\newtheoremstyle{exercise} 
  {3pt} 
  {3pt} 
  {\scriptsize\rmfamily} 
  {
\parindent} 
  {\rmfamily\scshape} 
  {.} 
  {.5em} 
  {} 
\newtheoremstyle{newplain}
  {5pt}
  {5pt}
  {\itshape}
  {}
  {\rmfamily\scshape}
  {. ---}
  {.5em}
  {}
\newtheoremstyle{newremark}
  {5pt}
  {5pt}
  {\rmfamily}
  {}
  {\rmfamily\scshape}
  {. ---}
  {.5em}
  {}
\theoremstyle{newplain}
\newtheorem*{Theorem*}{Theorem} 
\theoremstyle{newplain}
\newtheorem{Theorem}{Theorem}
\newtheorem{Lemma}[Theorem]{Lemma}
\newtheorem{Corollary}[Theorem]{Corollary}
\newtheorem{Proposition}[Theorem]{Proposition}
\newtheorem{Conjecture}[Theorem]{Conjecture}
\newtheorem{Definition}[Theorem]{Definition}
\theoremstyle{newremark}
\newtheorem{Remark}[Theorem]{Remark}
\newtheorem{Question}[Theorem]{Question}
\newtheorem{Example}[Theorem]{Example}
\newtheorem{Claim}[Theorem]{Claim}
\theoremstyle{exercise}
\numberwithin{Theorem}{subsection}
\numberwithin{Exercise}{subsection}
\newcommand{\N}{\mathbb{N}} 
\newcommand{\Q}{\mathbb{Q}} 
\newcommand{\R}{\mathbb{R}} 
\newcommand{\Rm}{\R^m}
\newcommand{\Rn}{\R^n}
\newcommand{\Z}{\mathbb{Z}} 
\newcommand{\K}{\mathbb{K}} 
\newcommand{\C}{\mathbb{C}} 
\newcommand{\ind}{\mathbbm{1}} 
\newcommand{\bbS}{\mathbb{S}}
\newcommand{\calB}{\mathscr{B}}
\newcommand{\calE}{\mathscr{E}}
\newcommand{\calF}{\mathscr{F}}
\newcommand{\calG}{\mathscr{G}}
\newcommand{\calH}{\mathscr{H}}
\newcommand{\calJ}{\mathscr{J}}
\newcommand{\calL}{\mathscr{L}}
\newcommand{\calM}{\mathscr{M}}
\newcommand{\calQ}{\mathscr{Q}}
\newcommand{\calS}{\mathscr{S}}
\newcommand{\calT}{\mathscr{T}}
\newcommand{\calU}{\mathscr{U}}
\newcommand{\calV}{\mathscr{V}}
\newcommand{\caA}{\boldsymbol{\frak A}}
\newcommand{\frA}{\frak A}
\newcommand{\frB}{\frak B}
\newcommand{\frL}{\frak L}
\newcommand{\frT}{\frak T}
\newcommand{\balpha}{\boldsymbol{\alpha}}
\newcommand{\boldeta}{\boldsymbol{\eta}}
\newcommand{\bpi}{\boldsymbol{\pi}}
\newcommand{\brho}{\boldsymbol{\rho}}
\newcommand{\bsigma}{\boldsymbol{\sigma}}
\newcommand{\bxi}{\boldsymbol{\xi}}
\newcommand{\bzeta}{\boldsymbol{\zeta}}
\newcommand{\bUpsilon}{\boldsymbol{\Upsilon}}
\newcommand{\bE}{\mathbf{E}}
\newcommand{\bG}{\mathbf{G}}
\newcommand{\bI}{\mathbf{I}}
\newcommand{\bL}{\mathbf{L}}
\newcommand{\bM}{\mathbf{M}}
\newcommand{\bU}{\mathbf{U}}
\newcommand{\bW}{\mathbf{W}}
\newcommand{\bY}{\mathbf{Y}}
\newcommand{\bc}{\mathbf{c}}
\newcommand{\bbf}{\mathbf{f}}
\DeclareMathOperator{\rmBdry}{\mathrm{Bdry}} 
\DeclareMathOperator{\rmcard}{\mathrm{card}} 
\DeclareMathOperator{\rmClos}{\mathrm{Clos}} 
\DeclareMathOperator{\rmdeg}{\mathrm{deg}}
\DeclareMathOperator{\rmdiam}{\mathrm{diam}} 
\DeclareMathOperator{\rmdim}{\mathrm{dim}} 
\DeclareMathOperator{\rmdist}{\mathrm{dist}} 
\DeclareMathOperator{\rmHom}{\mathrm{Hom}} 
\DeclareMathOperator{\rmid}{\mathrm{id}} 
\DeclareMathOperator{\rmim}{\mathrm{im}} 
\DeclareMathOperator{\rmlip}{\mathrm{lip}}
\DeclareMathOperator{\rmLip}{\mathrm{Lip}} 
\DeclareMathOperator{\rmosc}{\mathrm{osc}} 
\DeclareMathOperator{\rmspan}{\mathrm{span}} 
\DeclareMathOperator{\rmsplit}{\mathrm{split}}
\DeclareMathOperator{\rmsupp}{\mathrm{supp}} 
\newcommand{\hel} {
\hskip2.5pt{\vrule height7pt width.5pt depth0pt}
\hskip-.2pt\vbox{\hrule height.5pt width7pt depth0pt}
\, }
\newcommand{\lseg}{\boldsymbol{[}\!\boldsymbol{[}}
\newcommand{\rseg}{\boldsymbol{]}\!\boldsymbol{]}}
\newcommand{\bin}[2]{
\begin{pmatrix} #1 \\
#2
\end{pmatrix}}
\def\Xint#1{\mathchoice
{\XXint\displaystyle\textstyle{#1}}%
{\XXint\textstyle\scriptstyle{#1}}%
{\XXint\scriptstyle\scriptscriptstyle{#1}}%
{\XXint\scriptscriptstyle
\scriptscriptstyle{#1}}%
\!\int}
\def\XXint#1#2#3{{%
\setbox0=\hbox{$#1{#2#3}{\int}$}
\vcenter{\hbox{$#2#3$}}\kern-.5\wd0}}
\def\dashint{\Xint-}
\newcommand{\lno}{\left\bracevert}
\newcommand{\rno}{\right\bracevert}
\newcommand{\veps}{\varepsilon}
\newcommand{\vphi}{\varphi}
\newcommand{\la}{\langle}
\newcommand{\ra}{\rangle}
\renewcommand{\leq}{\leqslant}
\renewcommand{\geq}{\geqslant}
\renewcommand{\subset}{\subseteq}
\begin{document}


\title[Multiple valued maps]{Existence of $p$ harmonic multiple valued maps into a separable Hilbert space}
\author{Philippe Bouafia}
\author{Thierry De Pauw}
\email{depauw@math.jussieu.fr}
\author{Jordan Goblet}

\maketitle

\tableofcontents
\newpage


\section{Foreword}

\nocite{GOB.09}

Given a set $Y$ and a positive integer $Q$, we let $\calQ_Q(Y)$ denote the set of unordered $Q$-tuples of elements of $Y$, i.e. members of the quotient of $Y^Q$ by the action of the group of permutations $S_Q$. A $Q$-valued map from a set $X$ to $Y$ is a map $f : X \to \calQ_Q(Y)$.
\par
We let $\lseg y_1,\ldots,y_Q \rseg$ denote members of $\calQ_Q(Y)$. If $Y$ is a metric space then $\calQ_Q(Y)$ is given a metric
\begin{equation*}
\calG_\infty( \lseg y_1,\ldots,y_Q \rseg , \lseg y'_1,\ldots,y'_Q \rseg) = \min_{\sigma \in S_Q} \max_{i=1,\ldots,Q} d(y_i,y'_{\sigma(i)}) \,.
\end{equation*}
If $X$ is metric as well, we may thus consider Lipschitz maps $f : X \to \calQ_Q(Y)$. Although these do not necessarily decompose $f = \lseg f_1,\ldots,f_Q \rseg$ into Lipschitz branches $f_i : X \to Y$, $i=1,\ldots,Q$, (see the easy example at the end of Section \ref{section2.2}) we nevertheless establish, in case $X = \Rm$ and $Y$ is a Banach space with the Radon-Nikod\'{y}m property, their differentiability almost everywhere, for an appropriate notion of a derivative $Df$ that controls the variations of $f$ (Theorem \ref{258} and Proposition \ref{259}). In case $Y$ is finite dimensional, this had been obtained by F.J. Almgren \cite{ALMGREN}, the third author \cite{GOB.06}, and C. De Lellis and E. Spadaro \cite{DEL.SPA.11}. Our proof in the infinite dimensional setting follows essentially that given in the last two references.
\par
In case $X = \ell_2^m$ and $Y=\ell_2^n$ are finite dimensional Hilbert spaces, the Lipschitz $Q$-valued $f : \ell_2^m \to \calQ_Q(\ell_2^n)$ were considered by F.J. Almgren in \cite{ALMGREN} in order to approximate the support of a mass minimizing integral current $T \in \bI_m(\ell_2^{m+n})$ near a point $0 \in \rmsupp T$ such that $\bM(T \hel U(0,1)) \sim Q \balpha(m)$ and the ``excess'' of $T$ in $U(0,1)$ with respect to an $m$-plane $W \in \bG(n,m)$ is small. Thus $X=\ell_2^m$ is identified with $W$, $Y=\ell_2^n$ is identified with $W^\perp$ and the graph of $f$ approximates the support of $T$ in $U(0,1)$. The mass minimality of $T$ implies that $f$ is not too far from minimizing its Dirichlet energy $\int_{U(0,1)} \lno Df \rno^2 d\calL^m$, in an appropriate class of Sobolev competitors. F.J. Almgren's analysis (i.e. the definition of Sobolev $Q$-valued maps, their differentiability almost everywhere, the lower semicontinuity of their energy, their trace theory, the Poincar\'e inequality and the relevant compactness result) relied on his biLipschitz embedding
\begin{equation*}
\bxi : \calQ_Q(\ell_2^n) \to \R^N
\end{equation*}
where $N$ and $\rmLip \bxi^{-1}$ depend both upon $n$ and $Q$. Following C. De Lellis and E. Spadaro \cite{DEL.SPA.11}, we present this embedding in Theorem \ref{334}. We also include B. White's ``local isometric'' improvement (unpublished) as conclusion (B) of Theorem \ref{334}. Finally, we compare with an earlier biH\"olderian embedding due to H. Whitney \cite{WHITNEY.72}, Section \ref{31}.
\par
In this paper we concentrate on the Dirichlet problem for the $p$-energy, $1 < p < \infty$, of $Q$-valued maps $f : \ell_2^m \to \calQ_Q(\ell_2)$, i.e. $Y=\ell_2$ is an infinite dimensional separable Hilbert space. We don't know of any useful replacement of Almgren's embedding in that case. Thus we are led to develop further the intrinsic approach pioneered in \cite{GOB.06} and \cite{DEL.SPA.11} (yet we cannot dispense completely with the locally isometric embedding, in particular when proving the lower semicontinuity of the energy).
\par
Letting $U = U(0,1)$ be the unit ball of $\ell_2^m$, we consider the Borel measurable maps $f : U \to \calQ_Q(\ell_2)$ with finite $L_p$ ``norm'', $\int_U \calG(f,Q\lseg 0 \rseg)^p d\calL^m < \infty$. Their $L_p$-semidistance is defined as $d_p(f_1,f_2) = \left( \int_U \calG(f_1,f_2)^p d\calL^m \right)^\frac{1}{p}$; it is complete (Proposition \ref{411}). The Sobolev maps $f \in W^1_p(U;\calQ_Q(\ell_2))$ are defined to be the limits in this $L_p$-semidistance of sequences of Lipschitz maps $f_j : U \to \calQ_Q(\ell_2)$ such that $\sup_j \int_U \lno Df_j \rno^p d\calL^m < \infty$. This sort of ``weak density'' of Lipschitz $Q$-valued maps among Sobolev ones is justified, in case $Y = \ell_2^n$ is finite dimensional, by the fact that $U$ is an extension domain and that $\rmim \bxi$ is a Lipschitz retract of $\R^N$ (Theorem \ref{431}). That Sobolev $Q$-valued maps extend from $U$ to the whole $\ell_2^m$, with the appropriate control, is a matter of routine verification (Theorem \ref{ext}). We define the $p$-energy $\calE^p_p(f;U)$ of a Sobolev $Q$-valued map $f$ by relaxation, making it automatically lower semicontinuous with respect to convergence in the $L_p$-semidistance (Proposition \ref{441}), and we then embark on showing that $f$ is differentiable almost everywhere and that $\calE_p^p(f;U) = \int_U \lno Df \rno^p d\calL^m$. For this purpose we need to know the corresponding statement for finite dimensional approximating Sobolev maps $U \to \calQ_Q(\ell_2^n)$ (Proposition \ref{447}), a convergence result for the finite dimensional approximations (Theorem \ref{448}), a Poincar\'e inequality (Theorem \ref{452}) from which a stronger (Luzin type) approximation by Lipschitz $Q$-valued maps follows (Proposition \ref{453}(1)). The differentiability almost everywhere of a Sobolev $Q$-valued map (Theorem \ref{453}(4)) now becomes a consequence of our aforementioned Rademacher type result (Theorem \ref{258}). At that point we also obtain that $\calE_p^p(f;U) = \int_U \lno Df \rno^p d\calL^m$ (Theorem \ref{454}), thus the lower semicontinuity sought for. We prove the existence of a useful trace ``operator'' $\calT$ in Theorem \ref{472}, verifying the following continuity property: If $\{f_j\}$ is a sequence of Sobolev maps such that $\lim_j d_p(f,f_j)=0$ and $\sup_j \int_U \lno Df_j \rno^p d\calL^m < \infty$ then $\lim_j d_p(\calT(f),\calT(f_j))=0$. Finally, our Rellich compactness Theorem \ref{482} relies on a Fr\'echet-Kolmogorov compactness Theorem \ref{421} and a new embedding Theorem \ref{341}. Given a Lipschitz $g : \rmBdry U \to \calQ_Q(\ell_2)$ and $1 < p < \infty$, our main result states that the minimization problem
\begin{equation*}
\begin{cases}
\text{minimize } \int_U \lno Df \rno^p d\calL^m \\
\text{among } f \in W^1_p(U;\calQ_Q(\ell_2)) \text{ such that } \calT(f) = g
\end{cases}
\end{equation*}
admits a solution.
\par
Our section Preliminaries contains general results and proofs that can be found in \cite{DEL.SPA.11}. We verify that they apply with an infinite dimensional range when appropriate.

\section{Preliminaries}

\subsection{Symmetric powers}

Let $Q \in \N_0$ be a positive integer and let $Y$ be a metric space. Our aim is to consider unordered Q-tuples of elements of $Y$. For instance, letting $Y=\C$ and letting $P$ be a polynomial of degree $Q$ with coefficients in $\C$, the roots of $P$ form such an unordered $Q$-tuple of complex numbers. Thus the elements under consideration need not be distinct; if some agree they should be counted with their multiplicity.
\par
Formally the collection $\calQ_Q(Y)$ of unordered $Q$-tuples in $Y$ may be defined as the quotient of the Cartesian product $Y^Q$ under the action of the symmetric group $S_Q$. An element $\sigma \in S_Q$ is a permutation of $\{1,\ldots,Q\}$. It acts on $Y^Q$ in the obvious way :
\begin{equation*}
Y^Q \to Y^Q : (y_1,\ldots,y_Q) \mapsto (y_{\sigma(1)},\ldots,y_{\sigma(Q)}) \,.
\end{equation*}
We will denote by $\lseg y_1,\ldots,y_Q \rseg$ the equivalence class of $(y_1,\ldots,y_Q)$ in $\calQ_Q(Y)$, so that in particular $\lseg y_1,\ldots,y_Q\rseg = \lseg y_{\sigma(1)},\ldots,y_{\sigma(Q)} \rseg$ for every $\sigma \in S_Q$. On occasions we shall also denote by $v$ a generic element of $\calQ_Q(Y)$. Another way of thinking of a member $v = \lseg y_1,\ldots,y_Q \rseg \in \calQ_Q(Y)$ is to identify it with the finite measure $\mu_v = \sum_{i=1}^Q \delta_{y_i}$ where $\delta_{y_i}$ is the Dirac mass with atom $\{y_i\}$.
 The {\em support} of $v \in \calQ_Q(Y)$ is, by definition, the support of the corresponding measure, $\rmsupp v = \rmsupp \mu_v = \{y_1,\ldots,y_Q\}$ where $y_1,\ldots,y_Q$ is a {\em numbering} of $v$, i.e. a map $y : \{1,\ldots,Q\} \to Y$ such that $v= \lseg y_1,\ldots,y_Q \rseg$. The {\em multiplicity} of $y \in \rmsupp v$ is defined as $\mu_v\{y\}$.
\par
We now define a metric on $\calQ_Q(Y)$ associated with the given metric $d$ of $Y$. Let
\begin{equation*}
\calG( \lseg y_1,\ldots,y_Q \rseg , \lseg y'_1,\ldots,y'_Q \rseg) = \min_{\sigma \in S_Q} \sqrt{ \sum_{i=1}^Q d(y_i,y'_{\sigma(i)})^2} \,.
\end{equation*}
We will sometimes use the notation $\calG_2$ for $\calG$ in order to avoid confusion with two other useful metrics:
\begin{equation*}
\calG_1( \lseg y_1,\ldots,y_Q \rseg , \lseg y'_1,\ldots,y'_Q \rseg) = \min_{\sigma \in S_Q} \sum_{i=1}^Q d(y_i,y'_{\sigma(i)}) \,,
\end{equation*}
and
\begin{equation*}
\calG_{\infty}( \lseg y_1,\ldots,y_Q \rseg , \lseg y'_1,\ldots,y'_Q \rseg) = \min_{\sigma \in S_Q} \max_{i=1,\ldots,Q} d(y_i,y'_{\sigma(i)}) \,.
\end{equation*}
Thus $\calG_1$, $\calG_2$ and $\calG_{\infty}$ are equivalent metrics on $\calQ_Q(Y)$.

We begin with the following easy proposition.

\begin{Proposition}
\label{211}
The metric space $(Y,d)$ is complete (resp. compact, separable) if and only if $(\calQ_Q(Y),\calG)$ is complete (resp. compact, separable) for every $Q \in \N_0$.
\end{Proposition}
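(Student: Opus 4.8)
The plan is to play off the metric space $(\calQ_Q(Y),\calG)$ against the ``ordered'' space $(Y^Q,\rho)$, where $\rho(\mathbf y,\mathbf y') = \big(\sum_{i=1}^Q d(y_i,y_i')^2\big)^{1/2}$, by means of the canonical surjection $\pi\colon Y^Q \to \calQ_Q(Y)$, $\pi(y_1,\ldots,y_Q)=\lseg y_1,\ldots,y_Q\rseg$. The definition of $\calG=\calG_2$ says precisely that
\begin{equation*}
\calG(\pi\mathbf y,\pi\mathbf y') = \min_{\sigma\in S_Q}\rho\big(\mathbf y,(y'_{\sigma(1)},\ldots,y'_{\sigma(Q)})\big),
\end{equation*}
so $\pi$ is $1$-Lipschitz; since $S_Q$ is finite the minimum is attained, and by relabelling one sees that, given $v,v'\in\calQ_Q(Y)$ and a numbering $\mathbf y$ of $v$, there is a numbering $\mathbf y'$ of $v'$ with $\rho(\mathbf y,\mathbf y')=\calG(v,v')$. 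This ``lifting with control'' observation is the only non-formal ingredient.

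For the implication $\Leftarrow$, note that $\calQ_1(Y)$ coincides, as a metric space, with $(Y,d)$; hence if $(\calQ_Q(Y),\calG)$ is complete (resp.\ compact, separable) for every $Q\in\N_0$, taking $Q=1$ gives the conclusion for $Y$. (Alternatively, $y\mapsto\lseg y,\ldots,y\rseg$ is a bi-Lipschitz embedding of $Y$ onto the set of singleton-supported elements of $\calQ_Q(Y)$, which one checks is closed, so any single $Q$ suffices.)

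For $\Rightarrow$, separability and compactness are soft: a finite product of separable (resp.\ compact) metric spaces is separable (resp.\ compact), so $Y^Q$ is, and $\calQ_Q(Y)=\pi(Y^Q)$ is then separable (resp.\ compact) as the continuous image of such a space. For completeness, one first checks that $(Y^Q,\rho)$ is complete --- a $\rho$-Cauchy sequence has $d$-Cauchy, hence convergent, coordinates. Now let $(v_n)$ be $\calG$-Cauchy in $\calQ_Q(Y)$; after passing to a subsequence we may assume $\calG(v_n,v_{n+1})<2^{-n}$. Using the lifting observation, build $\mathbf y_n\in Y^Q$ with $\pi\mathbf y_n=v_n$ and $\rho(\mathbf y_n,\mathbf y_{n+1})=\calG(v_n,v_{n+1})<2^{-n}$ for all $n$: pick $\mathbf y_1$ arbitrarily, and at each step choose the numbering of $v_{n+1}$ adapted to the already-fixed $\mathbf y_n$. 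Then $(\mathbf y_n)$ is $\rho$-Cauchy, so $\mathbf y_n\to\mathbf y$ in $Y^Q$, whence $v_n=\pi\mathbf y_n\to\pi\mathbf y$ by continuity of $\pi$; since a Cauchy sequence with a convergent subsequence converges, the original sequence converges in $\calQ_Q(Y)$.

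The only place requiring genuine care is this last argument: the quotient map $\pi$ is not injective, so one cannot simply ``lift $(v_n)$ coordinatewise'' --- the telescoping choice of compatible numberings, legitimate because $S_Q$ is finite, is what makes the lift Cauchy. Everything else reduces to standard facts about finite products and continuous images.
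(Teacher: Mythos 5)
The paper states Proposition \ref{211} as ``the following easy proposition'' and gives no proof at all, so there is nothing to compare your argument against. Your proof is correct: lifting to the $\ell_2$-product $(Y^Q,\rho)$ via the $1$-Lipschitz quotient map $\pi$, and noting that any numbering of $v$ can be matched by a numbering of $v'$ realizing $\calG(v,v')$, is the natural route, and you rightly isolate the only delicate point --- the telescoping choice of compatible lifts in the completeness step, which works because $S_Q$ is finite and gives a $\rho$-Cauchy lift of the chosen fast-Cauchy subsequence. The $\Leftarrow$ direction via $Q=1$ (or via the closed diagonal $\{Q\lseg y\rseg\}$, closed by the lower semicontinuity of $\bsigma$ from Proposition \ref{3.2}) and the $\Rightarrow$ direction for compactness/separability via continuous image of $Y^Q$ are both standard and handled correctly.
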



A {\em $Q$-valued function} from a set $X$ to $Y$ is a mapping $f : X \to \calQ_Q(Y)$. A {\em multiple-valued function} from $X$ to $Y$ is a $Q$-valued function for some $Q \in \N_0$. In case $X$ is a metric space, the notion of continuity (in particular Lipschitz continuity) of such $f$ now makes sense. If $\caA$ is a $\sigma$-algebra of subsets of $X$ we say that $f$ is $\caA$-measurable (or simply measurable when $\caA$ is clear from the context) whenever $f^{-1}(B) \in \caA$ for every Borel subset $B \subset \calQ_Q(Y)$.

Our coming observation will reveal ubiquitous. We define the {\em splitting distance} of $v = \lseg y_1,\ldots,y_Q \rseg \in \calQ_Q(Y)$ as follows:
\begin{equation*}
\rmsplit v = \begin{cases}
\min \{ d(y_i,y_j) : i,j=1,\ldots,Q \text{ and } y_i \neq y_j \} & \text{ if } \rmcard \rmsupp v > 1  \\
+ \infty & \text{ if } \rmcard \rmsupp v = 1 \,.
\end{cases}
\end{equation*}

\begin{Lemma}[Splitting Lemma]
Let $v = \lseg y_1,\ldots,y_Q \rseg \in \calQ_Q(\Rn)$ and $v' \in \calQ_Q(Y)$ be such that $\calG(v,v') \leq \frac{1}{2} \rmsplit v$. Choose a numbering of $v' = \lseg y'_1,\ldots,y'_Q \rseg \in \calQ_Q(Y)$ so that $d(y_i,y'_i) \leq \frac{1}{2} \rmsplit v$, $i=1,\ldots,Q$. It follows that
\begin{equation*}
\calG(v,v') = \sqrt{ \sum_{i=1}^Q d(y_i,y'_i)^2 }
\end{equation*}
(and the analogous statement for $\calG_1$ and $\calG_{\infty}$).
\end{Lemma}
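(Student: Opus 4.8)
The plan is to show that the matched numbering singled out in the statement realizes the minimum in the definition of $\calG$. The trivial case is $\rmcard \rmsupp v = 1$: then $\rmsplit v = +\infty$, every numbering of $v'$ is admissible, and since all $y_i$ coincide the expression $\sum_i d(y_i,y'_{\sigma(i)})^2$ is independent of $\sigma$, so there is nothing to prove. Assume then $\rmsplit v < +\infty$. The existence of a numbering with $d(y_i,y'_i) \le \tfrac12 \rmsplit v$ for all $i$ is immediate: choosing $\sigma$ realizing $\calG(v,v')$ gives $\sum_i d(y_i,y'_{\sigma(i)})^2 = \calG(v,v')^2 \le \tfrac14(\rmsplit v)^2$, so each summand satisfies $d(y_i,y'_{\sigma(i)}) \le \tfrac12 \rmsplit v$, and relabelling $v'$ accordingly does it.

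The crucial elementary remark is that, for such a numbering, $y_i \ne y_j$ forces $d(y_i,y'_j) \ge \tfrac12 \rmsplit v$: indeed $d(y_i,y_j) \ge \rmsplit v$ by definition of the splitting distance, while $d(y_i,y_j) \le d(y_i,y'_j) + d(y'_j,y_j) \le d(y_i,y'_j) + \tfrac12 \rmsplit v$ (only the triangle inequality is used, so the restriction $v \in \calQ_Q(\Rn)$ is inessential here). Using this I would establish, for an arbitrary $\pi \in S_Q$, that $\sum_i d(y_i,y'_{\pi(i)})^2 \ge \sum_i d(y_i,y'_i)^2$. Split $\{1,\dots,Q\}$ into $A = \{i : y_{\pi(i)} = y_i\}$ and its complement $B$. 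On $A$, $i \mapsto \pi(i)$ is a bijection onto $\pi(A)$ and $d(y_i,y'_{\pi(i)}) = d(y_{\pi(i)},y'_{\pi(i)})$, so $\sum_{i \in A} d(y_i,y'_{\pi(i)})^2 = \sum_{j \in \pi(A)} d(y_j,y'_j)^2$. On $B$ the remark applies, so each term is $\ge \tfrac14(\rmsplit v)^2$, while for every $j$ one has $d(y_j,y'_j) \le \tfrac12 \rmsplit v$; since $\rmcard B = Q - \rmcard \pi(A)$ equals the number of indices outside $\pi(A)$, it follows that $\sum_{i \in B} d(y_i,y'_{\pi(i)})^2 \ge \tfrac14 (\rmcard B)(\rmsplit v)^2 \ge \sum_{j \notin \pi(A)} d(y_j,y'_j)^2$. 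Adding the two estimates gives the desired inequality.

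To conclude, apply the inequality to a $\pi$ realizing $\calG(v,v')$: this yields $\calG(v,v')^2 \ge \sum_i d(y_i,y'_i)^2$, and the reverse inequality holds because the identity competes in the minimum, so $\calG(v,v') = \sqrt{\sum_i d(y_i,y'_i)^2}$. The assertions for $\calG_1$ and $\calG_{\infty}$ follow by the very same partition, replacing squares by first powers in the $\calG_1$ case, and in the $\calG_{\infty}$ case noting that a nonempty $B$ already forces $\max_i d(y_i,y'_{\pi(i)}) \ge \tfrac12 \rmsplit v \ge \max_j d(y_j,y'_j)$, whereas $B = \emptyset$ makes the two families of distances coincide as multisets. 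I do not anticipate a real difficulty; the only point needing care is the counting in the partition step --- that the long terms indexed by $B$ can be matched with, and individually dominate, the short terms indexed outside $\pi(A)$, which is exactly what the factor $\tfrac12$ in the hypothesis buys.
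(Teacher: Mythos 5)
Your proof is correct, but it takes a more roundabout route than the paper's. The paper establishes the stronger \emph{termwise} inequality $d(y_i,y'_i) \leq d(y_{\sigma(i)},y'_i)$ for each fixed $i$: when $y_{\sigma(i)} = y_i$ it is trivial, and when $y_{\sigma(i)} \neq y_i$ one gets a contradiction from the triangle inequality exactly as in your ``crucial elementary remark.'' Once you have this for every $i$, the sum inequality (and the max inequality for $\calG_\infty$) follow immediately, with no need for the partition of the index set into $A$ and $B$ or the cardinality bookkeeping. In fact, your remark already contains the termwise inequality: applying it with the pair $(y_{\sigma(i)}, y_i)$ in the roles of $(y_i,y_j)$ gives $d(y_{\sigma(i)},y'_i) \geq \tfrac12 \rmsplit v \geq d(y_i,y'_i)$ whenever $y_{\sigma(i)} \neq y_i$, so you could have dispensed with the counting step entirely. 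Your argument is nonetheless complete and rigorous, and you correctly note in passing that the appearance of $\calQ_Q(\Rn)$ in the hypothesis is spurious --- only the triangle inequality in $Y$ is used, which the paper's proof also bears out.
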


\begin{proof}
We first observe that in case $\rmsplit v = \infty$ the conclusion indeed holds true. Thus we assume that $\rmsplit v < \infty$. Let $\sigma \in S_Q$ and $i=1,\ldots,Q$. We aim to show that $d(y_i,y'_i) \leq d(y_{\sigma(i)},y'_i)$. In case $y_{\sigma(i)} = y_i$ this is obvious. Otherwise, assuming if possible that $d(y_{\sigma(i)},y'_i) < d(y_i,y'_i)$ we would infer from the triangle inequality
\begin{equation*}
\begin{split}
\rmsplit v & \leq d(y_{\sigma(i)},y_i) \\
& \leq d(y_{\sigma(i)},y'_i) + d(y'_i,y_i) \\
& < 2 d(y_i,y'_i) \\
& \leq \rmsplit v \,,
\end{split}
\end{equation*}
a contradiction. Since $i=1,\ldots,Q$ is arbitrary we obtain
\begin{equation*}
\sum_{i=1}^Q d(y_i,y'_i)^2 \leq \sum_{i=1}^Q d(y_{\sigma(i)},y'_i)^2 \,.
\end{equation*}
Since $\sigma \in S_Q$ is arbitrary, the proof is complete.
\end{proof}

\begin{Proposition}
\label{3.2}
The function $\bsigma : \calQ_Q(Y) \to \N_0 : v \mapsto \rmcard \rmsupp v$ is lower semicontinuous.
\end{Proposition}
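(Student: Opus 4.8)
The plan is to prove the equivalent statement that, in a suitable neighborhood of each point, $\bsigma$ is bounded below by its value at that point; since $\bsigma$ takes values in $\N_0$, such a local lower bound is precisely lower semicontinuity. Concretely, I would fix $v_0 \in \calQ_Q(Y)$, put $k = \bsigma(v_0) = \rmcard \rmsupp v_0$, dispose first of the trivial case $k = 1$ (where $\bsigma(v) \geq 1$ for every $v$), and then, assuming $k \geq 2$, show that every $v \in \calQ_Q(Y)$ with $\calG(v,v_0) < \tfrac12 \rmsplit v_0$ satisfies $\bsigma(v) \geq k$. Since $k \geq 2$, the number $\rmsplit v_0$ is a genuine positive real, so this ball is nondegenerate.

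The mechanism is exactly the one underlying the Splitting Lemma. I would fix a numbering $v_0 = \lseg y_1,\ldots,y_Q \rseg$ and select indices $i_1,\ldots,i_k \in \{1,\ldots,Q\}$ realizing the $k$ distinct values, so that $\rmsupp v_0 = \{y_{i_1},\ldots,y_{i_k}\}$ with $y_{i_a} \neq y_{i_b}$ whenever $a \neq b$. Given $v$ with $\calG(v,v_0) < \tfrac12 \rmsplit v_0$, I would pick a permutation achieving the minimum in the definition of $\calG = \calG_2$ and relabel a numbering of $v$ by it, getting $v = \lseg z_1,\ldots,z_Q \rseg$ with $\sqrt{\sum_{i=1}^Q d(y_i,z_i)^2} < \tfrac12 \rmsplit v_0$; in particular $d(y_i,z_i) < \tfrac12 \rmsplit v_0$ for each $i$, each summand being at most the full sum — this is the elementary observation at the heart of the Splitting Lemma. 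Then for $a \neq b$ the triangle inequality yields
\begin{equation*}
d(z_{i_a},z_{i_b}) \geq d(y_{i_a},y_{i_b}) - d(y_{i_a},z_{i_a}) - d(y_{i_b},z_{i_b}) > \rmsplit v_0 - \tfrac12 \rmsplit v_0 - \tfrac12 \rmsplit v_0 = 0 \,,
\end{equation*}
so $z_{i_1},\ldots,z_{i_k}$ are pairwise distinct, whence $\rmcard \rmsupp v \geq k = \bsigma(v_0)$, as desired.

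I do not expect a genuine obstacle here: the whole point is to recognize $\rmsplit v_0$ as the correct scale at which the support of $v_0$ cannot collapse under a small perturbation. The only matters needing a little care are the separate handling of $\rmcard \rmsupp v_0 = 1$ (where $\rmsplit v_0 = +\infty$) and keeping all inequalities strict — which is why one works inside the open ball $\{v : \calG(v,v_0) < \tfrac12 \rmsplit v_0\}$ — so that the selected branches of $v$ genuinely separate. I would also remark that nothing above uses completeness or finite dimensionality, so the conclusion holds for an arbitrary metric space $Y$ (and verbatim with $\calG_1$ or $\calG_\infty$ in place of $\calG_2$).
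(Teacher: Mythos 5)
Your proof is correct and follows exactly the same route as the paper's, which records the key observation in a single sentence: if $\calG_\infty(v',v) < \tfrac12\rmsplit v$ then $\rmcard\rmsupp v' \geq \rmcard\rmsupp v$. You simply spell out the details (the paper works with $\calG_\infty$ rather than $\calG_2$, but since $\calG_\infty \leq \calG_2$ this is an immaterial difference).
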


\begin{proof}
It follows easily from the definition of $\rmsplit v$ that if $v,v' \in \calQ_Q(Y)$ and if $\calG_{\infty}(v',v) < \frac{1}{2}\rmsplit v$ then $\rmcard \rmsupp v' \geq \rmcard \rmsupp v$.
\end{proof}

\subsection{Concatenation and splitting}
\label{section2.2}

Let $Q_1, Q_2 \in \N_0$. We define the {\em concatenation} operation
\begin{equation*}
\oplus : \calQ_{Q_1}(Y) \times \calQ_{Q_2}(Y) \to \calQ_{Q_1+Q_2}(Y) : (v_1,v_2) \mapsto v_1 \oplus v_2
\end{equation*}
as follows. Write $v_1 = \lseg y_{1,1},\ldots,y_{1,Q_1} \rseg$ and $v_2 = \lseg y_{2,1},\ldots,y_{2,Q_2} \rseg$, and put $v_1 \oplus v_2 = \lseg y_{1,1},\ldots,y_{1,Q_1},y_{2,1},\ldots,y_{2,Q_2} \rseg$. We observe that this operation is commutative, i.e. $v_1 \oplus v_2 = v_2 \oplus v_1$. We notice the following associativity property. If $Q_1,Q_2,Q_3 \in \N_0$ and $v_j \in \calQ_{Q_j}(Y)$, $j=1,2,3$, then $(v_1 \oplus v_2) \oplus v_3 = v_1 \oplus (v_2 \oplus v_3)$ so that $v_1 \oplus v_2 \oplus v_3$ is well defined. It is thus possible to iterate the definition to the concatenation of any finite number of members of some $\calQ_{Q_j}(Y)$. In this new notation we readily have the identity
\begin{equation*}
\lseg y_1,\ldots,y_Q \rseg = \lseg y_1 \rseg \oplus \ldots \oplus \lseg y_Q \rseg = \oplus_{i=1}^Q \lseg y_i \rseg \,.
\end{equation*}

We leave the obvious proof of the next result to the reader.

\begin{Proposition}
Let $Q_1,\ldots,Q_k \in \N_0$.
The concatenation operation
\begin{equation*}
\calQ_{Q_1}(Y) \times \ldots \times \calQ_{Q_k}(Y) \to \calQ_{Q_1+\ldots+Q_k}(Y) : (v_1,\ldots,v_k) \mapsto v_1 \oplus \ldots \oplus v_k
\end{equation*}
 is Lipschitz continuous.
\end{Proposition}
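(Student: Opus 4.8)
The plan is to reduce everything to one explicit permutation-matching estimate. By the associativity noted above it suffices in principle to treat $k=2$ and iterate, but the general case is no harder, so I would argue directly. First I would fix the metric on the product: equip $\calQ_{Q_1}(Y) \times \ldots \times \calQ_{Q_k}(Y)$ with
\begin{equation*}
d\big((v_1,\ldots,v_k),(v_1',\ldots,v_k')\big) = \left( \sum_{j=1}^k \calG(v_j,v_j')^2 \right)^{\frac{1}{2}} ,
\end{equation*}
any of the usual equivalent product metrics serving equally well. The claim is then that concatenation is $1$-Lipschitz for this choice.

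The key observation is that there is a \emph{block} embedding $S_{Q_1} \times \ldots \times S_{Q_k} \hookrightarrow S_{Q_1+\ldots+Q_k}$: a tuple $(\sigma_1,\ldots,\sigma_k)$ is sent to the permutation acting as $\sigma_j$ on the $j$-th block of consecutive indices $\{Q_1+\ldots+Q_{j-1}+1,\ldots,Q_1+\ldots+Q_j\}$. Restricting the minimum defining $\calG$ on $\calQ_{Q_1+\ldots+Q_k}(Y)$ to this subgroup can only increase it. Concretely, pick numberings $v_j = \lseg y_{j,1},\ldots,y_{j,Q_j} \rseg$ and $v_j' = \lseg y_{j,1}',\ldots,y_{j,Q_j}' \rseg$, and for each $j$ choose $\sigma_j \in S_{Q_j}$ realizing $\calG(v_j,v_j')^2 = \sum_{i=1}^{Q_j} d(y_{j,i},y_{j,\sigma_j(i)}')^2$. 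Then, using the concatenated numberings of $w := v_1 \oplus \ldots \oplus v_k$ and $w' := v_1' \oplus \ldots \oplus v_k'$ together with the block permutation built from $(\sigma_1,\ldots,\sigma_k)$,
\begin{equation*}
\calG(w,w')^2 \leq \sum_{j=1}^k \sum_{i=1}^{Q_j} d(y_{j,i},y_{j,\sigma_j(i)}')^2 = \sum_{j=1}^k \calG(v_j,v_j')^2 = d\big((v_1,\ldots,v_k),(v_1',\ldots,v_k')\big)^2 ,
\end{equation*}
which is the asserted $1$-Lipschitz estimate.

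Finally I would note that the same computation, replacing sums of squares by sums (respectively by maxima), shows that concatenation is $1$-Lipschitz from the $\ell_1$-product of the $\calG_1$'s (respectively the $\ell_\infty$-product of the $\calG_\infty$'s); since $\calG_1$, $\calG_2$, $\calG_\infty$ are equivalent and all finite product metrics are equivalent, Lipschitz continuity holds whatever conventions one adopts. There is no real obstacle here: the only point requiring mild care is to record which product metric is used and to observe that passing from $S_{Q_1+\ldots+Q_k}$ to its block subgroup moves the minimum in the favorable direction, which is precisely what makes the one-line estimate go through.
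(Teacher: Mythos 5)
Your proof is correct and is exactly the argument the paper has in mind: the paper leaves this to the reader and only records (in the remark immediately following the statement) the $\calG_1$/$\ell_1$ normalization under which the Lipschitz constant is $1$, which is precisely your block-permutation estimate. Nothing to add.
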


In fact if each $\calQ_Q(Y)$ appearing in the statement is equipped with the metric $\calG_1$, and if the Cartesian product is considered as an $\ell_1$ ``product'', then the Lipschitz constant of the above mapping equals 1.
\par
Given $Q$ maps $f_1,\ldots,f_Q : X \to Y$ we define their {\em concatenation} $f : X \to \calQ_Q(Y)$ by the formula
\begin{equation*}
f(x) = \lseg f_1(x),\ldots,f_Q(x) \rseg = \oplus_{i=1}^Q \lseg f_i(x) \rseg  \,,\,\,x \in X .
\end{equation*}
Abusing notation in the obvious way we shall also write
\begin{equation*}
f = \lseg f_1,\ldots,f_Q \rseg \,.
\end{equation*}
In writing $f$ as above we will call $f_1,\ldots,f_Q$ {\em branches} of $f$. It is most obvious that such {\em splitting} of $f$ into branches is always possible, and equally evident that branches are very much not unique unless $X$ is a singleton. It ensues from the above proposition that if $f_i : X \to Y$, $i=1,\ldots,Q$, are measurable (resp. continuous, Lipschitz continuous) then so is their concatenation $f = \oplus_{i=1}^Q \lseg f_i \rseg$.
Now, if $f$ has some of these properties, can it be split into branches $f_1,\ldots,f_Q$ having the same property? The answer is positive for measurability, as we shall see momentarily, but not for continuity.  Consider $f : \C \to \calQ_2(\C)$ defined by $f(z) = \lseg \sqrt{z} , -\sqrt{z} \rseg$. Thus $f$ is (H\"older) continuous (for a recent account of such continuity, consult e.g. \cite{BRI.10}). We claim however that $f$ does not decompose into two continuous branches. In fact we shall argue that the restriction of $f$ to the unit circle, still denoted $f$,
\begin{equation*}
f : \mathbb{S}^1 \to \calQ_2(\mathbb{S}^1) : z \mapsto \lseg \sqrt{z} , -\sqrt{z} \rseg
\end{equation*}
does not admit a continuous selection. Suppose if possible that there are continuous maps $f_1,f_2 : \mathbb{S}^1 \to \mathbb{S}^1$ such that $f = \lseg f_1,f_2 \rseg$. Let $g : \mathbb{S}^1 \to \mathbb{S}^1 : z \mapsto z^2$. From the identity $\rmid_{\mathbb{S}^1}=g \circ f_1$ we infer that $1 = \rmdeg(g \circ f_1) = \rmdeg(g) \circ \rmdeg(f_1)= 2 \rmdeg(f_1)$, contradicting $\rmdeg(f_1) \in \mathbb{Z}$.

\subsection{Measurability}

This section is also contained in \cite{DEL.SPA.11}.
The process of splitting $v \in \calQ_Q(Y)$ (such that $\rmsplit v < \infty$) into $v_1 \in \calQ_{Q_1}(Y)$ and $v_2 \in \calQ_{Q_2}(Y)$, $Q = Q_1 + Q_2$ and $Q_1 \neq 0 \neq Q_2$, is locally well-defined and continuous.

\begin{Proposition}
\label{3.3}
Let $v \in \calQ_Q(Y)$ be such that $s = \rmsplit v < \infty$. There then exist $Q_1,Q_2 \in \N_0$ with $Q=Q_1+Q_2$ and continuous mappings
\begin{equation*}
\psi_k : \calQ_Q(Y) \cap \{ v' : \calG_{\infty}(v,v') < s/2 \} \to \calQ_{Q_k}(Y) \,,\,\, k=1,2 \,,
\end{equation*}
such that
\begin{equation*}
v' = \psi_1(v') \oplus \psi_2(v') \,.
\end{equation*}
\end{Proposition}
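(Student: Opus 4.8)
The plan is to pick $Q_1,Q_2$ by splitting the support of $v$, and to define $\psi_1,\psi_2$ by transporting this splitting along ``matched numberings''. Since $s=\rmsplit v<\infty$ we have $\rmcard\rmsupp v\geq 2$. Fix once and for all a numbering $v=\lseg y_1,\dots,y_Q\rseg$, choose a point $z\in\rmsupp v$, and set $I_1=\{i:y_i=z\}$ and $I_2=\{1,\dots,Q\}\setminus I_1$; both are nonempty (the second because $\rmcard\rmsupp v\geq 2$), so that $Q_k:=\rmcard I_k\in\N_0$ and $Q_1+Q_2=Q$. Given $v'$ with $\calG_\infty(v,v')<s/2$, I would first note that there is a numbering $v'=\lseg y'_1,\dots,y'_Q\rseg$ with $d(y_i,y'_i)<s/2$ for every $i$: take the numbering $y'_{\sigma(i)}$, where $\sigma\in S_Q$ realizes the minimum in the definition of $\calG_\infty(v,v')$, exactly as in the Splitting Lemma. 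Call such a numbering \emph{matched} (to $y_1,\dots,y_Q$), and set $\psi_1(v')=\lseg y'_i:i\in I_1\rseg$ and $\psi_2(v')=\lseg y'_i:i\in I_2\rseg$. The identity $v'=\psi_1(v')\oplus\psi_2(v')$ then holds by construction, since $\{1,\dots,Q\}=I_1\sqcup I_2$.

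The first thing to verify is that $\psi_k(v')$ does not depend on the chosen matched numbering. The key elementary fact, which I would isolate, is that if $d(y_i,y')<s/2$ and $d(y_j,y')<s/2$ for some $y'\in Y$, then $y_i=y_j$ (otherwise $s\leq d(y_i,y_j)\leq d(y_i,y')+d(y',y_j)<s$, a contradiction). Applying this: if $(y'_i)$ and $(\tilde y'_i)$ are two matched numberings of $v'$, related by $\tilde y'_i=y'_{\tau(i)}$ with $\tau\in S_Q$, then $y'_{\tau(i)}$ lies within $s/2$ of both $y_i$ (because $d(y_i,\tilde y'_i)<s/2$) and $y_{\tau(i)}$ (because $d(y_{\tau(i)},y'_{\tau(i)})<s/2$), hence $y_i=y_{\tau(i)}$. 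Thus $\tau$ permutes the level sets of $i\mapsto y_i$; in particular $\tau(I_1)=I_1$, so $\lseg\tilde y'_i:i\in I_1\rseg=\lseg y'_{\tau(i)}:i\in I_1\rseg=\lseg y'_i:i\in I_1\rseg$, and likewise for $I_2$. Hence $\psi_1,\psi_2$ are well defined.

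For continuity, suppose $v'_n\to v'$ with all terms in $\calQ_Q(Y)\cap\{\calG_\infty(v,\cdot)<s/2\}$. Fix a matched numbering $(y'_i)$ of $v'$ and choose numberings $(y'_{n,i})$ of $v'_n$ with $\max_i d(y'_i,y'_{n,i})=\calG_\infty(v',v'_n)\to 0$ (again using a minimizing permutation). Writing $\delta=s/2-\max_i d(y_i,y'_i)>0$, for $n$ large we have $d(y_i,y'_{n,i})\leq d(y_i,y'_i)+d(y'_i,y'_{n,i})<s/2$, so $(y'_{n,i})$ is a matched numbering of $v'_n$; by the well-definedness just established, $\psi_1(v'_n)=\lseg y'_{n,i}:i\in I_1\rseg$, whence $\calG_\infty(\psi_1(v'_n),\psi_1(v'))\leq\max_{i\in I_1}d(y'_{n,i},y'_i)\to 0$, and similarly $\psi_2(v'_n)\to\psi_2(v')$.

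I expect the only genuine subtlety to be the bookkeeping in the well-definedness step: tracking how an arbitrary matched numbering of $v'$ differs from a fixed one and checking that the relating permutation respects the chosen partition of $\{1,\dots,Q\}$. Everything else — existence of matched numberings, the decomposition identity, and the continuity estimate — is a routine consequence of the triangle inequality and the definition of $\calG_\infty$ as a minimum over $S_Q$.
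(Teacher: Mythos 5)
Your proof is correct and essentially the same as the paper's: both fix a numbering of $v$, single out one level set of the distinguished value (giving $Q_1,Q_2$), choose an $s/2$-matched numbering of $v'$, and transport the index partition, with well-definedness and continuity both resting on the observation that no point of $Y$ can lie within $s/2$ of two distinct points of $\rmsupp v$. The only cosmetic difference is that you package the uniqueness step as ``any permutation relating two matched numberings must preserve the level sets of $i\mapsto y_i$,'' whereas the paper argues directly by contradiction that no $y'_i$ with $i\le Q_1$ can equal a $y''_j$ with $j>Q_1$ — the triangle-inequality core is identical.
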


When $Y$ is a metric space we let $\frB_Y$ denote the $\sigma$-algebra of Borel subsets of $Y$.

\begin{Proposition}
\label{232}
Let $(X,\frA)$ be a measurable space and let $Y$ be a separable metric space.
\begin{enumerate}
\item[(A)] If $f_1,\ldots,f_Q : X \to Y$ are $(\frA,\frB_Y)$-measurable then $f = \lseg f_1,\ldots,f_Q \rseg$ is $(\frA,\frB_{\calQ_Q(Y)})$-measurable.
\item[(B)] If $f : X \to \calQ_Q(Y)$ is $(\frA,\frB_{\calQ_Q(Y)})$-measurable then there exist $(\frA,\frB_Y)$-measurable maps $f_1,\ldots,f_Q : X \to Y$ such that $f = \lseg f_1,\ldots,f_Q \rseg$.
\end{enumerate}
\end{Proposition}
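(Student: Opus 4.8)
The plan is to treat (A) and (B) separately, (A) being routine and (B) carrying the substance.

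For (A): since $Y$ is separable metric, so is $Y^Q$, and its Borel $\sigma$-algebra coincides with the product $\sigma$-algebra $\frB_Y \otimes \cdots \otimes \frB_Y$; hence $(f_1,\ldots,f_Q) : X \to Y^Q$ is $(\frA,\frB_{Y^Q})$-measurable. The canonical surjection $\pi : Y^Q \to \calQ_Q(Y)$, $(y_1,\ldots,y_Q) \mapsto \lseg y_1,\ldots,y_Q \rseg$, is $1$-Lipschitz from the $\ell_2$-product metric to $\calG_2$ (take $\sigma = \rmid$ in the definition of $\calG_2$), hence Borel, so $f = \pi \circ (f_1,\ldots,f_Q)$ is $(\frA,\frB_{\calQ_Q(Y)})$-measurable.

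For (B): I would argue by induction on $Q$, the case $Q = 1$ being trivial since $\calQ_1(Y)$ is isometric to $Y$. For the inductive step, first observe that $\bsigma \circ f$ is $(\frA,\frB_{\N_0})$-measurable, because $\bsigma$ is lower semicontinuous (Proposition~\ref{3.2}), hence Borel; consequently $X_1 := \{x : \bsigma(f(x)) = 1\}$ and $X' := X \setminus X_1$ lie in $\frA$. On $X_1$ the map $f$ takes values in $\{v : \bsigma(v) = 1\}$, on which $v = \lseg y,\ldots,y \rseg \mapsto y$ is continuous (it is $\sqrt{Q}$ times an isometry onto $Y$), so $f = \lseg g,\ldots,g \rseg$ on $X_1$ for a measurable $g : X_1 \to Y$; this disposes of $X_1$. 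It remains to split $f$ over $X'$, where $\rmsplit f(x) < \infty$ for every $x$. For each $v_0 \in \calQ_Q(Y)$ with $\rmsplit v_0 < \infty$, Proposition~\ref{3.3} gives an open neighborhood $B_{v_0} = \{v : \calG_\infty(v,v_0) < (\rmsplit v_0)/2\}$, integers $Q_1(v_0),Q_2(v_0) \in \N_0$ with $Q_1(v_0)+Q_2(v_0)=Q$, and continuous maps $\psi_k^{v_0} : B_{v_0} \to \calQ_{Q_k(v_0)}(Y)$ with $v = \psi_1^{v_0}(v) \oplus \psi_2^{v_0}(v)$ on $B_{v_0}$. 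The sets $B_{v_0}$ cover $\{v : \rmsplit v < \infty\}$, and since $\calQ_Q(Y)$ is separable (Proposition~\ref{211}), hence Lindel\"of, a countable subfamily $\{B_{v_n}\}_{n \geq 1}$ already does. The measurable sets $f^{-1}(B_{v_n}) \cap X'$ then cover $X'$; disjointifying them yields a countable measurable partition $\{A_n\}$ of $X'$ with $A_n \subset f^{-1}(B_{v_n})$, and on $A_n$ the maps $g_1 := \psi_1^{v_n} \circ f$ and $g_2 := \psi_2^{v_n} \circ f$ are measurable with $f = g_1 \oplus g_2$ and $g_k$ valued in $\calQ_{Q_k(v_n)}(Y)$. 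Grouping the $A_n$ according to the value $q = Q_1(v_n) \in \{1,\ldots,Q-1\}$ gives a finite measurable partition $X' = X'_1 \cup \cdots \cup X'_{Q-1}$ together with measurable $g_1 : X'_q \to \calQ_q(Y)$ and $g_2 : X'_q \to \calQ_{Q-q}(Y)$ such that $f = g_1 \oplus g_2$ on $X'_q$. Applying the inductive hypothesis to $g_1$ and $g_2$ (both indices $q$ and $Q-q$ are $< Q$) produces measurable branches whose concatenation gives measurable $f_1,\ldots,f_Q : X'_q \to Y$ with $f = \lseg f_1,\ldots,f_Q \rseg$ on $X'_q$. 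Finally, a map that is measurable on each member of a finite (or countable) measurable partition of $X$ is measurable on $X$, so gluing the branches obtained over $X_1, X'_1, \ldots, X'_{Q-1}$ completes the induction.

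The main obstacle is organizational rather than deep: Proposition~\ref{3.3} only yields local, continuous splittings with an a priori variable split type $(Q_1,Q_2)$, so the work in (B) is to pass from these to a single global measurable splitting — which forces the use of the Lindel\"of property to extract a countable cover, a disjointification, and a refinement of the partition by split type so that the induction applies uniformly. Part (A) and the gluing steps are routine.
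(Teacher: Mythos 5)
Your proof is correct and, for the substantive part (B), follows essentially the same route as the paper: induction on $Q$, separating off the set where $\bsigma \circ f = 1$, covering $\{\rmsplit < \infty\}$ by the local splitting neighborhoods of Proposition~\ref{3.3}, extracting a countable subcover via separability (Lindel\"of), disjointifying, and applying the inductive hypothesis piecewise before gluing. The only divergence is cosmetic: for part (A) the paper computes $f^{-1}(B_{\calG_\infty}(v,r))$ directly as a finite union of finite intersections of $f_i^{-1}(B(y_{\sigma(i)},r))$, whereas you factor $f$ through $Y^Q$ and use continuity of the quotient map together with $\frB_{Y^Q}=\frB_Y\otimes\cdots\otimes\frB_Y$; both are standard and equally valid, and your extra step of regrouping the $A_n$ by split type $q$ is harmless though unnecessary (the paper simply applies the inductive hypothesis on each $A_j$ individually).
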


\begin{proof}
(A) Since $(\calQ_Q(Y),\calG_{\infty})$ is separable (Proposition \ref{211}), each open subset of $\calQ_Q(Y)$ is a finite or countable union of open balls. Thus it suffices to show that $f^{-1}(B_{\calG_{\infty}}(v,r)) \in \frA$ whenever $v \in \calQ_Q(Y)$ and $r > 0$. Writing $ v = \lseg y_1,\ldots,y_Q \rseg$ we simply notice that
\begin{equation*}
\begin{split}
f^{-1}(B_{\calG_{\infty}}(v,r)) & = X \cap \{ x : \calG_{\infty}(f(x),v) < r \} \\
& = X \cap \left\{ x : \min_{\sigma \in S_Q} \max_{i=1,\ldots,Q} d(f_i(x),y_{\sigma(i)}) < r \right\} \\
& = \bigcup_{\sigma \in S_Q} \bigcap_{i=1}^Q f_i^{-1} (B(y_{\sigma(i)},r)) \in \frA \,.
\end{split}
\end{equation*}
\par
(B) The proof is by induction on $Q$. The case $Q=1$ being trivial, we henceforth assume that $Q \geq 2$. We start by letting $F = \calQ_Q(Y) \cap \{ v : \rmcard \rmsupp v = 1 \}$. Notice $F$ is closed, according to Proposition \ref{3.2}, thus $A_0 = f^{-1}(F) \in \frA$. There readily exist identical $(\frA,\frB_Y)$-measurable maps $f^0_1,\ldots,f^0_Q : A_0 \to Y$ such that $f \restriction_{A_0} = \lseg f^0_1,\ldots,f^0_Q \rseg$. We next infer from Proposition \ref{3.3} that to each $v \in \calQ_Q(Y) \setminus F$ there correspond a neighborhood $\calU_v$ of $v$ in $\calQ_Q(Y) \setminus F$, integers $Q^v_1, Q^v_2 \in \N_0$ such that $Q = Q^v_1+Q^v_2$, and continuous maps $\psi_k^v : \calU_v \to \calQ_{Q_k^v}(Y)$, $k=1,2$, such that $\psi_{1}^v \oplus \psi_{2}^v = \rmid_{\calU_v}$. Since $\calQ_Q(Y) \setminus F$ is separable we find a sequence $\{v_j\}$ such that $\calQ_Q(Y) \setminus F = \cup_{j \in \N_0} \calU_{v_j}$. Thus we find a disjointed sequence $\{\calB_j\}$ of Borel subsets of $\calQ_Q(Y)$ such that $\calQ_Q(Y) \setminus F = \cup_{j \in \N_0} \calB_j$ and $\calB_j \subset \calU_{v_j}$ for every $j$. Define $A_j = f^{-1}(\calB_j) \in \frA$, $j \in \N_0$. For each $j \in \N_0$ the induction hypothesis applies to the two multiple-valued functions $\psi^{v_j}_k \circ (f \restriction_{A_j}) : A_j \to \calQ_{Q^{v_j}_k}(Y)$, $k=1,2$, to yield $(\frA,\frB_Y)$-measurable decompositions $\lseg f^j_1,\ldots,f^j_{Q^{v_j}_1} \rseg$ and $\lseg f^j_{1+Q^{v_j}_1},\ldots,f^j_Q \rseg$ (the numberings are chosen arbitrarily). We define $f_i : X \to Y$, $i=1,\ldots,Q$, by letting $f_i \restriction_{A_j} = f^j_i$, $j \in \N_0$. It is now plain that each $f_i$ is $(\frA,\frB_Y)$-measurable and that $f = \lseg f_1,\ldots,f_Q \rseg$.
\end{proof}

\subsection{Lipschitz extensions}

The Lipschitz extension Theorem \ref{243} is due to F.J. Almgren in case $Y$ is finite dimensional (see \cite[1.5]{ALMGREN}), a former version is found in \cite{ALM.86} for a different notion of {\em multiple-valued function}). Here we merely observe that it extends to the case when $Y$ is an arbitrary Banach space (in case $Q=1$ this observation had already been recorded in \cite{JOH.LIN.SCH.86}, the method being due to H. Whitney \cite{WHI.34}). Our exposition is very much inspired by that of \cite{DEL.SPA.11} (see also \cite{LAN.SCH.05} for a comprehensive study of the extension techniques used here). This extension Theorem in case $Y$ is finite dimensional is equivalent to the fact that $\calQ_Q(\Rn)$ is an absolute Lispschitz retract (see Theorem \ref{336}). The latter is proved ``by hand'' in \cite[1.3]{ALMGREN}.
\par
Given a map $f : X \to Y$ between two metric spaces, and $r > 0$, we recall that the {\em oscillation of $f$ at $r$} is defined as
\begin{equation*}
\rmosc(f;r) = \sup \{ d_Y(f(x_1),f(x_2)) : x_1, x_2 \in X \text{ and } d_X(x_1,x_2) \leq r \} \in [0,+\infty] \,.
\end{equation*}
\textbf{In this section $\calQ_Q(Y)$ will be equipped with its metric $\calG_\infty$}.

\begin{Proposition}
\label{241}
Let $Q \geq 2$. Assume that
\begin{enumerate}
\item[(1)] $X$ and $Y$ are metric spaces, $x_0 \in X$, and $\delta = \rmdiam X < \infty$;
\item[(2)] $f : X \to \calQ_Q(Y)$ and $f(x_0) = \lseg y_1(x_0),\ldots,y_Q(x_0) \rseg$;
\item[(3)] There are $i_1,i_2 \in \{1,\ldots,Q\}$ such that
\begin{equation*}
d_Y(y_{i_1}(x_0),y_{i_2}(x_0)) > 3(Q-1) \rmosc(f;\delta) \,.
\end{equation*}
\end{enumerate}
It follows that there are $Q_1,Q_2 \in \N_0$ such that $Q_1+Q_2=Q$, and $f_1,f_2 : X \to \calQ_Q(Y)$ such that $f=f_1 \oplus f_2$ and $\rmosc(f_j;\cdot) \leq \rmosc(f;\cdot)$, $j=1,2$.
\end{Proposition}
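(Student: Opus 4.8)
The plan is to produce, at the single point $x_0$, a partition of the $Q$ branches of $f(x_0)$ into two non-empty clusters separated by more than $3\,\rmosc(f;\delta)$, and then to transport this partition to every $x\in X$ by matching the points of $f(x)$ to those of $f(x_0)$, which is possible because $\calG_\infty(f(x),f(x_0))\le\rmosc(f;\delta)$.

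Write $\omega=\rmosc(f;\delta)$ and note that $\rmosc(f;r)\le\omega$ for every $r>0$ (with equality when $r\ge\delta$, since $\rmdiam X=\delta$). First I would set $d_k=d_Y(y_k(x_0),y_{i_1}(x_0))$, so that $d_{i_1}=0$ and $\max_k d_k\ge d_{i_2}>3(Q-1)\omega$; sorting the $d_k$ nondecreasingly, the $Q-1$ consecutive gaps sum to $\max_k d_k>3(Q-1)\omega$, hence one gap exceeds $3\omega$, and I would let $G_1$ (resp.\ $G_2$) collect the indices lying below (resp.\ above) that gap. Both sets are non-empty ($i_1\in G_1$, and an index achieving $\max_k d_k$ lies in $G_2$), and the triangle inequality applied to the distances from $y_{i_1}(x_0)$ gives $d_Y(y_k(x_0),y_l(x_0))>3\omega$ whenever $k\in G_1$, $l\in G_2$. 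This is the only place where hypothesis~(3), with its factor $3(Q-1)$, is used.

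Next I would call a point $z\in Y$ of \emph{type $j$} ($j\in\{1,2\}$) when it is within $\omega$ of some $y_k(x_0)$ with $k\in G_j$. The separation estimate (via the weaker bound $2\omega<3\omega$) shows that no point is of both types, while every point of $\rmsupp f(x)$, being within $\omega$ of some $y_k(x_0)$, is of one type or the other; so I would define $f_j(x)$ to be the submultiset of $f(x)$ consisting of its type-$j$ points, giving $f=f_1\oplus f_2$. Comparing against a numbering $z_1,\dots,z_Q$ of $f(x)$ with $d_Y(z_k,y_k(x_0))\le\omega$ (such exists since $\calG_\infty(f(x),f(x_0))\le\omega$), and using $2\omega<3\omega$ once more, one sees $z_k$ has type $j$ exactly when $k\in G_j$; hence $f_j$ takes values in $\calQ_{Q_j}(Y)$, where $Q_j=\rmcard G_j$, $Q_1+Q_2=Q$, $Q_1,Q_2\ge1$. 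Finally, to obtain $\rmosc(f_j;r)\le\rmosc(f;r)$, I would fix $x,x'$ with $d_X(x,x')\le r$, choose numberings of $f(x)$ and $f(x')$ matched within $\eta:=\calG_\infty(f(x),f(x'))\le\rmosc(f;r)\le\omega$, and note that if matched points $z_k,z'_k$ had different types, witnessed respectively by $y_i(x_0)$ and $y_{i'}(x_0)$, then $d_Y(y_i(x_0),y_{i'}(x_0))\le\omega+\eta+\omega\le3\omega$, contradicting the separation; so the matching respects types, restricts to a matching between the type-$j$ parts, and yields $\calG_\infty(f_j(x),f_j(x'))\le\eta\le\rmosc(f;r)$.

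The only genuinely delicate point is the constant bookkeeping: one needs a gap \emph{strictly} larger than $3\omega$ at $x_0$, because identifying the decomposition across a \emph{pair} $x,x'$ costs three triangle-inequality hops of length $\le\omega$ (two suffice for mere well-definedness at a single point), and this is precisely matched by the factor $3(Q-1)$ in the hypothesis once the pigeonhole spreads $\max_k d_k$ over the $Q-1$ gaps.
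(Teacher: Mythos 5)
Your proof is correct, and the overall architecture matches the paper's: produce at $x_0$ a two-block partition of the branches with pairwise separation strictly greater than $3\,\rmosc(f;\delta)$, then transport it to every $x$ via optimal matchings, the factor $3$ being accounted for by three triangle-inequality hops $x_0\to x\to x'\to x_0$. The one place where you genuinely diverge from the paper is in how you manufacture that separated partition. The paper introduces the family $\calJ$ of subsets $J\ni i_1$ that are ``clustered'' in the sense $d_Y(y_{j_1}(x_0),y_{j_2}(x_0))\le 3(\rmcard J-1)\rmosc(f;\delta)$, takes $J_1$ maximal in $\calJ$, and shows that maximality forces the complement $J_2$ to be separated from $J_1$ by more than $3\rmosc(f;\delta)$; hypothesis~(3) then guarantees $J_2\neq\emptyset$. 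You instead sort the distances $d_Y(y_k(x_0),y_{i_1}(x_0))$ and use the pigeonhole principle on the $Q-1$ consecutive gaps to locate a gap exceeding $3\rmosc(f;\delta)$, the separation then following from the reverse triangle inequality. Both devices spend the factor $3(Q-1)$ in exactly the same way; your gap argument is a bit more elementary and direct, while the maximal-cluster argument has the small advantage of producing a canonical block. A further cosmetic difference is that you phrase $f_j(x)$ intrinsically as the ``type-$j$'' sub-multiset of $f(x)$, whereas the paper fixes a numbering of $f(x)$ matched to $f(x_0)$ and restricts the index set; as you implicitly check when you verify that each $z_k$ has exactly one type, the two definitions coincide. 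The constant bookkeeping ($\eta\le\omega$, $2\omega+\eta\le 3\omega$) is handled correctly.
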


\begin{proof}
We let $\calJ$ denote the family of all those $J \subset \{1,\ldots,Q\}$ such that $i_1 \in J$ and for every $j_1,j_2 \in J$,
\begin{equation}
\label{utileaussi}
d_Y(y_{j_1}(x_0),y_{j_2}(x_0)) \leq 3 (\rmcard J -1) \rmosc(f;\delta) \,.
\end{equation}
Notice that $\calJ \neq \emptyset$ (because $\{i_1\} \in \calJ$), and let $J_1 \in \calJ$ be maximal with respect to inclusion. Also define $J_2 = \{1,\ldots,Q\} \setminus J_1$, 
so that $J_2 \neq \emptyset$: according to hypothesis (3), $J_2$ contains at least $i_2$. We notice that for every $j_1 \in J_1$ and every $j_2 \in J_2$ one has
\begin{equation}
\label{eq.13}
d_Y(y_{j_1}(x_0) ,y_{j_2}(x_0)) > 3 \rmosc(f;\delta) \,.
\end{equation}
\par
For each $x \in X$ we choose a numbering $f(x) = \lseg y_1(x),\ldots,y_Q(x) \rseg$ such that
\begin{equation*}
\calG_\infty(f(x_0),f(x)) = \max_{i=1,\ldots,Q} d_Y(y_i(x_0),y_i(x)) \,.
\end{equation*}
We let $Q_1 = \rmcard J_1$, $Q_2 = \rmcard Q_2$, and we define $f_j : X \to \calQ_Q(Y)$, $j=1,2$, by the formula $f_j(x) = \lseg y_i(x) : i \in J_j \rseg$, so that $f=f_1 \oplus f_2$.
\par
For each pair $x,x' \in X$ we choose $\sigma_{x,x'} \in S_Q$ such that
\begin{equation*}
\calG_\infty(f(x),f(x')) = \max_{i=1,\ldots,Q} d_Y(y_i(x) , y_{\sigma_{x,x'}(i)}(x')) \,.
\end{equation*}
We now claim that $\sigma_{x,x'}(J_1) = J_1$ and $\sigma_{x,x'}(J_2) = J_2$, and this will readily finish the proof. Assume if possible that there exist $j_1 \in J_1$ and $j_2 \in J_2$ such that $\sigma_{x,x'}(j_1)=j_2$. Thus $d_Y(y_{j_1}(x),y_{j_2}(x')) \leq \calG_\infty(f(x),f(x'))$, and it would follow from Equation \eqref{eq.13} that
\begin{equation*}
\begin{split}
3 \rmosc(f;\delta) & < d_Y(y_{j_1}(x_0) , y_{j_2}(x_0)) \\
& \leq d_Y(y_{j_1}(x_0),y_{j_1}(x)) + d_Y(y_{j_1}(x),y_{j_2}(x')) + d_Y(y_{j_2}(x'),y_{j_2}(x_0)) \\
& \leq \calG_\infty(f(x_0),f(x)) + \calG_\infty(f(x),f(x')) + \calG_\infty(f(x'),f(x_0)) \\
& \leq 3 \rmosc(f;\delta) \,,
\end{split}
\end{equation*}
a contradiction.
\end{proof}

\begin{Proposition}
\label{242}
For each $Q \in \N_0$ there is a constant $\bc_{\theTheorem}(Q) \geq 1$ with the following property. Assume that
\begin{enumerate}
\item[(1)] $X$ and $Y$ are Banach spaces;
\item[(2)] $C \subset X$ is a closed ball;
\item[(3)] $f : (\rmBdry C, \|\cdot\|) \to (\calQ_Q(Y), \calG_\infty)$ is Lipschitz. 
\end{enumerate}
It follows that $f$ admits an extension $\hat{f} : (C, \|\cdot\|) \to (\calQ_Q(Y), \calG_\infty)$ 
such that
\begin{equation*}
\rmLip \hat{f} \leq \bc_{\theTheorem}(Q) \rmLip f \,,
\end{equation*}
and
\begin{equation*}
\max \{ \calG_\infty(\hat{f}(x),v) : x \in C \} \leq (6Q+2) \max \{ \calG_\infty(f(x),v) : x \in \rmBdry C \}
\end{equation*}
for every $v \in \calQ_Q(Y)$.
\end{Proposition}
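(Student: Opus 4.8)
The plan is to argue by induction on $Q$, the inductive step splitting into two cases: either two sheets of $f$ are far apart at some point of $\rmBdry C$, in which case Proposition \ref{241} peels off part of the problem onto fewer sheets; or all sheets of $f$ stay uniformly close, in which case an explicit ``radial dilation towards a point'' produces $\hat f$ directly. After an affine reparametrization of $X$ we may assume $C = \bar B(0,1)$; write $\delta = \rmdiam \rmBdry C$ and $u(x) = x/\|x\|$ for $x \neq 0$, and recall that in any normed space $\|u(x)-u(x')\| \leq 2\|x-x'\|/\max(\|x\|,\|x'\|)$. The base case $Q = 1$ is an instance of the second case below.

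\emph{Case 1.} Suppose there are $x_0 \in \rmBdry C$ and $i_1,i_2$ with $d_Y(y_{i_1}(x_0),y_{i_2}(x_0)) > 3(Q-1)\rmosc(f;\delta)$; necessarily $Q \geq 2$. Proposition \ref{241} then provides $Q_1+Q_2 = Q$, $Q_1,Q_2 \geq 1$, and $f = f_1 \oplus f_2$ with $f_j : \rmBdry C \to \calQ_{Q_j}(Y)$ and $\rmosc(f_j;\cdot) \leq \rmosc(f;\cdot)$; inspecting its proof yields in addition $\calG_\infty(f_j(x),f_j(x')) \leq \calG_\infty(f(x),f(x'))$ for all $x,x'$, that the cluster $J_1$ is internally small, $\rmdiam \rmsupp f_1(x_0) \leq 3(Q_1-1)\rmosc(f;\delta)$ (maximality of $J_1$), and that $J_1,J_2$ are more than $3\rmosc(f;\delta)$ apart at $x_0$ (Equation \eqref{eq.13}). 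Applying the inductive hypothesis to $f_1,f_2$ and setting $\hat f = \hat f_1 \oplus \hat f_2$, the Lipschitz bound is immediate since concatenation is $1$-Lipschitz for $\calG_\infty$ (with the sup-metric on the product): match the two pieces optimally and recombine. For the $L_\infty$-type bound one fixes a competitor $v$, puts $M = \max_{\rmBdry C}\calG_\infty(f(\cdot),v)$, uses an optimal matching of $f(x_0)$ with $v$ to split $v = v_1 \oplus v_2$ along $J_1,J_2$, and feeds $v_1,v_2$ into the inductive $L_\infty$ bounds for $\hat f_1,\hat f_2$.

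\emph{Case 2.} Suppose $\rmdiam \rmsupp f(x) \leq 3(Q-1)\rmosc(f;\delta)$ for every $x \in \rmBdry C$. Fix $x_0 \in \rmBdry C$ and $y_0 \in \rmsupp f(x_0)$, and for $t \in [0,1]$ let $D_t : \calQ_Q(Y) \to \calQ_Q(Y)$ be the dilation towards $y_0$ by factor $t$, $D_t \lseg z_1,\dots,z_Q \rseg = \lseg (1-t)y_0 + t z_i : i = 1,\dots,Q \rseg$; it needs no choice of numbering, $D_1 = \rmid$ and $D_0 \equiv Q\lseg y_0 \rseg$, each $D_t$ is $t$-Lipschitz in its argument for $\calG_\infty$, and $\calG_\infty(D_t v, D_s v) \leq |t-s|\max_i\|z_i - y_0\|$. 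Set $\hat f(x) = D_{\|x\|}(f(u(x)))$ for $x \neq 0$ and $\hat f(0) = Q\lseg y_0 \rseg$; this extends $f$ and is continuous at $0$, since every configuration collapses to $Q\lseg y_0 \rseg$ as $t \to 0$. For the Lipschitz bound, estimate $\calG_\infty(\hat f(x),\hat f(x'))$ through $D_{\|x\|}(f(u(x')))$: the ``change of direction'' term is at most $\|x\|\,\calG_\infty(f(u(x)),f(u(x'))) \leq \|x\|\,\rmLip f\,\|u(x)-u(x')\| \leq 2\rmLip f\,\|x-x'\|$, the factor $\|x\|$ absorbing the $2/\|x\|$ from $u$, and the ``change of base point'' term is at most $|\,\|x\|-\|x'\|\,|\,\max_i\|y_i(u(x'))-y_0\| \leq \|x-x'\|(\rmdiam \rmsupp f(x_0) + \rmosc(f;\delta)) \leq (3Q-2)\rmosc(f;\delta)\,\|x-x'\|$; using $\rmosc(f;\delta) \leq 2\rmLip f$ this gives $\rmLip \hat f \leq (6Q-2)\rmLip f$. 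For the $L_\infty$ bound, given $v$ put $M = \max_{\rmBdry C}\calG_\infty(f(\cdot),v)$ and note the elementary inequality $\rmosc(f;\delta) \leq 2M$ (since $f$ stays within $M$ of the fixed configuration $v$); writing each sheet of $\hat f(x)$ as a convex combination of $y_0$ and a sheet of $f(u(x))$ and matching $f(u(x))$ to $v$ optimally,
\[
\calG_\infty(\hat f(x),v) \leq \rmdiam \rmsupp f(x_0) + \rmosc(f;\delta) + \calG_\infty(f(u(x)),v) \leq 3(Q-1)\cdot 2M + 2M + M = (6Q-3)M \leq (6Q+2)M.
\]

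The step I expect to be the real obstacle is the constant bookkeeping for the $L_\infty$-type inequality in Case 1: a crude estimate inflates $M$ by a fixed factor on passing to each sub-problem, so to keep the resulting constant linear in $Q$ one must use the geometry supplied by Proposition \ref{241} — that $J_1$ is internally small and is far from $J_2$ relative to $\rmosc(f;\delta)$, hence relative to $M$ since $\rmosc(f;\delta) \leq 2M$ — to control the sizes of the competitors $v_1,v_2$ fed to $\hat f_1,\hat f_2$; it may be necessary to split Case 1 further according to the size of $M$, or to analyse the fully unwound construction rather than recursing one level at a time, in order to land precisely on the constant $6Q+2$. The Lipschitz bounds, the concatenation estimate, and the behaviour at the cone point are by comparison routine, once one notices the cancellation of the $1/\|x\|$ factor by the dilation parameter $\|x\|$.
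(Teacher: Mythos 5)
Your two-case architecture --- peel off far-apart sheets with Proposition \ref{241} and recurse, or dilate radially towards a fixed base point when all sheets stay close --- is exactly the paper's, and your Case~2 (with the slightly tighter threshold $3(Q-1)\rmosc$ in place of the paper's $3Q\rmosc$, the convex-combination bookkeeping, and the observation that $\|x\|$ cancels the $1/\max(\|x\|,\|x'\|)$ coming from $u$) is correct and lands inside $6Q+2$. The genuine gap is the $L_\infty$ inequality in Case~1, and you have flagged it yourself. Splitting $v = v_{J_1} \oplus v_{J_2}$ by matching $v$ optimally with $f(x_0)$ does not by itself give $\max_{\xi\in\rmBdry C}\calG_\infty(f_j(\xi),v_{J_j}) \leq M$, because the optimal matching of $f(\xi)$, $\xi \neq x_0$, with $v$ need not respect the partition $J_1,J_2$; without that, the right-hand side fed into the inductive $L_\infty$ bound for $\hat f_j$ cannot be controlled by $M$, and the constant is not linear in $Q$.

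The paper closes this with a further dichotomy on the size of $\calG_\infty(v,f(x_0))$ relative to $\rmosc(\hat f;\cdot)$, with a tuning parameter $K$ ultimately set to $2$. If $\calG_\infty(v,f(x_0)) \geq K\rmosc(\hat f;\cdot)$, a one-line triangle-inequality estimate gives $\calG_\infty(v,\hat f(x)) \leq (1+K^{-1})M$, already within budget. If $\calG_\infty(v,f(x_0)) < K\rmosc(\hat f;\cdot)$, one shows --- using \eqref{utileaussi}, \eqref{eq.13} and a short contradiction argument --- that for every $x \in C$ the optimal pairing of $v$ with $\hat f(x)$ does respect $J_1,J_2$, i.e.\ $\calG_\infty(v,\hat f(x)) = \max_j \calG_\infty(v_{J_j},\hat f_j(x))$ (the paper's Equation \eqref{eq1234}); its specialization to $x \in \rmBdry C$ then gives exactly the missing inequality $\calG_\infty(f_j(\xi),v_{J_j}) \leq \calG_\infty(f(\xi),v)$, after which the induction closes with constant $\max_j(6Q_j+2) \leq 6Q+2$. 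This subcase split on $v$ is the idea your proposal reaches for but does not supply.
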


\begin{proof}
There is no restriction to assume that $C = B(0,R)$, $R > 0$, is a ball centered at the origin.  
Note that it is enough to construct a Lipschitz extension $\hat{f}$ of $f$ on a dense subset of $C$, for example on $C \setminus \{0\}$. 
 The proof is by induction on $Q$, and we start with the case $Q=1$. Choose $x_0 \in \rmBdry C$. We define
\begin{equation*}
\hat{f}(x) = \left(1 - \frac{\|x\|}{R} \right)f(x_0) + \frac{\|x\|}{R} f \left( \frac{R x}{\|x\|} \right) , \,\,x \in C \setminus \{0\} \,.
\end{equation*}
This is readily an extension of $f$ to $B(0,R) \setminus \{0\}$. In order to estimate its Lipschitz constant, we let $x,x' \in B(0,R) \setminus \{0\}$, we put $r=\|x\|$, $r' = \|x'\|$, and we assume $r \leq r'$. We define $x'' = \frac{rx'}{r'}$, such as $\|x\| = \|x''\|$ and we observe that
\begin{equation*}
\begin{split}
\| \hat{f}(x) - \hat{f}(x'') \| & = \left\| \frac{\|x\|}{R} f \left( \frac{Rx}{\|x\|} \right) - \frac{\|x''\|}{R} f \left( \frac{Rx''}{\|x''\|} \right) \right\| \\
& = \frac{r}{R} \left\| f \left( \frac{Rx}{\|x\|} \right) - f \left( \frac{Rx'}{\|x'\|} \right) \right\| \\
& \leq r (\rmLip f) \left\| \frac{x}{\|x\|} - \frac{x'}{\|x'\|} \right\| \\
& \leq 2 (\rmLip f) \|x-x'\| \,,
\end{split}
\end{equation*}
and
\begin{equation*}
\begin{split}
\| \hat{f}(x'') - \hat{f}(x') \| & = \left\| \frac{\|x''\| - \|x'\|}{R} \left(f\left(\frac{Rx'}{\|x'\|}\right) - f(x_0) \right)\right\| \\
& \leq 2 (\rmLip f) \|x-x'\| 
\end{split}
\end{equation*}
Therefore,
\begin{equation*}
\rmLip \hat{f} \leq 4 \rmLip f \,.  
\end{equation*}
Moreover, if $v \in Y$ and $x \in C$, we compute
\begin{equation*}
\begin{split}
\|\hat{f}(x) - v\| & \leq \left(1 - \frac{\|x\|}{R} \right) \|f(x_0) - v\| + \frac{\|x\|}{R} \left\|f\left(\frac{Rx}{\|x\|} \right) - v\right\| \\
& \leq \max_{\xi \in \rmBdry C} \|f(\xi) - v\|.
\end{split}
\end{equation*}
We are now ready to treat the case when $Q > 1$.
\par
{\em
First case. Assume there are $i_1,i_2 \in \{1,\ldots,Q\}$ and $x_0 \in \rmBdry C$ such that
\begin{equation*}
\begin{split}
\| y_{i_1}(x_0) - y_{i_2}(x_0) \| & > 3Q \rmosc (f; 2R) \\
& \geq 3(Q-1) \rmosc(f;2R) \, .
\end{split}
\end{equation*}
where $f(x_0) = \lseg y_1(x_0) , \ldots , y_Q(x_0) \rseg$.
}
We infer from Proposition \ref{241} (applied with $X = \rmBdry C$) that $f$ decomposes into $f=f_1 \oplus f_2$ with $f_j : \rmBdry C \to \calQ_{Q_j}(Y)$ and $\rmLip f_j \leq \rmLip f$, $j=1,2$. The induction hypothesis implies the existence of extensions $\hat{f}_j : C \to \calQ_{Q_j}(Y)$ of $f_j$, such that $\rmLip \hat{f}_j \leq \bc_{\ref{242}}(Q_j) \rmLip f_j$, $j=1,2$. We put $\hat{f} = \hat{f}_1 \oplus \hat{f}_2$ and we notice that
\begin{equation*}
\rmLip \hat{f} \leq \max \{ \bc_{\ref{242}}(Q_1) , \bc_{\ref{242}}(Q_2) \} \rmLip f
\end{equation*}
and
\begin{equation*}
\rmosc (\hat{f};2R) \leq \rmosc (f;2R) \, .
\end{equation*}
Let $K > 0$ a constant to be determined later.
\begin{itemize}
\item \emph{First subcase}. Suppose that
\begin{equation*}
K \rmosc(\hat{f};2R) \leq \calG_\infty(v,f(x_0)).
\end{equation*}
Then, for any $x \in C$, one has
\begin{equation*}
\begin{split}
\calG_\infty(v, \hat{f}(x)) & \leq \calG_\infty(v, f(x_0)) + \calG_\infty(f(x_0), \hat{f}(x)) \\
& \leq \calG_\infty(v, f(x_0)) + \rmosc(\hat{f};2R) \\
& \leq (1 + K^{-1}) \calG_\infty(v, f(x_0)) \\
& \leq (1+K^{-1}) \max_{\xi \in \rmBdry C} \calG_\infty (v, f(\xi)).
\end{split}
\end{equation*}
\item \emph{Second subcase}. Suppose that
\begin{equation*}
K \rmosc (\hat{f},2R) > \calG_\infty(v,f(x_0)).
\end{equation*}
We will use the same notations as in the proof of Proposition \ref{241}. Recall that $f_l(x) = \oplus_{i \in J_l} \lseg y_i(x) \rseg$ for $x \in \rmBdry C$ and $l \in \{1, 2\}$.

We choose a numbering $v = \lseg v_1, \dots, v_Q \rseg$ such that $\calG_\infty(v, f(x_0)) = \max_{1 \leq i \leq Q} \|v_i - y_i(x_0) \|$. We set $v_{J_1} = \oplus_{i \in J_1} \lseg v_i \rseg$ and $v_{J_2} = \oplus_{i \in J_2} \lseg v_i \rseg$.
We claim that for any $x \in C$,
\begin{equation}
\label{eq1234}
\calG_\infty(v, \hat{f}(x)) = \max (\calG_\infty(v_{J_1}, \hat{f}_1(x)), \calG_\infty(v_{J_2}, \hat{f}_2(x))).
\end{equation}
This together with the inductive hypothesis will complete the proof. Suppose if possible that (\ref{eq1234}) is not valid. Switching $J_1$ and $J_2$ if necessary, it follows that there are $j_1 \in J_1$ and $j_2\in J_2$ with $\calG_\infty (v, \hat{f}(x)) = \| v_{j_1} - \hat{y}_{j_2}(x)\|$. Therefore, using (\ref{utileaussi}) and (\ref{eq.13}),
\begin{equation*}
\begin{split}
K \rmosc (\hat{f};2R) & >  \calG_\infty(v, f(x_0))  \\
& \geq  \calG_\infty(v, \hat{f}(x)) - \calG_\infty(\hat{f}(x), f(x_0)) \\
& \geq  \|v_{j_1} - \hat{y}_{j_2}(x)\| - \rmosc (\hat{f};2R) \\
& \geq  \| y_{i_1}(x_0) - y_{i_2}(x_0)\| \\
  & \qquad - \| y_{i_1}(x_0) - y_{j_1}(x_0) \| - \| y_{j_1}(x_0) - v_{j_1}\| \\
 & \qquad - \| y_{i_2}(x_0) - y_{j_2}(x_0) \| - \| y_{j_2}(x_0) - \hat{y}_{j_2}(x) \| \\
  & \qquad - \rmosc (\hat{f};2R)  \\
 & \geq  \| y_{i_1}(x_0) - y_{i_2}(x_0)\| \\
  & \qquad - 3( \rmcard J_1 - 1) \rmosc (\hat{f};2R) - \calG_\infty (v, f(x_0)) \\
  & \qquad - 3 (\rmcard J_2 - 1) \rmosc (\hat{f};2R) - \calG_\infty(f(x_0), \hat{f}(x)) \\
  & \qquad - \rmosc (\hat{f};2R)  \\
 & \geq  \left(3Q - 3(Q-2) - K - 2  \right) \rmosc (\hat{f};2R)
\end{split}
\end{equation*}
If $K = 2$, one gets a contradiction.
\end{itemize}

\par
{\em
Second case. Assume that for every $i_1,i_2 \in \{1,\ldots,Q\}$ and for every $x \in \rmBdry C$ one has
\begin{equation*}
\|y_{i_1}(x) - y_{i_2}(x) \| \leq 3Q \rmosc(f;2R) \leq 6QR \rmLip f \, .
\end{equation*}
where $f(x) = \lseg y_1(x) , \ldots, y_Q(x) \rseg$ is an arbitrary numbering.
}
We pick some $x_0 \in \rmBdry C$ and we define $\hat{y}_i : C \setminus \{0\} \to Y$ by\footnote{Note we don't claim any regularity about the $y_i$ nor the $\hat{y}_i$, not even measurability}
\begin{equation*}
\hat{y}_i(x) = \frac{\|x\|}{R} y_i \left( \frac{Rx}{\|x\|} \right) + \frac{R - \|x\|}{R} y_1(x_0) \,,
\end{equation*}
$x \in C \setminus \{0\}$ and $i=1,\ldots,Q$. We define $\hat{f} : C \setminus \{0\} \to \calQ_Q(Y)$ by $\hat{f}(x) = \lseg \hat{y}_1(x),\ldots,\hat{y}_Q(x) \rseg$, $x \in C \setminus \{0\}$. We first show that $\rmLip ( \hat{f} \restriction \rmBdry B(0,r)) \leq \rmLip f$, $0 < r \leq R$. Indeed given $x,x' \in C$ with $\|x\|=\|x'\|=r$, we define $\tilde{x} = \frac{Rx}{r}$ and $\tilde{x}' = \frac{Rx'}{r}$, and we select $\sigma \in S_Q$ such that
\begin{equation*}
\calG_\infty(f(\tilde{x}),f(\tilde{x}')) = \max_{i=1,\ldots,Q} \|y_i(\tilde{x}) - y_{\sigma(i)}(\tilde{x}') \| \,.
\end{equation*}
We notice that
\begin{equation*}
\begin{split}
\|\hat{y}_i(x) - \hat{y}_{\sigma(i)}(x') \| & = \frac{r}{R} \|y_i(\tilde{x}) - y_{\sigma(i)}(\tilde{x}') \| \\
& \leq \frac{r}{R} (\rmLip f) \|\tilde{x} - \tilde{x}'\| \\
& = (\rmLip f) \|x-x'\| \,.
\end{split}
\end{equation*}
Therefore
\begin{equation*}
\calG_\infty(\hat{f}(x),\hat{f}(x')) \leq \max_{i=1,\ldots,Q} \|\hat{y}_i(x)-\hat{y}_{\sigma(i)}(x')\| \leq (\rmLip f) \|x-x'\| \,.
\end{equation*}
Next, given $x \in \rmBdry C$, we choose $j \in \{1,\ldots,Q\}$ such that $\|y_j(x)-y_1(x_0)\| \leq \calG_\infty(f(x),f(x_0)) \leq (\rmLip f) 2R$. For each $0 < t_1 < t_2 \leq 1$ and $i=1,\ldots,Q$ one has
\begin{equation*}
\begin{split}
\|\hat{y}_i(t_2x)-\hat{y}_i(t_1x)\| & = (t_2-t_1) \|y_i(x)-y_1(x_0)\| \\
& \leq (t_2-t_1) \left( \|y_i(x)-y_j(x)\| + \|y_j(x)-y_1(x_0)\| \right) \\
& \leq (t_2-t_1) \left( 3Q+ 1 \right) 2R \rmLip f \\
& = \|t_2x - t_1 x\| \left(6Q+ 2  \right) (\rmLip f) \,,
\end{split}
\end{equation*}
thus
\begin{equation*}
\calG_\infty(\hat{f}(t_2x),\hat{f}(t_1x)) \leq \left(6Q + 2  \right) (\rmLip f) \|t_2x-t_1x\| \,.
\end{equation*}
We conclude from the triangle inequality that 
for any $x, x' \in C \setminus \{0\}$, $r = \|x\|, r' = \|x'\|$
\begin{equation*}
\begin{split}
\calG_\infty(\hat{f}(x), \hat{f}(x')) & \leq
\calG_\infty\left(\hat{f}(x), \hat{f}\left(\frac{rx'}{r'}\right)\right) + \calG\left(\hat{f}\left(\frac{rx'}{r'}\right), \hat{f}(x')\right)\\
& \leq (\rmLip f) \left(\left\|x - \|x\| \frac{x'}{r'}\right\|  +  \left(6Q\max_{1 \leq k < Q} \bc_{\ref{242}}(k) + 2  \right) \left\| \frac{rx'}{r'} - x' \right\|    \right) \\
& \leq \left(6Q\max_{1 \leq k < Q} \bc_{\ref{242}}(k) + 4  \right) (\rmLip f) \| x - x' \|
\end{split}
\end{equation*}
Regarding the second part of the Proposition, we choose some $v = \lseg v_1, \dots, v_Q \rseg$, ordered such that
$\calG_\infty(f(x_0), v) = \max_{1 \leq i \leq Q} \| y_i(x_0) - v_i\|$. Let $x \in C \setminus 0$ and $\sigma$ such that
\begin{equation*}
\calG_\infty \left(f\left(\frac{Rx}{\|x\|}\right), v\right) = \max_{1 \leq i \leq Q} \left\|y_{\sigma(i)}\left(\frac{Rx}{\|x\|}\right) - v_i\right\| \, .
\end{equation*}
One has
\begin{equation*}
\begin{split}
\calG_\infty(\hat{f}(x), v) & \leq \max_{1 \leq i \leq Q} \left\| \frac{\|x\|}{R} y_{\sigma(i)}\left(\frac{Rx}{\|x\|}\right) + \frac{R - \|x\|}{R} y_1(x_0) - v_i \right\| \\
& = \max_{1 \leq i \leq Q} \left\| \frac{\|x\|}{R} \left(y_{\sigma(i)}\left(\frac{Rx}{\|x\|}\right) - v_i\right) + \frac{R - \|x\|}{R} (y_1(x_0) - v_i) \right\|  \\
& \leq \calG_\infty \left(f\left(\frac{Rx}{\|x\|}\right), v\right) + \max_{1 \leq i \leq Q} \|y_1(x_0) - v_i \| \\
& \leq \calG_\infty \left(f\left(\frac{Rx}{\|x\|}\right), v\right) + \max_{1 \leq i \leq Q} \left(\|y_i(x_0) - v_i \| + \|y_i(x_0) - y_1(x_0)\|\right) \\
& \leq 2 \max_{\xi \in \rmBdry C} \calG_\infty(f(\xi), v) + 3Q \rmosc(f;2R) \, .
\end{split}
\end{equation*}
Note that
\begin{equation*}
\begin{split}
\rmosc(f;2R) & = \max_{\xi_1, \xi_2 \in \rmBdry C} \calG_\infty(f(\xi_1), f(\xi_2)) \\
& \leq \max_{\xi_1, \xi_2 \in \rmBdry C} \left(\calG_\infty(f(\xi_1), v) + \calG_\infty(v, f(\xi_2))\right) \\
& \leq 2 \max_{\xi \in \rmBdry C} \calG_\infty(f(\xi),v) \,.
\end{split}
\end{equation*}
Thus the proof is complete.
\end{proof}

\begin{Theorem}
\label{243}
For every $Q \in \N_0$ and every $m \in \N_0$ there exists a constant $\bc_{\theTheorem}(m,Q) \geq 1$ with the following property. Assume that
\begin{enumerate}
\item[(1)] $X$ is a finite dimensional Banach space with $m = \dim X$, and $A \subset X$ is closed;
\item[(2)] $Y$ is a Banach space;
\item[(3)] $f : A \to \calQ_Q(Y)$ is Lipschitz.
\end{enumerate}
It follows that $f$ admits an extension $\hat{f} : X \to \calQ_Q(Y)$ with
\begin{equation*}
\rmLip \hat{f} \leq \bc_{\theTheorem}(m,Q) \rmLip f \,,
\end{equation*}
and
\begin{equation*}
\sup \{ \calG_\infty(\hat{f}(x),v) : x \in X \} \leq \bc_{\theTheorem}(m,Q) \sup \{ \calG_\infty(f(x),v) : x \in A \}
\end{equation*}
for every $v \in \calQ_Q(Y)$.
\end{Theorem}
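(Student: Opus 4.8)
The plan is to carry out a Whitney decomposition of the open set $\Omega = X \setminus A$, to triangulate it compatibly with that decomposition, and then to extend $f$ across $\Omega$ one skeleton at a time, invoking Proposition \ref{242} on each newly added simplex. We may assume $A \neq \emptyset$ and $A \neq X$, otherwise there is nothing to prove. By John's theorem there is a linear isomorphism of $X$ onto $\R^m$ equipped with its Euclidean norm which is biLipschitz with constant at most $\sqrt m$, so at the cost of a factor depending only on $m$ in the final constant we may assume $X = \R^m$. We then fix the standard dyadic Whitney decomposition $\calW$ of $\Omega$ — a countable family of closed dyadic cubes with pairwise disjoint interiors and union $\Omega$, such that $\rmdiam L \leq \rmdist(L,A) \leq 4\,\rmdiam L$ for every $L \in \calW$ and any two members of $\calW$ that meet have comparable sizes — together with a triangulation $\calT$ of $\Omega$ subordinate to $\calW$, i.e. a genuine simplicial complex each of whose simplices $\sigma$ lies in some $L \in \calW$ with $\rmdiam \sigma$, $\rmdiam L$ and $\rmdist(L,A)$ pairwise comparable, the comparison constants depending only on $m$. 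For each vertex $w$ of $\calT$ we select $a_w \in A$ with $\|w - a_w\| \leq 2\,\rmdist(w,A)$.

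Next we build $\hat f$ by induction on the skeleton. Set $\hat f = f$ on $A$ and $\hat f(w) = f(a_w)$ at each vertex $w$ of $\calT$; the comparability of adjacent Whitney cubes and the relation $\rmdiam L \sim \rmdist(L,A)$ show that $\hat f$ is Lipschitz on $A \cup \calT^{(0)}$, with constant a dimensional multiple of $\rmLip f$. Assuming $\hat f$ has been constructed on $A \cup \calT^{(k)}$ and is Lipschitz there, for each $(k+1)$-simplex $\sigma$ its boundary $\partial\sigma$ lies in $\calT^{(k)}$, so $\hat f|_{\partial\sigma}$ is Lipschitz; identifying $\sigma$ biLipschitzly with a closed Euclidean ball (biLipschitz constant depending only on $m$) and applying Proposition \ref{242}, we obtain an extension of $\hat f|_{\partial\sigma}$ to $\sigma$ with $\rmLip(\hat f|_\sigma)$ a dimensional multiple of $\bc_{\ref{242}}(Q)\,\rmLip(\hat f|_{\partial\sigma})$ and with $\max_{\sigma}\calG_\infty(\hat f,v) \leq (6Q+2)\max_{\partial\sigma}\calG_\infty(\hat f,v)$ for every $v \in \calQ_Q(Y)$. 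Since the simplices form a complex, these extensions agree on common faces, so $\hat f$ is unambiguously defined on $A \cup \calT^{(k+1)}$, and a further use of the Whitney geometry upgrades the per-simplex Lipschitz bounds to a global bound on $A \cup \calT^{(k+1)}$, the constant enlarged by a factor depending only on $m$ and $Q$. After $m$ steps $\calT^{(m)} = \Omega$ and $\hat f$ is defined on all of $X$.

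It then remains to collect the estimates. Telescoping over the $m$ inductive steps gives $\rmLip \hat f \leq \bc_{\ref{243}}(m,Q)\,\rmLip f$. For the second conclusion, iterating $\max_\sigma \calG_\infty(\hat f,v) \leq (6Q+2)\max_{\partial\sigma}\calG_\infty(\hat f,v)$ through the skeleta and using that $\calG_\infty(\hat f(w),v) = \calG_\infty(f(a_w),v) \leq \sup_{\xi \in A}\calG_\infty(f(\xi),v)$ at every vertex $w$, one gets $\sup_{x \in X}\calG_\infty(\hat f(x),v) \leq (6Q+2)^m \sup_{\xi \in A}\calG_\infty(f(\xi),v)$ for every $v$, as required after enlarging $\bc_{\ref{243}}(m,Q)$ if necessary.

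The main obstacle is the passage from the isolated, per-simplex Lipschitz bounds furnished by Proposition \ref{242} to a single global Lipschitz bound for $\hat f$ at each stage (and, at the end, on all of $X$). The remedy is the metric regularity of the Whitney decomposition: for $x,x' \in X$ one argues by cases — if $x$ and $x'$ lie in a common Whitney cube or in two that meet, a chain of boundedly many simplices joins them and the per-simplex bounds add up; if they are far apart, the relation $\rmdiam L \sim \rmdist(L,A)$ forces $\max(\rmdist(x,A),\rmdist(x',A)) \lesssim \|x - x'\|$ with a dimensional constant, whence $\hat f(x)$ and $\hat f(x')$ are close to values $f(a)$, $f(a')$ at points $a,a' \in A$ with $\|a - a'\| \lesssim \|x - x'\|$, and one is reduced to the previous case; the subcase where $x$ or $x'$ lies in $A$ is handled similarly. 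The reduction to the Euclidean model and the choice of a triangulation subordinate to $\calW$ are the technical devices that let Proposition \ref{242} be applied with constants depending only on $m$.
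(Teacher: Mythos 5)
Your proof is correct and follows essentially the same strategy as the paper's: reduce to a model finite-dimensional space, take a Whitney decomposition of $X\setminus A$, and extend skeleton-by-skeleton using Proposition \ref{242}, finishing with a case analysis on the positions of $x,x'$ to get the global Lipschitz bound. The only deviation is local: you pass through a triangulation subordinate to the Whitney cubes (which requires, though you don't spell it out, that the triangulation be chosen compatibly across adjacent cubes of different sizes), whereas the paper works directly with the cube complex and sidesteps the subdivision issue via its notion of ``minimal $k$-faces''; and you normalize $X$ to Euclidean $\ell_2^m$ via John's theorem while the paper normalizes to $\ell_\infty^m$ via boundedness of the Banach--Mazur compactum, both of which are harmless.
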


\begin{proof}
Because they are lipeomorphic, there is no restriction to assume that $X$ and $\ell_\infty^m$ coincide: an isomorphism $T : X \to \ell_\infty^m$ will multiply the constant $\bc_{\theTheorem}(m,Q)$ by a factor $\| T \| \cdot \|T^{-1}\|$ (where $\| \cdot \|$ denotes the operator norm), yet one can always find a $T$ such that $\| T \| \cdot \|T^{-1}\|$ is smaller than a constant depending only on $m$, since the Banach-Mazur compactum of $m$ dimensional spaces is bounded. We consider a partition of $X \setminus A$ into {\em dyadic} semicubes $\{C_j\}_j$ with the following property
\begin{equation*}
\frac{\rmdist(C_j,A)}{2} \leq \rmdiam C_j < \rmdist(C_j,A)
\end{equation*}
for every $j \in \N$. With each $C_j$ and $k=0,\ldots,m$ we associate it $k$-skeleton $\calS_k(C_j)$, i.e. $\calS_m(C_j) = \{\rmClos C_j\}$ and $\calS_k(C_j)$ is the collection of those maximal $k$ dimensional convex subsets of the (relative) boundary of each $F \in \calS_{k+1}(C_j)$. We also set $\calS_k = \cup_{j \in \N} \calS_k(C_j)$. We now define, by upwards induction on $k$, mappings
\begin{equation*}
\hat{f}_k : A \cup \left( \bigcup\calS_k \right) \to \calQ_Q(Y)
\end{equation*}
which coincide with $f$ on $A$ and such that
\begin{equation}
\label{eq.14}
\rmLip \hat{f}_k \restriction \left( F \cap \left( \bigcup \calS_k(C_j)\right) \right) \leq C(k,Q) \rmLip f
\end{equation}
for each $F \in \calS_{k+1}(C_j)$, $j \in \N$, (where $C(k,Q)$ is a constant depending only on $k$ and $Q$). Furthermore, if $k \geq 1$ then $\hat{f}_k$ is an extension of $\hat{f}_{k-1}$.
\par
{\em Definition of $\hat{f}_0$.} With each $x \in A \cup \calS_0$ we associate $\xi_x \in A$ such that $\|x-\xi_x\| = \rmdist(x,A)$, and we put $\hat{f}_0(x) = f(\xi_x)$. For $x \in A$ we obviously have $\hat{f}_0(x)=f(x)$. If $x \in C_j$ then
\begin{equation*}
\|x-\xi_x\| = \rmdist(x,A) \leq \rmdiam C_j + \rmdist(C_j,A) \leq 3 \rmdiam C_j \,.
\end{equation*}
Consequently, if $x,x' \in C_j$ then
\begin{equation*}
\|\xi_x-\xi_{x'}\| \leq \|\xi_x-x\| + \|x-x'\| + \|x'-\xi_{x'}\| \leq 7 \rmdiam C_j = 7 \|x - x'\| \,.
\end{equation*}
Thus
\begin{equation*}
\calG_\infty(\hat{f}_0(x),\hat{f}_0(x')) = \calG_\infty(f(\xi_x),f(\xi_{x'})) \leq 7 (\rmLip f) \|x-x'\| \,.
\end{equation*}
This indeed proves \eqref{eq.14} in case $k=0$.
\par
{\em Definition of $\hat{f}_k$ by induction on $k \geq 1$.} We say a $k$-face $F \in \calS_k$ is {\em minimal} if there is no $k$-face $F' \in \calS_k$ such that $F' \subset F$ and $F' \neq F$. We observe that each $k$-face contains a minimal one, and that two distinct minimal $k$-faces have disjoint (relative) interiors. If $F \in \calS_k$ is a minimal $k$-face then its `` boundary'' $\partial F$ (relative to the $k$ dimensional affine subspace containing it) equals $F \cap \cup \calS_{k-1}(C_j)$ where $C_j$ is so that $F \in \calS_k(C_j)$, hence $\rmLip \hat{f}_{k-1} \restriction \partial F \leq C(k-1,Q) \rmLip f$ according to the induction hypothesis \eqref{eq.14}. Thus Proposition \ref{242} guarantees the existence of an extension $\hat{f}_k$ of $\hat{f}_{k-1}$ from $\partial F$ to $F$ so that $\rmLip (\hat{f}_k \restriction F) \leq \bc_{\ref{242}}(Q) C(k-1,Q) \rmLip f$. This completes the definition of $\hat{f}_k$ to $\cup \calS_k$. By construction $\hat{f}_k$ verifies \eqref{eq.14} for every minimal $k$-face $F \in \calS_k$. Since each $k$-face is the union of (finitely many) minimal $k$-faces all contained in the same $k$ dimensional affine subspace of $X$, it is an easy matter to check that \eqref{eq.14} is also verifies for arbitrary $F \in \calS_k$.

According to Proposition \ref{242}, for $k \geq 1$ and $v \in \calQ_Q(Y)$, one has
\begin{equation*}
\sup_{x \in A \cup (\cup \calS_k)} \calG_\infty(v, \hat{f}_k(x)) \leq 2Q \rmLip \phi_k \rmLip \phi_k^{-1} \sup_{x \in A \cup (\cup \calS_{k-1})} \calG_\infty(v, \hat{f}_{k-1}(x))
\end{equation*}
where $\phi_k$ denote a lipeomorphism from a $k$ ball to a $k$ cube. Moreover, one has easily
\begin{equation*}
\sup_{x \in A \cup (\cup \calS_0)} \calG_\infty(v, \hat{f}_0(x)) = \sup_{x \in A} \calG_\infty(v, f(x)) \, .
\end{equation*}
Those two facts implies that
\begin{equation*}
\sup \{ \calG_\infty(\hat{f}_m(x),v) : x \in X \} \leq \bc_{\theTheorem}(m,Q) \sup \{ \calG_\infty(f(x),v) : x \in A \}
\end{equation*}
if $\bc_{\theTheorem}(m,Q) \geq (2Q)^m \prod_{k=1}^m (\rmLip \phi_k \rmLip \phi_k^{-1})$.
\par
{\em We now check that $\hat{f}_m$ is Lipschitz}. Let $x,x' \in X$ and we define the line segment $[x,x'] = X \cap \{ x+ t(x'-x) : 0 \leq t \leq 1\}$. We distinguish between several cases according to the positions of these points. {\em First case : if $x,x' \in A$,} the clearly $\calG_\infty(\hat{f}_m(x),\hat{f}_m(x')) = \calG_\infty(f(x),f(x')) \leq (\rmLip f)\|x-x'\|$. {\em Second case : $x,x' \in \rmClos C_j$ for some $j \in \N$}. It then follows that $\calG_\infty(\hat{f}_m(x),\hat{f}_m(x')) \leq C(m,Q) (\rmLip f) \|x-x'\|$ according to \eqref{eq.14}. {\em Third case : $[x,x'] \cap A = \emptyset$}. One then checks that $J = \N \cap \{ j : [x,x'] \cap \rmClos C_j \neq \emptyset \}$ is finite and we apply the previous case to conclude that also $\calG_\infty(\hat{f}_m(x),\hat{f}_m(x')) \leq C(m,Q) (\rmLip f) \|x-x'\|$. {\em Fourth case : $x \not \in A$ and $x' \in A$.} We choose $j \in \N$ such that $x \in C_j$ and we choose arbitrarily $x'' \in \calS_0(C_j)$. It follows that
\begin{equation*}
\|x-x''\| \leq \rmdiam C_j \leq \rmdist(C_j,A) \leq \|x-x'\|
\end{equation*}
and
\begin{equation*}
\|x-\xi_{x''}\| \leq \|x-x''\| + \|x''-\xi_{x''}\| \leq \rmdiam C_j + 3 \rmdiam C_j \leq 4 \|x-x'\| \,.
\end{equation*}
Thus
\begin{multline*}
\calG_\infty(\hat{f}_m(x),\hat{f}_m(x'))  \leq \calG_\infty(\hat{f}_m(x),\hat{f}_m(x'')) + \calG_\infty(\hat{f}_0(x''),\hat{f}_0(x')) \\
\leq (\rmLip \hat{f}_m \restriction \rmClos C_j) \|x-x''\| + (\rmLip f) \|\xi_{x''}-x'\| \\ \leq 6 ( C(m,Q) +1) (\rmLip f) \|x-x'\| \,.
\end{multline*}
{\em Fifth case : $[x,x'] \cap A \neq \emptyset$ and either $x$ or $x'$ does not belong to $A$.} We let $a$ (resp. $a'$) denote the point $[x,x'] \cap A$ closest to $x$ (resp. $x'$) and we observe that
\begin{multline*}
\calG_\infty(\hat{f}_m(x),\hat{f}_m(x')) \\ \leq \calG_\infty(\hat{f}_m(x),\hat{f}_m(a)) + \calG_\infty(f(a),f(a')) + \calG_\infty(\hat{f}_m(a'),\hat{f}_m(x'))
\\ \leq 6 ( C(m,Q) +1) (\rmLip f) \|x-a\| \\ + (\rmLip f) \|a-a'\| + 6 ( C(m,Q) +1) (\rmLip f) \|a'-x'\| \\
\leq 6 ( C(m,Q) +1) (\rmLip f) \|x-x'\| \,.
\end{multline*}
\end{proof}

\begin{Question}
Given a pair of Banach spaces $X$ and $Y$ we here denote by $\bc(X,Y,Q)$ the {\em best} constant occurring in Theorem \ref{243} corresponding to these Banach spaces. Thus $\bc(X,Y,Q) \leq \bc_{\ref{243}}(\dim X,Q) < \infty$ in case $X$ is finite dimensional. Kirszbraun's Theorem says that $\bc(\ell_2^m,\ell_2^N,1)=1$ for every $n,N \in \N_0$, thus it follows from Theorem \ref{336} that $\bc(\ell^m_2,\ell_2^n,Q) \leq \rmLip \brho_{n,Q}$ is bounded independently of $m$. Is it true that $\bc(\ell_2^m,\ell_2,Q) < \infty$ for every $Q > 1$? That would be an analog of Kirszbraun's Theorem for multiple-valued functions. On the other hand, it is well-known that $\bc(X,\ell_\infty,1) = 1$ for every $X$. Is it true that $\bc(X,\ell_\infty,Q) < \infty$ for every $Q > 1$ and every finite dimensional Banach space $X$? See also Question \ref{339}.
\end{Question}

\subsection{Differentiability}

The results contained in this section are standard in case $Y = \ell_2^n$ is Euclidean. The notion of {\em (approximate) differentiability} was introduced (under the name {\em (approximate) affine approximatability}) by F.J. Almgren in \cite{ALMGREN}. We call {\em unambiguously differentiable} what Almgren calls {\em strongly affinely approximatable}. ``Intrinsic'' proofs (i.e. avoiding the embedding defined in section \ref{aw-embedding}) of the analog of Rademacher's Theorem have been given in \cite{GOB.06} and \cite{DEL.SPA.11}.
\par
\textbf{In this section $X$ is a finite dimensional Banach space, $m=\rmdim X$, $\lambda$ is a Haar measure on $X$, and $Y$ is a separable Banach space.}
\par
We say that $g : X \to \calQ_Q(Y)$ is {\em affine} (resp. {\em linear}) if there are affine maps $A_1,\ldots,A_Q$ from $X$ to $Y$ (resp. linear maps $L_1,\ldots,L_Q$ from $X$ to $Y$) such that $g = \oplus_{i=1}^Q \lseg A_i \rseg$ (resp. $g = \oplus_{i=1}^Q \lseg L_i \rseg$). Our first task is to observe that the $A_i$'s are uniquely determined by $g$.

\begin{Lemma}
\label{251}
Let $A_1,\ldots,A_Q,A'_1,\ldots,A'_Q$ be affine maps from $X$ to $Y$, $g = \oplus_{i=1}^Q \lseg A_i \rseg$, $g' = \oplus_{i=1}^Q \lseg A'_i \rseg$, and $S \subset X$. If $g(x)=g'(x)$ for every $x \in S$ and $\lambda(S) > 0$, then there exists $\sigma \in S_Q$ such that $A_i = A'_{\sigma(i)}$, $i=1,\ldots,Q$.
\end{Lemma}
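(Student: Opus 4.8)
The plan is to combine two elementary facts: a subset of $X$ of positive Haar measure cannot be contained in a finite union of proper affine subspaces, and an affine map $X\to Y$ whose zero set has positive measure vanishes identically.

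First, for each $\sigma\in S_Q$ set
\[
E_\sigma \;=\; \bigcap_{i=1}^Q \big\{\, x\in X : A_i(x)=A'_{\sigma(i)}(x) \,\big\}.
\]
Since the $A_i$ and $A'_j$ are affine, hence continuous, each $E_\sigma$ is closed, in particular Borel measurable. The hypothesis that $g(x)=g'(x)$ for $x\in S$ says exactly that for every $x\in S$ there is some $\sigma_x\in S_Q$ with $A_i(x)=A'_{\sigma_x(i)}(x)$ for all $i=1,\ldots,Q$; that is, $x\in E_{\sigma_x}$. Hence $S\subseteq\bigcup_{\sigma\in S_Q}E_\sigma$. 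As $S_Q$ is finite and $\lambda(S)>0$, subadditivity of $\lambda$ (or of outer measure, if $S$ is not assumed measurable, using that the $E_\sigma$ are Borel) produces some $\sigma\in S_Q$ with $\lambda(E_\sigma)>0$.

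Fix this $\sigma$. For each $i$ the map $h_i:=A_i-A'_{\sigma(i)}:X\to Y$ is affine, and its zero set contains $E_\sigma$, hence has positive measure and in particular is nonempty; choosing $x_0$ in this zero set and writing $h_i(x)=L_i(x-x_0)$ with $L_i:X\to Y$ linear, the zero set of $h_i$ equals the affine subspace $x_0+\ker L_i$. A proper affine subspace of the finite dimensional space $X$ is $\lambda$-null, so $\ker L_i=X$, i.e. $L_i=0$ and $h_i\equiv 0$. Thus $A_i=A'_{\sigma(i)}$ for every $i=1,\ldots,Q$, as desired.

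There is no real obstacle here; the only mild point of care is the measurability of the relevant sets, which is dispatched by noting that the $E_\sigma$ are closed, and the sole structural input — that an affine subspace of $X$ carrying positive Haar measure must equal $X$ — is a standard consequence of $\lambda$ being, up to a linear change of coordinates, Lebesgue measure on $\R^m$.
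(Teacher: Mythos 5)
Your proof is correct and follows essentially the same route as the paper: partition $S$ by the finitely many permutations $\sigma$, find one $\sigma$ whose agreement set has positive measure, and conclude that set (an affine subspace) must be all of $X$. The only cosmetic difference is that you argue per index $i$ via $h_i = A_i - A'_{\sigma(i)}$, whereas the paper notes directly that the whole agreement set $W_\sigma$ is an affine subspace of positive measure, hence equals $X$.
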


\begin{proof}
For each $\sigma \in S_Q$ we define
\begin{equation*}
W_{\sigma} = X \cap \{ x : A_i(x) = A'_{\sigma(i)}(x) , \, i=1,\ldots,Q \} \,,
\end{equation*}
and we notice that $W_{\sigma}$ is an affine subspace of $X$. If $x \in S$ then $x \in W_{\sigma}$ for some $\sigma \in S_Q$. Thus $S \subset \cup_{\sigma \in S_Q} W_{\sigma}$. Therefore there exists $\sigma$ such that $\lambda(W_{\sigma}) > 0$, hence $W_{\sigma} = X$.
\end{proof}

Let $f,g : X \to \calQ_Q(Y)$ be Borel measurable, and $a \in X$. We say that {\em $f$ and $g$ are approximately tangent at $a$} if for every $\veps > 0$,
\begin{equation*}
\Theta^m(\lambda \hel  \{ x : \calG(f(x),g(x)) > \veps \|x-a\| \} , a) = 0 \,.
\end{equation*}
It is plain that the distance $\calG$ can be replaced by $\calG_1$ or $\calG_{\infty}$ without changing the scope of the definition.

\begin{Proposition}
\label{252}
Let $g,g' : X \to \calQ_Q(Y)$ be affine and approximately tangent at some $a \in X$. It follows that $g=g'$.
\end{Proposition}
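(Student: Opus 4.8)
The plan is to reduce the statement to the case $Q=1$, where it is elementary, by using the Splitting Lemma to decouple clusters of branches. First I would dispose of the trivial observation that we may assume $g = \oplus_{i=1}^Q \lseg A_i \rseg$ and $g' = \oplus_{i=1}^Q \lseg A'_i \rseg$ for affine maps $A_i, A'_i : X \to Y$, and that (by Lemma \ref{251}, whose conclusion we are in fact trying to upgrade) it suffices to produce a single $x$ with $g(x) = g'(x)$ on a set of positive $\lambda$-measure — indeed, once $g(x) = g'(x)$ for $x$ in a set of positive measure, Lemma \ref{251} finishes the job.

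The key step is a localization at the point $a$. Consider the value $v = g(a) \in \calQ_Q(Y)$. If $\rmcard \rmsupp v = 1$, then all the affine maps $A_i$ and (since $g'(a)$ must then also be a single point, forced by approximate tangency at scale zero) all the $A'_i$ agree at $a$; but this is not yet enough, so instead I would work at a slightly perturbed nearby point. More robustly: pick any $a'$ near $a$ at which $v' := g(a')$ has $s := \rmsplit v' < \infty$ split into genuine clusters, OR, if $g$ is constant, handle that case directly (a constant affine $Q$-valued map forces each $A_i$ constant, and approximate tangency then forces $g' \equiv g$ near $a$, again by a measure-positivity argument). In the generic case, by Proposition \ref{3.3} there are integers $Q_1 + Q_2 = Q$ and continuous splitting maps $\psi_1, \psi_2$ defined on a $\calG_\infty$-ball of radius $s/2$ around $v'$. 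Using the Splitting Lemma, on the set $\{x : \calG_\infty(g(x), v') < s/4\} \cap \{x : \calG_\infty(g'(x), v') < s/4\}$ — which has $a'$ in its density-one part once $g, g'$ are approximately tangent there and $g$ is continuous (affine maps are continuous) — both $g$ and $g'$ decompose compatibly, $g = \psi_1 \circ g \oplus \psi_2 \circ g$ and similarly for $g'$, and the metric $\calG$ splits as a sum over the two clusters. Hence the approximate tangency of $g$ and $g'$ at $a'$ passes to each cluster: $\psi_k \circ g$ and $\psi_k \circ g'$ are approximately tangent at $a'$, and each is a $Q_k$-valued affine map with $Q_k < Q$.

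This sets up an induction on $Q$. The base case $Q=1$ is where the real (easy) content sits: if $A, A' : X \to Y$ are affine and $\Theta^m(\lambda \restr \{x : |A(x) - A'(x)| > \veps\|x-a\|\}, a) = 0$ for every $\veps > 0$, then the affine map $A - A'$ vanishes on a set of positive density at $a$, hence on an affine subspace of positive measure, hence on all of $X$, so $A = A'$. The inductive step: by the splitting above, on a neighborhood (in the density sense) of $a'$ each pair $\psi_k \circ g$, $\psi_k \circ g'$ agrees, by induction, as unordered tuples of affine maps — i.e. the multiset of the $A_i$'s in cluster $k$ equals the multiset of the corresponding $A'_i$'s — and reassembling the two clusters gives $g = g'$ as affine $Q$-valued maps on a set of positive measure near $a'$, whence everywhere by Lemma \ref{251}.

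The main obstacle I anticipate is the bookkeeping needed to guarantee that the decomposition point $a'$ can be chosen so that both $g$ and $g'$ land inside the domain of the splitting maps on a set of positive density at $a'$, and that the "constant cluster" case (where $\rmsplit$ is infinite and Proposition \ref{3.3} does not apply) is handled cleanly — one must separately argue that if $g$ restricted to a cluster is a constant $Q_k\lseg y \rseg$ and $g'$ is approximately tangent to it, then $g'$ restricted to that cluster is also $Q_k\lseg y \rseg$ on a set of positive measure, which again comes down to the $Q=1$ argument applied to $A'_i - (\text{const})$. Once these edge cases are corralled, the rest is a routine induction.
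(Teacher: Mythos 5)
There is a genuine gap. In the case where $\rmcard\rmsupp g(a) = 1$ (all the $A_i$ agree at $a$), you propose to move to a nearby point $a'$ where $g(a')$ has a nontrivial split. But the hypothesis only gives approximate tangency of $g$ and $g'$ \emph{at $a$}: nothing guarantees they are approximately tangent at $a'$, so the splitting machinery cannot be invoked there. You correctly flag the sub-case ``$g$ constant,'' but the hard instances are precisely those with $\rmcard\rmsupp g(a) = 1$ and $g$ \emph{not} constant (e.g.\ $g(x) = \lseg x \rseg \oplus \lseg -x \rseg$ at $a=0$). Moreover this is not a corner case of your induction: after one round of splitting at $a$, \emph{every} cluster $g_k = \oplus_{i \in I_k}\lseg A_i \rseg$ satisfies $\rmcard\rmsupp g_k(a) = 1$, so the recursion lands you straight back in the coincident configuration, where essentially all the content of the proposition lives.

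The missing ingredient --- which is also the heart of the paper's proof --- is a homogeneity argument. Once $A_i(a) = A'_j(a)$ for all $i,j$ (forced by tangency plus continuity of $g,g'$ at $a$, just as in your $Q=1$ base case), the quantity $\calG(g(a+h),g'(a+h))$ depends only on the linear parts $L_i, L'_j$ and is positively $1$-homogeneous in $h$; hence the bad set $\{x : \calG(g(x),g'(x)) > \veps\|x-a\|\}$ is a cone with vertex $a$, and a cone with zero density at its vertex is $\lambda$-negligible. Thus $\calG(g,g') \leq \veps\|\cdot-a\|$ a.e., then everywhere by continuity, then identically as $\veps \to 0$. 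With this supplied your plan goes through; two cosmetic corrections then remain: in the generic case split at $a$ itself (since $g,g'$ are affine, hence continuous, and $g(a)=g'(a)$, both land in the domain of $\psi_1,\psi_2$ on an actual neighbourhood of $a$, so no density argument is needed to localise), and note that tangency descends to each cluster because $\calG_2(g_k(x),g'_k(x)) \leq \calG_2(g(x),g'(x))$ whenever the Splitting Lemma forces the optimal matching to respect the clusters. This yields a legitimate inductive alternative; the paper instead treats the split and coincident configurations in one stroke by choosing the optimal permutation $\sigma$ matching $A_i(a)$ to $A'_{\sigma(i)}(a)$ and then scaling $h \mapsto th$, which gives the same cone property of $G_\veps$ without any induction.
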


\begin{proof}
Write $g = \oplus_{i=1}^Q \lseg A_i \rseg$, $g' = \oplus_{i=1}^Q \lseg A'_i \rseg$, where $A_1,\ldots,A_Q,A'_1,\ldots,A'_Q$ are affine from $X$ to $Y$, and $A_i = L_i + b_i$, $A'_i = L'_i +b'_i$, $b_i,b'_i \in Y$ and the $L_i$'s and $L'_i$'s are linear. With $0 < \veps \leq 1$ we associate
\begin{equation*}
G_{\veps}  = X \cap \{ x : \calG_1(g(x),g'(x)) \leq \veps \|x-a\| \}
\end{equation*}
so that $\Theta^m(\lambda \hel G_{\veps} , a ) = 1$ by assumption, because $G_\veps$ is Borel measurable. Define
\begin{equation*}
\eta = \inf \{ \|A_i(a) - A'_j(a) \| : i,j=1,\ldots,Q \text{ and } A_i(a) \neq A'_j(a) \} \in (0, \infty].
\end{equation*}
Suppose $\eta < \infty$, the case $\eta = \infty$ being easier to prove. 
Choose $\delta > 0$ small enough for
\begin{equation*}
\delta ( 1 + 2Q \max \{ \|L_1\|_{\infty} , \ldots, \|L_Q\|_{\infty} , \|L'_1\|_{\infty},\ldots,\|L'_Q\|_{\infty} \} ) < \eta \,.
\end{equation*}
Let $x \in G_{\veps} \cap B(a,\delta)$ and write $x = a+h$. There exists $\sigma \in S_Q$ such that
\begin{equation}
\label{eq.1}
\begin{split}
\veps \|h\| & \geq \calG_1(g(a+h),g'(a+h)) \\
& = \sum_{i=1}^Q \|A_i(a+h) - A'_{\sigma(i)}(a+h) \| \\
& = \sum_{i=1}^Q \| L_i(a) + b_i -L'_{\sigma(i)}(a) - b'_{\sigma(i)} + L_i(h) - L'_{\sigma(i)}(h) \| \\
& = \sum_{i=1}^Q \| A_i(a) - A'_{\sigma(i)}(a) + L_i(h) - L'_{\sigma(i)}(h) \| \\
& \geq \sum_{i=1}^Q \|A_i(a) - A'_{\sigma(i)}(a) \| - \sum_{i=1}^Q \|L_i(h) - L'_{\sigma(i)}(h) \| \,,
\end{split}
\end{equation}
whence
\begin{multline*}
\sum_{i=1}^Q \|A_i(a) - A'_{\sigma(i)}(a) \| \\ \leq \veps \|h\| + 2Q \|h\| \max \{ \|L_1\|_{\infty} , \ldots, \|L_Q\|_{\infty} , \|L'_1\|_{\infty},\ldots,\|L'_Q\|_{\infty} \} < \eta
\end{multline*}
since $\|h\| \leq \delta$. The definition of $\eta$ then implies that $A_i(a) = A'_{\sigma(i)}(a)$ for each $i=1,\ldots,Q$. Multiplying \eqref{eq.1} by $t > 0$ we obtain
\begin{equation*}
\begin{split}
\veps \|th\| & \geq \sum_{i=1}^Q \| L_i(th) - L'_{\sigma(i)}(th) \| \\
& = \sum_{i=1}^Q \|A_i(a+th) - A'_{\sigma(i)}(a+th) \| \\
& \geq \calG_1(g(a+th),g'(a+th)) \,.
\end{split}
\end{equation*}
In other words, we have established that for every $0 < \veps \leq 1$ and every $t > 0$, if $a+h \in G_{\veps} \cap B(a,\delta)$ then $a+th \in G_{\veps}$.
\par
Letting $\veps_k = k^{-1}$, $k \in \N_0$, we choose $0 < r_k < \delta$ such that
\begin{equation*}
\lambda(G_{\veps_k} \cap B(a,r_k)) \geq \left( 1 - \frac{1}{4^k} \right) \lambda(B(a,r_k)) \,.
\end{equation*}
If $h_k : B(a,r_k) \to B(a,1)$ maps $a+h$ to $a+r_k^{-1}h$ then the above paragraph says that $h_k(G_{\veps_k} \cap B(a,r_k)) \subset G_{\veps_k} \cap B(a,1)$. Consequently,
\begin{equation*}
\begin{split}
\lambda(G_{\veps_k} \cap B(a,1)) & \geq \lambda (h_k(G_{\veps_k} \cap B(a,r_k))) \\
& = r_k^{-m} \lambda(G_{\veps_k} \cap B(a,r_k)) \\
& \geq \left( 1 - \frac{1}{4^k} \right) \lambda(B(a,1)) \,. 
\end{split}
\end{equation*}
Summing over $k \in \N_0$ we obtain
\begin{equation*}
\lambda \left( \bigcap_{k \in \N_0} G_{\veps_k} \right) > 0
\end{equation*}
and the conclusion now follows from Lemma \ref{251}.
\end{proof}

\begin{Corollary}
If $f : X \to \calQ_Q(Y)$ is Borel measurable, $g,g' : X \to \calQ_Q(Y)$ are both affine and both approximately tangent to $f$ at $a$, then $g=g'$.
\end{Corollary}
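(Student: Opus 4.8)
The plan is to reduce to Proposition \ref{252} by showing that $g$ and $g'$ are themselves approximately tangent at $a$, and then invoking that proposition directly.

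First I would fix $\veps > 0$. By the triangle inequality for the metric $\calG$ on $\calQ_Q(Y)$, for every $x \in X$ one has
\begin{equation*}
\calG(g(x),g'(x)) \leq \calG(g(x),f(x)) + \calG(f(x),g'(x)) \,,
\end{equation*}
so that
\begin{equation*}
X \cap \{ x : \calG(g(x),g'(x)) > \veps \|x-a\| \} \subset A_\veps \cup A'_\veps \,,
\end{equation*}
where $A_\veps = X \cap \{ x : \calG(f(x),g(x)) > \tfrac{\veps}{2} \|x-a\| \}$ and $A'_\veps = X \cap \{ x : \calG(f(x),g'(x)) > \tfrac{\veps}{2} \|x-a\| \}$. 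Since $f$ is Borel measurable and $g,g'$ are affine, hence continuous and in particular Borel measurable, all three sets above are Borel.

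Next, by hypothesis $\Theta^m(\lambda \hel A_\veps, a) = 0$ and $\Theta^m(\lambda \hel A'_\veps, a) = 0$. Since $\lambda \hel (A_\veps \cup A'_\veps) \leq \lambda \hel A_\veps + \lambda \hel A'_\veps$, the upper density at $a$ of $\lambda \hel (A_\veps \cup A'_\veps)$ also vanishes, and a fortiori $\Theta^m(\lambda \hel \{ x : \calG(g(x),g'(x)) > \veps \|x-a\| \}, a) = 0$. As $\veps > 0$ was arbitrary, $g$ and $g'$ are approximately tangent at $a$.

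Finally, since $g$ and $g'$ are affine and approximately tangent at $a$, Proposition \ref{252} yields $g = g'$. I do not expect any genuine obstacle here: the argument is purely a triangle inequality combined with the subadditivity of the restricted measure, and the only point deserving attention is the Borel measurability of the sets involved, which is ensured by the Borel measurability of $f$ together with the continuity of the affine maps $g$ and $g'$.
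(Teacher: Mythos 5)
Your proposal is correct and follows the same route as the paper: observe via the triangle inequality that $g$ and $g'$ are approximately tangent to each other at $a$, then invoke Proposition \ref{252}. You have merely spelled out the density estimate and measurability check that the paper leaves implicit.
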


\begin{proof}
Observe that $g$ and $g'$ are approximately tangent (to each other) at $a$ and apply Proposition \ref{252}.
\end{proof}

Let $f : X \to \calQ_Q(Y)$ and $a \in X$. We say that $f$ is {\em approximately differentiable} at $a$ if there exists an affine $Q$-valued $g : X \to \calQ_Q(Y)$ which is approximately tangent to $f$ at $a$. According to the above corollary, the existence of such $g$ implies its uniqueness. It will be subsequently denoted as $Af(a)$. Writing $Af(a) = \oplus_{i=1}^Q \lseg A_i \rseg$ we shall see later that $Af(a)(a) = \oplus_{i=1}^Q \lseg A_i(a) \rseg$ equals $f(a)$ in case $f$ is approximately continuous at $a$. Concatenation of the linear parts $L_i = A_i - A_i(0)$ yields $Df(a) = \oplus_{i=1}^Q \lseg L_i \rseg$ which is uniquely determined by $Af(a)$. It may occur (but not too often, as we shall later see) that for some pair of distinct indexes $i$ and $j$ one has $A_i(a)=A_j(a)$, yet $L_i \neq L_j$. We now state a definition to rule this out. We say that $f$ is {\em unambiguously approximately differentiable} at $a$ if $Af(a)$ fulfils the following additional requirement. For every $i,j=1,\ldots,Q$, {\em if} $A_i(a) = A_j(a)$ {\em then} $L_i = L_j$.

\begin{Example}
The affine 2-valued map
\begin{equation*}
g = \R \to \calQ_2(\R) : x \mapsto \lseg x \rseg \oplus \lseg -x \rseg
\end{equation*}
is everywhere (approximately) differentiable, but not unambiguously so at 0.
\end{Example}

The need for unambiguous differentiation appears when stating the Euler-Lagran\-ge equation for minimizing multiple-valued maps with respect to range deformation (so-called ``squash deformation'' by F.J. Almgren), see e.g. \cite[Theorem 2.6(4)]{ALMGREN}.
\par
Recall the function $\bsigma$ defined in Proposition \ref{3.2}.

\begin{Lemma} 
\label{255}
Assume $v \in \calQ_Q(Y)$, put $k = \bsigma(v)$, and let $Q_1,\ldots,Q_k \in \N_0$ and $y_1,\ldots,y_k \in Y$ be such that $v = \oplus_{j=1}^k Q_j \lseg y_j \rseg$ and $Q = \sum_{j=1}^k Q_j$. For every $0 < r < \frac{1}{2} \rmsplit v$ the following holds. Whenever $v' \in \calQ_Q(Y)$ is such that $\calG_\infty(v,v') < r$ and $\bsigma(v')=k$, there are $y'_1,\ldots,y'_k \in Y$ such that $v' = \oplus_{j=1}^k Q_j \lseg y'_j \rseg$, $\|y_j - y'_j \| < r$ for every $j=1,\ldots,k$, and $\calG_(v,v') = \sum_{j=1}^Q Q_j \|y_j - y'_j \|$.
\end{Lemma}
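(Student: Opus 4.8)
The plan is to combine the Splitting Lemma with a pigeonhole argument whose engine is the hypothesis $\bsigma(v')=k$.

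First I would fix matched numberings. Write $\{1,\dots,Q\} = I_1 \sqcup \cdots \sqcup I_k$ with $\rmcard I_j = Q_j$, and take the numbering $v = \lseg z_1,\dots,z_Q \rseg$ with $z_i = y_j$ whenever $i \in I_j$. Since $\calG_\infty(v,v') < r$, choose a numbering $v' = \lseg z'_1,\dots,z'_Q \rseg$ with $\|z_i - z'_i\| < r$ for every $i$. As $\|z_i-z'_i\| < r \leq \tfrac12\rmsplit v$, the Splitting Lemma (whose proof only uses the bound on the chosen numbering) applies to these two numberings and gives $\calG_1(v,v') = \sum_{i=1}^Q \|z_i - z'_i\|$, together with the analogous identities for $\calG_2$ and $\calG_\infty$.

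Next I would bring in $\bsigma(v')=k$. For $j \neq j'$ we have $\|y_j - y_{j'}\| \geq \rmsplit v > 2r$, so the open balls $B(y_1,r),\dots,B(y_k,r)$ are pairwise disjoint. For $i \in I_j$ we have $z'_i \in B(y_j,r)$; hence $\rmsupp v' \subseteq \bigsqcup_{j=1}^k B(y_j,r)$, and since each $I_j$ is nonempty $\rmsupp v'$ meets each of the $k$ balls. Because $\rmcard\rmsupp v' = \bsigma(v') = k$ equals the number of balls, each $B(y_j,r)$ contains exactly one point of $\rmsupp v'$, which I call $y'_j$. Then $z'_i = y'_j$ for every $i \in I_j$, so $v' = \oplus_{j=1}^k Q_j \lseg y'_j \rseg$; the multiplicity of $y'_j$ in $v'$ is precisely $\rmcard I_j = Q_j$ because points of $\rmsupp v'$ lying in distinct balls are distinct. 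Finally $\|y_j - y'_j\| = \|z_i - z'_i\| < r$ for any $i \in I_j$, and $\calG_1(v,v') = \sum_{i=1}^Q \|z_i - z'_i\| = \sum_{j=1}^k Q_j \|y_j - y'_j\|$, which is the asserted formula.

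There is no real obstacle here; the one thing worth highlighting is that the hypothesis $\bsigma(v')=k$ is exactly what validates the pigeonhole step and thereby forces $v'$ to carry the same multiplicities $Q_1,\dots,Q_k$ — dropping it, a single cluster $y_j$ could break up into several nearby points, as the map $\R \to \calQ_2(\R)$, $x \mapsto \lseg x \rseg \oplus \lseg -x \rseg$, illustrates near $0$.
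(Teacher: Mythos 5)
Your proof is correct and follows essentially the same route as the paper's: both fix a matched numbering via the hypothesis $\calG_\infty(v,v')<r$, invoke the Splitting Lemma to compute $\calG_1(v,v')$ termwise, and then use $\bsigma(v')=k$ as a pigeonhole to force $\rmsupp v'$ to sit one point per ball $B(y_j,r)$. If anything, your version is a bit tidier than the paper's, since by choosing the numbering of $v'$ to match that of $v$ at the outset you avoid having to check separately that the assignment $i\mapsto\tau(i)$ used in the paper's proof has fibers of the correct sizes $Q_1,\ldots,Q_k$.
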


\begin{proof}
Let $v'$ be as in the statement and choose a numbering $v' = \lseg z_1,\ldots,z_Q \rseg$. Since $\calG_1(v,v') < r$, it follows that each $z_i$ is $r$ close to some $y_j$. In other words there exists $\tau : \{1,\ldots,Q \} \to \{1,\ldots,k \}$ such that $\|z_i - y_{\tau(i)} \| < r$, $i=1,\ldots,Q$. Thus
\begin{equation*}
\calG_1(v,v') = \sum_{i=1}^Q \|z_i - y_{\tau(i)}\|
\end{equation*}
according to the Splitting Lemma. We now observe that if $i,i' \in \{1,\ldots,Q \}$ are so that $\tau(i) \neq \tau(i')$ then $z_i \neq z_{i'}$. Indeed the converse would yield
\begin{equation*}
\|y_{\tau(i)} - y_{\tau(i')} \| \leq \|y_{\tau(i)} - z_i \| + \| z_{i'} - y_{\tau(i')} \| \leq 2r < \rmsplit v\,,
\end{equation*}
a contradiction. Since also $\bsigma(v')=k$ we infer that $\tau(i)=\tau(i')$ implies $z_i=z_{i'}$. The proof is complete.
\end{proof}

We now give a criterion implying unambiguous approximate differentiability. Since $\bsigma \circ f$ takes values in $\Z^+$, it is approximately continuous at a point if and only if it is approximately constant at that point.

\begin{Proposition}
\label{256}
Let $f : X \to \calQ_Q(Y)$ be Borel measurable, and $a \in X$. Assume that
\begin{enumerate}
\item[(A)] $f$ is approximately continuous at $a$;
\item[(B)] $\bsigma \circ f$ is approximately constant at $a$;
\item[(C)] $f$ is approximately differentiable at $a$.
\end{enumerate}
It follows that $f$ is unambiguously approximately differentiable at $a$, and that $Af(a)=f(a)$.
\end{Proposition}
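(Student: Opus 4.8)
The plan is to treat the two conclusions separately, the equality $Af(a)(a)=f(a)$ being the easy half and the unambiguity being the substantive one. Throughout, write $g:=Af(a)=\oplus_{i=1}^Q\lseg A_i\rseg$ with linear parts $L_i=A_i-A_i(0)$, set $k=\bsigma(f(a))=\rmcard\rmsupp f(a)$, and fix a numbering $f(a)=\oplus_{l=1}^k Q_l\lseg y_l\rseg$ with the $y_l$ pairwise distinct.

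I would first establish $Af(a)(a)=f(a)$ using only hypotheses (A) and (C). The affine map $g$ is genuinely continuous, so $\calG_\infty(g(x),g(a))\to0$ as $x\to a$; by (A), the set $\{x:\calG_\infty(f(x),f(a))<\veps\}$ has density $1$ at $a$ for each $\veps>0$; and approximate tangency from (C) gives that $\{x:\calG_\infty(f(x),g(x))\le\veps\|x-a\|\}$ has density $1$ at $a$. Picking $x\ne a$ in the intersection of these two density-$1$ sets and a sufficiently small ball about $a$, the triangle inequality bounds $\calG_\infty(f(a),g(a))$ by $\calG_\infty(f(a),f(x))+\calG_\infty(f(x),g(x))+\calG_\infty(g(x),g(a))$, which is arbitrarily small; hence $g(a)=f(a)$. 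Consequently $\{A_i(a)\}_{i=1}^Q$, counted with multiplicity, realises $\oplus_{l=1}^k Q_l\lseg y_l\rseg$; let $I_l=\{i:A_i(a)=y_l\}$, so $\rmcard I_l=Q_l$ and $\{1,\dots,Q\}=\sqcup_l I_l$. Unambiguity is then exactly the statement that $L_i=L_j$ whenever $i,j\in I_l$ for some $l$, equivalently that the integer $N:=\sum_{l=1}^k\rmcard\{L_i:i\in I_l\}$, which is always $\ge k$, equals $k$.

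The heart of the argument is to prove $N=k$, and here (B) enters: by (A) and (B) (and the remark preceding the statement) the level set $E:=\{x:\bsigma(f(x))=k\}$ has density $1$ at $a$. The idea is that for most $x$ near $a$ the affine model $g(x)$ genuinely has exactly $N$ distinct points, with mutual splitting of order $\|x-a\|$, so that the approximate tangency $\calG_\infty(f(x),g(x))\le\veps\|x-a\|$ with $\veps$ small places $f(x)$ inside the half-splitting ball of $g(x)$, whence Proposition \ref{3.2} (lower semicontinuity of $\bsigma$) gives $\bsigma(f(x))\ge\bsigma(g(x))=N$; against $\bsigma(f(x))=k$ this forces $N\le k$. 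To make ``of order $\|x-a\|$'' honest I would introduce, for small $c>0$, the exceptional set $D_c$ of those $x$ with $|(L_i-L_j)(x-a)|<c\|x-a\|$ for some $i,j$ lying in a common $I_l$ with $L_i\ne L_j$ (if no such pair exists then $N=k$ already and there is nothing to prove). Being a finite union of cone-neighbourhoods of hyperplanes through $a$, $D_c$ has density at $a$ tending to $0$ as $c\to0^+$; fix $c$ with $\Theta^m(\lambda\hel D_c,a)<1$. For $x\notin D_c$ close enough to $a$ one checks directly that distances between points of $g(x)$ in distinct clusters exceed $\rmsplit f(a)-O(\|x-a\|)>0$, while distances between distinct points within a cluster are exactly $|(L_i-L_j)(x-a)|\ge c\|x-a\|$; hence $\rmsplit g(x)\ge c\|x-a\|$ and $\bsigma(g(x))=N$ exactly. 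Then, taking $\veps=c/4$ and choosing $x$ in the intersection of $E$, the complement of $D_c$, $\{x:\calG_\infty(f(x),g(x))\le\tfrac c4\|x-a\|\}$, and a small ball about $a$ — a set of positive density at $a$, hence meeting every punctured neighbourhood of $a$ — one gets $\calG_\infty(f(x),g(x))<\tfrac12\rmsplit g(x)$, so Proposition \ref{3.2} yields $k=\bsigma(f(x))\ge\bsigma(g(x))=N$; with $N\ge k$ this gives $N=k$, completing the proof. (Lemma \ref{255} could alternatively be used to organise the local bookkeeping, but is not essential.)

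I expect the main obstacle to be precisely this ``slab'' estimate: one must control $\rmsplit g(x)$ and the exact value $\bsigma(g(x))$ simultaneously on a set of $x$ of nearly full density, which requires choosing $c$ (depending on the $L_i$) and then $\veps$ in the right order, and one must verify that the set where $\bsigma(g(x))<N$ is genuinely negligible — it is contained in the union of the kernels of the $L_i-L_j$, hence in $D_c$. Everything else reduces to the triangle inequality and density arithmetic.
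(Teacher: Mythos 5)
Your argument is correct and takes a genuinely different route from the paper's. The paper splits $f(x)$ for $x$ in a good set $G_\veps$ into $k$ branches via Lemma~\ref{255}, shows the cluster indices $I_{x,j}$ are locally constant, deduces $\|A_i(x)-A_{i'}(x)\|\leq 2\veps\|x-a\|$ for $i,i'$ in the same cluster, and then invokes the uniqueness result Proposition~\ref{252} to get $A_i=A_{i'}$. You instead argue by contradiction through the lower semicontinuity of $\bsigma$ (Proposition~\ref{3.2}): if the number $N=\sum_l\rmcard\{L_i:i\in I_l\}$ of distinct affine branches of $g=Af(a)$ exceeded $k$, then away from the small-density exceptional cone $D_c$ the affine model satisfies $\bsigma(g(x))=N$ with $\rmsplit g(x)\geq c\|x-a\|$, so approximate tangency ($\calG_\infty(f(x),g(x))\leq\frac c4\|x-a\|<\frac12\rmsplit g(x)$) forces $\bsigma(f(x))\geq N$ on a set of positive lower density, contradicting hypothesis~(B). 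You also give a cleaner, two-hypothesis proof of $Af(a)(a)=f(a)$.

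What each approach buys: yours dispenses entirely with Lemma~\ref{255} and Proposition~\ref{252}, replacing the combinatorial bookkeeping of the $I_{x,j}$ with a single counting argument plus an elementary cone-density estimate; it isolates clearly that the equality $g(a)=f(a)$ needs only (A) and (C), whereas (B) is used exactly once to control $N$. The paper's proof, by contrast, is constructive: it exhibits the local decomposition of $f$ into branches and the exact cluster structure near $a$, a description that is reused downstream (e.g.\ in the Rademacher-type Theorem~\ref{258}). One small slip of notation: in defining $D_c$ you write $|(L_i-L_j)(x-a)|$, but since $L_i-L_j$ maps into $Y$ this should of course be $\|(L_i-L_j)(x-a)\|_Y$; this is cosmetic and does not affect the density estimate, because $\ker(L_i-L_j)$ is a proper linear subspace of the finite-dimensional $X$, so the cone $\{h:\|(L_i-L_j)(h)\|<c\|h\|\}$ is scale-invariant and its density at the origin tends to $0$ as $c\to0^+$, as you claim.
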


\begin{proof}
Write $Af(a) = \oplus_{i=1}^Q \lseg A_i \rseg$, and define $\alpha = \max \{ \|A_1\|_{\infty},\ldots,\|A_Q\|_{\infty} \}$. Put $k = \bsigma(f(a))$.
 There exist $f_1(a), \ldots, f_k(a) \in Y$ and positive integers $Q_1, \ldots, Q_k$ with $\sum_{j=1}^k Q_j = Q$ such that
 $$ f(a) = \oplus_{j=1}^k Q_j \lseg f_j(a)\rseg.$$
 Let $0 < r_0 < \frac{1}{2} \rmsplit f(a)$ so that Lemma \ref{255} applies with any $0 < r \leq r_0$. For each $\veps > 0$ define
\begin{multline*}
G_{\veps} = X \cap \{ x : \calG_1(f(x),f(a)) \leq \veps \text{ and } \bsigma(f(x))=k \\ \text{ and } \calG_1(f(x),Af(a)(x)) \leq \veps \|x-a\| \} \,.
\end{multline*}
Given $\eta > 0$ there exists $r_1(\veps,\eta) > 0$ such that
\begin{equation*}
\lambda (G_{\veps} \cap B(a,r)) \geq (1-\eta) \lambda(B(a,r))
\end{equation*}
whenever $0 < r \leq r_1(\veps,\eta)$. From now on we shall further assume that
\begin{equation*}
\veps < \min \left\{ \frac{1}{8} \rmsplit f(a) , 1 \right\}
\end{equation*}
and
\begin{equation*}
r \leq \min \left\{ 1 , r_0 , r_1(\veps,\eta) , \frac{1}{4 \alpha} \rmsplit f(a) \right\} \,.
\end{equation*}
\par
For each $x \in G_{\veps} \cap B(a,r)$ there are $f_1(x),\ldots,f_k(x) \in Y$ such that
\begin{equation*}
f(x) = \oplus_{j=1}^k Q_j \lseg f_j(x) \rseg
\end{equation*}
and
\begin{equation}
\label{eq.2}
\calG_1(f(x),f(a)) = \sum_{j=1}^k Q_j \|f_j(x)-f_j(a)\| \leq \veps
\end{equation}
according to Lemma \ref{255}. Associated with such $x$, there are also partitions $I_{x,1},\ldots,I_{x,k}$ of $\{1,\ldots,Q\}$ such that
\begin{equation}
\label{eq.3}
\calG_1(f(x),Af(a)(x)) = \sum_{j=1}^k \sum_{i \in I_{x,j}} \|f_j(x)-A_i(x)\| \leq \veps \|x-a\| \,.
\end{equation}
In view of \eqref{eq.2} there also holds
\begin{equation*}
\sum_{j=1}^k \sum_{i \in I_{x,j}} \|f_j(a) - A_i(x)\| \leq \veps (1+\|x-a\|) \leq 2 \veps \,.
\end{equation*}
This already implies that $f(a)=Af(a)(a)$. Now let $x,x' \in G_{\veps} \cap B(a,r)$ and $j,j' \in \{1,\ldots,k\}$. If $i \in I_{x,j} \cap I_{x',j'}$ then
\begin{equation*}
\begin{split}
\|f_j(a)-f_{j'}(a) \| & \leq \|f_j(a) - A_i(x)\| + \|A_i(x)-A_i(x')\| + \|A_i(x')-f_{j'}(a)\|\\
& \leq 4 \veps +  \alpha \|x-x'\| \\
& < \rmsplit f(a)
\end{split}
\end{equation*}
according to our choice of $\veps$ and $r$, thus $j=j'$. This in turn readily implies that $I_{x,j} = I_{x',j} =: I_j$, $j=1,\ldots,k$. It follows from \eqref{eq.3} above that if $x \in G_{\veps} \cap B(a,r)$ and $i,i' \in I_j$, $j=1,\ldots,k$, then
\begin{equation*}
\begin{split}
\|A_i(x) - A_{i'}(x)\| & \leq \|A_i(x) - f_j(x)\| + \|f_j(x)-A_{i'}(x)\| \\
& \leq 2\veps \|x-a\| \,.
\end{split}
\end{equation*}
Since $\eta > 0$ and $\veps > 0$ are arbitrarily small we see that $A_i$ and $A_{i'}$ are approximately tangent at $a$. Thus $A_i = A_{i'}$ according to Proposition \ref{252} applied with $Q=1$. Finally if $i \in I_j$, $i' \in I_{j'}$, and $j\neq j'$ then $A_i(a) = f_j(a) \neq f_{j'}(a) = A_{i'}(a)$. The proof is complete.
\end{proof}

\begin{Example}
Consider
\begin{equation*}
f : \R \to \calQ_2(\R) : x \mapsto \begin{cases}
2 \lseg 0 \rseg & \text{ if } x < 0 \\
\lseg x^2 \rseg \oplus \lseg -x^2 \rseg & \text{ if } x \geq 0 \,.
\end{cases}
\end{equation*}
One readily checks that $f$ is (approximately) continuous at 0 and unambiguously (approximately) differentiable at 0, yet $\bsigma \circ f$ is not approximately constant at 0.
\end{Example}

We are ready to state and prove a useful generalization of Rademacher's Theorem. We recall that $X$ is a finite dimensional Banach space, and $Y$ a Banach space. In case $f : X \to \calQ_Q(Y)$ is approximately differentiable at $a \in X$ we let $Af(a) = \oplus_{i=1}^Q \lseg A_i \rseg$ and we define $L_i = A_i - A_i(0)$, $i=1,\ldots,Q$, the {\em linear part} of the affine approximation. We introduce the new notation
\begin{equation*}
Df(a) = \oplus_{i=1}^Q \lseg L_i \rseg \in \calQ_Q(\rmHom(X,Y))
\end{equation*}
where $\rmHom(X,Y)$ denotes the space of linear operators $X \to Y$ (these are automatically continuous). Letting the latter be equipped with some norm $\vvvert\cdot\vvvert$ we let
\begin{equation*}
\lno Df(a) \rno = \calG_2(Df(a),Q\lseg 0 \rseg) = \sqrt{\sum_{i=1}^Q \vvvert L_i \vvvert^2} \,.
\end{equation*}

\begin{Theorem}
\label{258}
Let $f : X \to \calQ_Q(Y)$ be Lipschitz continuous and assume that $Y$ has the Radon-Nikod\'ym property. It follows that
\begin{enumerate}
\item[(A)] For $\lambda$ almost every $a \in X$, $f$ is unambiguously approximately differentiable at $a$, and $Af(a)(a)=f(a)$;
\item[(B)] The map $X \to \calQ_Q(\rmHom(X,Y)) : x \mapsto Df(x)$ is $(\frB_X,\frB_{\calQ_Q(\rmHom(X,Y))}) $ measurable;
\item[(C)] If $f$ is approximately differentiable at $a \in X$ then it is differentiable at $a$ in the sense that
\begin{equation*}
\lim_{x \to a} \frac{\calG(f(x),Af(a)(x))}{\|x-a\|}=0 \,,
\end{equation*}
and $\lno Df(a) \rno \leq \sqrt{Q} \rmLip f$;
\item[(D)] For every injective Lipschitzian curve $\gamma : [0,1] \to X$ such that $\|\gamma'(t)\|=1$ and $f$ is approximately differentiable at $\gamma(t)$ for $\calL^1$ almost every $0 \leq t \leq 1$, one has
\begin{equation*}
\calG_2(\gamma(1),\gamma(0)) \leq \int_{\rmim \gamma} \lno Df(x) \rno d\calH^1(x) \,.
\end{equation*}
\end{enumerate}
\end{Theorem}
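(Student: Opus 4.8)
The main content of (A) is approximate differentiability $\lambda$-almost everywhere; granting that, (A) follows from Proposition \ref{256}, since $f$ is Lipschitz (hence approximately continuous everywhere) while $\bsigma\circ f$ is lower semicontinuous (Proposition \ref{3.2}) and integer valued, hence approximately constant $\lambda$-a.e. The plan for approximate differentiability is to stratify $X$ by the multiplicity function and reduce, on each stratum, to the classical Rademacher theorem for single valued Lipschitz maps into a Banach space with the Radon-Nikod\'ym property.

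Let $E_k=(\bsigma\circ f)^{-1}(\{k\})$, a Borel set, so $X=\bigcup_{k=1}^Q E_k$. Fix $k$, a density point $a$ of $E_k$, and $\rho>0$ small enough (depending on $\rmLip f$ and $\rmsplit f(a)$; any $\rho$ if $k=1$). The crucial observation is that $f$ \emph{restricted to $E_k\cap B(a,\rho)$} decomposes into Lipschitz branches. Indeed, Lemma \ref{255} yields, for each $x\in E_k\cap B(a,\rho)$, a representation $f(x)=\oplus_{j=1}^k Q_j\lseg f_j(x)\rseg$ with each $f_j(x)$ close to $f_j(a)$ and the multiplicities $Q_j$ independent of $x$; and for $x,x'\in E_k\cap B(a,\rho)$ the optimal matching realizing $\calG_\infty(f(x),f(x'))$ cannot pair $f_j(x)$ with $f_{j'}(x')$ when $j\neq j'$ (cross-branch distances exceed $\max_j\|f_j(x)-f_j(x')\|$), so the Splitting Lemma gives $\|f_j(x)-f_j(x')\|\leq\calG_\infty(f(x),f(x'))\leq(\rmLip f)\|x-x'\|$. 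Thus each $f_j$ is Lipschitz on $E_k\cap B(a,\rho)$; extend it to a Lipschitz $\tilde f_j\colon X\to Y$ (Theorem \ref{243} with $Q=1$, after passing to the closure). By Rademacher's theorem for $Y$-valued Lipschitz maps---valid because $Y$ has the Radon-Nikod\'ym property---each $\tilde f_j$ is differentiable at $\lambda$-a.e.\ point of $X$; write $A\tilde f_j(x_0)$ for its affine approximation at $x_0$. At a point $x_0\in E_k\cap B(a,\rho)$ that is both a density point of $E_k$ and a differentiability point of every $\tilde f_j$, the affine map $g=\oplus_j Q_j\lseg A\tilde f_j(x_0)\rseg$ is approximately tangent to $f$ at $x_0$: for $x\in E_k$ near $x_0$ one has $f(x)=\oplus_j Q_j\lseg\tilde f_j(x)\rseg$, whence
\begin{equation*}
\calG(f(x),g(x))\leq\Big(\sum_{j=1}^k Q_j\,\|\tilde f_j(x)-A\tilde f_j(x_0)(x)\|^2\Big)^{1/2}=o(\|x-x_0\|),
\end{equation*}
so $\{x:\calG(f(x),g(x))>\veps\|x-x_0\|\}$ is, near $x_0$, contained in $X\setminus E_k$, a set of density $0$ at $x_0$, and therefore $\Theta^m(\lambda\hel\{x:\calG(f(x),g(x))>\veps\|x-x_0\|\},x_0)=0$. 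Covering each $E_k$ up to a $\lambda$-null set by countably many such balls (stratifying $E_k$ by the size of $\rmsplit\circ f$, which is Borel on $E_k$, so that the radii can be taken uniform on each piece) shows that $f$ is approximately differentiable $\lambda$-a.e. \emph{This branch-splitting step is the main obstacle}: it is there that Lemma \ref{255} and the Splitting Lemma must be combined to keep the selection globally consistent on a stratum, and it is this that turns the multiple valued assertion into the single valued Rademacher theorem, through which the Radon-Nikod\'ym hypothesis is used.

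For (B): on each good piece $Df=\oplus_j Q_j\lseg D\tilde f_j\rseg$ $\lambda$-a.e., and since $x\mapsto D\tilde f_j(x)$ is Borel (an a.e.\ pointwise limit of continuous difference quotients) and concatenation is continuous (Section \ref{section2.2}), $x\mapsto Df(x)$ is Borel on the (Borel) set of approximate differentiability points; extend by $Q\lseg 0\rseg$ on the complementary null set. For (C): if $g=Af(a)$ and the genuine limit failed, there would be $x_n\to a$ with $\calG(f(x_n),g(x_n))\geq\veps\|x_n-a\|$, and Lipschitz continuity of $f$ and $g$ would force $\calG(f(\cdot),g)>\tfrac{\veps}{4}\|\cdot-a\|$ on a ball of radius proportional to $\|x_n-a\|$ about $x_n$, contradicting $\Theta^m(\lambda\hel\{\calG(f(\cdot),g)>\tfrac{\veps}{4}\|\cdot-a\|\},a)=0$. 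For the norm bound, write $g=\oplus_i\lseg A_i\rseg$ with linear parts $L_i$; since $g(a)=f(a)$, a Splitting Lemma computation gives $\calG_2(g(a+th),g(a))=t\big(\sum_i\|L_i(h)\|^2\big)^{1/2}$ for all small $t>0$ and every unit vector $h$, and comparing this with the Lipschitz bound $\calG_2(f(a+th),f(a))\leq t\,\rmLip f$ (via genuine differentiability and $g(a)=f(a)$) forces each $L_i$ to have operator norm $\leq\rmLip f$, so $\lno Df(a)\rno\leq\sqrt Q\,\rmLip f$.

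For (D): put $h=f\circ\gamma\colon[0,1]\to\calQ_Q(Y)$, a Lipschitz curve. At $\calL^1$-a.e.\ $t$ where $\gamma$ is differentiable and $f$ is approximately (hence, by (C), genuinely) differentiable at $\gamma(t)$, the chain rule together with $Af(\gamma(t))(\gamma(t))=f(\gamma(t))$ (valid at every differentiability point, as $f$ and the affine approximation are continuous) and the Splitting Lemma computation of (C) show that the metric derivative $|h'|(t)=\lim_{s\to0}\calG_2(h(t+s),h(t))/|s|$ exists and equals $\big(\sum_i\|L_i(\gamma'(t))\|^2\big)^{1/2}\leq\lno Df(\gamma(t))\rno$, using $\|\gamma'(t)\|=1$. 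The real valued Lipschitz function $t\mapsto\calG_2(h(t),h(0))$ is absolutely continuous with $\tfrac{d}{dt}\calG_2(h(t),h(0))\leq|h'|(t)$ a.e.; integrating and using that $\gamma$ is injective with unit speed, so $\int_0^1\varphi(\gamma(t))\,dt=\int_{\rmim\gamma}\varphi\,d\calH^1$, gives $\calG_2(h(1),h(0))\leq\int_0^1|h'|(t)\,dt\leq\int_{\rmim\gamma}\lno Df(x)\rno\,d\calH^1(x)$.
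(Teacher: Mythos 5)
Your proof is correct and follows essentially the same route as the paper's: you stratify by the multiplicity function $\bsigma\circ f$, use Lemma \ref{255} together with the Splitting Lemma to decompose $f$ locally into single-valued Lipschitz branches, apply the classical Rademacher theorem for $Y$-valued Lipschitz maps (which is where the Radon--Nikod\'ym hypothesis enters), and invoke Proposition \ref{256} for unambiguity; likewise (B) via Proposition \ref{232}(A) and Lindel\"of, and (C), (D) via the same Splitting Lemma computation and the auxiliary scalar function $t\mapsto\calG_2(f(\gamma(t)),f(\gamma(0)))$. The only departures are cosmetic — you make the covering/selection step and the density argument in (C) more explicit, and phrase (D) through the metric derivative of $f\circ\gamma$ — but these are elaborations of the same argument, not a different one.
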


\begin{proof}
For each $k = 1,\ldots,Q$ define
\begin{equation*}
B_k = X \cap \{ x : \bsigma(f(x)) = k \} \,.
\end{equation*}
Notice $B_k$ is Borel since $\bsigma$ is Borel measurable according to Proposition \ref{3.2}. Fix $k$ and let $a \in B_k$. Put $\eta_a = \rmsplit f(a)$. Choose $0 < r < \frac{1}{2} \eta_a$ small enough for $\calG_1(f(x),f(a)) < \frac{1}{8} \eta_a$ whenever $x \in B(a,r)$. It follows from Lemma \ref{255} that 
there exist positive integers $Q_1, \ldots, Q_k$ such that $\sum_{j=1}^k Q_j = Q$ and each $f(x)$, $x \in B(a,r) \cap B_k$, can be decomposed as
\begin{equation*}
f(x) = \oplus_{j=1}^k Q_j \lseg f_j(x) \rseg
\end{equation*}
in such a way that $\|f_j(a) - f_j(x) \| < r$, $j=1,\ldots,k$, and
\begin{equation*}
\calG_1(f(x),f(a)) = \sum_{j=1}^k Q_j \|f_j(a) - f_j(x) \| \,.
\end{equation*}
In particular, for $j \neq j'$, we infer that
\begin{equation*}
\begin{split}
\|f_j(x)-f_{j'}(x) \| & \geq  \|f_j(a)-f_{j'}(a)\| - \| f_j(x)-f_j(a)\| - \|f_{j'}(a)-f_{j'}(x)\| \\
& \geq \frac{1}{2} \eta_a \,.
\end{split}
\end{equation*}
Thus $\eta_x := \rmsplit f(x) \geq \frac{1}{2} \eta_a$. If $x,x' \in B(a,r) \cap B_k$ then
\begin{equation*}
\|f_j(x)-f_j(x') \| \leq \|f_j(x)-f_j(a) \| + \|f_j(a) - f_j(x')\| < \frac{1}{4} \eta_a \leq \frac{1}{2} \eta_x
\end{equation*}
so that
\begin{equation*}
\calG_1(f(x),f(x')) = \sum_{j=1}^k Q_j \|f_j(x) - f_j(x') \|
\end{equation*}
according to the Splitting Lemma. Thus each $f_j$ is Lipschitz continuous on $B(a,r) \cap B_k$, and hence it is differentiable at $\lambda$ almost every point of $B(a,r) \cap B_k$ since it can be extended to the whole $X$ (ref ...) and $Y$ has the Radon-Nikod\'ym property. Now if each $f_j$ is differentiable 
at a density point $x$ of $B(a,r) \cap B_k$ one easily checks that
\begin{equation*}
g = \oplus_{j=1}^k Q_j \lseg f_j(a) + Df_j \rseg
\end{equation*}
is approximately tangent to $f$ at $x$. Thus we have shown that assumption (C) of Proposition \ref{256} occurs at $\lambda$ almost every $a \in X$. Since this is also the case of assumptions (A) and (B) (according to \cite[2.9.13]{GMT} and the Borel measurability of $f$ and of $\bsigma \circ f$), conclusion (A) is now a consequence of that proposition.
\par
In order to prove conclusion (B) we use the same notation $B_k$, $a \in B_k$ and $r > 0$ as above. It follows that the restriction
\begin{equation*}
Df : B_k \cap B(a,r) \to \calQ_Q(\rmHom(X,Y)) : x \mapsto \oplus_{i=1}^Q \lseg Df_j \rseg
\end{equation*}
is Borel measurable according to Proposition \ref{232}(A), because each $x \mapsto Df_j(x)$ is itself Borel measurable. Since $B_k$ is Lindel\"of the restriction $Df \restriction B_k$ is Borel measurable for each $k=1,\ldots,Q$, and the Borel measurability of $Df$ follows immediately.
\par
The proof of the first part of conclusion (C) is inspired by \cite[Lemma 3.1.5]{GMT} and exactly similar to \cite{GOB.06}. In order to prove the second part of conclusion (C) we assume that $f$ is differentiable at $a$ and we write $Af(a) = \oplus_{i=1}^Q \lseg A_i \rseg$ and $L_i = A_i - A_i(0)$, $i=1,\ldots,Q$. Observe that $A_i(h) = A_i(0) + L_i(h) = f_i(a) + L_i(h)$, $i=1,\ldots,Q$, according to (A). Observe that for each $x \in X$ we have
\begin{equation}
\label{eq.6}
\calG_2(Af(a)(x),f(a))^2 \leq \|x-a\|^2 \left( \sum_{i=1}^Q \|L_i\|^2 \right)
\end{equation}
and, given $x$, let $\sigma \in S_Q$ be a permutation such that
\begin{equation}
\label{eq.7}
\calG_2(Af(a)(x),f(a))^2 = \sum_{i=1}^Q \| f_i(a) + L_i(x-a) - f_{\sigma(i)}(a) \|^2 \,.
\end{equation}
Assuming that $f_i(a) \neq f_{\sigma(i)}(a)$, for some $i =1,\ldots,Q$, and that
\begin{equation*}
\|x-a\| \max \{\|L_1\|,\ldots,\|L_Q\|\} \leq \frac{1}{2} \rmsplit f(a)
\end{equation*}
 we infer that the right member of \eqref{eq.7} is bounded below by $\frac{1}{4}(\rmsplit f(a))^2$, in contradiction with \eqref{eq.6} provided $\|x-a\| \sqrt{\sum_{i=1}^Q \|L_i\|^2} < \frac{1}{2} \rmsplit f(a)$. Thus, if $\|x-a\|$ is small enough then \eqref{eq.7} becomes
\begin{equation*}
\begin{split}
\sqrt{\sum_{i=1}^Q \|L_i(x-a)\|^2} & = \calG_2(Af(a)(x),f(a)) \\
& \leq \calG_2(Af(a)(x),f(x)) + \calG_2(f(x),f(a)) \,.
\end{split}
\end{equation*}
Upon letting $x \to a$ we obtain
\begin{equation}
\label{eq.205}
\sup \left\{ \sqrt{\sum_{i=1}^Q \|L_i(h)\|^2} : h \in X \text{ and } \|h\| \leq 1 \right\} \leq \rmLip f \,.
\end{equation}
Let $j=1,\ldots,Q$ be such that $\|L_j\|=\max \{\|L_1\|,\ldots,\|L_Q\|\}$. The above inequality implies that $\|L_j\| \leq \rmLip f$. Finally $\lno Df(a) \rno = \left( \sum_{i=1}^Q \|L_i\|^2\right)^{\frac{1}{2}} \leq \sqrt{Q} \rmLip f$.
\par
It remains to establish conclusion (D). We define $g : [0,1] \to \R$ by the formula $g(t) = \calG_2(\gamma(t),\gamma(0))$, $0 \leq t \leq 0$. We will show that $g$ is Lipschitzian and that $|g'(t)| \leq \lno Df(\gamma(t)) \rno$ at each $t$ such that $f$ is differentiable at $\gamma(t)$, so that our conclusion will become a consequence of a Theorem of Lebesgue applied to $g$:
\begin{multline*}
\calG_2(\gamma(1),\gamma(0)) = g(1)-g(0) = \int_0^1 g'(t)d\calL^1(t) \\
\leq \int_0^1 \lno Df(\gamma(t)) \rno d\calL^1(t) = \int_{\rmim \gamma} \lno Df(x) \rno d\calH^1(x)
\end{multline*}
according to the area formula applied to $\gamma$. Write $Df(\gamma(t)) = \oplus_{i=1}^Q \lseg L_i(\gamma(t)) \rseg$. For each $t,t+h \in [0,1]$ one has
\begin{equation*}
\begin{split}
g(t+h)-g(t) & = \calG_2(f(\gamma(t+h)),\gamma(0)) - \calG_2(f(\gamma(t)),\gamma(0)) \\
& \leq \calG_2(f(\gamma(t+h)),f(\gamma(t))\\
\intertext{which shows that $\rmLip g \leq \rmLip (f \circ \gamma)$; and assuming further that $f$ is differentiable at $\gamma(t)$, we obtain:}
& \leq \calG_2(Af(\gamma(t))(\gamma(t+h)),f(\gamma(t))) \\
&\quad + \calG_2(Af(\gamma(t))(\gamma(t+h)),f(\gamma(t+h)))\\
& \leq \left( \sum_{i=1}^Q \|L_i(\gamma(t))(\gamma(t+h)-\gamma(t))\|^2 \right)^\frac{1}{2}\\
&\quad + \veps \| \gamma(t+h)-\gamma(t)\|
\end{split}
\end{equation*}
where the last inequality holds provided $h$ is small enough according to $\veps$, $\rmsplit f(\gamma(t))$ and $\|L_1(\gamma(t)\|,\ldots,\|L_Q(\gamma(t))\|$ (recall the proof of (C)). Dividing by $|h|$, letting $h \to 0$, and recalling that $\rmLip \gamma \leq 1$ we infer that $|g'(t)| \leq \lno Df(\gamma(t)) \rno$ provided that $g$ is differentiable at $t$.
\end{proof}

Given $f : X \to \calQ_Q(Y)$ and $a \in X$ we now define
\begin{equation*}
\rmlip_a f := \limsup_{r \to 0} \sup_{x \in B(x,r)} \frac{\calG(f(x),f(a))}{\|x-a\|} \,.
\end{equation*}
If $f$ is Lipschitz then clearly $\rmlip_a f \leq \rmLip f < \infty$ for every $a \in X$. We leave it to the reader to check the following partial ``product rule'': if $f$ and $\lambda : X \to \R$ are Lipschitz then
\begin{equation}
\label{eq.204}
\rmlip_a (\lambda f) \leq (\rmlip_a \lambda) \lno f(a) \rno + | \lambda(a) | (\rmlip_a f) \,.
\end{equation}

\begin{Proposition}
\label{259}
If $f : X \to \calQ_Q(Y)$ is Lipschitz then
\begin{equation*}
\rmlip_a f \leq \lno Df(a) \rno \leq \sqrt{Q} (\rmlip_a f)
\end{equation*}
for $\calL^m$ almost every $a \in X$.
\end{Proposition}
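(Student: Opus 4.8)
The plan is to read everything off Theorem~\ref{258}. By parts (A) and (C) of that theorem, for $\lambda$ almost every $a\in X$ the map $f$ is approximately differentiable at $a$, one has $Af(a)(a)=f(a)$, and $f$ is moreover differentiable at $a$ in the strong sense that $\calG(f(x),Af(a)(x))=o(\|x-a\|)$ as $x\to a$. I fix such a point $a$ and write $Af(a)=\oplus_{i=1}^Q\lseg A_i\rseg$ with linear parts $L_i=A_i-A_i(0)$, so that $Df(a)=\oplus_{i=1}^Q\lseg L_i\rseg$, $\lno Df(a)\rno=\big(\sum_{i=1}^Q\|L_i\|^2\big)^{1/2}$, and $A_i(x)=f_i(a)+L_i(x-a)$ with $f(a)=\oplus_{i=1}^Q\lseg f_i(a)\rseg$. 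Both inequalities then reduce to comparing the increments of $f$ near $a$ with those of the affine model $Af(a)$, modulo the error $o(\|x-a\|)$.

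For the upper bound $\rmlip_a f\le\lno Df(a)\rno$: given $\veps>0$, I choose $\delta>0$ with $\calG(f(x),Af(a)(x))\le\veps\|x-a\|$ for $0<\|x-a\|<\delta$. Using $Af(a)(a)=f(a)$, bounding $\calG(Af(a)(x),Af(a)(a))$ by its value at the identity permutation, and using $\|L_i(x-a)\|\le\|L_i\|\,\|x-a\|$, I get for such $x$
\begin{multline*}
\calG(f(x),f(a))\le\calG(f(x),Af(a)(x))+\calG(Af(a)(x),Af(a)(a))\\
\le\veps\|x-a\|+\Big(\sum_{i=1}^Q\|L_i(x-a)\|^2\Big)^{1/2}\le\big(\veps+\lno Df(a)\rno\big)\|x-a\|.
\end{multline*}
Dividing by $\|x-a\|$, taking the supremum over $0<\|x-a\|<r$, then $\limsup_{r\to0}$, and finally $\veps\to0$ yields $\rmlip_a f\le\lno Df(a)\rno$.

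For the lower bound $\lno Df(a)\rno\le\sqrt Q\,\rmlip_a f$ I first discard the trivial case $Df(a)=Q\lseg0\rseg$, so I assume $\lno Df(a)\rno>0$, fix a unit vector $h\in X$, and move along the ray $t\mapsto a+th$. The key point --- which is exactly the computation already carried out for the second part of conclusion (C) in the proof of Theorem~\ref{258} --- is that once $0<t<\rmsplit f(a)/\big(2\lno Df(a)\rno\big)$ the permutation realizing $\calG(Af(a)(a+th),f(a))$ must match each $A_i(a)$ with one equal to it, whence $\calG(Af(a)(a+th),f(a))=t\,\big(\sum_{i=1}^Q\|L_i(h)\|^2\big)^{1/2}$. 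Choosing moreover $t$ small enough that $\calG(f(a+th),Af(a)(a+th))\le\veps t$, the triangle inequality gives
\begin{multline*}
t\,\Big(\sum_{i=1}^Q\|L_i(h)\|^2\Big)^{1/2}=\calG(Af(a)(a+th),f(a))\\
\le\calG(Af(a)(a+th),f(a+th))+\calG(f(a+th),f(a))\le\veps t+\calG(f(a+th),f(a)).
\end{multline*}
Dividing by $t=\|th\|$, taking $\limsup_{t\to0^+}$, and using $\limsup_{t\to0^+}\calG(f(a+th),f(a))/\|th\|\le\rmlip_a f$, I obtain $\big(\sum_{i=1}^Q\|L_i(h)\|^2\big)^{1/2}\le\veps+\rmlip_a f$, hence $\le\rmlip_a f$ for every unit $h$. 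In particular $\|L_j\|\le\rmlip_a f$ for each $j$, and choosing $j$ with $\|L_j\|$ maximal I conclude $\lno Df(a)\rno=\big(\sum_{i=1}^Q\|L_i\|^2\big)^{1/2}\le\sqrt Q\,\|L_j\|\le\sqrt Q\,\rmlip_a f$.

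The only genuinely delicate ingredient is the ``the optimal matching is the identity on the clusters'' fact used for the lower bound; but since it is literally the estimate already established inside the proof of Theorem~\ref{258}(C), I would simply invoke it. Everything else is routine manipulation of the $\limsup$ defining $\rmlip_a f$, the triangle inequality for $\calG$, and the elementary inequality $\sum_i\|L_i\|^2\le Q\max_i\|L_i\|^2$.
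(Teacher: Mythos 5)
Your proof is correct. For the second inequality you argue as the paper does: localize the argument of Theorem~\ref{258}(C), replace $\rmLip f$ by $\rmlip_a f$ in the estimate $\sup_{\|h\|\leq1}\bigl(\sum_{i=1}^Q\|L_i(h)\|^2\bigr)^{1/2}\leq\rmLip f$, and conclude $\max_j\|L_j\|\leq\rmlip_a f$, hence $\lno Df(a)\rno\leq\sqrt Q\,\rmlip_a f$. For the first inequality your route is genuinely different from, and noticeably shorter than, the paper's. At a point $a$ where Theorem~\ref{258}(A) and (C) give $Af(a)(a)=f(a)$ and full (not merely approximate) differentiability --- already a full measure set --- you combine the triangle inequality $\calG(f(x),f(a))\leq\calG(f(x),Af(a)(x))+\calG(Af(a)(x),Af(a)(a))$ with the identity-permutation plus operator-norm bound $\calG(Af(a)(x),Af(a)(a))\leq\bigl(\sum_{i=1}^Q\|L_i(x-a)\|^2\bigr)^{1/2}\leq\|x-a\|\,\lno Df(a)\rno$, divide by $\|x-a\|$, and let $x\to a$ and then $\veps\to0$. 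The paper instead additionally requires $a$ to be a Lebesgue point of $x\mapsto\lno Df(x)\rno$, builds a thin cylinder around the segment $[a,x]$, extracts by Fubini and Chebyshev a parallel segment $S_h$ meeting the good set $G_\veps=\{\lno Df\rno\leq\veps+\lno Df(a)\rno\}$ in most of its length, applies the curve-integration estimate of Theorem~\ref{258}(D) along $S_h$, and transfers back to $\calG(f(x),f(a))$ via the triangle inequality. Your version uses only the pointwise differentiability statement and the elementary bound $\|L_i(x-a)\|\leq\|L_i\|\,\|x-a\|$, avoiding the Fubini and potential-theoretic machinery and the Lebesgue-point hypothesis altogether; the paper's argument is longer and buys no extra strength for the statement as formulated, though its structure (using $Df$ only through averages along segments) is the one that would survive in settings where pointwise differentiability is unavailable.
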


\begin{proof}
The second inequality is proved in exactly the same as Theorem \ref{258}(C) on noticing that in \eqref{eq.205} $\rmLip f$ can be replaced by $\rmlip_a f$. In order to prove the first inequality we assume that $f$ is differentiable at $a$ and that $a$ is a Lebesgue point of $x \mapsto \lno Df(x) \rno$. Given $0 < \veps < 1$ we define
\begin{equation*}
G_\veps = X \cap \{ x : \lno Df(x) \rno \leq \veps + \lno Df(a) \rno \} \,.
\end{equation*}
There exists $r_0 > 0$ such that for every $0 < r \leq r_0$ one has
\begin{equation*}
\calL^m(B(a,r) \cap G_\veps^c) \leq 2^{-m}\veps^m \balpha(m-1) r^m \,.
\end{equation*}
Fix $0 < r \leq r_0/2$. Given $x \in B(a,r)$, $x \neq a$, we put $\rho = \|x-a\|$ and we consider the set
\begin{equation*}
H = B(0,\veps \rho) \cap \rmspan \{x-a\}^\perp \,.
\end{equation*}
With each $h \in H$ we associate the line segment $S_j$ joining $a+h$ and $x+h$, and we define the ``cylinder''
\begin{equation*}
C = \cup_{h \in H} S_h \,.
\end{equation*}
We observe that $C \subset B(a,2\rho)$ and that
\begin{equation*}
\calL^m(C) = \rho \balpha(m-1) \veps^{m-1} \rho^{m-1} = \veps^{m-1} \balpha(m-1) \rho^{m} \,.
\end{equation*}
Therefore,
\begin{equation*}
\begin{split}
\calL^m(C \cap G_\veps) & = \calL^m(C) - \calL^m(C \cap G_\veps^c) \\
& \geq \veps^{m-1} \balpha(m-1) \rho^m - 2^{-m}\veps^m \balpha(m-1) (2 \rho)^m \\
& = \veps^{m-1} \balpha(m-1)\rho^{m-1}(1 - \veps) \rho \,.
\end{split}
\end{equation*}
According to Fubini's Theorem, Chebyshev's inequality and Theorem \ref{258}, there exists $h \in H$ such that
\begin{equation*}
\calH^1(S_h \cap G_\veps) \geq (1- \veps) \rho
\end{equation*}
and $f$ is differentiable $\calH^1$ almost everywhere on $S_h$. For such $h$, recalling Theorem \ref{258}(D), we infer that
\begin{equation*}
\begin{split}
\calG(f(x+h),f(a+h)) & \leq \int_{S_h} \lno Df(z) \rno d\calH^1(z) \\
& = \int_{S_h \cap G_\veps} \lno Df(z) \rno d\calH^1(z) + \int_{S_h \cap G_\veps^c} \lno Df(z) \rno d\calH^1(z) \\
& \leq (\veps + \lno Df(a) \rno ) \rho + \sqrt{Q} (\rmLip f) \veps \rho \,.
\end{split}
\end{equation*}
Since $\|h\| \leq \veps \rho$, the triangle inequality implies that
\begin{equation*}
\calG(f(x),f(a)) \leq \leq (\veps + \lno Df(a) \rno ) \rho + (2 + \sqrt{Q}) (\rmLip f) \veps \rho \,,
\end{equation*}
thus
\begin{equation*}
\frac{\calG(f(x),f(a))}{\|x-a\|} \leq \veps + \lno Df(a) \rno+ (2 + \sqrt{Q}) (\rmLip f) \veps \,.
\end{equation*}
\end{proof}

\section{Embeddings}

\subsection{Whitney bi-H\"older embedding --- The case $Y = \ell_2^n(\K)$}
\label{31}
Here we report on \cite[Appendix V]{WHITNEY.72}. We let $\K = \R$ or $\K = \C$. We start by recalling the usual embedding
\begin{equation*}
\boldeta : \calQ_Q(\K) \to \K^Q : v \mapsto (\eta_1(v),\ldots,\eta_Q(v)) \,.
\end{equation*}
Given $v = \lseg x_1,\ldots,x_Q \rseg$ we let $\eta_i(v) \in \K$, $i=1,\ldots,Q$, be the coefficients of the Weierstrass polynomial of $v$:
\begin{equation*}
P_v(x) = \prod_{i=1}^Q (x-x_i) = x^Q + \sum_{i=1}^Q \eta_i(v) x^{Q-i} \in \K[x] \,.
\end{equation*}
Readily the $\eta_i(v)$ are the $Q$ symmetric functions of $Q$ variables, and their (Lipschitz) continuity follows. In case $\K = \C$, $\boldeta$ is a bijection and $\boldeta^{-1}$ is H\"older continuous (see e.g. \cite[Theorem (1,4)]{MARDEN}).
\par
We now treat the case of $\K^n$. We will define a mapping
\begin{equation*}
\boldeta : \calQ_Q(\K^n) \to \K^N
\end{equation*}
where $N=N(n,Q)$. Given $u \in \C^n$ and $v = \lseg x_1,\ldots,x_Q \rseg \in \calQ_Q(\K^n)$ we define a polynomial
\begin{equation*}
P_v(u,x) = \prod_{i=1}^Q ( x- \la u,x_i \ra) \in \K[u_1,\ldots,u_n,x]
\end{equation*}
whose coefficients $\eta_\alpha(v)$ form the components of $\boldeta$:
\begin{equation*}
P_v(u,x) =x^Q + \sum_{i=1}^Q \sum_{\substack{ \alpha \in \N^n \\ |\alpha|=i}} \eta_\alpha(v) u_1^{\alpha_1} \ldots u_n^{\alpha_n} x^{Q-i} \,.
\end{equation*}
One computes that
\begin{equation*}
N(n,Q) = \bin{Q+n}{n} - 1 \,.
\end{equation*}
One shows (\cite[Appendix V Theorem 6A]{WHITNEY.72}) that $\boldeta$ is injective, continuous, that $\boldeta(\calQ_Q(\K^n))$ is closed in $\K^N$, and that $\boldeta^{-1}$ is continuous as well. In case $\K=\C$, it follows from the Proper Mapping Theorem that $\boldeta(\calQ_Q(\C^n))$ is an irreducible analytic variety in $\C^N$, see \cite[Chapter 5 Theorem 5A]{WHITNEY.72}. In fact $\boldeta(\calQ_Q(\C^n))$ is a H\"older continuous retract of $\C^N$, see Remark \ref{337}.

\subsection{Splitting in case $Y = \R$}

We now state an easy and important observation on how to compute the $\calG_2$ distance of two members of $\calQ_Q(\R)$. The order of $\R$ plays the essential role. This is taken from \cite[1.1(4)]{ALMGREN}.

\begin{Proposition}
\label{7.1}
Let $v,v' \in \calQ_Q(\R)$ and choose numbering $v = \lseg y_1,\ldots,y_Q \rseg$ and $v' = \lseg y'_1,\ldots,y'_Q \rseg$ such that $y_1 \leq y_2 \leq \ldots \leq y_Q$ and $y'_1 \leq y'_2 \leq \ldots \leq y'_Q$. It follows that
\begin{equation*}
\calG_2(v,v') = \sqrt{ \sum_{i=1}^Q |y_i - y'_i |^2} \,.
\end{equation*}
\end{Proposition}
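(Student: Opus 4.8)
The plan is to prove the stronger statement that the minimum defining $\calG_2(v,v')$ is attained at the identity permutation, once $v$ and $v'$ are numbered in nondecreasing order. Expanding the square,
\[
\sum_{i=1}^Q (y_i - y'_{\sigma(i)})^2 = \sum_{i=1}^Q y_i^2 + \sum_{i=1}^Q (y'_i)^2 - 2\sum_{i=1}^Q y_i y'_{\sigma(i)},
\]
and since the first two sums do not depend on $\sigma \in S_Q$, minimizing the left-hand side over $\sigma$ amounts to maximizing $\sum_{i=1}^Q y_i y'_{\sigma(i)}$; so the assertion is precisely the classical rearrangement inequality for the two nondecreasing sequences $(y_i)_i$ and $(y'_i)_i$. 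I would nonetheless give a direct, self-contained argument.

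The elementary input is the inequality
\[
(a-c)^2 + (b-d)^2 \leq (a-d)^2 + (b-c)^2 \qquad \text{whenever } a \leq b \text{ and } c \leq d,
\]
which holds because the difference of the right-hand and left-hand sides equals $2(b-a)(d-c) \geq 0$. The argument then proceeds by eliminating inversions one at a time. Given $\sigma \neq \rmid$, since $\sigma$ is a bijection of $\{1,\ldots,Q\}$ that is not nondecreasing there is an index $i$ with $\sigma(i) > \sigma(i+1)$; let $\sigma'$ agree with $\sigma$ except that it interchanges the values at $i$ and $i+1$. Only the summands indexed $i$ and $i+1$ change, and applying the elementary inequality with $a = y_i \leq y_{i+1} = b$ and $c = y'_{\sigma(i+1)} \leq y'_{\sigma(i)} = d$ (the second inequality because $(y'_j)_j$ is nondecreasing and $\sigma(i) > \sigma(i+1)$) yields
\[
\sum_{k=1}^Q (y_k - y'_{\sigma'(k)})^2 \leq \sum_{k=1}^Q (y_k - y'_{\sigma(k)})^2 .
\]
Since $\sigma'$ has strictly fewer inversions than $\sigma$, iterating this at most $\binom{Q}{2}$ times produces the identity permutation while never increasing the sum; hence $\sum_k (y_k - y'_k)^2 \leq \sum_k (y_k - y'_{\sigma(k)})^2$ for every $\sigma \in S_Q$. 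As the reverse inequality for the minimum is trivial, $\calG_2(v,v')^2 = \sum_{i=1}^Q (y_i - y'_i)^2$, and taking square roots completes the proof.

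There is no genuine obstacle: the statement is elementary. The only points deserving a word of care are the bookkeeping that a non-identity permutation always admits an adjacent inversion and that swapping it strictly lowers the number of inversions — this is what makes the reduction to $\rmid$ terminate — together with the observation that an adjacent swap affects only two summands, so that the two-variable inequality above suffices.
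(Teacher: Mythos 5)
Your proof is correct and rests on exactly the same two-variable swap inequality $(a-c)^2+(b-d)^2 \le (a-d)^2+(b-c)^2$ (equivalently $2(b-a)(d-c)\ge 0$) that underlies the paper's argument. The paper sorts by induction on $Q$, applying one such swap to move the largest element into its slot and then restricting to $S_Q$, whereas you descend on the number of inversions via adjacent transpositions; this is a difference of bookkeeping, not of idea, so the two proofs are essentially the same.
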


\subsection{Almgren-White locally isometric embedding --- The case $Y = \ell_2^n(\R)$}
\label{aw-embedding}

This section is devoted to the case $Y = \ell_2^n$, i.e. $\Rn$ equipped with its Euclidean norm $\|\cdot\|$ and inner product $\la \cdot , \cdot \ra$. Proposition \ref{8.1} and Theorem \ref{334} are due to F.J. Almgren \cite[1.2]{ALMGREN}. The presentation we give here is (inspired by) that of C. De Lellis and E.N. Spadaro \cite{DEL.SPA.11}. Part (B) of Theorem \ref{334} is due to B. White \cite{WHITE.PERSONAL}.
\par
Let $e \in \Rn$ be such that $\|e\|=1$. We define a map
\begin{equation*}
\bpi_e : \calQ_Q(\Rn) \to \R^Q
\end{equation*}
by the requirement that $\bpi_e(\lseg y_1,\ldots,y_Q \rseg)$ be the list of inner products
\begin{equation*}
\la y_1,e \ra,\ldots, \la y_Q , e \ra \,.
\end{equation*}
numbered in increasing order.
Notice that we need indeed to explain how we choose to order these real numbers if we want the values of $\bpi_e$ to belong to $\R^Q$, for otherwise they would merely belong to $\calQ_Q(\R)$.

\begin{Proposition}
\label{8.1}
Let $e_1,\ldots,e_n$ be an orthonormal basis of $\Rn$. The mapping
\begin{equation*}
\bxi_0 : \calQ_Q(\Rn) \to \R^{Qn} : v \mapsto (\bpi_{e_1}(v),\ldots,\bpi_{e_n}(v))
\end{equation*}
has the following properties:
\begin{enumerate}
\item[(A)] $\rmLip \bxi_0 = 1$;
\item[(B)] For every $v \in \calQ_Q(\Rn)$ there exists $r > 0$ such that for each $v' \in \calQ_Q(\Rn)$, if $\calG_2(v,v') < r$ then $\|\bxi_0(v) - \bxi_0(v')\| = \calG_2(v,v')$;
\item[(C)] For every $v \in \calQ_Q(\Rn)$ one has $\|\bxi_0(v)\| = \calG_2(v,Q\lseg 0 \rseg)$.
\end{enumerate}
\end{Proposition}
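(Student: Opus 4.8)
The plan is to reduce all three assertions to the one-variable computation of Proposition~\ref{7.1}. For a unit vector $e \in \Rn$ let $q_e : \calQ_Q(\Rn) \to \calQ_Q(\R)$ be the map $\lseg y_1,\dots,y_Q \rseg \mapsto \lseg \la y_1,e\ra,\dots,\la y_Q,e\ra \rseg$; it is $1$-Lipschitz by Cauchy--Schwarz, and $\bpi_e$ is $q_e$ followed by the increasing reordering that identifies $\calQ_Q(\R)$ with its image in $\R^Q$. Thus Proposition~\ref{7.1} gives $\|\bpi_e(v) - \bpi_e(v')\| = \calG_2(q_e(v), q_e(v'))$ for all $v, v'$, while permutation invariance of the Euclidean norm gives $\|\bpi_e(v)\|^2 = \sum_{i=1}^Q \la y_i, e\ra^2$ for any numbering of $v$. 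Part (C) then drops out at once: $\|\bxi_0(v)\|^2 = \sum_{j=1}^n \|\bpi_{e_j}(v)\|^2 = \sum_{i=1}^Q \sum_{j=1}^n \la y_i, e_j\ra^2 = \sum_{i=1}^Q \|y_i\|^2 = \calG_2(v, Q\lseg 0 \rseg)^2$, the middle equality being Parseval's identity for the orthonormal basis $\{e_j\}$.

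For (A), I would fix $v, v'$ and a numbering realizing $\calG_2(v,v')^2 = \sum_i \|y_i - y_i'\|^2$; for each $j$ the identity matching competes in $\calG_2(q_{e_j}(v), q_{e_j}(v'))$, so $\|\bpi_{e_j}(v) - \bpi_{e_j}(v')\|^2 \leq \sum_i \la y_i - y_i', e_j\ra^2$, and summing over $j$ with Parseval gives $\|\bxi_0(v) - \bxi_0(v')\|^2 \leq \sum_i \|y_i - y_i'\|^2 = \calG_2(v,v')^2$, hence $\rmLip \bxi_0 \leq 1$. That the constant is exactly $1$ follows by restricting $\bxi_0$ to the diagonal $\{Q \lseg y \rseg : y \in \Rn\}$, on which the same Pythagorean computation shows it is an isometry.

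The substantive assertion is (B), where I expect the only genuine difficulty to lie. We already have ``$\leq$'' from (A), so the task is to produce $r > 0$ forcing $\|\bxi_0(v) - \bxi_0(v')\| \geq \calG_2(v,v')$ once $\calG_2(v,v') < r$. If $v = Q \lseg y \rseg$ then every numbering of every $v'$ realizes $\calG_2(v,v')$ and the computation of (C) applies verbatim, so the equality $\|\bxi_0(v)-\bxi_0(v')\|=\calG_2(v,v')$ holds for all $v'$; hence assume $v \neq Q \lseg y \rseg$. Put $\delta_j = \min\{\,|\la y_i - y_{i'}, e_j\ra| : \la y_i, e_j\ra \neq \la y_{i'}, e_j\ra\,\} \in (0, +\infty]$, with $\delta_j < \infty$ for at least one $j$, and set $r = \tfrac14 \min\{\delta_j : \delta_j < \infty\}$. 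Since $y_i \neq y_{i'}$ forces some coordinate to separate them by at least $\delta_j$, we get $4r \leq \rmsplit v$, so the Splitting Lemma yields a numbering of $v'$ with $\|y_i - y_i'\| \leq \calG_2(v,v')$ and $\calG_2(v,v')^2 = \sum_i \|y_i - y_i'\|^2$. The crux is then the claim that, for this aligned numbering and each $j$, the identity matching is already optimal in $\calG_2(q_{e_j}(v), q_{e_j}(v'))$, i.e. $\|\bpi_{e_j}(v) - \bpi_{e_j}(v')\|^2 = \sum_i \la y_i - y_i', e_j\ra^2$. Granting this, summing over $j$ with Parseval gives $\|\bxi_0(v) - \bxi_0(v')\|^2 = \sum_i \|y_i - y_i'\|^2 = \calG_2(v,v')^2$, which completes (B).

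To prove the claim I would group $\{1,\dots,Q\}$ into the maximal runs on which $i \mapsto \la y_i, e_j\ra$ is constant. Because each point has moved by less than $\delta_j/4$, every strict inequality between distinct values $\la y_i, e_j\ra$ is inherited by the corresponding primed values $a_i' := \la y_i', e_j\ra$, so these primed values break into consecutive blocks, one per run, in the same order; hence a globally increasing reordering of the $a_i'$ is obtained by permuting only \emph{within} each run, which leaves $\sum_{i \in \mathrm{run}} (\mathrm{const} - a_i')^2$ unchanged because the unprimed coordinate is constant on a run. Therefore $\sum_i \la y_i - y_i', e_j\ra^2$ equals the value of this sum computed against the globally increasing reordering, which by Proposition~\ref{7.1} is precisely $\calG_2(q_{e_j}(v), q_{e_j}(v'))^2$. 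The one delicate point in the whole argument is exactly this bookkeeping around ties in the values $\la y_i, e_j\ra$; once it is handled, (A), (B) and (C) are each just Parseval's identity combined with Proposition~\ref{7.1}.
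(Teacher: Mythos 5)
Your proof is correct and takes essentially the same route as the paper: reduce everything to the one-dimensional sorted-matching fact (Proposition~\ref{7.1}) plus Parseval, and for (B) pass to a numbering of $v'$ aligned with $\calG_2(v,v')$ and show that the induced identity matching remains optimal for each coordinate projection. The paper gets this last step by applying the Splitting Lemma in $\calQ_Q(\R)$ to $\bpi_{e_j}(v), \bpi_{e_j}(v')$ with radius $r=\tfrac12\min_j\rmsplit\bpi_{e_j}(v)$; your runs/blocks bookkeeping with $r=\tfrac14\min_j\delta_j$ (and the separate treatment of $v=Q\lseg y\rseg$, which the paper absorbs via $\rmsplit=\infty$) is just a self-contained rederivation of that same one-dimensional fact.
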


\begin{proof}
(A) Let $v,v' \in \calQ_Q(\Rn)$ and write $v= \lseg y_1,\ldots,y_Q \rseg$ and $v' = \lseg y'_1,\ldots,y'_Q \rseg$. For each $j=1,\ldots,n$ there exists $\tau_j \in S_Q$ such that $\la y_{\tau_j(1)},e_j \ra \leq \ldots \leq \la y_{\tau_j(Q)} , e_j \ra$ and there exists $\tau'_j \in S_Q$ such that $\la y'_{\tau'_j(1)},e_j \ra \leq \ldots \leq \la y'_{\tau'_j(Q)} , e_j \ra$. By definition of $\bpi_{e_j}$ we have
\begin{equation*}
\| \bpi_{e_j}(v) - \bpi_{e_j}(v') \|^2 = \sum_{i=1}^Q | \la y_{\tau_j(i)},e_j \ra - \la y'_{\tau'_j(i)},e_j \ra |^2 \,.
\end{equation*}
There also exists $\sigma \in S_Q$ such that
\begin{equation*}
\calG_2(v,v')^2 = \sum_{i=1}^Q \|y_i - y'_{\sigma(i)} \|^2 \,.
\end{equation*}
It remains to observe that
\begin{equation*}
\begin{split}
\| \bxi_0(v) - \bxi_0(v') \|^2 & = \sum_{j=1}^n \sum_{i=1}^Q | \la y_{\tau_j(i)},e_j \ra - \la y'_{\tau'_j(i)},e_j \ra |^2 \\
\intertext{which, by Proposition \ref{7.1}, is bounded by}
& \leq \sum_{j=1}^n \sum_{i=1}^Q | \la y_i,e_j \ra - \la y_{\sigma(i)} , e_j \ra|^2 \\
& = \sum_{i=1}^Q \|y_i - y_{\sigma(i)} \|^2 \\
& = \calG_2(v,v')^2 \,.
\end{split}
\end{equation*}
\par
(B) Let $v \in \calQ_Q(\Rn)$ and write $v = \lseg y_1,\ldots,y_Q \rseg$. For each $j=1,\ldots,n$ choose $\tau_j \in S_Q$ such that $\la y_{\tau_j(1)} , e_j \ra \leq \ldots \leq \la y_{\tau_j(Q)} , e_j \ra$. Define $r = \frac{1}{2} \min \{ \rmsplit \pi_{e_j}(v) : j=1,\ldots,n \}$ and let $v' \in \calQ_Q(\Rn)$ be such that $\calG_2(v,v') < r$. Choose a numbering $v' = \lseg y'_1,\ldots,y'_Q \rseg$ so that
\begin{equation*}
\calG_2(v,v')^2 = \sum_{i=1}^Q \|y_i - y'_i \|^2 \,.
\end{equation*}
Notice that for every $j=1,\ldots,n$ one has
\begin{multline*}
\max_{i=1,\ldots,Q}| \la y_{\tau_j(i)} , e_j \ra - \la y'_{\tau_j(i)} , e_j \ra | \\ \leq \max_{i=1,\ldots,Q} \| y_{\tau_j(i)} - y'_{\tau_j(i)} \| \leq \calG_2(v,v') < \frac{1}{2} \rmsplit \bpi_{e_j}(v)
\end{multline*}
which implies, according to the Splitting Lemma, Proposition \ref{7.1} and the definition of $\bpi_{e_j}$, that
\begin{equation*}
\sum_{i=1}^Q | \la y_{\tau_j(i)} , e_j \ra - \la y'_{\tau_j(i)} , e_j \ra |^2 = \calG_2(\bpi_{e_j}(v),\bpi_{e_j}(v'))^2 = \sum_{i=1}^Q | \la y_{\tau_j(i)} , e_j \ra - \la y'_{\tau'_j(i)} , e_j \ra |^2
\end{equation*}
where $\tau'_j \in S_Q$ is such that $\la y'_{\tau'_j(1)} , e_j \ra \leq \ldots \leq \la y'_{\tau'_j(Q)} , e_j \ra$. Therefore,
\begin{equation*}
\begin{split}
\| \bxi_0(v) - \bxi_0(v') \|^2 & = \sum_{j=1}^n \sum_{i=1}^Q | \la y_{\tau_j(i)} , e_j \ra - \la y'_{\tau'_j(i)} , e_j \ra |^2 \\
& = \sum_{j=1}^n \sum_{i=1}^Q | \la y_{\tau_j(i)} , e_j \ra - \la y'_{\tau_j(i)} , e_j \ra |^2 \\
& = \sum_{i=1}^Q\sum_{j=1}^n  | \la y_i , e_j \ra - \la y'_i , e_j \ra |^2 \\
& = \sum_{i=1}^Q \|y_i - y'_i\|^2 \\
& = \calG_2(v,v')^2 \,.
\end{split}
\end{equation*}
\par
(C) Writing $v = \lseg y_1,\ldots,y_Q \rseg$, it suffices to observe that
\begin{equation*}
\|\bxi_0(v)\|^2  = \sum_{j=1}^n \sum_{i=1}^Q | \la y_i,e_j \ra|^2
 = \sum_{i=1}^Q \|y_i\|^2
 = \calG_2(v,Q\lseg 0 \rseg)^2 \,.
\end{equation*}
\end{proof}

\begin{Remark}
The Lipschitz mapping $\bxi_0$ defined above is usually not injective. Consider for instance the case when $Q=2$, $n=2$, and let $e_1,e_2$ be an orthonormal basis of $\R^2$. We define $v = \lseg -e_1+e_2 , e_1 \rseg$. It follows that $\bxi_0(v) = (-1,1,0,1) = \bxi_0(v')$ where $v' = \lseg -e_1 , e_1+e_2 \rseg$. Clearly $v \neq v'$.
\end{Remark}

The lack of injectivity of $\bxi_0$ is overcome by considering a lot of orthonormal bases instead of just one, i.e. we shall replace $\bxi_0$ by many copies of $\bxi_0$ corresponding to various bases. The main observation to obtain injectivity is the following.

\begin{Proposition}
\label{8.3}
Given integers $n$ and $L$ there are $\veps > 0$ and unit vectors $e_1,\ldots,e_K \in \bbS^{n-1}$ with the following property. For every $v_1,\ldots,v_L \in \Rn$ there exists $k = 1,\ldots,K$ such that
\begin{equation*}
| \la e_k , v_l \ra | \geq \veps \|v_l\|
\end{equation*}
for each $l=1,\ldots,L$.
\end{Proposition}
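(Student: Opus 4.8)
The plan is to build the $e_k$ from the moment (Vandermonde) curve, so that the hyperplane $v^\perp$ through the origin --- and, more to the point, a fixed $\veps$-thickening of it --- can contain only few of them. Throughout we may assume each $v_l \neq 0$: if $v_l = 0$ the required inequality $|\la e_k,v_l\ra| \geq \veps \|v_l\|$ holds for every $k$, and replacing $v_l$ by $v_l/\|v_l\|$ changes nothing, so it suffices to prove the statement for $v_1,\dots,v_L \in \bbS^{n-1}$ and then undo the normalization at the end.

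Set $K = L(n-1)+1$, fix distinct reals $t_1 < \dots < t_K$ (e.g. $t_k = k$), put $m_k = (1,t_k,t_k^2,\dots,t_k^{n-1}) \in \Rn$, let $R = \max_{1 \leq k \leq K} \|m_k\|$, and define $e_k = m_k/\|m_k\| \in \bbS^{n-1}$. Identifying a vector $a = (a_0,\dots,a_{n-1}) \in \Rn$ with the polynomial $p_a(t) = \sum_{i=0}^{n-1} a_i t^i$ of degree $\leq n-1$, one has $\la a,m_k\ra = p_a(t_k)$, and hence $\la e_k,v\ra = p_v(t_k)/\|m_k\|$ for every $v \in \Rn$.

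The crux is a quantitative Vandermonde estimate: there exists $\veps_0 > 0$ such that for every unit vector $a \in \Rn$ the set $\{k : |p_a(t_k)| < \veps_0\}$ has at most $n-1$ elements. I would prove this by compactness and contradiction. If it failed, then for each $j \in \N_0$ there would be a unit vector $a^{(j)}$ with $|p_{a^{(j)}}(t_k)| < 1/j$ for at least $n$ indices $k$; since $\{t_1,\dots,t_K\}$ has only finitely many $n$-element subsets, after passing to a subsequence a fixed $n$-subset $\{s_1,\dots,s_n\}$ works for all $j$, and, after a further subsequence, $a^{(j)} \to a^*$ with $\|a^*\| = 1$ by compactness of the unit sphere of $\Rn$. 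Then $p_{a^*}$ is a polynomial of degree $\leq n-1$ vanishing at the $n$ distinct points $s_1,\dots,s_n$, hence $p_{a^*} \equiv 0$, i.e. $a^* = 0$, a contradiction. (Alternatively one can avoid compactness by expressing $p_a$ through Lagrange interpolation at $n$ of the $t_k$ and reading off an explicit bound on $\|a\|$ in terms of $\max_i |p_a(s_i)|$; either way, this uniform step --- upgrading ``few roots'' to a band of positive width --- is the only genuine obstacle, the rest being bookkeeping.)

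Finally set $\veps = \veps_0/R > 0$. Given $v_1,\dots,v_L \in \bbS^{n-1}$, for each $l$ the inequality $|\la e_k,v_l\ra| < \veps$ forces $|p_{v_l}(t_k)| = \|m_k\|\,|\la e_k,v_l\ra| < \veps R = \veps_0$, which by the estimate above holds for at most $n-1$ values of $k$. Hence the set of indices $k \in \{1,\dots,K\}$ for which $|\la e_k,v_l\ra| < \veps$ for at least one $l$ is contained in a union of $L$ sets each of size $\leq n-1$, so it has at most $L(n-1) = K-1 < K$ elements. Therefore some $k$ satisfies $|\la e_k,v_l\ra| \geq \veps = \veps\|v_l\|$ for every $l = 1,\dots,L$. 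Applying this to the normalized tuple $v_l/\|v_l\|$ (with arbitrary unit replacements for the zero vectors) recovers the statement for arbitrary $v_1,\dots,v_L \in \Rn$.
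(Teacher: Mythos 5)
Your proof is correct, but it takes a genuinely different route from the paper. The paper argues measure-theoretically on the sphere: it observes that $\calH^{n-1}\hel\bbS^{n-1}$ is doubling, chooses $\veps$ small enough that every slab $S_{e,\veps}=\bbS^{n-1}\cap\{w:|\la e,w\ra|<\veps\}$ has measure at most $\calH^{n-1}(\bbS^{n-1})/(3CL)$ (where $C$ is the doubling constant), takes a maximal collection $e_1,\dots,e_K$ with the balls $U(e_k,\veps)$ pairwise disjoint, and derives a contradiction by comparing the total measure of $\bbS^{n-1}$ with the measure of the union $S=\bigcup_l S_{w_l,\veps}$ containing all the $e_k$. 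You instead take the $e_k$ on the normalized moment curve, observe that $\la a,m_k\ra$ is the evaluation of the degree $\le n-1$ polynomial $p_a$ at $t_k$, and use the quantitative Vandermonde fact (a compactness argument, or Lagrange interpolation as you note) that a unit polynomial cannot be $\veps_0$-small at $n$ of the nodes; then a union bound over the $L$ vectors eliminates at most $L(n-1)=K-1$ indices. Your approach yields an explicit and remarkably small $K=L(n-1)+1$, whereas the paper's packing argument produces a $K$ that grows roughly like $\veps^{-(n-1)}$ with $\veps$ itself depending on $L$; on the other hand, the paper's argument is purely metric-measure-theoretic and would adapt to settings without the algebraic structure you exploit. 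Both are sound; for this result the moment-curve argument is arguably sharper.
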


\begin{proof}
We first notice that the measure $\calH^{n-1} \hel \bbS^{n-1}$ is doubling, i.e. there exists $C \geq 1$ such that $\calH^{n-1}(\bbS^{n-1} \cap U(e,2r)) \leq C \calH^{n-1}(\bbS^{n-1} \cap U(e,r))$ whenever $e \in \bbS^{n-1}$ and $r > 0$. Given $e \in \bbS^{n-1}$ and $\veps > 0$ we define the slab
\begin{equation*}
S_{e,\veps} = \bbS^{n-1} \cap \{ w : | \la e,w \ra | < \veps \} \,.
\end{equation*}
Now we choose $\veps > 0$ small enough for
\begin{equation*}
\calH^{n-1}(S_{e,\veps}) \leq \frac{\calH^{n-1}(\bbS^{n-1})}{3CL}
\end{equation*}
whenever $e \in \bbS^{n-1}$.
 We choose a maximal collection of points $e_1,\ldots,e_k \in \bbS^{n-1}$ such that the (open) balls $U(e_k,\veps)$, $k=1,\ldots,K$, are pairwise disjoint. Such a collection exists because $\calH^{n-1}(\bbS^{n-1})$ is finite and $\calH^{n-1}(\bbS^{n-1} \cap U(e,\veps))$ does not depend on $e \in \bbS^{n-1}$. By maximality, we have that $\bbS^{n-1} = \bigcup_{k=1}^K U(e_k,2\veps)$.
\par
Let now $v_1,\ldots,v_L \in \Rn$ be arbitrary. We define $\frL = \{1,\ldots,L \} \cap \{ l : v_l \neq 0 \}$ and for $l \in \frL$ we set $w_l = v_l |v_l|^{-1}$. Our claim is that for some $k$, $e_k$ does not belong to any of the slabs $S_{w_l,\veps}$, $l \in \frL$. Suppose if possible that for each $k=1,\ldots,K$, $e_k \in S$ where
\begin{equation*}
S = \bigcup_{l \in \frL} S_{w_l,\veps} \,.
\end{equation*}
If $l \in \frL$ corresponds to $k$ so that $e_k \in S_{w_l,\veps}$ then in fact at least ``half'' the ball $U(e_k,\veps)$ must be contained in $S_{w_l,\veps}$, thus $\calH^{n-1}(S \cap U(e_k,\veps)) \geq \frac{1}{2} \calH^{n-1}(\bbS^{n-1} \cap U(e_k,\veps))$. We would then obtain
\begin{equation*}
\begin{split}
\calH^{n-1}(\bbS^{n-1}) & \leq \sum_{k=1}^K \calH^{n-1}(\bbS^{n-1} \cap U(e_k,2\veps)) \\
& \leq C \sum_{k=1}^K \calH^{n-1}(\bbS^{n-1} \cap U(e_k,\veps)) \\
& \leq 2C \sum_{k=1}^K \calH^{n-1}(S \cap U(e_k,\veps)) \\
& \leq 2C \calH^{n-1}(S)\\
& \leq 2C \sum_{l \in \frL} \calH^{n-1}(S_{w_l,\veps}) \\
& \leq \frac{2}{3} \calH^{n-1}(\bbS^{n-1}) \,,
\end{split}
\end{equation*}
a contradiction.
\end{proof}

\begin{Theorem}
\label{334}
There exist an integer $N=N(n,Q)$, a real number $\alpha=\alpha(n,Q) \leq 1$ and a mapping
\begin{equation*}
\bxi : \calQ_Q(\Rn) \to \R^N
\end{equation*}
with the following properties.
\begin{enumerate}
\item[(A)] For every $v,v' \in \calQ_Q(\Rn)$, $\alpha \calG_2(v,v') \leq \|\bxi(v)-\bxi(v')\| \leq  \calG_2(v,v')$;
\item[(B)] For every $v \in \calQ_Q(\Rn)$ there exists $r > 0$ such that for each $v' \in \calQ_Q(\Rn)$, if $\calG_2(v,v') < r$ then $\|\bxi(v)-\bxi(v')\| = \calG_2(v,v')$;
\item[(C)] For every $v \in \calQ_Q(\Rn)$ one has $\|\bxi(v)\| = \calG_2(v,Q\lseg 0 \rseg)$.
\end{enumerate}
\end{Theorem}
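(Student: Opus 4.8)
The plan is to glue together many copies of the locally isometric map $\bxi_0$ of Proposition \ref{8.1}, using the directions supplied by Proposition \ref{8.3} to force a lower Lipschitz bound, and to normalise so that the three exact equalities survive.

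Concretely, I would first fix the combinatorial data: apply Proposition \ref{8.3} with $L = Q^2$ to obtain $\veps > 0$ and unit vectors $e_1,\ldots,e_K \in \bbS^{n-1}$, both depending only on $n$ and $Q$ (note $\veps \leq 1$ by Cauchy--Schwarz, since $\|e_k\| = 1$). For each $k$ complete $e_k$ to an orthonormal basis $e_k = e_{k,1},\ldots,e_{k,n}$ of $\Rn$ and let $\bxi_0^{(k)} : \calQ_Q(\Rn) \to \R^{Qn}$ be the map of Proposition \ref{8.1} attached to this basis. Then I would set $N = KQn$ and
\begin{equation*}
\bxi = \frac{1}{\sqrt{K}}\left(\bxi_0^{(1)},\ldots,\bxi_0^{(K)}\right) : \calQ_Q(\Rn) \to \R^N, \qquad \alpha = \frac{\veps}{\sqrt{K}} \leq 1 .
\end{equation*}
Since $\|\bxi(v)-\bxi(v')\|^2 = \frac{1}{K}\sum_{k=1}^K \|\bxi_0^{(k)}(v)-\bxi_0^{(k)}(v')\|^2$ and $\|\bxi(v)\|^2 = \frac{1}{K}\sum_{k=1}^K \|\bxi_0^{(k)}(v)\|^2$, the upper bound in (A) together with statements (B) and (C) would follow at once by averaging the corresponding assertions of Proposition \ref{8.1} over $k$; for (B) one takes $r = \min_{k} r_k$, where $r_k > 0$ is the radius furnished by Proposition \ref{8.1}(B) for $\bxi_0^{(k)}$.

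The substance is the lower bound in (A). Given $v,v' \in \calQ_Q(\Rn)$, I would first choose numberings $v = \lseg y_1,\ldots,y_Q\rseg$ and $v' = \lseg y'_1,\ldots,y'_Q\rseg$ for which the identity permutation realises $\calG_2(v,v')$, so that $\calG_2(v,v')^2 = \sum_{i=1}^Q \|y_i-y'_i\|^2 \leq \sum_{i=1}^Q \|y_i - y'_{\tau(i)}\|^2$ for every $\tau \in S_Q$. Then I would apply Proposition \ref{8.3} to the $Q^2$ vectors $y_i - y'_j$, $i,j = 1,\ldots,Q$, obtaining an index $k$ with $|\la e_k, y_i - y'_j\ra| \geq \veps \|y_i - y'_j\|$ for all $i,j$. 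Since $\bpi_{e_k}(v)$ is a block of coordinates of $\bxi_0^{(k)}(v)$, Proposition \ref{7.1} applied to the lists $\la y_i,e_k\ra$ and $\la y'_i,e_k\ra$ gives
\begin{equation*}
\|\bxi_0^{(k)}(v)-\bxi_0^{(k)}(v')\|^2 \geq \|\bpi_{e_k}(v)-\bpi_{e_k}(v')\|^2 = \min_{\tau \in S_Q}\sum_{i=1}^Q \la y_i - y'_{\tau(i)}, e_k\ra^2 ;
\end{equation*}
if $\tau_0 \in S_Q$ attains this minimum, then each $y_i - y'_{\tau_0(i)}$ is one of the chosen $Q^2$ vectors, so $\la y_i - y'_{\tau_0(i)}, e_k\ra^2 \geq \veps^2 \|y_i - y'_{\tau_0(i)}\|^2$, and summing over $i$ and invoking the optimality of the identity matching for $\calG_2$ yields $\|\bxi_0^{(k)}(v)-\bxi_0^{(k)}(v')\|^2 \geq \veps^2 \calG_2(v,v')^2$. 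As the $k$-th term is dominated by the whole sum defining $\|\bxi(v)-\bxi(v')\|^2$, this gives $\|\bxi(v)-\bxi(v')\| \geq \alpha\,\calG_2(v,v')$.

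The one delicate point I expect is precisely this handling of the unknown optimal rearrangement $\tau_0$ buried in the one-dimensional projection $\bpi_{e_k}$: it is why one must feed all $Q^2$ differences $y_i - y'_j$ into Proposition \ref{8.3} rather than only the $Q$ differences $y_i - y'_i$, and why one arranges at the outset for the identity to be the $\calG_2$-optimal matching, so that the final inequality $\sum_i \|y_i - y'_{\tau_0(i)}\|^2 \geq \sum_i \|y_i - y'_i\|^2$ is at hand. The rest is routine bookkeeping with Proposition \ref{8.1}.
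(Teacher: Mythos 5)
Your proof is correct and takes essentially the same approach as the paper: you assemble copies of $\bxi_0$ from Proposition \ref{8.1} attached to bases whose first vectors come from Proposition \ref{8.3} with $L = Q^2$, derive the lower bound by projecting the $Q^2$ differences $y_i - y'_j$ onto a good direction $e_k$ and combining Proposition \ref{7.1} with the $\calG_2$-optimality of the chosen numbering, and normalise by $K^{-1/2}$ to preserve the exact isometry statements (B) and (C). The slight repackaging — fixing the identity as the $\calG_2$-optimal matching and then letting $\tau_0$ be the optimal matching for the projected distance, rather than explicitly invoking the two sort permutations — is cosmetic; the chain of inequalities is the same.
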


\begin{proof}
Letting $L=Q^2$ we choose $\veps$ and $e_1,\ldots,e_K$ according to Proposition \ref{8.3}. For each $k=1,\ldots,K$ we choose an orthonormal basis $e_{1,k},\ldots,e_{n,k}$ of $\Rn$ such that $e_{1,k}=e_k$. We then define
\begin{equation*}
\bxi : \calQ_Q(\Rn) \to \R^N : v \mapsto (\bxi_1(v),\ldots,\bxi_K(v))
\end{equation*}
where $N = QnK$ and we have abbreviated $\bxi_k(v) = (\bpi_{e_{1,k}}(v),\ldots,\bpi_{e_{n,k}}(v))$. Thus each $\bxi_k$ is a mapping of the type $\bxi_0$ considered in Proposition \ref{8.1}, corresponding to the basis $e_{1,k},\ldots,e_{n,k}$. We therefore infer from Proposition \ref{8.1}(A) that for every $v,v' \in \calQ_Q(\Rn)$,
\begin{equation*}
\|\bxi(v)-\bxi(v')\|^2 = \sum_{k=1}^K \|\bxi_k(v)-\bxi_k(v')\|^2 \leq K \calG_2(v,v')^2 \,.
\end{equation*}
On the other hand, letting $v = \lseg y_1,\ldots,y_Q \rseg$ and $v' = \lseg y'_1,\ldots,y'_Q \rseg$, we infer from Proposition \ref{8.3} that
there exists $k = 1,\ldots, K$ such that
\begin{equation*}
| \la e_{1,k} , y_i - y'_j \ra | \geq \veps \| y_i - y'_j \|
\end{equation*}
for every $i,j = 1,\ldots,Q$. Let $\sigma \in S_Q$ be such that $\la y_{\sigma(1)} , e_{1,k} \ra \leq \ldots \leq \la y_{\sigma(Q)} , e_{1,k} \ra$ and let $\tau \in S_Q$ be such that $\la y'_{\tau(1)} , e_{1,k} \ra \leq \ldots \leq \la y'_{\tau(Q)} , e_{1,k} \ra$. Observe that
\begin{equation*}
\begin{split}
\calG_2(v,v')^2 & \leq \sum_{i=1}^Q \|y_{\sigma(i)} - y'_{\tau(i)} \|^2 \\
& \leq \veps^{-2} \sum_{i=1}^Q | \la y_{\sigma(i)} , e_{1,k} \ra - \la y'_{\tau(i)} , e_{1,k} \ra |^2\\
& = \veps^{-2} \|\bpi_{e_{1,k}}(v) - \bpi_{e_{1,k}}(v') \|^2 \\
& \leq \veps^{-2} \|\bxi(v)-\bxi(v)\|^2 \,.
\end{split}
\end{equation*}
\par
We now turn to proving conclusions (B) and (C). Given $v \in \calQ_Q(\Rn)$ and $k=1,\ldots,K$ we choose $r_k > 0$ according to Proposition \ref{8.1}(B). Let $r = \min\{r_1,\ldots,r_K\}$. If $v \in \calQ_Q(\Rn)$ and $\calG_2(v,v') < r$ then
\begin{equation*}
\|\bxi(v)-\bxi(v')\|^2 = \sum_{k=1}^K \|\bxi_k(v)-\bxi_k(v')\|^2 = K \calG_2(v,v')^2 \,.
\end{equation*}
Also, regarding conclusion (C), we observe that for every $v \in \calQ_Q(\Rn)$,
\begin{equation*}
\|\bxi(v)\|^2 = K \calG_2(v,Q \lseg 0 \rseg) \,,
\end{equation*}
according to Proposition \ref{8.1}(C).
This means that the mapping $K^{-1/2}\bxi$ verifies the conclusions of the present proposition.
\end{proof}

B. White's addition (B) to F.J. Almgren's embedding Theorem \ref{334} has the following rather useful consequence. Here the linear spaces $\rmHom(\Rm,\R^\nu)$ ($\nu=n$ or $\nu=N$) are equipped with the norm
\begin{equation*}
\vvvert L \vvvert = \sqrt{ \sum_{j=1}^m \sum_{k=1}^\nu \la L(e_j), e_k \ra^2}
\end{equation*}
corresponding to the canonical bases of $\Rm$ and $\R^\nu$.

\begin{Proposition}
\label{335}
Assume that $f : \Rm \to \calQ_Q(\Rn)$, $a \in \Rn$, and that both $f$ and $\bxi \circ f$ are differentiable at $a$\footnote{For $f$ this is in the sense of Theorem \ref{258}(C)}. It follows that
\begin{equation*}
\lno Df(a) \rno = \vvvert D(\bxi \circ f) (a) \vvvert \,.
\end{equation*}
\end{Proposition}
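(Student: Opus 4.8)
The plan is to exploit conclusion~(B) of Theorem~\ref{334}: near the point $v=f(a)$ the embedding $\bxi$ is a genuine $\calG_2$-isometry, so that differentiating $\bxi\circ f$ at $a$ cannot see anything beyond the intrinsic derivative $Df(a)$. Write $Af(a)=\oplus_{i=1}^Q\lseg A_i\rseg$ with each $A_i\colon\Rm\to\Rn$ affine, and $L_i=A_i-A_i(0)\in\rmHom(\Rm,\Rn)$, so that $Df(a)=\oplus_{i=1}^Q\lseg L_i\rseg$. The first thing I would settle is that $Af(a)(a)=f(a)$. Indeed, differentiability of $\bxi\circ f$ at $a$ entails continuity of $\bxi\circ f$ at $a$; since $\bxi$ is a biLipschitz embedding (Theorem~\ref{334}(A)), $f=\bxi^{-1}\circ(\bxi\circ f)$ is continuous at $a$, with $\lim_{x\to a}f(x)=f(a)$. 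On the other hand, differentiability of $f$ at $a$ gives $\calG(f(x),Af(a)(x))\to 0$ as $x\to a$, while $Af(a)$ is continuous; hence $\lim_{x\to a}f(x)=Af(a)(a)$. Comparing the two limits yields $Af(a)(a)=f(a)$, and I accordingly write $f(a)=\oplus_{i=1}^Q\lseg A_i(a)\rseg$.

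The core of the argument is a one-dimensional computation along rays emanating from $a$. Fix $h\in\Rm$ with $\|h\|=1$, choose $r>0$ according to Theorem~\ref{334}(B) applied to $v=f(a)$, and let $x=a+th$ with $t\downarrow 0$. For $t$ small enough, three elementary facts fit together. First, $\calG_2(f(a+th),f(a))<r$ by the continuity just established, so Theorem~\ref{334}(B) gives $\|\bxi(f(a+th))-\bxi(f(a))\|=\calG_2(f(a+th),f(a))$. Second, $|\calG_2(f(a+th),f(a))-\calG_2(Af(a)(a+th),f(a))|\leq\calG_2(f(a+th),Af(a)(a+th))=o(t)$ by differentiability of $f$. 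Third, $\calG_2(Af(a)(a+th),f(a))=t\bigl(\sum_{i=1}^Q\|L_i(h)\|^2\bigr)^{1/2}$ once $t$ is small: this is the splitting argument already run in the proof of Theorem~\ref{258}(C), namely a permutation $\sigma$ optimal for $\calG_2\bigl(\oplus_i\lseg A_i(a)+tL_i(h)\rseg,\oplus_i\lseg A_i(a)\rseg\bigr)$ must pair $A_i(a)$ with an equal $A_{\sigma(i)}(a)$, for otherwise the corresponding summand is at least $\tfrac{1}{2}\rmsplit f(a)$ whereas the whole left-hand side is $O(t)$ (the case $\rmsplit f(a)=\infty$ being immediate). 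Combining these with the ordinary differentiability of $\bxi\circ f$, which gives $\|\bxi(f(a+th))-\bxi(f(a))\|=t\,\|D(\bxi\circ f)(a)(h)\|+o(t)$, dividing by $t$ and letting $t\to 0^+$ produces
\begin{equation*}
\|D(\bxi\circ f)(a)(h)\|=\Bigl(\sum_{i=1}^Q\|L_i(h)\|^2\Bigr)^{1/2}
\end{equation*}
for every unit vector $h$, hence for all $h\in\Rm$ by homogeneity.

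To finish I would take $h$ to range over the canonical basis $e_1,\ldots,e_m$ of $\Rm$ and sum. Since the relevant norms on $\rmHom(\Rm,\R^N)$ and on $\rmHom(\Rm,\Rn)$ are the Frobenius norms with respect to the canonical bases, $\vvvert L\vvvert^2=\sum_{j=1}^m\|L(e_j)\|^2$, whence
\begin{equation*}
\vvvert D(\bxi\circ f)(a)\vvvert^2=\sum_{j=1}^m\|D(\bxi\circ f)(a)(e_j)\|^2=\sum_{j=1}^m\sum_{i=1}^Q\|L_i(e_j)\|^2=\sum_{i=1}^Q\vvvert L_i\vvvert^2=\lno Df(a)\rno^2\,,
\end{equation*}
which is the claimed identity. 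The only delicate points are where the hypotheses are genuinely used: the identity $Af(a)(a)=f(a)$ (which requires differentiability of $\bxi\circ f$, not merely of $f$) and the passage through the local isometry of Theorem~\ref{334}(B) together with the splitting estimate of the third step. I expect the latter --- controlling the optimal permutation on the shrinking ray-segments --- to be the only part needing care, the rest being bookkeeping, since all the real work has already been done inside Theorem~\ref{334}.
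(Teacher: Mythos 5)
Your proposal is correct and follows essentially the same path as the paper's own proof: both exploit the local isometry of $\bxi$ from Theorem~\ref{334}(B), replace $f(a+th)$ by the affine approximation $Af(a)(a+th)$ using differentiability of $f$, observe that for small $t$ the optimal permutation in the $\calG_2$ minimization must pair equal points of $f(a)$ (which makes the remaining cost exactly $t^2\sum_i\|L_i(h)\|^2$), and then sum the resulting directional identities over the canonical basis. The only presentational difference is that you isolate the verification $Af(a)(a)=f(a)$ as a preliminary step, whereas the paper obtains it implicitly from the finiteness of the limit; your explicitness is, if anything, a small improvement in clarity.
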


\begin{proof}
For each $j=1,\ldots,m$ we have
\begin{equation*}
\begin{split}
\| \partial_j(\bxi \circ f)(a) \|^2 & = \lim_{t \to 0} \frac{ \| (\bxi \circ f)(a + te_j) - (\bxi \circ f)(a)\|^2}{t^2} \\
&= \lim_{t \to 0} \frac{ \calG_2(f(a+te_j),f(a))^2}{t^2} \\
\intertext{(according to Theorem \ref{334}(B))}
& = \lim_{t \to 0} \frac{\calG_2(Af(a)(a+te_j),f(a))^2}{t^2} \\
\intertext{(because $f$ is differentiable at $a$)}
& = \lim_{t \to 0}  \frac{\sum_{i=1}^Q \| f_i(a) - A_{\sigma_t(i)}(a) - L_{\sigma_t(i)}(te_j)\|^2}{t^2} \,,
\end{split}
\end{equation*}
where, as usual, $Df(a) = \oplus_{i=1}^Q \lseg A_i \rseg$, $L_i = A_i - A_i(0)$, $i=1,\ldots,Q$ and $\sigma_t$ is a permutation $\sigma \in S_Q$ for which the quantity
 \begin{equation*}
  \frac{\sum_{i=1}^Q \| f_i(a) - A_{\sigma(i)}(a) - L_{\sigma(i)}(te_j)\|^2}{t^2}
  \end{equation*}
  is minimal. Since the above limit exists and is finite, we infer that $\sigma_t \in S_Q$ must be such that $f_i(a) = A_{\sigma_t(i)}(a)$ when $t$ is small enough, $i=1,\ldots,Q$. Thus,
\begin{equation*}
\|\partial_j (\bxi \circ f)(a)\|^2 = \sum_{i=1}^Q \|L_i(e_j)\|^2 \,,
\end{equation*}
and in turn,
\begin{multline*}
\vvvert D(\bxi \circ f)(a) \vvvert^2 = \sum_{j=1}^m \|\partial_j (\bxi \circ f)(a) \|^2 = \sum_{j=1}^m \sum_{i=1}^Q \|L_i(e_j)\|^2 \\= \sum_{i=1}^Q \vvvert L_i \vvvert^2 = \lno Df(a) \rno^2 \,.
\end{multline*}
\end{proof}

\begin{Theorem}
\label{336}
Let $N=N(n,Q)$ and $\bxi$ be as in Theorem \ref{334}. There exists a Lipschitz retraction
\begin{equation*}
\brho : \R^N \to \bxi(\calQ_Q(\Rn)) \,.
\end{equation*}
\end{Theorem}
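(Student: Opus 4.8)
The plan is to produce $\brho$ as the composition $\bxi \circ \Phi$, where $\Phi : \R^N \to \calQ_Q(\Rn)$ is a Lipschitz extension of the inverse mapping $\bxi^{-1}$ defined on $\calM := \bxi(\calQ_Q(\Rn))$. The existence of such an extension is precisely what the Lipschitz extension Theorem \ref{243} delivers, since the source $\R^N$ is a finite dimensional Banach space and the range $Y = \ell_2^n$ is a Banach space.

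Concretely, I would carry out the following steps. First, I would note that $\bxi$ is injective (conclusion (A) of Theorem \ref{334}, with $\alpha = \alpha(n,Q) > 0$), so that $\bxi^{-1} : \calM \to \calQ_Q(\Rn)$ is a well-defined map, and that the same conclusion (A) gives $\calG_2(\bxi^{-1}(p),\bxi^{-1}(q)) \leq \alpha^{-1}\|p-q\|$, i.e. $\bxi^{-1}$ is $\alpha^{-1}$-Lipschitz. Second, I would check that $\calM$ is closed in $\R^N$: by Proposition \ref{211} the space $\calQ_Q(\Rn)$ is complete, and since $\bxi$ is a biLipschitz embedding its image $\calM$ is complete as well, hence closed. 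Third, I would apply Theorem \ref{243} with $X = \R^N$, $A = \calM$, $Y = \ell_2^n$ and $f = \bxi^{-1}$ to obtain a Lipschitz map $\Phi := \hat{f} : \R^N \to \calQ_Q(\Rn)$ extending $\bxi^{-1}$, with $\rmLip \Phi \leq \bc_{\ref{243}}(N,Q)\,\rmLip \bxi^{-1}$. Finally I would set $\brho := \bxi \circ \Phi : \R^N \to \calM$; this is Lipschitz as a composition of Lipschitz maps (using $\rmLip \bxi \leq 1$ from Theorem \ref{334}(A)), its image lies in $\bxi(\calQ_Q(\Rn)) = \calM$, and for $p \in \calM$ one has $\brho(p) = \bxi(\Phi(p)) = \bxi(\bxi^{-1}(p)) = p$, so $\brho$ is a retraction of $\R^N$ onto $\calM$.

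One point of bookkeeping: in Theorem \ref{243} the space $\calQ_Q(Y)$ is equipped with the metric $\calG_\infty$, whereas conclusion (A) of Theorem \ref{334} is phrased in terms of $\calG_2$. This causes no difficulty, since $\calG_\infty \leq \calG_2 \leq \sqrt{Q}\,\calG_\infty$: the map $\bxi^{-1}$ is still Lipschitz when the target carries $\calG_\infty$, and $\bxi$ is still Lipschitz on $(\calQ_Q(\Rn),\calG_\infty)$, with only a harmless change of constants; the precise Lipschitz constant of $\brho$ is immaterial to the statement. I do not foresee a genuine obstacle here --- the whole content of the argument is the observation that the biLipschitz embedding $\bxi$, together with the already-proved extension Theorem \ref{243}, yields the retraction; the finite dimensionality of the source $\R^N$ is exactly the hypothesis that makes Theorem \ref{243} applicable.
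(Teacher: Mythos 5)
Your proposal is correct and follows exactly the paper's own argument: apply Theorem \ref{243} with $X=\ell_2^N$, $A=\bxi(\calQ_Q(\Rn))$, $Y=\ell_2^n$, $f=\bxi^{-1}$, and set $\brho=\bxi\circ\hat{f}$. The extra verifications you supply (that $\bxi^{-1}$ is Lipschitz, that the image is closed, and the $\calG_\infty$ versus $\calG_2$ bookkeeping) are all sound and merely make explicit what the paper leaves implicit.
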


\begin{proof}
Apply Theorem \ref{243} with $X=\ell_2^N$, $A = \bxi(\calQ_Q(\Rn))$, $Y = \ell_2^n$ and
 $f = \bxi^{-1}$. Letting $\hat{f}$ be a Lipschitz extension of $f$, the mapping
  $\brho = \bxi \circ \hat{f}$ verifies the conclusion.
\end{proof}

\begin{Remark}
\label{337}
The exact same proof shows that there exists a H\"older continuous retraction
\begin{equation*}
\tilde{\brho} : \C^N \to \boldeta(\calQ_Q(\C^n))
\end{equation*}
where $N=N(n,Q)$ and $\boldeta$ are as in Section \ref{31}. This follows indeed from the fact that $\boldeta^{-1}$ is H\"older continuous (reference \cite[Theorem (1,4)]{MARDEN}). In the same vein one can prove the following, based on \cite[Theorem 1.12]{BENYAMINI.LINDENSTRAUSS} and Theorem \ref{336}: {\em If $\omega : \R^+ \to \R^+$ is concave then for every $A \subset \ell_2^m$ and every $f : A \to \calQ_Q(\ell_2^n)$ such that $\rmosc(f;\cdot) \leq \omega$, there exists an extension $\hat{f} : \ell_2^m \to \calQ_Q(\ell_2^n)$ of $f$ such that $\rmosc(\hat{f};\cdot) \leq (\rmLip \brho_{n,Q}) \omega$.} Here $\brho_{n,Q}$ is the Lipschitz retraction of Theorem \ref{336}, and $\calQ_Q(\ell_2^n)$ is equipped with its metric $\calG_2$.
\end{Remark}

We recall that a metric space $Z$ is an {\em absolute Lipschitz retract} if and only if each isometric embedding $Z \to Z'$ into another metric space $Z'$ has a Lipschitz right inverse $\rho : Z' \to Z$. In other words, $Z$ is a Lipschitz retract of any of its metric superspaces. This is equivalent to asking that any partially defined Lipschitz map into $Z$ extends to a Lipschitz map into $Z$, see \cite[Proposition 1.2]{BENYAMINI.LINDENSTRAUSS}. For instance $\ell_\infty^N$ is an absolute Lipschitz retract, and hence the following holds.

\begin{Corollary}
$\calQ_Q(\Rn)$ is an absolute Lipschitz retract.
\end{Corollary}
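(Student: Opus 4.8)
The plan is to deduce the statement from the two theorems just proved, together with the fact, recorded just above, that being an absolute Lipschitz retract is the same as having every Lipschitz map $h : A \to \calQ_Q(\Rn)$ defined on a subset $A$ of an \emph{arbitrary} metric space $W$ extend to a Lipschitz map $W \to \calQ_Q(\Rn)$. The auxiliary ingredient is that $\R^N$ (equivalently $\ell_\infty^N$, via a biLipschitz change of norm) is an absolute Lipschitz retract --- immediate by extending a Lipschitz $\R^N$-valued map coordinate by coordinate with McShane's formula, at the cost of a factor $\sqrt N$. This is precisely why the preceding paragraph singles out $\ell_\infty^N$.

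Concretely: fix $N = N(n,Q)$ and $\bxi : \calQ_Q(\Rn) \to \R^N$ from Theorem \ref{334}, so that by part (A) $\bxi$ is $1$-Lipschitz and $\bxi^{-1} : \bxi(\calQ_Q(\Rn)) \to \calQ_Q(\Rn)$ is Lipschitz (with constant $\alpha^{-1}$), and let $\brho : \R^N \to \bxi(\calQ_Q(\Rn))$ be the Lipschitz retraction of Theorem \ref{336}. Now, given $A \subseteq W$ and a Lipschitz map $h : A \to \calQ_Q(\Rn)$, I would form $\bxi \circ h : A \to \R^N$ (Lipschitz), extend it to a Lipschitz $G : W \to \R^N$ using that $\R^N$ is an absolute Lipschitz retract, and set
\begin{equation*}
\hat{h} \;=\; \bxi^{-1} \circ \brho \circ G \;:\; W \to \calQ_Q(\Rn) \,,
\end{equation*}
which is well defined since $\brho$ takes values in $\bxi(\calQ_Q(\Rn))$, and Lipschitz as a composition of Lipschitz maps. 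For $a \in A$ one has $G(a) = \bxi(h(a)) \in \bxi(\calQ_Q(\Rn))$, hence $\brho(G(a)) = \bxi(h(a))$ because $\brho$ fixes that set pointwise, so $\hat{h}(a) = h(a)$; thus $\hat{h}$ extends $h$ and we are done. (Equivalently, and more briefly: $\calQ_Q(\Rn)$ is biLipschitz equivalent to the subset $\bxi(\calQ_Q(\Rn))$ of the absolute Lipschitz retract $\R^N$, of which it is a Lipschitz retract by Theorem \ref{336}; since a Lipschitz retract of an absolute Lipschitz retract is again one, and this property is a biLipschitz invariant, the conclusion follows.)

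I do not foresee a real obstacle: all of the work is already contained in Theorems \ref{334} and \ref{336} (hence, ultimately, in the extension Theorem \ref{243}). The only points demanding a little care are (i) using the characterization of absolute Lipschitz retracts in terms of extending Lipschitz maps \emph{from arbitrary metric domains}, not merely within Euclidean space; (ii) the fact that Theorem \ref{334}(A) is a two-sided estimate, which is what makes $\bxi^{-1}$ --- and therefore $\hat h$ --- Lipschitz; and (iii) the elementary observation that $\R^N$, equivalently $\ell_\infty^N$, is an absolute Lipschitz retract, which the discussion preceding the Corollary has already flagged.
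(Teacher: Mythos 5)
Your proposal is correct and follows essentially the same route as the paper: it uses the Almgren embedding (Theorem \ref{334}) to identify $\calQ_Q(\Rn)$ with a subset of $\R^N \cong \ell_\infty^N$, the Lipschitz retraction of Theorem \ref{336}, and the observation that a biLipschitz copy of a Lipschitz retract of an absolute Lipschitz retract (here $\ell_\infty^N$) is again one. The paper leaves this composition implicit, and your more spelled-out version, including the explicit extension $\hat{h} = \bxi^{-1} \circ \brho \circ G$, is a faithful unpacking of the same argument.
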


\begin{Question}
\label{339}
If $Y$ is an absolute Lipschitz retract, is $\calQ_Q(Y)$ also one? Are $\calQ_Q(\ell_\infty)$ and $\calQ_Q(C[0,1])$ absolute Lipschitz retracts? Are they absolute uniform retracts?
\end{Question}

\subsection{Lipeomorphic embedding into $\rmLip_{y_0}(Y)^*$}

Let $(Y,y_0)$ be a pointed metric space, i.e. a metric space $Y$ together with a distinguished point $y_0 \in Y$. We denote by $\rmLip_{y_0}(Y)$ the collection of those Lipschitz continuous functions $u : Y \to \R$ vanishing at $y_0$. This is a Banach space equipped with the norm $\|u\|_{\rmLip} = \rmLip u$.
With each $v = \lseg y_1,\ldots,y_Q \rseg \in \calQ_Q(Y)$ we associate a linear functional
\begin{equation}
\label{eq.11}
\bzeta(v) : \rmLip_{y_0}(Y) \to \R : u \mapsto \sum_{i=1}^Q u(y_i) \,.
\end{equation}
One readily checks that $\bzeta(v)$ is continuous and
\begin{equation*}
\|\bzeta(v) \|_{(\rmLip Y)^*} \leq \sum_{i=1}^Q d(y_i,y_0) \,.
\end{equation*}
In particular $\|\bzeta(v) \|_{(\rmLip Y)^*} \leq Q (\rmdiam Y)$ so that $\bzeta$ is bounded when $Y$ is. Notice also that $\bzeta(Q \lseg y_0 \rseg) = 0$.
We shall now show that
\begin{equation*}
\bzeta : \calQ_Q(Y) \to \rmLip_{y_0}(Y)^*
\end{equation*}
is a lipeomorphic embedding.

\begin{Theorem}
\label{341}
There exists $c_Q > 0$ such that for every pointed metric space $(Y,y_0)$ and every $v,v' \in \calQ_Q(Y)$ one has
\begin{equation*}
c_Q \calG_1(v,v') \leq \|\bzeta(v)-\bzeta(v') \|_{(\rmLip Y)^*} \leq \calG_1(v,v') \,.
\end{equation*}
\end{Theorem}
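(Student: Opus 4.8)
The plan is to prove the sharper statement that $\|\bzeta(v)-\bzeta(v')\|_{(\rmLip Y)^*} = \calG_1(v,v')$ for all $v,v' \in \calQ_Q(Y)$, so that the asserted inequalities hold with $c_Q = 1$. Conceptually, $\|\bzeta(v)-\bzeta(v')\|_{(\rmLip Y)^*}$ is the Kantorovich--Rubinstein (Wasserstein-$1$) distance between the $Q$-atomic measures $\mu_v$ and $\mu_{v'}$, while $\calG_1(v,v')$ is an optimal assignment cost, and for these two equal-mass atomic measures the two quantities coincide.

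First I would fix numberings $v = \lseg y_1,\ldots,y_Q \rseg$, $v' = \lseg y'_1,\ldots,y'_Q \rseg$ and write $\|\bzeta(v)-\bzeta(v')\|_{(\rmLip Y)^*} = \sup\{(\bzeta(v)-\bzeta(v'))(u) : u \in \rmLip_{y_0}(Y),\ \rmLip u \le 1\}$. Since $\mu_v$ and $\mu_{v'}$ have the same total mass $Q$, the functional $\bzeta(v)-\bzeta(v')$ annihilates constants, so for any $1$-Lipschitz $u : Y \to \R$ the function $u - u(y_0) \in \rmLip_{y_0}(Y)$ realizes the same value; hence the normalization $u(y_0)=0$ may be dropped throughout. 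The upper bound is then immediate: for every $\sigma \in S_Q$ and every $1$-Lipschitz $u$, $|(\bzeta(v)-\bzeta(v'))(u)| = \bigl|\sum_{i=1}^Q (u(y_i) - u(y'_{\sigma(i)}))\bigr| \le \sum_{i=1}^Q d(y_i, y'_{\sigma(i)})$, and minimizing over $\sigma$ gives $\|\bzeta(v)-\bzeta(v')\|_{(\rmLip Y)^*} \le \calG_1(v,v')$.

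For the reverse inequality I would pass to the finite transportation linear program associated with the cost matrix $c_{ij} = d(y_i,y'_j)$. By the Birkhoff--von Neumann theorem the extreme points of the polytope of doubly stochastic $Q\times Q$ matrices are the permutation matrices, so $\min\{\sum_{i,j}\pi_{ij}c_{ij} : \pi \text{ doubly stochastic}\} = \min_{\sigma\in S_Q}\sum_i c_{i\sigma(i)} = \calG_1(v,v')$. Linear programming duality then yields real numbers $a_1,\ldots,a_Q,b_1,\ldots,b_Q$ with $a_i + b_j \le d(y_i,y'_j)$ for all $i,j$ and $\sum_i a_i + \sum_j b_j = \calG_1(v,v')$. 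I would then set $u(z) = \min_{1\le j\le Q}\bigl(d(z,y'_j) - b_j\bigr)$; being a minimum of finitely many $1$-Lipschitz functions, $u$ is itself $1$-Lipschitz and finite-valued, dual feasibility gives $u(y_i) \ge a_i$ for each $i$, and $u(y'_j) \le d(y'_j,y'_j) - b_j = -b_j$ for each $j$. Therefore $(\bzeta(v)-\bzeta(v'))(u) = \sum_i u(y_i) - \sum_j u(y'_j) \ge \sum_i a_i + \sum_j b_j = \calG_1(v,v')$, and replacing $u$ by $u - u(y_0)$ (which leaves the value unchanged) exhibits an admissible competitor, whence $\|\bzeta(v)-\bzeta(v')\|_{(\rmLip Y)^*} \ge \calG_1(v,v')$.

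The main point to watch is that there is no genuine analytic difficulty here — everything collapses to a finite linear program — but a few bookkeeping issues deserve care: (i) the argument must be insensitive to coincidences among the $y_i$ or among the $y'_j$, which is automatic because we work with the $Q\times Q$ cost matrix with multiplicities built in; (ii) the Kantorovich potential is determined only up to an additive constant, and this is exactly the freedom used to enforce $u(y_0)=0$; and (iii) the formula $z\mapsto \min_j(d(z,y'_j)-b_j)$ is precisely the McShane--Whitney extension of the optimal dual vector, so it is globally defined on $Y$ with Lipschitz constant $1$ without any further work. If one prefers to avoid invoking Birkhoff--von Neumann and LP duality, one can instead build the potentials directly from a cost-minimizing permutation using the cyclic inequalities it satisfies, at the price of a constant $c_Q < 1$; the route above, however, gives the optimal value $c_Q = 1$ and in fact the identity $\|\bzeta(v)-\bzeta(v')\|_{(\rmLip Y)^*} = \calG_1(v,v')$.
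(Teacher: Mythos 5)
Your proof is correct and takes a genuinely different route from the paper's, yielding a sharper conclusion. The paper argues by induction on $Q$: the base case $Q=1$ gives $c_1=1$ with the test function $d(y_1,\cdot)-d(y_1,y_0)$, and the inductive step splits into two cases according to whether $\rmdist(\rmsupp\mu_v,\rmsupp\mu_{v'})$ exceeds $\alpha\,\calG_1(v,v')$ for a tunable $\alpha$: when the supports are far a two-level step function separates them, and when they are close one removes a closest matched pair, invokes the inductive hypothesis on the remaining $Q$-tuples, and pays an error controlled by the triangle inequality. This yields the recursion $c_{Q+1}=\min\{\alpha(Q+1),\ c_Q/2-\alpha(1+c_Q/2)\}$ and a constant that deteriorates roughly geometrically in $Q$. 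You instead recognize $\|\bzeta(v)-\bzeta(v')\|_{(\rmLip Y)^*}$ as the Kantorovich--Rubinstein norm of $\mu_v-\mu_{v'}$, use the Birkhoff--von Neumann theorem to identify the primal transport cost over doubly stochastic matrices with the assignment cost $\calG_1(v,v')$, and then use strong LP duality together with the McShane-type extension $u(z)=\min_j\bigl(d(z,y'_j)-b_j\bigr)$ of the optimal dual potential to exhibit an extremal $1$-Lipschitz competitor. This proves the identity $\|\bzeta(v)-\bzeta(v')\|_{(\rmLip Y)^*}=\calG_1(v,v')$, hence the sharp constant $c_Q=1$ for every $Q$. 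The paper's argument avoids appealing to LP duality, but your route is both shorter and quantitatively much stronger, and the one bookkeeping point you flag, namely the normalization $u(y_0)=0$, is indeed harmless because $\bzeta(v)-\bzeta(v')$ annihilates constants.
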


\begin{proof}
We start with the second inequality. Let $v,v' \in \calQ_Q(Y)$ and choose numberings $v=\lseg y_1,\ldots,y_Q \rseg$ and $v' = \lseg y'_1,\ldots,y'_Q \rseg$ so that $\calG_1(v,v') = \sum_{i=1}^Q d(y_i,y'_i)$. It is clear that
\begin{equation*}
\begin{split}
\|\bzeta(v) - \bzeta(v') \|_{(\rmLip Y)^*} & = \sup \Bigg\{ \left| \sum_{i=1}^Q u(y_i) - \sum_{i=1}^Q u(y'_i) \right| : u \in \rmLip_{y_0}(Y) \\
& \qquad\qquad\qquad\qquad\qquad\qquad\qquad\qquad\text{ and } \rmLip u \leq 1 \Bigg\} \\
& \leq \sum_{i=1}^Q d(y_i,y'_i) \\
& = \calG_1(v,v') \,.
\end{split}
\end{equation*}
\par
We now turn to proving the first inequality, by induction on $Q$. If $Q=1$ then the inequality is verified with $c_1=1$. Indeed, given $v = \lseg y_1 \rseg$ and $v' = \lseg y'_1 \rseg$ we let $u(y) = d(y_1,y)-d(y_1,y_0)$ so that $u \in \rmLip_{y_0}(Y)$, $\rmLip u \leq 1$, and
\begin{equation*}
\|\bzeta(v) - \bzeta(v')\|_{(\rmLip Y)^*} \geq | u(y_1) - u(y'_1) | = d(y_1,y'_1) = \calG_1(v,v') \,.
\end{equation*}
We now assume the conclusion holds for $Q$ and we establish it for $Q+1$. Let $v,v' \in \calQ_{Q+1}(Y)$ and write $v = \oplus_{i=1}^{Q+1} \lseg y_i \rseg$ and $v' = \oplus_{i=1}^{Q+1} \lseg y'_i \rseg$. We let $\alpha > 0$ to be determined later, and we distinguish between two cases.
\par
{\em First case}. We assume that
\begin{equation*}
\rmdist( \rmsupp \mu_v , \rmsupp \mu_{v'} ) = \min \{ d(y_i,y'_j) : i,j =1,\ldots,Q+1 \} > \alpha \calG_1(v,v') \,.
\end{equation*}
We define $u_0 : (\rmsupp \mu_v) \cup (\rmsupp \mu_{v'}) \to \R$ by letting $u_0(y_i) = 0$ and $u_0(y'_i) = \alpha \calG_1(v,v')$, $i=1,\ldots,Q+1$. It is most obvious that $\rmLip u_0 \leq 1$ and we let $\hat{u}_0$ be an extension of $u_0$ to $Y$ such that $\rmLip \hat{u}_0 \leq 1$, whose existence follows from the McShane-Whitney Theorem. Finally we let $u = \hat{u}_0 - \hat{u}_0(y_0) \ind_Y$ and we observe that
\begin{equation*}
\| \bzeta(v)-\bzeta(v')\|_{(\rmLip Y)^*} \geq \left| \sum_{i=1}^{Q+1} u(y_i) - \sum_{i=1}^{Q+1} u(y'_i) \right| = \alpha (Q+1) \calG_1(v,v') \,.
\end{equation*}
\par
{\em Second case}. We assume that
\begin{equation*}
\rmdist( \rmsupp \mu_v , \rmsupp \mu_{v'} ) \leq  \alpha \calG_1(v,v') \,.
\end{equation*}
Choose $i_0,j_0 \in \{1,\ldots,Q+1\}$ such that $d(y_{i_0},y'_{j_0}) = \rmdist(\rmsupp \mu_v , \rmsupp \mu_{v'} )$. Define $\tilde{v},\tilde{v}' \in \calQ_Q(Y)$ by
\begin{equation*}
\tilde{v} = \oplus_{i \neq i_0} \lseg y_i \rseg \text{ and } \tilde{v}' = \oplus_{j \neq j_0} \lseg y'_j \rseg \,.
\end{equation*}
According to the induction hypothesis there exists $u \in \rmLip_{y_0}(Y)$ with $\rmLip u \leq 1$ and
\begin{equation*}
\left| \sum_{i \neq i_0} u(y_i) - \sum_{j \neq j_0} u(y'_j) \right| \geq \frac{1}{2} \| \bzeta(\tilde{v}) - \bzeta(\tilde{v}') \|_{(\rmLip Y)^*} \geq \frac{c_Q}{2} \calG_1( \tilde{v} ,\tilde{v}') \,.
\end{equation*}
Since readily $\calG_1(\tilde{v},\tilde{v}') + d(y_{i_0},y'_{j_0}) \geq \calG_1(v,v')$ we infer that
\begin{equation*}
\begin{split}
\| \bzeta(v) - \bzeta(v') \|_{(\rmLip Y)^*} & \geq \left| \sum_{i=1}^{Q+1} u(y_i)- \sum_{j=1}^{Q+1} u(y'_j) \right| \\
& \geq \frac{c_Q}{2} \calG_1(\tilde{v},\tilde{v}') - | u(y_{i_0}) - u(y'_{j_0}) | \\
& \geq \frac{c_Q}{2} \calG_1(v,v') - \frac{c_Q}{2}d(y_{i_0},y'_{j_0}) - d(y_{i_0},y'_{j_0}) \\
& \geq  \left( \frac{c_Q}{2} - \alpha \left( 1 + \frac{c_Q}{2} \right) \right) \calG_1(v,v') \,.
\end{split}
\end{equation*}
We now choose $\alpha > 0$ small enough for $\frac{c_Q}{2} - \alpha \left( 1 + \frac{c_Q}{2} \right) > 0$ and we set
\begin{equation*}
c_{Q+1} = \min \left\{ \alpha (Q+1) , \frac{c_Q}{2} - \alpha \left( 1 + \frac{c_Q}{2} \right) \right\}
\end{equation*}
so that, in both cases,
\begin{equation*}
\| \bzeta(v)-\bzeta(v') \|_{(\rmLip Y)^*} \geq c_{Q+1} \calG_1(v,v') \,.
\end{equation*}
\end{proof}

\section{Sobolev classes}

\subsection{Definition of $L_p(X,\calQ_Q(Y))$}

Let $(Y,y_0)$ be a pointed metric space as usual, let $(X,\frA,\mu)$ be a measure space, and let $1 \leq p < \infty$. We denote by $L_p(X,\calQ_Q(Y))$ the collection of mappings $f : X \to \calQ_Q(Y)$ verifying the following requirements:
\begin{enumerate}
\item[(A)] $f$ is $(\frA,\frB_{\calQ_Q(Y)})$ measurable;
\item[(B)] The function $X \to \R : x \mapsto \calG_2(f(x),Q\lseg y_0\rseg)^p$ is $\mu$ summable.
\end{enumerate}
In the remaining part of this paper we shall abbreviate
\begin{equation*}
\lno f(x) \rno = \calG_2(f(x),Q\lseg y_0 \rseg) \,,
\end{equation*}
$x \in X$, and we keep in mind that no ambiguity should occur from the lack of mention of $y_0$ in the abbreviation\footnote{In case $Y$ is a Banach space it will be implicitly assumed that $y_0=0$}. If $f \in L_p(X,\calQ_Q(Y))$ we also set the notation
\begin{equation*}
\lno f \rno_{L_p} = \left( \int_X \lno f \rno^p d\mu \right)^{\frac{1}{p}} \,.
\end{equation*}
Of course $L_p(X,\calQ_Q(Y))$ need not be a linear space. It is most obvious that the formula
\begin{equation*}
d_p(f,g) = \left( \int_X \calG_2(f,g)^p d\mu \right)^{\frac{1}{p}}
\end{equation*}
defines a semimetric on $L_p(X,\calQ_Q(Y))$. As in the scalar case, we have:

\begin{Proposition}
\label{411}
Assume that $Y$ is a complete metric space. It follows that $L_p(X,\calQ_Q(Y))[d_p]$ is a complete semimetric space, and each Cauchy sequence contains a subsequence converging pointwise almost everywhere.
\end{Proposition}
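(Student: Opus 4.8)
The plan is to run the classical Riesz--Fischer argument, using the completeness of $(\calQ_Q(Y),\calG_2)$ supplied by Proposition \ref{211} in place of the completeness of the scalar line. Let $\{f_j\}$ be a $d_p$-Cauchy sequence. First I would pass to a subsequence $\{f_{j_k}\}$ with $d_p(f_{j_k},f_{j_{k+1}}) \leq 2^{-k}$ for every $k$, and set $g_k = \calG_2(f_{j_k},f_{j_{k+1}}) : X \to [0,\infty)$, a nonnegative $\frA$-measurable function (this measurability is already implicit in $d_p$ being a bona fide semimetric on $L_p(X,\calQ_Q(Y))$) with $\|g_k\|_{L_p(\mu)} = d_p(f_{j_k},f_{j_{k+1}}) \leq 2^{-k}$. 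Applying the Minkowski inequality in scalar $L_p$ to the partial sums and then the monotone convergence theorem, the function $G = \sum_{k \in \N_0} g_k$ satisfies $\|G\|_{L_p(\mu)} \leq \sum_k 2^{-k} = 1 < \infty$; hence the $\frA$-measurable set $X_0 = \{x : G(x) < \infty\}$ is conull.

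Next, fix $x \in X_0$. For $k < l$ the triangle inequality for $\calG_2$ gives $\calG_2(f_{j_k}(x),f_{j_l}(x)) \leq \sum_{i=k}^{l-1} g_i(x) \leq \sum_{i \geq k} g_i(x)$, and the right-hand side tends to $0$ as $k \to \infty$ because $G(x) < \infty$; thus $\{f_{j_k}(x)\}$ is $\calG_2$-Cauchy. Since $Y$ is complete, so is $(\calQ_Q(Y),\calG_2)$ by Proposition \ref{211}, so the limit exists; I define $f(x) = \lim_k f_{j_k}(x)$ for $x \in X_0$ and $f(x) = Q\lseg y_0 \rseg$ for $x \notin X_0$. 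On $X_0$ the map $f$ is the pointwise limit of the $\frA$-measurable maps $f_{j_k}$, hence $\frA$-measurable there: for every closed $C \subset \calQ_Q(Y)$ one has $(f\restriction_{X_0})^{-1}(C) = X_0 \cap \bigcap_{n \in \N_0} \bigcup_{m \in \N_0} \bigcap_{k \geq m} f_{j_k}^{-1}\big(\{v : \calG_2(v,C) < 1/n\}\big) \in \frA$, and the Borel subsets of $\calQ_Q(Y)$ whose $(f\restriction_{X_0})$-preimage lies in $\frA$ form a $\sigma$-algebra containing the closed sets; being constant on the $\frA$-measurable set $X \setminus X_0$, $f$ is then $\frA$-measurable on all of $X$.

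It remains to verify that $f \in L_p(X,\calQ_Q(Y))$ and that $d_p(f_{j_k},f) \to 0$. Letting $l \to \infty$ in the inequality above gives, for almost every $x$, $\calG_2(f_{j_k}(x),f(x)) \leq \sum_{i \geq k} g_i(x) =: G_k(x)$, with $\|G_k\|_{L_p(\mu)} \leq \sum_{i \geq k} 2^{-i} = 2^{1-k}$. Taking $k=1$ and using $\lno f \rno \leq \lno f_{j_1} \rno + G_1$ a.e. (triangle inequality) yields $\lno f \rno_{L_p} \leq \lno f_{j_1} \rno_{L_p} + \|G_1\|_{L_p(\mu)} < \infty$, so $f \in L_p(X,\calQ_Q(Y))$; taking $k$ arbitrary yields $d_p(f_{j_k},f) \leq 2^{1-k} \to 0$. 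Finally $d_p(f_j,f) \leq d_p(f_j,f_{j_k}) + d_p(f_{j_k},f)$ together with the Cauchy property of $\{f_j\}$ forces $d_p(f_j,f) \to 0$, and $\{f_{j_k}\}$ is the announced subsequence converging pointwise almost everywhere. No step is a serious obstacle: the only points requiring a moment's attention are the measurability of the pointwise limit $f$ and the appeal to Proposition \ref{211} to transport completeness from $Y$ to $\calQ_Q(Y)$; the rest is the textbook $L_p$-completeness proof.
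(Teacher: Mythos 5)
Your proof is correct and follows the same classical Riesz--Fischer blueprint as the paper's own (commented-out) argument: extract a rapidly convergent subsequence, invoke Proposition \ref{211} to transport completeness from $Y$ to $\calQ_Q(Y)$ and obtain a pointwise a.e.\ limit, then conclude $L_p$-convergence. The bookkeeping differs slightly. The paper obtains the a.e.\ pointwise convergence by a Chebyshev estimate on the sets $X_j = \{x : \calG_2(f_{k_{j+1}}(x),f_{k_j}(x))^p \geq 2^{-j}\}$ followed by a Borel--Cantelli argument, and then gets $d_p(f,f_{k_j}) \to 0$ from Fatou's lemma; you instead establish the $L_p$-summability of the telescoping increments $g_k$ via Minkowski and monotone convergence, which simultaneously yields the a.e.\ pointwise Cauchy property (from $G<\infty$ a.e.) and, through the tail sums $G_k$, an explicit dominating bound giving the $L_p$-convergence directly. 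These are standard interchangeable devices; yours is, if anything, a little more streamlined and has the additional merit of spelling out the $\frA$-measurability of the pointwise limit $f$ (via preimages of closed sets and a $\sigma$-algebra argument), a point the paper's proof merely asserts. One small remark: as in the paper, the measurability of $x \mapsto \calG_2(f_1(x),f_2(x))$ is silently used; strictly speaking this relies on $\calQ_Q(Y)$ being separable so that joint Borel measurability of $(f_1,f_2)$ follows from componentwise measurability, but the paper takes this for granted in the very definition of $d_p$, so you are entitled to do the same.
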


\subsection{Analog of the Fr\'echet-Kolmogorov compactness Theorem}

\begin{Theorem}
\label{421}
Assume that $1 \leq p < \infty$ and that:
\begin{enumerate}
\item[(A)] $(X,\frB_X,\lambda)$ is a finite dimensional Banach space with a Haar measure $\lambda$ defined on the $\sigma$ algebra $\frB_X$ of Borel subsets of $X$;
\item[(B)] $Y$ is a compact metric space, and $y_0 \in Y$;
\item[(C)] $\calF \subset L_p(X,\calQ_Q(Y))$ is a family subjected to the following requirements:
\begin{enumerate}
\item[(i)] $\sup \{ \lno f \rno_{L_p} : f \in \calF \} < \infty$;
\item[(ii)] For every $\veps > 0$ there exists a neighbourhood $U$ of $0$ in $X$ such that
\begin{equation*}
\sup \{ d_p(\tau_h f , f) : f \in \calF \} < \veps
\end{equation*}
whenever $h \in U$, where $(\tau_h f)(x) := f(x+h)$;
\item[(iii)] For every $\veps > 0$ there exists a compact $K \subset X$ such that
\begin{equation*}
\sup \{ d_p(f,f_K) : f \in \calF \} < \veps \,,
\end{equation*}
where
\begin{equation*}
f_K(x) = \begin{cases} f(x) & \text{ if } x \in K \\
Q \lseg y_0 \rseg & \text{ if } x \not\in K \,.
\end{cases}
\end{equation*}
\end{enumerate}
\end{enumerate}
It follows that $\calF$ is relatively compact in $L_p(X,\calQ_Q(Y))[d_p]$.
\end{Theorem}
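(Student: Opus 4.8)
The plan is to transport the problem, by means of the lipeomorphic embedding $\bzeta$ of Theorem \ref{341}, to an $L_p$ space of Banach-space valued maps, where the classical mollification argument behind the Fr\'echet--Kolmogorov theorem applies.

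First I would set $E = \rmLip_{y_0}(Y)^*$ and $M = \bzeta(\calQ_Q(Y)) \subset E$. Since $Y$ is compact, $\calQ_Q(Y)$ is compact (Proposition \ref{211}), and $\bzeta$ being $1$-Lipschitz for $\calG_1$, $M$ is a compact subset of $E$ with $\sup_{w \in M}\|w\|_E \leq Q\,\rmdiam Y$; by Mazur's theorem its closed convex hull $\widehat{M} = \overline{\mathrm{conv}}\,M$ is compact as well. For $f \in L_p(X,\calQ_Q(Y))$ the composition $F := \bzeta \circ f : X \to E$ is Borel measurable and $M$-valued, hence Bochner measurable by Pettis, and the two-sided estimate of Theorem \ref{341} combined with $\calG_2 \leq \calG_1 \leq \sqrt{Q}\,\calG_2$ and $\bzeta(Q\lseg y_0 \rseg) = 0$ gives $c_Q\,d_p(f,g) \leq \|\,\bzeta\circ f - \bzeta\circ g\,\|_{L_p(X;E)} \leq \sqrt{Q}\,d_p(f,g)$ for all $f,g$ (in particular $\bzeta\circ f \in L_p(X;E)$). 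Thus the family $\widetilde{\calF} := \{\bzeta\circ f : f \in \calF\} \subset L_p(X;E)$ inherits from hypothesis (C) its three standard features: $L_p(X;E)$-boundedness, $\sup_{\widetilde{\calF}}\|\tau_h F - F\|_{L_p(X;E)} \to 0$ as $h \to 0$ (note $\tau_h(\bzeta\circ f) = \bzeta\circ\tau_h f$), and uniform $L_p(X;E)$-concentration on a compact set (note $\bzeta\circ f_K = (\bzeta\circ f)\ind_K$). It therefore suffices to prove that $\widetilde{\calF}$ is relatively compact in $L_p(X;E)$.

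Next I would fix a linear isomorphism identifying $(X,\lambda)$ with $\R^m$ carrying a multiple of Lebesgue measure (translations are preserved) and take an approximate identity $(\rho_\delta)_{\delta > 0}$ with $\rho_\delta \geq 0$, $\rho_\delta \in C_c^\infty$, $\int \rho_\delta = 1$, $\rmsupp \rho_\delta \subset \{|x| \leq \delta\}$. For $F \in \widetilde{\calF}$ put $F_\delta = \rho_\delta \ast F$, a Bochner integral which is well defined because $F$ is bounded and Bochner measurable. Two facts drive the argument. By Minkowski's integral inequality $\|F_\delta - F\|_{L_p(X;E)} \leq \sup_{|h| \leq \delta}\|\tau_h F - F\|_{L_p(X;E)}$, which by the transported hypothesis (C)(ii) tends to $0$ as $\delta \to 0$ \emph{uniformly over} $\widetilde{\calF}$. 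And for $\delta$ fixed, each $F_\delta$ is an average of points of $M$, hence takes values in the compact set $\widehat{M}$, while $\|F_\delta(x) - F_\delta(x')\|_E \leq (\sup_{w \in M}\|w\|_E)\,\|\tau_x \rho_\delta - \tau_{x'}\rho_\delta\|_{L^1}$ shows that $\{F_\delta : F \in \widetilde{\calF}\}$ is equicontinuous; by Ascoli's theorem it is relatively compact in $C(K;E)$, hence in $L_p(K;E)$, for every compact $K \subset \R^m$. Since $\rho_\delta\ast(F\ind_K)$ is supported in the $\delta$-neighbourhood of $K$ and $\|\rho_\delta\ast F - \rho_\delta\ast(F\ind_K)\|_{L_p} \leq \|F - F\ind_K\|_{L_p}$, hypothesis (C)(iii) makes $\{F_\delta : F \in \widetilde{\calF}\}$ uniformly $L_p$-concentrated on a compact set as well; combining the two, this family is totally bounded in $L_p(X;E)$ for each fixed $\delta$.

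Finally I would assemble these: given $\veps > 0$, pick $\delta$ with $\sup_{\widetilde{\calF}}\|F_\delta - F\|_{L_p(X;E)} < \veps/2$ and cover the totally bounded set $\{F_\delta : F \in \widetilde{\calF}\}$ by finitely many $L_p$-balls of radius $\veps/2$; this exhibits $\widetilde{\calF}$ as totally bounded in $L_p(X;E)$, and transporting back through the bi-Lipschitz $\bzeta$ makes $\calF$ totally bounded in $L_p(X,\calQ_Q(Y))[d_p]$, which by completeness of that space (Proposition \ref{411}) yields the asserted relative compactness. The one genuinely delicate point is the $E$-valued mollification, since $E = \rmLip_{y_0}(Y)^*$ is typically non-separable; this is circumvented precisely because $F$ stays in the compact, hence separable, set $M$ and, crucially, the mollified functions $F_\delta$ remain confined to the compact set $\widehat{M}$ by Mazur's theorem, which is exactly what makes Ascoli's theorem applicable.
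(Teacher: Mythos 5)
Your proof is correct and follows essentially the same strategy as the paper's: transport the family into $L_p(X;\rmLip_{y_0}(Y)^*)$ via the bi-Lipschitz embedding $\bzeta$ of Theorem \ref{341}, mollify, note by Mazur's theorem that the mollified functions take values in the compact convex hull of $\rmim\bzeta$, apply Ascoli, and assemble total boundedness. The only cosmetic differences are that you organize the argument as an explicit reduction to a compact-range Banach-valued Fr\'echet--Kolmogorov theorem with a family of mollifiers $\rho_\delta$, and you derive equicontinuity directly from the $L^\infty$ bound on $\bzeta\circ f$ together with continuity of translation in $L^1$, whereas the paper fixes one mollifier per $\veps$ and uses a H\"older estimate; both are sound.
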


\begin{proof}
In this proof we will abbreviate $\|\cdot\| = \|\cdot\|_{(\rmLip Y)^*}$.
In view of the completeness of $L_p(X,\calQ_Q(Y))$ (Proposition \ref{411}) we need only to show that $\calF$ is totally bounded. Let $\veps > 0$ and choose $U$ and $K$ according to hypotheses (C)(ii) and (C)(iii). There is no restriction to assume that $\rmClos U$ is compact. We next secure a continuous function $\vphi : X \to \R^+$ such that $\rmsupp \vphi \subset U$ and $\int_X \vphi d \lambda = 1$. Given $f \in \calF$ we consider the map
\begin{equation*}
\bzeta \circ f_K : X \to \rmLip_{y_0}(Y)^*
\end{equation*}
and we observe that it is $(\frB_X,\frB_{\rmLip_{y_0}(Y)^*})$-measurable, separably valued (in fact $\rmim \bzeta \circ f_K \subset \rmim \bzeta$ and the latter is compact according to the continuity of $\bzeta$, Theorem \ref{341}, and the compactness of $\calQ_Q(Y)$, Proposition \ref{211}). It therefore ensues from the Pettis measurability Theorem, \cite[Chap. II \S 1 Theorem 2]{DIESTEL.UHL}, that $\bzeta \circ f_K$ is {\em strongly} measurable, i.e. the pointwise $\lambda$ almost everywhere limit of a sequence of $(\frB_X,\frB_{\rmLip_{y_0}(Y)^*})$-measurable functions with finite range. Furthermore $\bzeta \circ f_K$ is bounded (because $Y$ is) and compactly supported (because $\bzeta(Q \lseg y_0 \rseg)=0$), so that the Lebesgue integral $\int_X \|\bzeta \circ f_K \| d\lambda < \infty$. Thus $\bzeta \circ f_K$ is Bochner integrable. We define the convolution product of $\vphi$ and $\bzeta \circ f_K$ by means of the Bochner integral:
\begin{equation*}
(\vphi * \bzeta \circ f_K)(x) = (B) \int_X \vphi(h) (\bzeta \circ f_K)(x+h) d\lambda(h) \,,\qquad x \in X \,.
\end{equation*}
\par
We now claim that each $\vphi * (\bzeta \circ f_K)$ is continuous and, in fact, that the family $C(X,\rmLip_{y_0}(Y)^*) \cap \{ \vphi * (\bzeta \circ f_K) : f \in \calF \}$ is equicontinuous. Given $x,x' \in X$ we simply observe that
\begin{equation*}
\begin{split}
\| (\vphi * &\bzeta \circ f_K)(x) -  (\vphi * \bzeta \circ f_K)(x') \| \\
& = \left\| (B) \int_X (\vphi(h)-\vphi(h+x-x'))(\bzeta \circ f_K)(x+h)d\lambda(h) \right\| \\
& \leq \left( \int_X | \vphi(h) - \vphi(h+x-x')|^{\frac{p}{p-1}} d\lambda(h) \right)^{1 - \frac{1}{p}} \left( \int_X \|(\bzeta \circ f_K)(x+h)\|^pd\lambda(h) \right)^\frac{1}{p} \\
& \leq \rmosc(\vphi,\|x-x'\|_X) \lambda(U+B_X(0,\|x-x'\|_X))^{1-\frac{1}{p}} \lno f \rno_{L_p} \,,
\end{split}
\end{equation*}
according to \cite[Chap. II \S 2 Theorem 4(ii)]{DIESTEL.UHL}, H\"older's inequality, and Theorem \ref{341}. The equicontinuity follows from the uniform continuity of $\vphi$ and hypothesis (C)(i).
\par
We denote by $C$ the closed convex hull of $\rmim \bzeta$ in the Banach space $\rmLip_{y_0}(Y)^*$. As $\rmim \bzeta$ is compact it ensues from Mazur's Theorem that $C$ is compact as well. Furthermore, the definition of the convolution product guarantees that $(\vphi * \bzeta \circ f_K)(x) \in C$ for every $x \in X$. It therefore follows from Ascoli's Theorem, \cite[0.4.11]{EDWARDS}, that the family $C(X,\rmLip_{y_0}(Y)^*) \cap \{ \vphi * (\bzeta \circ f_K) : f \in \calF \}$ is relatively compact in $C_c(X,\rmLip_{y_0}(Y)^*)$ with respect to uniform convergence (note that $\rmsupp (\vphi * \bzeta \circ f_K) \subset K + \rmClos U$, a compact set
independent of $f$). Consequently there are $f^1,\ldots,f^{\kappa} \in \calF$ such that for every $f \in \calF$ there exists $k \in \{1,\ldots,\kappa\}$ with
\begin{equation}
\label{eq.12}
\| (\vphi * \bzeta \circ f_K)(x) - (\vphi * \bzeta \circ f^k_K)(x) \| < \veps \lambda(K + \rmClos U)^{- \frac{1}{p}}
\end{equation}
for every $x \in X$.
\par
Now given $f \in \calF$ we choose $k$ so that \eqref{eq.12} holds and we aim at showing that $d_p(f,f^k) < D \veps$ where $D$ is a suitable constant; this will complete the proof. We start with the observation that
\begin{equation*}
d_p(f,f^k) \leq d_p(f,f_K) + d_p(f_K,f^k_K) + d_p(f^k_K,f^k) \leq 2 \veps + d_p(f_K,f^k_K)
\end{equation*}
according to hypothesis (C)(iii). Next we infer from Theorem \ref{341} and \eqref{eq.12} that
\begin{equation*}
\begin{split}
c_Q d_p(f_K,f^k_K) & \leq \left( \int_X \| (\bzeta \circ f_K) - (\bzeta \circ f^k_K) \|^p d\lambda \right)^\frac{1}{p} \\
& \leq \left( \int_X \| (\bzeta \circ f_K) - (\vphi * \bzeta \circ f_K) \|^p d\lambda \right)^\frac{1}{p} \\
& \qquad + \left( \int_{K+\rmClos U} \| (\vphi * \bzeta \circ f_K) - (\vphi * \bzeta \circ f^k_K) \|^p d\lambda \right)^\frac{1}{p} \\
& \qquad + \left( \int_X \| (\vphi *\bzeta \circ f^k_K) - (\bzeta \circ f^k_K) \|^p d\lambda \right)^\frac{1}{p} \\
& \leq \left( \int_X \| (\bzeta \circ f_K) - (\vphi * \bzeta \circ f_K) \|^p d\lambda \right)^\frac{1}{p} + \veps\\
& \qquad + \left( \int_X \| (\vphi *\bzeta \circ f^k_K) - (\bzeta \circ f^k_K) \|^p d\lambda \right)^\frac{1}{p} \\
\end{split}
\end{equation*}
Thus it remains only to find a uniform small upper bound of
\begin{equation*}
\int_X \| (\bzeta \circ f_K) - (\vphi * \bzeta \circ f_K) \|^p d\lambda
\end{equation*}
whenever $f \in \calF$. Let $x \in X$, abbreviate $\mu = \lambda \hel \vphi$, and observe that
\begin{equation*}
\begin{split}
\|(\bzeta \circ f_K)(x)  &- (\vphi * \bzeta \circ f_K)(x) \| \\
& = \left\| (B) \int_X \vphi(h) \big( (\bzeta \circ f_K)(x) - (\bzeta \circ f_K)(x+h) \big) d\lambda(h) \right\|\\
& \leq \int_X \calG_1(f_K(x),\tau_hf_K(x)) d\mu(h) \,.
\end{split}
\end{equation*}
It then follows from Jensen's inequality applied to the probability measure $\mu$, and from Fubini's Theorem that
\begin{equation*}
\begin{split}
\int_X \| (\bzeta \circ f_K)(x) & - (\vphi * \bzeta \circ f_K)(x) \|^p d\lambda(x) \\
& \leq \int_X d\lambda(x) \left( \int_X \calG_1(f_K(x) , \tau_h f_K(x)) d\mu(h) \right)^p \\
& \leq \int_X d\lambda(x) \int_X \calG_1(f_K(x),\tau_h f_K(x))^p d\mu(h) \\
& = Q^{p/2}\int_U d\mu(h) d_p(f_K,\tau_h f_K)^p \\
& \leq Q^{p/2}\sup_{h \in U} d_p(f_K,\tau_h f_K)^p \,.
\end{split}
\end{equation*}
Consequently,
\begin{equation*}
\left( \int_X \| (\bzeta \circ f_K) - (\vphi * \bzeta \circ f_K) \|^p d\lambda \right)^\frac{1}{p} \leq 3 \sqrt{Q}\veps
\end{equation*}
according to hypotheses (C)(ii) and (iii). Therefore,
\begin{equation*}
d_p(f_K,f_K^k) \leq c_Q^{-1} (1 + 6\sqrt{Q}) \veps \,,
\end{equation*}
and finally,
\begin{equation*}
d_p(f,f^k) \leq \left( 2 + c_Q^{-1}(1+6\sqrt{Q}) \right) \veps \,.
\end{equation*}
\end{proof}

\subsection{Definition of $W^1_p(U;\calQ_Q(Y))$}

{\bf In this section $X$ is a finite dimensional Banach space with Haar measure $\lambda$, $U \subset X$ is either $X$ itself or a bounded open subset having the {\em extension property}\footnote{i.e. for every $1 < p < \infty$ there exists an extension operator $\bW^1_p(U) \to \bW^1_p(X)$ for classical Sobolev spaces; for instance $U$ has Lipschitz boundary}, $Y$ is a Banach space having the Radon-Nikod\' ym property, and $1 < p < \infty$.} The space $\rmHom(X,Y)$ is given a norm $\vvvert \cdot \vvvert$. We recall that each Lipschitz map $f : U \to \calQ_Q(Y)$ extends to a Lipschitz map $\hat{f} : X \to \calQ_Q(Y)$ according to Theorem \ref{243}, and that $\hat{f}$ is differentiable at $\lambda$ almost every $x \in U$, according to Theorem \ref{258}. For such $x$, writing $Df(x) = \oplus_{i=1}^Q \lseg L_i \rseg$, we recall that we have defined
\begin{equation*}
\lno Df(x) \rno = \sqrt{ \sum_{i=1}^Q \vvvert L_i \vvvert^2 } \,.
\end{equation*}
We define the {\em Sobolev class} $W^1_p(U;\calQ_Q(Y))$ to be the subset of $L_p(U;\calQ_Q(Y))$ consisting of those $f : U \to \calQ_Q(Y)$ for which there exists a sequence $\{f_j\}$ of Lipschitz mappings $X \to \calQ_Q(Y)$ with the following properties
\begin{enumerate}
\item[(1)] $f_j \in L_p(U;\calQ_Q(Y))$ and $\int_U \lno Df_j \rno^p d\lambda < \infty$ for every $j=1,2,\ldots$;
\item[(2)] $\sup_j \int_U \lno Df_j \rno^p d\lambda < \infty$;
\item[(3)] $d_p(f,f_j) \to 0$ as $j \to \infty$.
\end{enumerate}
In case $U$ is bounded, (1) is redundant.
\par
We define $\bW^1_p(U;\calQ_Q(Y))$ to be the quotient of $W^1_p(U;\calQ_Q(Y))$ relative to the equivalence relation $f_1 \sim f_2$ iff $\lambda \{f_1 \neq f_2 \} = 0$. We now recall the definition of F.J. Almgren's Sobolev class $Y_p(U;\calQ_Q(\ell_2^n))$. Here $X = \ell_2^m$ and $Y=\ell_2^n$. This is simply the collection of Borel functions $f : U \to \R^N$ (where $N=N(n,Q)$ is as in \ref{334}) such that $f $ is a member of the classical Sobolev space $W^1_p(U;\R^N)$, and $f(x) \in \bxi(\ell_2^n)$ for $\calL^m$ almost every $x \in U$. This is reminiscent of the definition of Sobolev mappings between Riemannian manifolds, except for $\calQ_Q(\ell_2^n)$ is not a Riemannian manifold, but merely a stratified space. We also let $\bY_p(U;\calQ_Q(\ell_2^n))$ denote the corresponding quotient relative to equality $\calL^m$ almost everywhere. We finally recall that $\rmHom(\ell_2^m,\ell_2^\nu)$ is equipped with the following norm
\begin{equation*}
\vvvert L \vvvert = \sqrt{ \sum_{j=1}^m \sum_{k=1}^\nu \la L(e_j) , e_k \ra^2 }
\end{equation*}
that appears in the following result.

\begin{Theorem}
\label{431}
Assuming that $X=\ell_2^m$ and $Y=\ell_2^n$, the mapping
\begin{equation*}
\Upsilon : W^1_p(U;\calQ_Q(\ell_2^n)) \to Y_p(U;\calQ_Q(\ell_2^n)) : f \mapsto \bxi \circ f
\end{equation*}
yields a bijection
\begin{equation*}
\bUpsilon : \bW^1_p(U;\calQ_Q(\ell_2^n)) \to \bY_p(U;\calQ_Q(\ell_2^n)) \,,
\end{equation*}
and
\begin{enumerate}
\item[(1)] $\int_U \calG_2(f(x),Q\lseg 0 \rseg)^p d\calL^m(x) = \int_U \| \Upsilon (f)(x) \|^p d\calL^m(x)$;
\item[(2)] If $f$ is Lipschitz then
\begin{equation*}
\int_U \lno Df(x) \rno^p d\calL^m(x) = \int_U \vvvert D \Upsilon(f)(x) \vvvert^p d\calL^m(x) \,.
\end{equation*}
\end{enumerate}
\end{Theorem}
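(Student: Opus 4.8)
The plan is to transport everything through Almgren's embedding $\bxi$ of Theorem~\ref{334}, its Lipschitz retraction $\brho$ of Theorem~\ref{336}, and the identity of derivatives supplied by Proposition~\ref{335}, thereby reducing all claims about $W^1_p(U;\calQ_Q(\ell_2^n))$ to facts about the classical vector-valued Sobolev space $W^1_p(U;\R^N)$.

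\emph{Step 1: $\Upsilon$ takes values in $Y_p(U;\calQ_Q(\ell_2^n))$.} Fix $f\in W^1_p(U;\calQ_Q(\ell_2^n))$ and a sequence $\{f_j\}$ of Lipschitz maps $X\to\calQ_Q(\ell_2^n)$ witnessing membership. Each $\bxi\circ f_j:X\to\R^N$ is Lipschitz (Theorem~\ref{334}(A) makes $\bxi$ be $1$-Lipschitz), hence differentiable $\calL^m$-a.e., while $f_j$ is differentiable $\calL^m$-a.e. by Theorem~\ref{258}(C); so Proposition~\ref{335} applies a.e. and gives $\vvvert D(\bxi\circ f_j)(x)\vvvert=\lno Df_j(x)\rno$ for $\calL^m$-a.e. $x\in U$. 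Hence $\int_U\vvvert D(\bxi\circ f_j)\vvvert^p\,d\calL^m=\int_U\lno Df_j\rno^p\,d\calL^m$ is bounded in $j$; by Theorem~\ref{334}(C), $\int_U\|\bxi\circ f_j\|^p\,d\calL^m=\lno f_j\rno_{L_p}^p$ is bounded too (since $d_p(f_j,f)\to 0$); and $\|\bxi\circ f_j-\bxi\circ f\|_{L_p(U;\R^N)}\leq d_p(f_j,f)\to 0$, again by Theorem~\ref{334}(A). A standard weak-compactness argument in $L_p$ (here $1<p<\infty$) then identifies the $L_p$-limit $\bxi\circ f$ with a weak limit of $\{\bxi\circ f_j\}$ in $W^1_p(U;\R^N)$, so $\bxi\circ f\in W^1_p(U;\R^N)$. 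Passing to a subsequence along which $\bxi\circ f_j\to\bxi\circ f$ $\calL^m$-a.e., and using that $\bxi(\calQ_Q(\ell_2^n))$ is closed in $\R^N$ (it is the biLipschitz image of the complete space $\calQ_Q(\ell_2^n)$, Proposition~\ref{211}), we get $\bxi(f(x))\in\bxi(\calQ_Q(\ell_2^n))$ a.e.; together with the Borel measurability of $\bxi\circ f$ this shows $\Upsilon(f)\in Y_p(U;\calQ_Q(\ell_2^n))$. The two stated identities are then immediate: identity (1) holds pointwise since $\lno f(x)\rno=\calG_2(f(x),Q\lseg 0\rseg)=\|\bxi(f(x))\|=\|\Upsilon(f)(x)\|$ by Theorem~\ref{334}(C); and for identity (2), $f$ Lipschitz makes both $f$ (Theorem~\ref{258}(C)) and $\bxi\circ f$ (Rademacher) differentiable $\calL^m$-a.e., whence $\lno Df(x)\rno=\vvvert D(\bxi\circ f)(x)\vvvert=\vvvert D\Upsilon(f)(x)\vvvert$ a.e.\ by Proposition~\ref{335}, and one integrates.

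\emph{Step 2: $\bUpsilon$ is a bijection.} Well-definedness on the quotients and injectivity are formal, because $\bxi$ is injective (Theorem~\ref{334}(A)): $\bxi\circ f_1=\bxi\circ f_2$ $\calL^m$-a.e.\ iff $f_1=f_2$ $\calL^m$-a.e. For surjectivity, take $g\in Y_p(U;\calQ_Q(\ell_2^n))$ and set $f(x)=\bxi^{-1}(g(x))$ on the full-measure Borel set where $g(x)\in\bxi(\calQ_Q(\ell_2^n))$ and $f(x)=Q\lseg 0\rseg$ elsewhere, so $f$ is a Borel $\calQ_Q(\ell_2^n)$-valued map with $\bxi\circ f=g$ a.e. Using the extension property of $U$, extend $g$ to an element of $W^1_p(X;\R^N)$, then multiply by a cutoff and mollify (diagonalising in the truncation radius when $U=X$) to obtain $C^\infty_c$, hence Lipschitz, maps $g_j:X\to\R^N$ with $g_j\to g$ in $W^1_p(U;\R^N)$; let $\brho:\R^N\to\bxi(\calQ_Q(\ell_2^n))$ be the retraction of Theorem~\ref{336} (note $\brho(0)=0$ since $0=\bxi(Q\lseg 0\rseg)\in\bxi(\calQ_Q(\ell_2^n))$ by Theorem~\ref{334}(C)) and put $f_j=\bxi^{-1}\circ\brho\circ g_j:X\to\calQ_Q(\ell_2^n)$. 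Each $f_j$ is Lipschitz; since $\bxi^{-1}$ is $\alpha^{-1}$-Lipschitz on $\bxi(\calQ_Q(\ell_2^n))$ one has $\calG_2(f_j(x),f(x))\leq\alpha^{-1}\|\brho(g_j(x))-\brho(g(x))\|\leq\alpha^{-1}(\rmLip\brho)\|g_j(x)-g(x)\|$, forcing $d_p(f_j,f)\to 0$; Proposition~\ref{335} applied to $f_j$ (noting $\bxi\circ f_j=\brho\circ g_j$) together with the elementary bound $\vvvert D(\brho\circ g_j)(x)\vvvert\leq(\rmLip\brho)\vvvert Dg_j(x)\vvvert$ — valid wherever $\brho\circ g_j$ is differentiable, because $\|\partial_{e}(\brho\circ g_j)(x)\|\leq(\rmLip\brho)\|Dg_j(x)e\|$ for each direction $e$ — gives $\int_U\lno Df_j\rno^p\,d\calL^m\leq(\rmLip\brho)^p\int_U\vvvert Dg_j\vvvert^p\,d\calL^m$, bounded in $j$; and $\lno f_j\rno_{L_p}=\|\brho\circ g_j\|_{L_p}\leq(\rmLip\brho)\|g_j\|_{L_p}<\infty$. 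Hence $f\in W^1_p(U;\calQ_Q(\ell_2^n))$ and $\bUpsilon$ sends its class onto that of $g$.

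\emph{Main obstacle.} The genuine content is Step~1 together with the surjectivity part of Step~2 — the two-way passage between the intrinsic, $d_p$-based definition of $W^1_p(U;\calQ_Q(\ell_2^n))$ and the classical space $W^1_p(U;\R^N)$. One direction needs a weak-compactness argument to see that the embedded map is classically Sobolev; the other needs the Lipschitz retraction $\brho$ (whose existence is the geometric crux, already proved in Theorem~\ref{336}) to push a classical Sobolev $\R^N$-valued map back to a sequence of Lipschitz $Q$-valued maps with controlled $p$-energy, with Proposition~\ref{335} guaranteeing that $p$-energies measured on the two sides of $\bxi$ coincide. Everything else is bookkeeping.
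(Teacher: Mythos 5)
Your proof is correct and follows essentially the same route as the paper: transport to $W^1_p(U;\R^N)$ via $\bxi$ and Proposition \ref{335} for the two identities, weak compactness of the embedded Lipschitz approximants to show $\Upsilon(f)\in Y_p$, and extension plus mollification plus the retraction $\brho$ of Theorem \ref{336} for surjectivity. The only cosmetic differences are that you invoke uniqueness of weak $L_p$ limits directly where the paper appeals to the compact embedding, and you spell out the a.e.\ membership of $\bxi\circ f$ in $\bxi(\calQ_Q(\ell_2^n))$ via closedness, which the paper leaves implicit.
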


\begin{proof}
We first show that $\bxi \circ f \in Y_p(U;\calQ_Q(\ell_2^n))$ whenever $f \in W^1_p(U;\calQ_Q(\ell_2^n))$. It is clear that $f : U \to \R^N$ is Borel measurable and also that
\begin{equation*}
\int_U \| (\bxi \circ f)(x) \|^p d\calL^m(x)
= \int_U \calG_2(f(x),Q\lseg 0 \rseg)^p d\calL^m(x) < \infty \,,
\end{equation*}
according to Theorem \ref{334}(C), thus $\bxi \circ f$ is a member of the classical Lebesgue space $L_p(U;\R^N)$ and conclusion (1) is proved. Assuming that $f$ be also Lipschitz then so is $\bxi \circ f$, thus conclusion (2) holds according to Proposition \ref{335} (in conjunction with Theorem \ref{258} and the classical Rademacher Theorem), whence $\bxi \circ f $ belongs to the classical Sobolev space $W^1_p(U;\R^N)$. If we now return to merely assuming that $f \in W^1_p(U;\calQ_Q(\ell_2^n))$ in our definition, there then exists a sequence $\{f_j\}$ of Lipschitz maps $X \to \calQ_Q(\ell_2^n)$ such that $\sup_j \int_U \lno Df_j \rno^p d\calL^m < \infty$ and $\lim_j \int_U \calG_2(f,f_j)^p d\calL^m = 0$. We infer from conclusions (1) and (2) that $\{ \bxi \circ f_j \}$ is a bounded sequence in $W^1_p(U;\R^N)$. Since $\bW^1_p(U;\R^N)$ is a reflexive Banach space, there exists a subsequence $\{ \bxi \circ f_{k(j)} \}$ converging weakly to some $g \in W^1_p(U;\R^N)$. Since $U$ has the extension property, the weak convergence corresponds to convergence in $L_p$ :
\begin{equation*}
\lim_j \int_U \| (\bxi \circ f_{k(j)}) - g \|^p d\calL^m = 0 \,,
\end{equation*}
and therefore $\bxi \circ f = g$ $\calL^m$ almost everywhere, which readily implies that $\bxi \circ f \in Y_p(U;\calQ_Q(\ell_2^n))$.
\par
We next observe that the equivalence class of $\Upsilon(f)$ depends only upon the equivalence class of $f$, because $\bxi$ maps null sets to null sets. Since the same is true about $\bxi^{-1}$, we infer that $\bUpsilon$ is injective. It remains to show that $\bUpsilon$ is surjective. Let $g \in Y_p(U;\calQ_Q(\ell_2^n))$. There is no restriction to assume that $g(x) \in \bxi(\ell_2^n)$ for {\em all} $x \in U$, and we define $f = \bxi^{-1} \circ g$; it is obviously Borel measurable. Since $g \in W^1_p(U;\R^N)$ and $U$ has the extension property, there exists $\hat{g} \in W^1_p(\R^n;\R^N)$ such that
$\hat{g}$ is compactly supported in a neighborhood of $U$ and $\hat{g} \restriction U = g$.  Choosing $\{ \vphi_{\veps_j} \}$ a smooth compactly supported approximation to the identity, we define
\begin{equation*}
f_j = \bxi^{-1} \circ \brho \circ ( \vphi_{\veps_j} * \hat{g} ) \,.
\end{equation*}
We observe that the $f_j : X \to \calQ_Q(\ell_2^n)$ are Lipschitz and
\begin{equation*}
\begin{split}
\int_U \calG_2(f_j,f)^p d\calL^m & = \int_U \calG_1 ( \bxi^{-1} \circ \brho \circ (\vphi_{\veps_j} * \hat{g}) , \bxi^{-1} \circ g )^p d\calL^m \\
& \leq \alpha(n,Q)^{-p} \int_U \| \brho \circ (\vphi_{\veps_j} * \hat{g}) - \brho \circ g \|^p d\calL^m \\
& \leq \alpha(n,Q)^{-p} (\rmLip \brho)^p \int_U \| \vphi_{\veps_j} * \hat{g} - g \|^p d\calL^m \\
& \to 0 \text{ as } j \to \infty \,.
\end{split}
\end{equation*}
\par
Finally, if $f_j$ and $\brho \circ (\vphi_{\veps_j} * \hat{g})$ are both differentiable at $a \in U$, then Proposition \ref{335} implies that
\begin{equation*}
\begin{split}
\lno Df_j(a) \rno^p & = \lno D( \bxi^{-1} \circ \brho \circ (\vphi_{\veps_j} * \hat{g}))(a) \rno^p \\
& = \vvvert D( \brho \circ (\vphi_{\veps_j} * \hat{g})) (a) \vvvert^p \\
& \leq (\rmLip \brho)^p \vvvert D( \vphi_{\veps_j} * \hat{g})(a) \vvvert^p \,.
\end{split}
\end{equation*}
Since this occurs that $\calL^m$ almost every $a \in U$, according to Theorem \ref{258} and the classical Rademacher Theorem, we infer that
\begin{multline*}
\sup_j \int_U \lno Df_j(a) \rno^p d\calL^m \leq (\rmLip \brho)^p \sup_j \int_U \vvvert D( \vphi_{\veps_j} * \hat{g}) \vvvert^p d\calL^m \\
\leq (\rmLip \brho)^p \int_U \vvvert D \hat{g} \vvvert^p d\calL^m \,.
\end{multline*}
Thus $f \in W^1_p(U;\calQ_Q(\ell_2^n))$.
\end{proof}

\begin{Remark}
It is worth observing that in case $p=1$ the above Theorem would not be valid, as our definition would yield a space of mappings $U \to \calQ_Q(\ell_2^n)$ of {\em bounded variation} rather than {\em Sobolev}.
\end{Remark}

\begin{Remark}
\label{433}
We recall that $U$ is assumed to have the extension property. This means that there exists a continuous linear operator
\begin{equation*}
\bE : \bW^1_p(U;\R^N) \to \bW^1_p(\R^m;\R^N) \,.
\end{equation*}
Given $\bbf \in \bW^1_p(\Rm;\R^N)$ and $f \in \bbf$, one easily checks that $\brho \circ f \in Y^1_p(\Rm;\calQ_Q(\ell_2^n))$ and that the equivalence class of $\brho \circ f$ depends only upon that of $f$. Thus the formula
\begin{equation*}
\tilde{\bE}(\bbf) = \bUpsilon^{-1} \left( \brho \circ \bE \left( \bUpsilon \left( \bbf \right) \right) \right)
\end{equation*}
defines an ``extension mapping''
\begin{equation*}
\bW^1_p(U;\calQ_Q(\ell_2^n)) \to \bW^1_p(\Rm; \calQ_Q(\ell_2^n)) \,.
\end{equation*}
\end{Remark}

\begin{Proposition}
\label{434}
Let $f \in W^1_p(U;\calQ_Q(\ell_2^n))$ and $t \geq 0$. Define
\begin{equation*}
A_t = U \cap \big\{ x : \calG_2(f(x),Q\lseg 0 \rseg)^p + \left( M \vvvert D E(\Upsilon(f)) \vvvert \right)^p(x) \leq t^p \big\} \,,
\end{equation*}
where $M$ denotes the maximal function operator 
and $E(\Upsilon(f))$ is a representant of the class $\bE(\Upsilon(f))$.
There then exists a Lipschitzian map $h : U \to \calQ_Q(\ell_2^n)$ such that
\begin{enumerate}
\item[(1)] $h(x) = f(x)$ for $\calL^m$ almost every $x \in A_t$;
\item[(2)] $\rmLip h \leq 4^{m+1} \alpha(n,Q)^{-1} \bc_{\ref{243}}(m,Q) t$ where $\alpha(n,Q)$ is as in Theorem \ref{334};
\item[(3)] $\calG_2(h(x),Q\lseg 0 \rseg) \leq \bc_{\ref{243}}(m,Q) t$ for every $x \in U$;
\item[(4)] For $\calL^m$ almost every $x \in A_t$, $f$ is approximately differentiable at $x$ and $\lno Df(x) \rno = \lno Dh(x) \rno$.
\end{enumerate}
\end{Proposition}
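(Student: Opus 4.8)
The plan is to transfer the problem, through the bi-Lipschitz embedding $\bxi$, to the classical Lipschitz truncation of an ordinary Sobolev function, and then to re-import the resulting Lipschitz map by means of the multiple-valued extension Theorem \ref{243}. First I would write $g = \Upsilon(f) = \bxi \circ f$ and let $G := E(\Upsilon(f)) \in \bW^1_p(\R^m;\R^N)$ be the chosen representative of $\bE(\Upsilon(f))$, so that $G = \bxi \circ f$ $\calL^m$ almost everywhere on $U$ and $\vvvert DG \vvvert \in L_p(\R^m)$. The classical ingredient I would invoke is the pointwise Lipschitz-truncation estimate for Sobolev functions: there is a Borel set $Z \subset \R^m$ with $\calL^m(Z) = 0$ — arranged to contain the non-Lebesgue points of $G$ and the negligible set $\{x \in U : G(x) \neq \bxi(f(x))\}$ — such that whenever $x, y \in \R^m \setminus Z$ satisfy $M \vvvert DG \vvvert(x) \leq t$ and $M \vvvert DG \vvvert(y) \leq t$ one has $\| G(x) - G(y) \| \leq 4^{m+1}\, t\, \| x - y \|$. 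This comes from the usual telescoping-of-averages argument (Poincar\'e on concentric balls; see e.g. \cite{GMT}), the dimensional constant $4^{m+1}$ being extracted by careful bookkeeping.

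Next I would set $A_t' = A_t \setminus Z$ (the case $\calL^m(A_t) = 0$ being trivial — take $h \equiv Q\lseg 0 \rseg$), so that $\calL^m(A_t \setminus A_t') = 0$ and, for $x \in A_t'$, both $M \vvvert DG \vvvert(x) \leq t$ and $\calG_2(f(x),Q\lseg 0 \rseg) = \| G(x) \| \leq t$ (using Theorem \ref{334}(C)). Combining the pointwise estimate above with the lower bound $\alpha(n,Q)\calG_2(v,v') \leq \| \bxi(v) - \bxi(v') \|$ of Theorem \ref{334}(A), I get for $x, y \in A_t'$
\[
\calG_2(f(x),f(y)) \leq \alpha(n,Q)^{-1} \| G(x) - G(y) \| \leq 4^{m+1} \alpha(n,Q)^{-1}\, t\, \| x - y \| \,,
\]
i.e. $f \restriction A_t'$ is Lipschitz with constant at most $4^{m+1} \alpha(n,Q)^{-1} t$; since $\calQ_Q(\ell_2^n)$ is complete (Proposition \ref{211}) it extends uniquely to a Lipschitz map $f_1$ on $\rmClos A_t'$ with the same constant and with $\sup_{x \in \rmClos A_t'} \calG_2(f_1(x),Q\lseg 0 \rseg) \leq t$. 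Then I would apply Theorem \ref{243} (with $X = \ell_2^m$, $A = \rmClos A_t'$, $Y = \ell_2^n$) to $f_1$ to obtain an extension $\hat h : \ell_2^m \to \calQ_Q(\ell_2^n)$ with $\rmLip \hat h \leq \bc_{\ref{243}}(m,Q) \rmLip f_1$ and, from its second conclusion with $v = Q\lseg 0 \rseg$, $\sup_{x} \calG_\infty(\hat h(x),Q\lseg 0 \rseg) \leq \bc_{\ref{243}}(m,Q)\, t$; I set $h = \hat h \restriction U$. (Theorem \ref{243} uses the metric $\calG_\infty$, so there is a passage between $\calG_\infty$ and $\calG_2$, whose equivalence constants I would absorb.) Conclusions (2) and (3) are then immediate, and (1) holds because $h = f_1 = f$ on $A_t' \subset \rmClos A_t'$ while $\calL^m(A_t \setminus A_t') = 0$.

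For conclusion (4) I would argue as follows. Since $h$ is Lipschitz and $\ell_2^n$ has the Radon--Nikod\'ym property, Theorem \ref{258}(A) makes $h$ approximately differentiable $\calL^m$-a.e.; at $\calL^m$-a.e. $x \in A_t'$ the point is also a density point of $A_t'$. For such $x$, as $f = h$ on $A_t'$ and $A_t'$ has density $1$ at $x$, the set $\{f \neq h\}$ has density $0$ at $x$, so $Ah(x)$ is approximately tangent to $f$ at $x$ as well; hence $f$ is approximately differentiable at $x$ with $Af(x) = Ah(x)$ (uniqueness by Proposition \ref{252}), and therefore $\lno Df(x) \rno = \lno Dh(x) \rno$. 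Since $\calL^m(A_t \setminus A_t') = 0$, this holds $\calL^m$-a.e. on $A_t$.

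The step I expect to be the main obstacle is the first, classical one: establishing the pointwise Lipschitz-truncation estimate with precisely the dimensional constant $4^{m+1}$, and then threading the argument so that the successive constants — $\alpha(n,Q)^{-1}$ from $\bxi^{-1}$, $\bc_{\ref{243}}(m,Q)$ from the multiple-valued extension, and the $\calG_\infty$/$\calG_2$ conversion — come out exactly as in the statement. Once the good set $A_t'$ and the Lipschitz map $h$ are in hand, the remaining verifications, including the transfer of approximate differentiability in (4), are routine.
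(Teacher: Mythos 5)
Your proposal is correct and essentially reproduces the paper's own argument: both proofs truncate the classical Sobolev representative $u=E(\Upsilon(f))$ to a $4^{m+1}t$-Lipschitz function on $A_t$ modulo a null set via maximal-function/potential estimates, pull this back to $\calQ_Q(\ell_2^n)$ through $\bxi^{-1}$ at the cost of $\alpha(n,Q)^{-1}$, extend via Theorem~\ref{243}, and then obtain (4) from approximate tangency at density points of $A_t$ together with Theorem~\ref{258}. The only cosmetic difference is the route to the pointwise Lipschitz-truncation estimate (you cite the telescoping Poincar\'e argument, whereas the paper directly applies the Mal\'y--Ziemer potential lemmas); both produce the same constant and the rest of the proof is identical.
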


\begin{proof}
Write $u = E(\Upsilon(f)) \in W^1_p(\Rm;\R^N)$.
We let $\tilde{A}_t$ denote the Borel subset of $A_t$ consisting of those $x$ such that $u(x) = \lim_{\veps \to 0^+} (\vphi_\veps * u)(x)$ where $\{\vphi_\veps\}_{\veps > 0}$ is a given approximate identity. Given distinct $x,x' \in \tilde{A}_t$ we let $\Omega = \bU(x,2r) \cap \bU(x',2r)$, where $r = \|x-x'\| > 0$, and we infer from \cite[Lemma 1.50]{MALY.ZIEMER} (adapted in the obvious way to the case of vectorvalued maps) that
\begin{equation*}
\left\| u(x) - \dashint_\Omega u d\calL^m \right\| \leq \frac{4^m}{m\balpha(m)} \int_\Omega \frac{\vvvert Du(y) \vvvert}{\|x-y\|^{m-1}} \, d\calL^m(y)
\end{equation*}
and
\begin{equation*}
\left\| u(x') - \dashint_\Omega u d\calL^m \right\| \leq \frac{4^m}{m\balpha(m)} \int_\Omega \frac{\vvvert Du(y) \vvvert}{\|x'-y\|^{m-1}} \, d\calL^m(y)
\end{equation*}
It follows from the potential estimate \cite[Theorem 1.32(i)]{MALY.ZIEMER} that
\begin{equation*}
\begin{split}
\int_\Omega \frac{\vvvert Du(y) \vvvert}{\|x-y\|^{m-1}} d\calL^m(y) & \leq  \int_{\bU(x,2r)} \frac{\vvvert Du(y) \vvvert}{\|x-y\|^{m-1}} d\calL^m(y)  \\
& \leq  m\balpha(m) 2 \|x-x'\| M \left( \vvvert Du \vvvert \right)(x)  \,.
\end{split}
\end{equation*}
Since the same holds with $x$ replaced by $x'$, we obtain
\begin{equation*}
\|u(x) - u(x') \| \leq 4^{m+1} t \| x - x'\|
\end{equation*}
whenever $x,x' \in \tilde{A}_t$. The first three conclusions now follow from Theorems \ref{334} and \ref{243}.
Conclusion (4) follows from the fact that $h$ and $f$ are approximately tangent at each Lebesgue density point of $A_t$, together with the differentiability Theorem \ref{258}.
\end{proof}

\begin{Remark}
\label{435}
We shall see in Proposition \ref{453} that the constant in (2) does not in fact depend upon $n$.
\end{Remark}

\begin{Corollary}
\label{436}
Let $f \in W^1_p(U;\calQ_Q(\ell_2^n))$. It follows that $f$ and $\Upsilon(f)$ are approximately differentiable $\calL^m$ almost everywhere, and that
\begin{equation*}
\lno Df(x) \rno = \vvvert D\Upsilon(f)(x) \vvvert
\end{equation*}
 at each point $x \in U$ where both are approximately differentiable.
\end{Corollary}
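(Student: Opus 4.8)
The plan is to prove the two assertions in turn. For the almost‑everywhere approximate differentiability: $\Upsilon(f)=\bxi\circ f$ lies in the classical Sobolev space $W^1_p(U;\R^N)$ by Theorem \ref{431}, hence is approximately differentiable $\calL^m$ almost everywhere by the classical theory of Sobolev functions (cf. \cite{MALY.ZIEMER}). For $f$ itself I would invoke Proposition \ref{434}: its conclusion (4) gives, for each $t\ge 0$, approximate differentiability of $f$ at $\calL^m$ almost every point of $A_t$; since $p>1$, the maximal function $M\vvvert DE(\Upsilon(f))\vvvert$ lies in $L_p$ and $\calG_2(f(\cdot),Q\lseg 0\rseg)$ is finite $\calL^m$ almost everywhere, so $\bigcup_{t\in\N}A_t$ is of full measure in $U$ and $f$ is approximately differentiable $\calL^m$ almost everywhere.

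Now fix $a\in U$ at which both $f$ and $\bxi\circ f$ are approximately differentiable, and write $Af(a)=\oplus_{i=1}^Q\lseg A_i\rseg$, $L_i=A_i-A_i(0)$, $g=Af(a)$, $v_0=g(a)$, and $M=D\Upsilon(f)(a)\in\rmHom(\Rm,\R^N)$. Approximate differentiability of $f$ forces $f$ to be approximately continuous at $a$ with approximate limit $v_0$ (the set where $\calG_2(f,g)>\veps\|\cdot-a\|$ has density $0$, and $g$ is continuous with $g(a)=v_0$); hence $\bxi\circ f$ has approximate limit $\bxi(v_0)$, and comparing with the approximate limit forced by the affine approximation of $\bxi\circ f$ shows that the latter has value $\bxi(v_0)$ at $a$. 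The target identity $\lno Df(a)\rno=\vvvert M\vvvert$ is, after squaring the respective Hilbert--Schmidt-type norms, equivalent to $\|Mh\|^2=\sum_{i=1}^Q\|L_ih\|^2$ for every $h\in\Rm$.

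The core is a short computation. First, the Splitting-Lemma argument already used in the proof of Theorem \ref{258}(C) shows that for $\|y-a\|$ small any permutation realizing $\calG_2(g(y),v_0)$ must respect the ``collision classes'' of $v_0$, whence $\calG_2(g(y),v_0)^2=\sum_i\|L_i(y-a)\|^2$; combined with the local isometry of $\bxi$ near $v_0$ (Theorem \ref{334}(B)) this gives $\|\bxi(g(y))-\bxi(v_0)\|=\big(\sum_i\|L_i(y-a)\|^2\big)^{1/2}$ for $\|y-a\|<\rho_1$, some $\rho_1>0$. Second, for $\veps>0$ the two sets $\{y:\calG_2(f(y),g(y))>\veps\|y-a\|\}$ and $\{y:\|\bxi(f(y))-\bxi(v_0)-M(y-a)\|>\veps\|y-a\|\}$ have density $0$ at $a$, so their union misses a set $S_\veps$ of density $1$ at $a$; using that $\bxi$ is $1$-Lipschitz for $\calG_2$ (Theorem \ref{334}(A)), for $y\in S_\veps$ with $\|y-a\|<\rho_1$ one obtains
\begin{equation*}
\left|\,\|M(y-a)\|-\Big(\sum_i\|L_i(y-a)\|^2\Big)^{1/2}\,\right|\le 2\veps\|y-a\|.
\end{equation*}
Letting $\phi(h)=\|Mh\|-(\sum_i\|L_ih\|^2)^{1/2}$, a continuous, positively $1$-homogeneous function, the displayed inequality says that $\{y:|\phi(y-a)|>2\veps\|y-a\|\}$ has density $0$ at $a$ for every $\veps>0$; if $\phi$ were nonzero in some direction, continuity would force this set to contain a solid cone of directions issuing from $a$, of positive density --- a contradiction. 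Hence $\phi\equiv0$, and summing $\|Me_j\|^2=\sum_i\|L_ie_j\|^2$ over $j=1,\dots,m$ yields $\vvvert M\vvvert^2=\sum_i\vvvert L_i\vvvert^2=\lno Df(a)\rno^2$.

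I expect the last step --- upgrading density-$1$ (approximate tangency) control into a genuinely \emph{pointwise} equality of derivatives --- to be the main obstacle, which is why I route the argument through the homogeneous function $\phi$ and a density/continuity argument rather than through Proposition \ref{335} (whose proof needs honest, not merely approximate, differentiability). Two degenerate situations should be recorded as easy special cases: $\rmsplit v_0=\infty$ (all branches of $f(a)$ coincide), which makes the collision-class step vacuous, and $\lno Df(a)\rno=0$, where $g$ is constant and one reads off $M=0$ directly. An alternative yielding the identity only $\calL^m$ almost everywhere would compose the Lipschitz Luzin map $h$ of Proposition \ref{434} with $\bxi$ and invoke Proposition \ref{335} for $h$; I prefer the direct route since it gives the identity at every point where both maps are approximately differentiable, as stated.
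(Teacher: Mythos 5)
Your proof is correct, and on the second conclusion it takes a genuinely different (and more careful) route than the paper. The almost-everywhere approximate differentiability part coincides with the paper's argument: classical Sobolev theory for $\Upsilon(f)$, and Proposition~\ref{434}(4) together with the arbitrariness of $t$ for $f$. For the norm identity, however, the paper simply cites Proposition~\ref{335}; but that proposition is stated and proved for \emph{genuine} differentiability (its proof computes limits of difference quotients along the rays $a+te_j$), so invoking it directly at a point of mere approximate differentiability is not literally justified --- one would have to route through a Lipschitz--Luzin representative, which only recovers the identity almost everywhere. Your approach replaces that citation with a self-contained density argument: you use approximate continuity (forced by approximate tangency to the continuous affine $g=Af(a)$) to pin down the constant term of the affine approximation of $\bxi\circ f$; the Splitting Lemma plus the local isometry of Theorem~\ref{334}(B) to identify $\|\bxi(g(y))-\bxi(v_0)\|$ with $\bigl(\sum_i\|L_i(y-a)\|^2\bigr)^{1/2}$ near $a$; the $1$-Lipschitz bound of Theorem~\ref{334}(A) to compare $\bxi\circ f$ with $\bxi\circ g$ on a density-one set; and finally a cone argument for the continuous, positively $1$-homogeneous discrepancy $\phi$ to upgrade density-one control into the pointwise vanishing $\phi\equiv 0$. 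This yields the identity at \emph{every} point where both maps are approximately differentiable, exactly as the corollary states, whereas the paper's cited lemma formally covers only the differentiable points. In short: the same idea (push forward through the locally isometric embedding), but executed at the level of approximate differentiability rather than assumed full differentiability, which is the correct level of generality here.
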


\begin{proof}
That $\Upsilon(f)$ be approximately differentiable (in the usual sense) $\calL^m$ almost everywhere follows from standard Sobolev theory (see e.g. \cite[Theorem 1.72]{MALY.ZIEMER}). The analogous property of $f$ follows from Proposition \ref{434}(4) and the arbitrariness of $t \geq 0$. The last conclusion is a consequence of Proposition \ref{335}.
\end{proof}

\subsection{The $p$ energy}

In this section, $X$, $Y$, $U$ and $p$ are subject to the same requirements as in the last section, and sometimes more. Given $f \in W^1_p(U;\calQ_Q(Y))$ and an open subset $V \subset U$, we define the {\em $p$ energy of $f$ in $V$} by the formula
\begin{multline*}
\calE_p^p(f;V) = \inf \bigg\{ \liminf_j \int_V \lno Df_j \rno^p d\lambda : \{f_j\} \text{ is a sequence } \\ \text{of Lipschitz mappings } U \to \calQ_Q(Y)
\text{ such that } d_p(f,f_j) \to 0 \text{ as } j \to \infty \bigg\} \,.
\end{multline*}
We notice that $\calE_p^p(f;V) \leq \calE_p^p(f;U) < \infty$. Clearly,

\begin{Proposition}
\label{441}
Given $f \in W^1_p(U;\calQ_Q(Y))$ and an open subset $V \subset U$, there exists a sequence of Lipschitz mappings $U \to \calQ_Q(Y)$ such that $\lim_j d_p(f,f_j)=0$ and
\begin{equation*}
\calE_p^p(f;V) = \lim_j \int_V \lno Df_j \rno^p d\lambda \,.
\end{equation*}
\end{Proposition}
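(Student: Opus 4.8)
The plan is a routine diagonalization, exploiting that $\calE_p^p(f;V)$ is defined by relaxation. First I would record the two facts that make the argument go: $(i)$ since $f\in W^1_p(U;\calQ_Q(Y))$, there is at least one Lipschitz sequence $\{f_j\}$ with $d_p(f,f_j)\to 0$ and $\sup_j\int_U\lno Df_j\rno^p d\lambda<\infty$, so the class of competitors in the definition of $\calE_p^p(f;V)$ is nonempty and $E:=\calE_p^p(f;V)\le \calE_p^p(f;U)<\infty$; and $(ii)$ by definition $E$ is the infimum, over all Lipschitz $\{f_j\}$ with $d_p(f,f_j)\to 0$, of $\liminf_j\int_V\lno Df_j\rno^p d\lambda$.

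Next, for each integer $k\ge 1$ I would use $(ii)$ to select a Lipschitz sequence $\{f^k_j\}_j$ with $\lim_j d_p(f,f^k_j)=0$ and $\liminf_j\int_V\lno Df^k_j\rno^p d\lambda\le E+\tfrac1k$. Since a $\liminf$ is attained along a subsequence, there are infinitely many indices $j$ with $\int_V\lno Df^k_j\rno^p d\lambda<E+\tfrac2k$, and among those (using $d_p(f,f^k_j)\to 0$) one can pick a single index $j(k)$ for which simultaneously $d_p(f,f^k_{j(k)})<\tfrac1k$ and $\int_V\lno Df^k_{j(k)}\rno^p d\lambda<E+\tfrac2k$. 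Setting $g_k=f^k_{j(k)}$ produces a sequence of Lipschitz maps $U\to\calQ_Q(Y)$ with $\lim_k d_p(f,g_k)=0$ and $\limsup_k\int_V\lno Dg_k\rno^p d\lambda\le E$.

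Finally I would close the loop: the sequence $\{g_k\}$ is itself an admissible competitor in the definition of $\calE_p^p(f;V)$, hence $\liminf_k\int_V\lno Dg_k\rno^p d\lambda\ge E$; combined with the previous $\limsup$ bound this forces $\lim_k\int_V\lno Dg_k\rno^p d\lambda=E=\calE_p^p(f;V)$, which is the claim. I do not expect any genuine obstacle here — this is the standard "the relaxed functional is realized along some recovery sequence" argument; the only point requiring a little care is the choice of $j(k)$, where one must remember that the $\liminf$ is approached along a subsequence so that the two near-optimality conditions can be met for the same index.
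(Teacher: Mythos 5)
Your proposal is correct and takes essentially the same diagonalization route as the paper's own proof: for each $k$ choose a near-optimal sequence $\{f^k_j\}_j$, then extract one index $j(k)$ along which both $d_p(f,f^k_{j(k)})$ is small and $\int_V\lno Df^k_{j(k)}\rno^p d\lambda$ is within $O(1/k)$ of $\calE_p^p(f;V)$, and take the diagonal. The only cosmetic difference is that the paper reads the lower bound $\int_V\lno Df^k_{j(k)}\rno^p d\lambda \geq \calE_p^p(f;V)-1/k$ directly off the $\liminf$ inequality, whereas you obtain it afterward by noting the diagonal sequence is itself an admissible competitor; these amount to the same observation.
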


As the $p$-energy is defined by relaxation, we easily prove its lower semicontinuity with respect to weak convergence.

\begin{Proposition}
\label{442}
Let $f,f_1,f_2,\ldots$ be members of $W^1_p(U;\calQ_Q(Y))$ and assume that $d_p(f,f_j) \to 0$ as $j \to \infty$. It follows that
\begin{equation*}
\calE_p^p(f;V) \leq \liminf_j \calE_p^p(f_j;V)
\end{equation*}
for every open subset $V \subset U$.
\end{Proposition}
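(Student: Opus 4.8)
The plan is to give a straightforward diagonalization argument, exploiting that $\calE_p^p(\cdot;V)$ is an infimum over recovery sequences and that Proposition \ref{441} guarantees the infimum is attained. First I would dispose of the trivial case: if $L := \liminf_j \calE_p^p(f_j;V) = +\infty$ there is nothing to prove, so assume $L < \infty$ and, passing to a subsequence which I shall not relabel, assume moreover that $\calE_p^p(f_j;V) \to L$ as $j \to \infty$.

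Next, for each fixed $j$ I would invoke Proposition \ref{441} applied to $f_j$ to produce a sequence $\{g_{j,k}\}_k$ of Lipschitz mappings $U \to \calQ_Q(Y)$ with $d_p(f_j,g_{j,k}) \to 0$ as $k \to \infty$ and $\int_V \lno Dg_{j,k} \rno^p d\lambda \to \calE_p^p(f_j;V)$ as $k \to \infty$. I then select an index $k(j)$ large enough that both
\begin{equation*}
d_p(f_j,g_{j,k(j)}) \leq \frac{1}{j} \qquad \text{and} \qquad \int_V \lno Dg_{j,k(j)} \rno^p d\lambda \leq \calE_p^p(f_j;V) + \frac{1}{j} \,,
\end{equation*}
and I set $h_j := g_{j,k(j)}$, a Lipschitz mapping $U \to \calQ_Q(Y)$.

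Finally I would check that $\{h_j\}$ is an admissible competitor in the definition of $\calE_p^p(f;V)$: by the triangle inequality for $d_p$,
\begin{equation*}
d_p(f,h_j) \leq d_p(f,f_j) + d_p(f_j,h_j) \leq d_p(f,f_j) + \frac{1}{j} \longrightarrow 0 \,,
\end{equation*}
using the hypothesis $d_p(f,f_j) \to 0$. Moreover
\begin{equation*}
\liminf_j \int_V \lno Dh_j \rno^p d\lambda \leq \liminf_j \left( \calE_p^p(f_j;V) + \frac{1}{j} \right) = L \,.
\end{equation*}
Since $\calE_p^p(f;V)$ is by definition the infimum of $\liminf_j \int_V \lno Df_j \rno^p d\lambda$ over all sequences of Lipschitz maps converging to $f$ in $d_p$, the competitor $\{h_j\}$ yields $\calE_p^p(f;V) \leq \liminf_j \int_V \lno Dh_j \rno^p d\lambda \leq L$, which is the desired inequality.

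There is no genuine obstacle here; the argument is the standard stability of a relaxed functional under the very convergence used to define it. The only point requiring a little care is the interchange of the two limiting processes (in $j$ and in the recovery index $k$), which is handled cleanly by the diagonal choice of $k(j)$ above, and the fact that $W^1_p(U;\calQ_Q(Y)) \subset L_p(U;\calQ_Q(Y))$ so that all the semidistances $d_p$ involved are finite.
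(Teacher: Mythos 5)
Your proof is correct and follows essentially the same route as the paper's: a diagonal extraction of Lipschitz recovery sequences for each $f_j$ (via Proposition~\ref{441} or directly from the definition of $\calE_p^p$), followed by the triangle inequality for $d_p$ and the defining infimum of $\calE_p^p(f;V)$. The preliminary pass to a subsequence realizing the $\liminf$ is harmless but not needed, since the bound $\int_V \lno Dh_j \rno^p d\lambda \leq \calE_p^p(f_j;V) + 1/j$ already gives $\liminf_j \int_V \lno Dh_j \rno^p d\lambda \leq \liminf_j \calE_p^p(f_j;V)$ directly.
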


If $W \subset Y$ is a linar subspace and $P : Y \to W$ is a continuous linear retract we define
\begin{equation*}
\calQ_Q(P) : \calQ_Q(Y) \to \calQ_Q(W)
\end{equation*}
by the formula $\calQ_Q(P)(\lseg y_1,\ldots,y_Q \rseg) = \lseg P(y_1),\ldots,P(y_Q) \rseg$. It is a trivial matter to check that
\begin{equation*}
\calG_2(\calQ_Q(P)(v),\calQ_Q(P)(v')) \leq (\rmLip P) \calQ_2(v,v')
\end{equation*}
whenever $v,v' \in \calQ_Q(Y)$.

\begin{Proposition}
\label{443}
Assume that
\begin{enumerate}
\item[(1)] $W \subset Y$ is a linear subspace and $P : Y \to W$ is a continuous linear retract;
\item[(2)] $g : U \to \calQ_Q(Y)$ is approximately differentiable at $a \in U$;
\item[(3)] $\rmHom(X,Y)$ and $\rmHom(X,W)$ are equipped with norms such that $\vvvert P \circ L \vvvert \leq (\rmLip P) \vvvert L \vvvert$ whenever $L \in \rmHom(X,Y)$.
\end{enumerate}
It follows that $\calQ_Q(P) \circ g$ is approximately differentiable at $a$ and
\begin{equation*}
\lno D(\calQ_Q(P) \circ g)(a) \rno \leq (\rmLip P) \lno Dg(a) \rno \,.
\end{equation*}
\end{Proposition}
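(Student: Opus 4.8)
The plan is to produce the affine $Q$-valued map approximately tangent to $\calQ_Q(P) \circ g$ at $a$ by pushing forward the one tangent to $g$. Write $Ag(a) = \oplus_{i=1}^Q \lseg A_i \rseg$ with the $A_i$ affine from $X$ to $Y$ and linear parts $L_i = A_i - A_i(0)$, so that $Dg(a) = \oplus_{i=1}^Q \lseg L_i \rseg$. Since $P$ is linear, each $P \circ A_i$ is affine from $X$ to $W$ with linear part $P \circ L_i$; hence
\begin{equation*}
h := \calQ_Q(P) \circ Ag(a) = \oplus_{i=1}^Q \lseg P \circ A_i \rseg
\end{equation*}
is an affine $Q$-valued map $X \to \calQ_Q(W)$ whose associated linear map is $\oplus_{i=1}^Q \lseg P \circ L_i \rseg$. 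I would then show that $h$ is approximately tangent to $\calQ_Q(P) \circ g$ at $a$; uniqueness of the affine approximant (cf. the Corollary following Proposition \ref{252}) gives $A(\calQ_Q(P) \circ g)(a) = h$ and $D(\calQ_Q(P) \circ g)(a) = \oplus_{i=1}^Q \lseg P \circ L_i \rseg$.

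First I would dispose of the trivial case $\rmLip P = 0$, in which $\calQ_Q(P) \circ g \equiv Q\lseg 0 \rseg$ is constant, hence linear, hence approximately differentiable with vanishing derivative, and the asserted inequality reads $0 \leq 0$. So assume $\rmLip P > 0$. The core estimate is the Lipschitz property of $\calQ_Q(P)$ recorded just above the statement: for every $x \in U$,
\begin{equation*}
\calG\big((\calQ_Q(P) \circ g)(x), h(x)\big) = \calG\big(\calQ_Q(P)(g(x)), \calQ_Q(P)(Ag(a)(x))\big) \leq (\rmLip P)\,\calG\big(g(x), Ag(a)(x)\big) \,.
\end{equation*}
Consequently, for each $\veps > 0$,
\begin{equation*}
U \cap \{ x : \calG((\calQ_Q(P) \circ g)(x), h(x)) > \veps \|x-a\| \} \subset U \cap \{ x : \calG(g(x), Ag(a)(x)) > (\veps / \rmLip P)\|x-a\| \} \,.
\end{equation*}
The right-hand set has $m$-dimensional density $0$ at $a$ because $Ag(a)$ is approximately tangent to $g$ at $a$ (applied with $\veps / \rmLip P$ in place of $\veps$); therefore so does the left-hand set. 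I would also note that $\calQ_Q(P) \circ g$ is Borel measurable, being the composition of the Borel measurable $g$ with the continuous $\calQ_Q(P)$, so that the density statement makes sense as required in the definition of approximate tangency. This establishes the approximate differentiability of $\calQ_Q(P) \circ g$ at $a$ together with the explicit form of its derivative.

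The remaining step is purely algebraic: using hypothesis (3),
\begin{equation*}
\lno D(\calQ_Q(P) \circ g)(a) \rno^2 = \sum_{i=1}^Q \vvvert P \circ L_i \vvvert^2 \leq (\rmLip P)^2 \sum_{i=1}^Q \vvvert L_i \vvvert^2 = (\rmLip P)^2 \lno Dg(a) \rno^2 \,,
\end{equation*}
and taking square roots gives the claimed inequality. I do not expect a genuine obstacle here: the only delicate points are verifying that $P \circ A_i$ is again affine with linear part $P \circ L_i$ (immediate from the linearity of $P$) and keeping track of the measurability needed by the definition of approximate tangency; everything else is just the Lipschitz bound for $\calQ_Q(P)$ fed into the density argument.
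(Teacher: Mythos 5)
Your proof is correct and takes essentially the same route as the paper: both push the affine approximant $Ag(a)$ forward by $\calQ_Q(P)$, invoke the $\rmLip P$-Lipschitz bound for $\calQ_Q(P)$ to transfer the approximate tangency, observe that $\calQ_Q(P)\circ\oplus_i\lseg A_i\rseg=\oplus_i\lseg P\circ A_i\rseg$ is again affine with linear parts $P\circ L_i$, and finish with the identical algebraic estimate under hypothesis (3). Your unpacking of the $\mathrm{ap}\lim$ inequality into a density-set inclusion, the aside on measurability, and the separate treatment of $\rmLip P=0$ are harmless extra care rather than a different argument.
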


\begin{proof}
Write $Ag(a) = \oplus_{i=1}^Q \lseg A_i \rseg$, with $A_i : X \to Y$ affine maps. Observe that
\begin{multline*}
\mathrm{ap}\lim_{x \to a} \frac{ \calG_2 \left( (\calQ_Q(P) \circ g)(x) , \left( \calQ_Q(P) \circ \oplus_{i=1}^Q \lseg A_i \rseg \right) (x)\right)}{\|x-a\|} \\ \leq (\rmLip P) \mathrm{ap}\lim_{x \to a} \frac{\calG_2 \left( g(x) , \left( \oplus_{i=1}^Q \lseg A_i \rseg \right) (x)\right)}{\|x-a\|} = 0 \,.
\end{multline*}
Since $\calQ_Q(P) \circ \oplus_{i=1}^Q \lseg A_i \rseg = \oplus_{i=1}^Q \lseg P \circ A_i\rseg$, and the $P \circ A_i$ are affine as well, we infer that $\calQ_Q(P) \circ g$ is differentiable at $a$ and $A(\calQ_Q(P) \circ g)(a) = \oplus_{i=1}^Q \lseg P \circ A_i \rseg$. Next note that if $L_i$ is the linear part of $A_i$, then $P \circ L_i$ is the linear
 part of $P \circ A_i$. Consequently,
\begin{equation*}
\lno D(\calQ_Q(P) \circ g)(a) \rno^2 = \sum_{i=1}^Q \vvvert P \circ L_i \vvvert^2 \leq (\rmLip P)^2 \sum_{i=1}^Q \vvvert L_i \vvvert^2 = (\rmLip P)^2 \lno Dg(a) \rno^2 \,.
\end{equation*}
\end{proof}

\begin{Proposition}
\label{444}
Assume that
\begin{enumerate}
\item[(1)] $W \subset Y$ is a linear subspace and $\iota : W \to Y$ is the canonical injection;
\item[(2)] $g : U \to \calQ_Q(W)$ is differentiable at $a \in U$;
\item[(3)] $\rmHom(X,Y)$ and $\rmHom(X,W)$ are equipped with norms such that $\vvvert \iota \circ L \vvvert = \vvvert L \vvvert$ whenever $L \in \rmHom(X,W)$.
\end{enumerate}
It follows that $\calQ_Q(\iota) \circ g$ is differentiable at $a$ and
\begin{equation*}
\lno D( \calQ_Q(\iota) \circ g)(a) \rno = \lno Dg(a) \rno \,.
\end{equation*}
\end{Proposition}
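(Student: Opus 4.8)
The plan is to run through essentially the same argument as in the proof of Proposition \ref{443}, the point being that $\iota$ is an isometric embedding: since $W$ carries the norm inherited from $Y$, one has $\|\iota(w)-\iota(w')\| = \|w-w'\|$ for all $w,w'\in W$, and consequently $\calQ_Q(\iota)$ is distance preserving, i.e.
\[
\calG_2\bigl(\calQ_Q(\iota)(v),\calQ_Q(\iota)(v')\bigr) = \calG_2(v,v') \qquad \text{for all } v,v' \in \calQ_Q(W),
\]
because the minimum over $\sigma \in S_Q$ defining the two sides is taken over exactly the same collection of numbers. So where Proposition \ref{443} loses a factor $\rmLip P$, here nothing is lost.

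Concretely, I would write $Ag(a) = \oplus_{i=1}^Q \lseg A_i \rseg$ with $A_i : X \to W$ affine, and observe that $\calQ_Q(\iota)\circ\bigl(\oplus_{i=1}^Q\lseg A_i\rseg\bigr) = \oplus_{i=1}^Q\lseg \iota\circ A_i\rseg$, the maps $\iota\circ A_i : X\to Y$ being affine as well. Applying the isometry identity above with $v = g(x)$ and $v' = \bigl(\oplus_{i=1}^Q\lseg A_i\rseg\bigr)(x)$ gives, for every $x\in U$,
\[
\calG_2\Bigl((\calQ_Q(\iota)\circ g)(x),\ \bigl(\oplus_{i=1}^Q\lseg \iota\circ A_i\rseg\bigr)(x)\Bigr) = \calG_2\Bigl(g(x),\ \bigl(\oplus_{i=1}^Q\lseg A_i\rseg\bigr)(x)\Bigr),
\]
so dividing by $\|x-a\|$ and letting $x\to a$ (using that $g$ is differentiable, not merely approximately differentiable, at $a$) shows that $\oplus_{i=1}^Q\lseg \iota\circ A_i\rseg$ is tangent to $\calQ_Q(\iota)\circ g$ at $a$. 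Hence $\calQ_Q(\iota)\circ g$ is differentiable at $a$, and by uniqueness of the affine approximation (the Corollary following Proposition \ref{252}) we get $A(\calQ_Q(\iota)\circ g)(a) = \oplus_{i=1}^Q\lseg \iota\circ A_i\rseg$.

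Finally, writing $L_i = A_i - A_i(0)$ for the linear part of $A_i$, linearity of $\iota$ gives that $\iota\circ L_i$ is the linear part of $\iota\circ A_i$, so
\[
\lno D(\calQ_Q(\iota)\circ g)(a)\rno^2 = \sum_{i=1}^Q \vvvert \iota\circ L_i\vvvert^2 = \sum_{i=1}^Q \vvvert L_i\vvvert^2 = \lno Dg(a)\rno^2
\]
by hypothesis (3), which is the asserted equality. There is no genuine obstacle here: the only thing worth spelling out is the observation that $\calQ_Q(\iota)$ preserves $\calG_2$, which is immediate from the definition of $\calG_2$ together with $\iota$ being norm preserving; the rest is a transcription of the proof of Proposition \ref{443} with $\rmLip P$ replaced by $1$.
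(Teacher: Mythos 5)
Your proof is correct and follows exactly the approach the paper intends: the paper's own proof of Proposition~\ref{444} is just the one-line remark ``similar to that of Proposition~\ref{443},'' and you have spelled out precisely what that means — $\calQ_Q(\iota)$ preserves $\calG_2$ (rather than merely being $\rmLip P$-Lipschitz as in \ref{443}), so the tangency of $\oplus_{i=1}^Q\lseg\iota\circ A_i\rseg$ to $\calQ_Q(\iota)\circ g$ at $a$ follows, and hypothesis~(3) turns the final norm computation into an equality. Nothing is missing.
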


\begin{proof}
The proof is similar to that of Proposition \ref{443}.
\end{proof}

\begin{Remark}
\label{445}
Hypotheses (3) of Proposition \ref{443} and \ref{444} are verified in two cases of interest. First when $\vvvert \cdot \vvvert$ is the operator norm. Second when
\begin{equation*}
\vvvert L \vvvert = \nu \left( \sum_{j=1}^m \|L(u_j)\|_Y e_j \right)
\end{equation*}
where $\nu$ is a norm on $\Rm$, $m=\rmdim X$, $e_1,\ldots,e_m$ is the canonical basis of $\Rm$, and $u_1,\ldots,u_m$ is a basis of $X$.
\end{Remark}

\begin{Proposition}
\label{446}
Assume that
\begin{enumerate}
\item[(1)] $W \subset Y$ is a linear subspace, $P : Y \to W$ is a continuous linear retraction, and $\iota : W \to Y$ is the canonical injection;
\item[(2)] $g \in W^1_p(U;\calQ_Q(W))$;
\item[(3)] $\rmHom(X,Y)$ and $\rmHom(X,W)$ are equipped with norms such that
$\vvvert P \circ L \vvvert \leq (\rmLip P) \vvvert L \vvvert$ whenever $L \in \rmHom(X,Y)$, and $\vvvert \iota \circ L \vvvert = \vvvert L \vvvert$ whenever $L \in \rmHom(X,W)$.
\end{enumerate}
It follows that $\calQ_Q(\iota) \circ g \in W^1_p(U;\calQ_Q(Y)$ and
\begin{equation*}
(\rmLip P)^{-p} \calE_p^p(g;V) \leq \calE_p^p(\calQ_Q(\iota) \circ g;V) \leq \calE_p^p(g;V) \,,
\end{equation*}
for every $V \subset U$ open.
\end{Proposition}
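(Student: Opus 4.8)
The plan is to transport Lipschitz approximating sequences back and forth along the maps $\calQ_Q(\iota)$ and $\calQ_Q(P)$, using three elementary facts together with the relaxation definition of the $p$-energy and the pointwise chain rules of Propositions \ref{443} and \ref{444}: first, $\calQ_Q(\iota)$ is an isometric embedding of $(\calQ_Q(W),\calG_2)$ into $(\calQ_Q(Y),\calG_2)$, because $W$ carries the norm induced from $Y$; second, $\calQ_Q(P)$ is $(\rmLip P)$-Lipschitz for $\calG_2$, by the estimate recorded just before Proposition \ref{443}; third, $\calQ_Q(P)\circ\calQ_Q(\iota)=\rmid_{\calQ_Q(W)}$, since $P\circ\iota=\rmid_W$. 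To prove membership, I would pick Lipschitz maps $g_j\colon X\to\calQ_Q(W)$ with $\sup_j\int_U\lno Dg_j\rno^p\,d\lambda<\infty$ and $d_p(g,g_j)\to 0$, as furnished by the definition of $W^1_p(U;\calQ_Q(W))$. Then each $\calQ_Q(\iota)\circ g_j\colon X\to\calQ_Q(Y)$ is Lipschitz, one has $d_p(\calQ_Q(\iota)\circ g,\calQ_Q(\iota)\circ g_j)=d_p(g,g_j)\to 0$, and, since $Y$ has the Radon-Nikod\'ym property, $\calQ_Q(\iota)\circ g_j$ is approximately differentiable $\lambda$-a.e.\ (Theorem \ref{258}); the norm condition in hypothesis (3) of Proposition \ref{444} is part of hypothesis (3) here, so that proposition gives $\lno D(\calQ_Q(\iota)\circ g_j)(x)\rno=\lno Dg_j(x)\rno$ for $\lambda$-a.e.\ $x$, whence $\sup_j\int_U\lno D(\calQ_Q(\iota)\circ g_j)\rno^p\,d\lambda<\infty$ and $\calQ_Q(\iota)\circ g\in W^1_p(U;\calQ_Q(Y))$.

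For the upper estimate $\calE_p^p(\calQ_Q(\iota)\circ g;V)\leq\calE_p^p(g;V)$, I would use Proposition \ref{441} to choose Lipschitz maps $g_j\colon U\to\calQ_Q(W)$ with $d_p(g,g_j)\to 0$ and $\calE_p^p(g;V)=\lim_j\int_V\lno Dg_j\rno^p\,d\lambda$. Each $\calQ_Q(\iota)\circ g_j\colon U\to\calQ_Q(Y)$ is Lipschitz and $d_p(\calQ_Q(\iota)\circ g,\calQ_Q(\iota)\circ g_j)\to 0$, hence is an admissible competitor in the relaxation defining $\calE_p^p(\calQ_Q(\iota)\circ g;V)$; applying Proposition \ref{444} at $\lambda$-a.e.\ point gives $\int_V\lno D(\calQ_Q(\iota)\circ g_j)\rno^p\,d\lambda=\int_V\lno Dg_j\rno^p\,d\lambda$, and letting $j\to\infty$ yields the estimate.

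For the lower estimate $(\rmLip P)^{-p}\calE_p^p(g;V)\leq\calE_p^p(\calQ_Q(\iota)\circ g;V)$ --- equivalently $\calE_p^p(g;V)\leq(\rmLip P)^p\calE_p^p(\calQ_Q(\iota)\circ g;V)$ --- I would apply Proposition \ref{441} to $\calQ_Q(\iota)\circ g$ to choose Lipschitz maps $h_j\colon U\to\calQ_Q(Y)$ with $d_p(\calQ_Q(\iota)\circ g,h_j)\to 0$ and $\calE_p^p(\calQ_Q(\iota)\circ g;V)=\lim_j\int_V\lno Dh_j\rno^p\,d\lambda$, and set $\tilde h_j=\calQ_Q(P)\circ h_j\colon U\to\calQ_Q(W)$, which is Lipschitz. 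Using $\calQ_Q(P)\circ\calQ_Q(\iota)=\rmid_{\calQ_Q(W)}$ and the $(\rmLip P)$-Lipschitz continuity of $\calQ_Q(P)$ we get $d_p(g,\tilde h_j)=d_p\big(\calQ_Q(P)\circ\calQ_Q(\iota)\circ g,\,\calQ_Q(P)\circ h_j\big)\leq(\rmLip P)\,d_p(\calQ_Q(\iota)\circ g,h_j)\to 0$, so $\tilde h_j$ is an admissible competitor in the relaxation defining $\calE_p^p(g;V)$; and Proposition \ref{443}, applied at $\lambda$-a.e.\ point with the linear retraction $P$ (the norm condition of its hypothesis (3) again being part of hypothesis (3) here), gives $\lno D\tilde h_j(x)\rno\leq(\rmLip P)\lno Dh_j(x)\rno$ for $\lambda$-a.e.\ $x$. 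Therefore $\calE_p^p(g;V)\leq\liminf_j\int_V\lno D\tilde h_j\rno^p\,d\lambda\leq(\rmLip P)^p\lim_j\int_V\lno Dh_j\rno^p\,d\lambda=(\rmLip P)^p\calE_p^p(\calQ_Q(\iota)\circ g;V)$.

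The argument is essentially bookkeeping, and I do not expect a genuine obstacle; the points that need care are that the two-sided identity $\calQ_Q(P)\circ\calQ_Q(\iota)=\rmid_{\calQ_Q(W)}$ is precisely what makes the lower estimate close, that the competitors produced by Proposition \ref{441} are defined on $U$ (matching the class used in the definition of $\calE_p^p$), and that every Lipschitz map appearing above is approximately differentiable $\lambda$-a.e., so that Propositions \ref{443} and \ref{444} may be applied pointwise --- this last being Theorem \ref{258} together with the Radon-Nikod\'ym property, which holds for $Y$ by assumption and for $W$ since $W$ is a closed subspace of $Y$ (the range of the bounded projection $\iota\circ P$) and the Radon-Nikod\'ym property passes to closed subspaces.
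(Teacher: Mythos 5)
Your proof is correct and follows essentially the same route as the paper: realize the energies by Lipschitz sequences via Proposition \ref{441}, transport them through $\calQ_Q(\iota)$ and $\calQ_Q(P)$ respectively, and invoke the pointwise chain rules of Propositions \ref{443} and \ref{444} together with the identity $\calQ_Q(P)\circ\calQ_Q(\iota)=\rmid$. The only organizational difference is that you treat membership as a separate step from the raw definition whereas the paper extracts it from the $V=U$ case of the upper-bound computation, and you are slightly more careful on two small points the paper glosses over (the factor $\rmLip P$ in $d_p(g,\calQ_Q(P)\circ f_j)\leq(\rmLip P)\,d_p(\calQ_Q(\iota)\circ g,f_j)$, and the fact that $W$ inherits the Radon--Nikod\'ym property as a complemented, hence closed, subspace).
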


\begin{proof}
Choose a sequence $\{g_j\}$ of Lipschitz mappings $U \to \calQ_Q(W)$ such that $d_p(g,g_j) \to 0$ and $\calE_p^p(g;V) = \lim_j \int_V \lno Dg_j \rno^p d\lambda$, according to Proposition \ref{441}. Notice that $\calQ_Q(\iota) \circ g$ are Lipschitz mappings $U \to \calQ_Q(Y)$ and that
\begin{equation*}
\limsup_j d_p(\calQ_Q(\iota) \circ g , \calQ_Q(\iota) \circ g_j) \leq \lim_j d_p(g,g_j) = 0 \,.
\end{equation*}
Therefore,
\begin{equation*}
\begin{split}
\calE_p^p(\calQ_Q(\iota) \circ g;V) & \leq \liminf_j \int_V \lno D( \calQ_Q(\iota) \circ g_j) \rno^p d\lambda \\
& \leq \limsup_j \int_V \lno Dg_j \rno^p d\lambda \\
& = \calE_p^p(g;V) \,,
\end{split}
\end{equation*}
according to Proposition \ref{444}.
The case $V=U$ of this computation implies that $\calQ_Q(\iota) \circ g \in W^1_p(U;\calQ_Q(Y))$, by definition of this Sobolev class, and the general case yields the second inequality of our conclusion.
\par
The other way round choose a sequence $\{f_j\}$ of Lipschitz mappings $U \to \calQ_Q(Y)$ such that $d_p(\calQ_Q(\iota) \circ g,f_j) \to 0$ and $\calE_p^p(\calQ_Q(\iota) \circ g;V) = \lim_j \int_V \lno Df_j \rno^p d\lambda$. Notice that the mappings $\calQ_Q(P) \circ f_j : U \to \calQ_Q(W)$ are Lipschitz and, since $g = \calQ_Q(P) \circ \calQ_Q(\iota) \circ g$, one has
\begin{multline*}
d_p( g , \calQ_Q(P) \circ f_j) = d_p( \calQ_Q(P) \circ \calQ_Q(\iota) \circ g , \calQ_Q(P) \circ f_j) \\
\leq d_p (\calQ_Q(\iota) \circ g , f_j) \to 0 \text{ as } j \to \infty \,.
\end{multline*}
Thus,
\begin{equation*}
\begin{split}
\calE_p^p(g;V) & \leq \liminf_j \int_V \lno D(\calQ_Q(P) \circ f_j) \rno^p d\lambda \\
& \leq (\rmLip P)^p \liminf_j \int_V \lno D f_j \rno^p d\lambda \\
& = (\rmLip P)^p \calE_p^p(\calQ_Q(\iota) \circ g;V) \,. 
\end{split}
\end{equation*}
\end{proof}

{\bf For the remaining part of this paper we will only consider the cases when either $Y = \ell_2^n$ for some $n \in \N \setminus \{0\}$ or $Y = \ell_2$, and $X$ is a finite dimensional Banach space as usual. The norm $\vvvert \cdot \vvvert$ on $\rmHom(X,Y)$ is associated with a basis $u_1,\ldots,u_m$ of $X$ as follows:
\begin{equation*}
\vvvert L \vvvert = \sqrt{ \sum_{j=1}^m \|L(u_j)\|^2}
\end{equation*}
where $\|\cdot\|$ is the Hilbert norm on $Y$.} According to Remark \ref{445}, Propositions \ref{443} and \ref{444} apply. When $Y = \ell_2$ and $n \in \N \setminus \{0\}$ we also define an $n$ dimensional subspace of $Y$, $W_n = \rmspan \{e_1,\ldots,e_n \}$, and we let $P_n : Y \to W_n$ be the orthognal projection and $\iota_n : W_n \to Y$ be the canonical injection.
\par
The following guarantees that the $p$ energy is the expected quantity in case $Y = \ell_2^n$. Notice the statement makes sense since $g$ is almost everywhere approximately differentiable (recall Corollary \ref{436}).
\begin{Proposition}
\label{447}
If $g \in W^1_p(U;\calQ_Q(\ell_2^n))$ for some $n \in \N \setminus \{0\}$ then
\begin{equation*}
\calE_p^p(g;V) = \int_V \lno Dg \rno^p d\lambda
\end{equation*}
for every open set $V \subset U$.
\end{Proposition}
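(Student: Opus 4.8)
The plan is to prove the two inequalities $\int_V \lno Dg \rno^p \, d\lambda \leq \calE_p^p(g;V)$ and $\calE_p^p(g;V) \leq \int_V \lno Dg \rno^p \, d\lambda$ separately, keeping in mind that $\lno Dg \rno$ is defined $\lambda$ almost everywhere, equals $\vvvert D\Upsilon(g) \vvvert$ there, and lies in $L_p(U)$, by Corollary \ref{436} (together with $\Upsilon(g) \in Y_p(U;\calQ_Q(\ell_2^n)) \subset W^1_p(U;\R^N)$). For the first (lower semicontinuity) inequality I would take an almost optimal sequence $\{g_j\}$ of Lipschitz maps $U \to \calQ_Q(\ell_2^n)$ with $d_p(g,g_j) \to 0$ and $\int_V \lno Dg_j \rno^p \, d\lambda \to \calE_p^p(g;V)$, furnished by Proposition \ref{441}, and transport it through Almgren's embedding. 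By Proposition \ref{335} (Theorem \ref{258} supplies the differentiability of $g_j$, classical Rademacher that of $\bxi \circ g_j$), $\bxi \circ g_j \in W^1_p(V;\R^N)$ with $\int_V \vvvert D(\bxi \circ g_j) \vvvert^p \, d\lambda = \int_V \lno Dg_j \rno^p \, d\lambda$, and Theorem \ref{334}(A) gives $\bxi \circ g_j \to \bxi \circ g$ in $L_p(V;\R^N)$. Hence $\{\bxi \circ g_j\}$ is bounded in the reflexive space $W^1_p(V;\R^N)$, a subsequence converges weakly there, necessarily to $\bxi \circ g$, and weak lower semicontinuity of $w \mapsto \int_V \vvvert Dw \vvvert^p \, d\lambda$ together with Corollary \ref{436} yields $\int_V \lno Dg \rno^p \, d\lambda = \int_V \vvvert D(\bxi \circ g) \vvvert^p \, d\lambda \leq \calE_p^p(g;V)$.

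For the second inequality the point is to exhibit Lipschitz competitors whose energies converge \emph{exactly} to $\int_V \lno Dg \rno^p \, d\lambda$; the naive mollification scheme from the proof of Theorem \ref{431} is of no use here, since the retraction $\brho$ occurring in it only produces approximants with energy bounded by $(\rmLip \brho)^p$ times that of a Sobolev extension of $\bxi \circ g$. Instead I would use the Luzin type truncations $h_t$ of Proposition \ref{434}. Set $\psi = \calG_2(g,Q\lseg 0 \rseg)^p + \big( M \vvvert D E(\Upsilon(g)) \vvvert \big)^p$ on $U$; one checks $\psi \in L_1(U)$, the first summand by the definition of $L_p$ and the second by the Hardy--Littlewood maximal inequality (this is where $p > 1$ is used). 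Consequently $\psi < \infty$ $\lambda$ almost everywhere, the sets $A_t$ of Proposition \ref{434} increase to $U$ up to a null set, $\lambda(U \setminus A_t) \to 0$, and, by Chebyshev's inequality and dominated convergence, $t^p \lambda(U \setminus A_t) \leq \int_{\{\psi > t^p\}} \psi \, d\lambda \to 0$ as $t \to \infty$. This decay drives everything: with Proposition \ref{434}(3) and the $\lambda$ summability of $\calG_2(g,Q\lseg 0 \rseg)^p$ it gives $d_p(h_t,g) \to 0$; with Theorem \ref{258}(C) (so that $\lno Dh_t \rno \leq \sqrt{Q}\,\rmLip h_t \leq C(m,n,Q)\,t$ on $U \setminus A_t$) it gives $\int_{V \setminus A_t} \lno Dh_t \rno^p \, d\lambda \to 0$; and on $A_t$ Proposition \ref{434}(4) gives $\lno Dh_t \rno = \lno Dg \rno$ almost everywhere, whence $\int_{V \cap A_t} \lno Dh_t \rno^p \, d\lambda = \int_{V \cap A_t} \lno Dg \rno^p \, d\lambda \to \int_V \lno Dg \rno^p \, d\lambda$ by monotone convergence. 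Therefore $\int_V \lno Dh_t \rno^p \, d\lambda \to \int_V \lno Dg \rno^p \, d\lambda$, and taking $t \to \infty$ along a sequence exhibits admissible competitors for $\calE_p^p(g;V)$ that settle the inequality.

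The step I expect to be the main obstacle is the sharpness of this upper bound: relaxation makes $\calE_p^p$ lower semicontinuous automatically, but matching it to the Dirichlet integral forces one to use approximants tailored to $g$, hence the Luzin construction of Proposition \ref{434}, and then the quantitative statement $t^p \lambda(U \setminus A_t) \to 0$ has to be extracted with care from the global integrability of $\psi$ --- a plain Chebyshev bound $\lambda(U \setminus A_t) = O(t^{-p})$ would only keep the error terms bounded, not send them to zero.
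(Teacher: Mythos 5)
Your argument reproduces the paper's proof in both directions: the lower bound via transporting an almost-optimal Lipschitz sequence through Almgren's embedding $\bxi$ and invoking weak lower semicontinuity of the Dirichlet integrand in $W^1_p(V;\R^N)$ together with Corollary \ref{436}, and the upper bound via the Luzin-type truncations $h_t$ of Proposition \ref{434} driven by the decay $t^p\lambda(U\setminus A_t)\to 0$ extracted from $\psi\in L_1(U)$ (the paper takes $t=j\in\N$ and controls $\lno Dh_j\rno$ on $U\setminus A_j$ by $Q^{p/2}(\rmLip h_j)^p$, exactly as you do). The only cosmetic differences are that you spell out the reflexivity/weak-subsequence step where the paper simply cites weak lower semicontinuity, and you use a continuous parameter $t$; both are correct and match the paper's reasoning.
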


\begin{proof}
If $\{g_j\}$ is a sequence of Lipschitz mappings $U \to \calQ_Q(\ell_2^n)$ such that $d_p(g,g_j) \to 0$ as $j \to \infty$, then $\|\bxi \circ g - \bxi \circ g_n \|_{L_p} \to 0$ as $j \to \infty$ where $\bxi$ is the Almgren embedding described in Theorem \ref{334}. Thus
\begin{equation*}
\int_V \vvvert D( \bxi \circ g) \vvvert^p d\lambda \leq \liminf_j \int_V \vvvert D(\bxi \circ g_j) \vvvert^p d\lambda
\end{equation*}
according to classical finite dimensional Sobolev theory: the above functional is weakly lower semicontinuous because it satisfies the hypotheses of \cite[Section 3.3, Theorem 3.4]{DACOROGNA}. It then follows from Corollary \ref{436} that
\begin{equation*}
\int_V \lno Dg \rno^p d\lambda \leq \liminf_j \int_V \lno Dg_j \rno^p d\lambda \,.
\end{equation*}
Choosing the sequence $\{g_j\}$ according to Proposition \ref{441} we infer that
\begin{equation*}
\int_V \lno Dg \rno^p d\lambda \leq \calE_p^p(g;V) \,.
\end{equation*}
\par
We turn to proving the reverse inequality.
We let $u = E(\Upsilon(g)) \in W^1_p(\R^m;\R^N)$ so that the maximal function $M (\vvvert Du \vvvert ) \in L_p(U)$ (see e.g. \cite[Theorem 1.22]{MALY.ZIEMER}). For each $j \in \N \setminus \{0\}$ we define
\begin{equation*}
A_j = U \cap \{ x : \calG_2(g(x),Q\lseg 0 \rseg)^p + M( \vvvert Du \vvvert)^p(x) \leq j^p \}
\end{equation*}
and we infer that
\begin{equation*}
\lim_j j^p \lambda( U \setminus A_j) \leq \lim_j \int_{U \setminus A_j} \left( \calG_2(g(x),Q\lseg 0 \rseg)^p + M( \vvvert Du \vvvert)^p(x) \right) d\lambda(x) = 0 \,.
\end{equation*}
We let $g_j : U \to \calQ_Q(\ell_2^n)$ be the Lipschitz mapping associated with $f=g$ and $t=j$ in Proposition \ref{434}. We see that
\begin{equation*}
\begin{split}
\lim_j d_p(g_j,g) & \lim_j \left( \int_{U \setminus A_j} \calG_2(g_j,g)^p d\lambda(x) \right)^\frac{1}{p} \\
& \leq \lim_j \left( \int_{U \setminus A_j} \calG_2(g_j,Q \lseg 0 \rseg)^p d\lambda(x) \right)^\frac{1}{p} \\
& \qquad\qquad\qquad + \lim_j \left( \int_{U \setminus A_j} \calG_2(g,Q \lseg 0 \rseg)^p d\lambda(x) \right)^\frac{1}{p} \\
& \leq \lim_j \left( \bc_{\ref{243}}(m,Q) j^p \lambda(U \setminus A_j) \right)^\frac{1}{p}
 + \lim_j \left( \int_{U \setminus A_j} \calG_2(g,Q \lseg 0 \rseg)^p d\lambda(x) \right)^\frac{1}{p} \\
& = 0 \,,
\end{split}
\end{equation*}
thus $\calE^p_p(g;V) \leq \liminf_j \int_V \lno Dg_j \rno^p d\lambda$.
Furthermore,
\begin{equation*}
\begin{split}
\liminf_j \int_V \lno Dg_j \rno^p d\lambda & \leq  \liminf_j \int_{V \cap A_j} \lno Dg_j \rno^p d\lambda + \limsup_j \int_{U \setminus A_j} \lno Dg_j \rno^p d\lambda \\
& \leq \liminf_j \int_{V \cap A_j} \lno Dg \rno^p d\lambda + Q^\frac{p}{2} \limsup_j \int_{U \setminus A_j} \left( \rmLip g_j \right)^p d\lambda \\
& \leq \int_V \lno Dg \rno^p d\lambda \\
& \qquad + \limsup_j Q^\frac{p}{2} 4^{p(m+1)} \alpha(n,Q)^{-p} \bc_{\ref{243}}(m,Q)^p j^p \lambda(U \setminus A_j) \\
& = \int_V \lno Dg \rno^p d\lambda \,.
\end{split}
\end{equation*}
This completes the proof.
\end{proof}

\begin{Theorem}
\label{448}
Let $f \in W^1_p(U;\calQ_Q(\ell_2))$. The following hold.
\begin{enumerate}
\item[(A)] $\calQ_Q(P_n) \circ f \in W^1_p(U;\calQ_Q(\ell_2^n))$ for each $n \in \N \setminus \{0\}$;
\item[(B)] For every open set $V \subset U$ one has
\begin{equation*}
\calE_p^p(f;V) = \lim_n \int_V \lno D (\calQ_Q(P_n) \circ f) \rno^p d\lambda \,;
\end{equation*}
\item[(C)] The sequence $\{ \lno D(\calQ_Q(P_n) \circ f) \rno^p \}_n$ is nondecreasing $\lambda$ almost everywhere and bounded in $L_1(U)$.
\end{enumerate}
\end{Theorem}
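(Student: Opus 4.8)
The plan is to prove (A) together with the ``$\leq$'' half of (B) and the $L_1$ bound of (C), then the monotonicity in (C), then the ``$\geq$'' half of (B).

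\emph{Part (A), and the easy estimates.} Fix $n$ and identify $W_n=\rmspan\{e_1,\dots,e_n\}$ with $\ell_2^n$. Pick a defining sequence $\{f_j\}$ of Lipschitz maps $X\to\calQ_Q(\ell_2)$ for $f$, so that $d_p(f,f_j)\to 0$ and $\sup_j\int_U\lno Df_j\rno^p\,d\lambda<\infty$; one may further choose it, via Proposition \ref{441}, so that $\int_V\lno Df_j\rno^p\,d\lambda\to\calE_p^p(f;V)$ for a fixed open $V\subset U$. Since $\rmLip\calQ_Q(P_n)\leq\rmLip P_n=1$, each $\calQ_Q(P_n)\circ f_j:X\to\calQ_Q(W_n)$ is Lipschitz, $d_p(\calQ_Q(P_n)\circ f,\calQ_Q(P_n)\circ f_j)\leq d_p(f,f_j)\to 0$, and Proposition \ref{443} (whose hypothesis on norms holds by Remark \ref{445}) gives $\lno D(\calQ_Q(P_n)\circ f_j)\rno\leq\lno Df_j\rno$ $\lambda$-a.e. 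Hence $\sup_j\int_U\lno D(\calQ_Q(P_n)\circ f_j)\rno^p\,d\lambda<\infty$, which proves (A); Corollary \ref{436} then makes $\lno D(\calQ_Q(P_n)\circ f)\rno$ well defined $\lambda$-a.e. Passing to $\liminf_j$ in the definition of the relaxed energy and invoking Proposition \ref{447} yields, for every open $V\subset U$ and every $n$,
\[
\int_V\lno D(\calQ_Q(P_n)\circ f)\rno^p\,d\lambda=\calE_p^p(\calQ_Q(P_n)\circ f;V)\leq\calE_p^p(f;V)\leq\calE_p^p(f;U)<\infty.
\]
Taking $V=U$ gives the $L_1(U)$ boundedness claimed in (C).

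\emph{Monotonicity in (C).} Write $P_n=\tilde P_n\circ P_{n+1}$, where $\tilde P_n:W_{n+1}\to W_n$ is the orthogonal projection, so that $\calQ_Q(P_n)\circ f=\calQ_Q(\tilde P_n)\circ g$ with $g:=\calQ_Q(P_{n+1})\circ f\in W^1_p(U;\calQ_Q(W_{n+1}))$ (by the argument of (A)). Applying Proposition \ref{443} with range $W_{n+1}$, subspace $W_n$, and the $1$-Lipschitz retraction $\tilde P_n$, at each of the $\lambda$-a.e.\ points where $g$ is approximately differentiable (Corollary \ref{436}) we obtain $\lno D(\calQ_Q(P_n)\circ f)\rno\leq\lno D(\calQ_Q(P_{n+1})\circ f)\rno$. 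Thus the sequence is nondecreasing $\lambda$-a.e., completing (C); in particular the limits in (B) exist (finitely, by the bound above).

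\emph{The inequality ``$\geq$'' in (B), and the main obstacle.} Set $h_n:=\calQ_Q(\iota_n)\circ\calQ_Q(P_n)\circ f$. Proposition \ref{446} (with subspace $W_n$, retraction $P_n$, injection $\iota_n$) gives $h_n\in W^1_p(U;\calQ_Q(\ell_2))$ and $\calE_p^p(h_n;V)\leq\calE_p^p(\calQ_Q(P_n)\circ f;V)$, which equals $\int_V\lno D(\calQ_Q(P_n)\circ f)\rno^p\,d\lambda$ by Proposition \ref{447}. On the other hand, writing $f(x)=\lseg y_1,\dots,y_Q\rseg$ we have $\calG_2(h_n(x),f(x))^2\leq\sum_{i=1}^Q\|P_ny_i-y_i\|^2\to 0$ as $n\to\infty$ (since $P_n$ converges strongly to the identity on $\ell_2$), while $\calG_2(h_n(x),f(x))^p\leq\lno f(x)\rno^p\in L_1(U)$; dominated convergence then gives $d_p(h_n,f)\to 0$. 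Lower semicontinuity of the $p$ energy (Proposition \ref{442}) yields
\[
\calE_p^p(f;V)\leq\liminf_n\calE_p^p(h_n;V)\leq\liminf_n\int_V\lno D(\calQ_Q(P_n)\circ f)\rno^p\,d\lambda.
\]
Combined with the reverse inequality from Part (A) and the monotonicity from (C), this gives $\calE_p^p(f;V)=\lim_n\int_V\lno D(\calQ_Q(P_n)\circ f)\rno^p\,d\lambda$, which is (B). The only genuinely nontrivial point is this last inequality: one must notice that the ``finite-dimensional truncations'' $h_n=\calQ_Q(\iota_n\circ P_n)\circ f$ converge to $f$ in $d_p$ — a routine dominated-convergence argument using only the integrability of $\lno f\rno^p$ and the strong convergence $P_n\to\rmid$ — and then feed this into the lower semicontinuity of the energy after identifying $\calE_p^p(h_n;V)$ with $\int_V\lno D(\calQ_Q(P_n)\circ f)\rno^p\,d\lambda$ through Propositions \ref{446} and \ref{447}.
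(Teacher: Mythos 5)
Your proof is correct and follows essentially the same route as the paper's: the same triple use of Propositions \ref{442}, \ref{446} and \ref{447} together with dominated convergence of $\calQ_Q(\iota_n)\circ\calQ_Q(P_n)\circ f$ to $f$ for one inequality in (B), and a $\liminf_j$ argument over a defining Lipschitz sequence for the other. The only substantive difference is that you spell out the monotonicity in (C) (writing $P_n=\tilde P_n\circ P_{n+1}$, invoking Corollary \ref{436} for $\calQ_Q(P_{n+1})\circ f$, and applying Proposition \ref{443} with the $1$-Lipschitz retraction $\tilde P_n$), where the paper compresses this to ``follows as in the proof of Proposition \ref{443}''; your explicit version is a welcome clarification, since at this stage of the paper one does not yet know that $f$ itself is approximately differentiable a.e., so it matters that the differentiability used is that of the finite-dimensional truncation $\calQ_Q(P_{n+1})\circ f$.
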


\begin{proof}
(A) Choose a sequence $\{f_j\}$ of Lipschitz mappings $U \to \calQ_Q(\ell_2)$ such that $d_p(f_j,f) \to 0$ and $\sup_j \int_U \lno Df_j \rno^p d\lambda < \infty$. Notice that the $\calQ_Q(P_n) \circ f : U \to \calQ_Q(\ell_2^n)$ are Lipschitz,
\begin{equation*}
\lim_j d_p( \calQ_Q(P_n) \circ f_j , \calQ_Q(P_n) \circ f) \leq \lim_j d_p(f_j,f) = 0
\end{equation*}
and
\begin{equation*}
\sup_j \int_U \lno D( \calQ_Q(P_n) \circ f_j) \rno^p d\lambda \leq \sup_j \int_U \lno Df_j \rno^p d\lambda < \infty
\end{equation*}
according to Proposition \ref{443}. Thus $f \in W^1_p(U;\calQ_Q(\ell_2^n))$.
\par
(B) We note that for every $x \in U$ one has
\begin{equation*}
\lim_n \calG_2 \big( f (x), \left( \calQ_Q(\iota_n) \circ \calQ_Q(P_n) \circ f \right)(x) \big) = 0 \,,
\end{equation*}
and also
\begin{equation*}
\begin{split}
\calG_2(f , \calQ_Q(\iota_n) \circ \calQ_Q(P_n) \circ f) & \leq \calG_2( f , Q \lseg 0 \rseg) + \calG_2 ( \calQ_Q(\iota_n) \circ \calQ_Q(P_n) \circ f , Q \lseg 0 \rseg) \\
& \leq 2 \calG_2 ( f , Q \lseg 0 \rseg) \,.
 \end{split}
\end{equation*}
Thus
\begin{equation*}
\lim_n d_p ( f , \calQ_Q(\iota_n) \circ \calQ_Q(P_n) \circ f) = 0
\end{equation*}
according to the Dominated Convergence Theorem. Therefore,
\begin{equation*}
\begin{split}
\calE^p_p(f;V) & \leq \liminf_n \calE_p^p( \calQ_Q(\iota_n) \circ \calQ_Q(P_n) \circ f ; V) \\
& = \liminf_n \calE_p^p( \calQ_Q(P_n) \circ f ; V) \\
& = \liminf_n \int_V \lno D( \calQ_Q(P_n) \circ f) \rno^p d\lambda \,,
\end{split}
\end{equation*}
according respectively to Propositions \ref{442}, \ref{446} and \ref{447}. The other way round, we choose a sequence $\{f_j\}$ of Lipschitz mappings $U \to \calQ_Q(\ell_2)$ such that $d_p(f_j,f) \to 0$ and $\calE_p^p(f;V) = \lim_j \int_V \lno Df_j \rno^p d\lambda$, according to Proposition \ref{441}. For each fixed $n$ we have
\begin{equation*}
\begin{split}
\int_V \lno D( \calQ_Q(P_n) \circ f) \rno^p d\lambda & \leq \liminf_j \int_V \lno D(\calQ_Q(P_n) \circ f_j) \rno^p d\lambda \\
\intertext{(according to the proof of (A) and Proposition \ref{442} and \ref{447})}
& \leq \liminf_j \int_V \lno D f_j \rno^p d\lambda \\
\intertext{(according to Proposition \ref{443})}
& = \calE_p^p(f;V) \,.
\end{split}
\end{equation*}
Therefore $\limsup_n \int_V \lno D(\calQ_Q(P_n) \circ f) \rno^p d\lambda \leq \calE_p^p(f;V)$.
\par
(C) That the sequence $\{ \lno D(\calQ_Q(P_n) \circ f) \rno^p \}_n$ be nondecreasing follows as in the proof of Proposition \ref{443}; its boundedness in $L_1(U)$ is a consequence of (B).
\end{proof}

We now turn to defining the function $\lno \delta f \rno \in L_p(U)$ associated with $f \in W^1_p(U;\calQ_Q(\ell_2))$. It follows from Theorem \ref{448}(C) and the Monotone Convergence Theorem that
\begin{equation*}
\int_V \lim_n \lno D( \calQ_Q(P_n) \circ f) \rno^p d\lambda = \lim_n \int_V \lno D(\calQ_Q(P_n) \circ f) \rno^p d\lambda \,,
\end{equation*}
$V \subset U$ open. We define, for $\lambda$ almost every $x \in U$,
\begin{equation}
\label{eq.204}
\lno \delta f \rno(x) = \lim_n \lno D( \calQ_Q(P_n) \circ f)(x) \rno \,.
\end{equation}
It follows therefore from Theorem \ref{448}(B) that
\begin{equation}
\label{eq.205}
\calE_p^p(f;V) = \int_V \lno \delta f \rno^p d\lambda \,.
\end{equation}

\subsection{Extension}

The following is the obvious analog of \cite[Theorem 1.63]{MALY.ZIEMER}.

\begin{Theorem}
\label{ext}
Let $U = U(0,1)$ be the unit ball in $\Rm$. There exists a mapping
\begin{equation*}
E : W^1_p(U;\calQ_Q(\ell_2)) \to W^1_p(\Rm;\calQ_Q(\ell_2))
\end{equation*}
with the following properties.
\begin{enumerate}
\item[(A)] For every $f \in W^1_p(U;\calQ_Q(\ell_2))$ one has $E(f)(x) = f(x)$ for every $x \in U$;
\item[(B)] For every $f_1,f_2 \in W^1_p(U;\calQ_Q(\ell_2)$ one has
\begin{equation*}
d_p(E(f_1),E(f_2)) \leq 2^\frac{1}{p} d_p(f_1,f_2) \,;
\end{equation*}
\item[(C)] For every $f \in W^1_p(U;\calQ_Q(\ell_2))$ one has
\begin{equation*}
\calE_p^p(E(f);\Rm) \leq \left( 1 +Q^\frac{p}{2} 2^p \right) \left( \calE_p^p(f;U) + \lno f \rno_p^p \right) \,;
\end{equation*}
\item[(D)] For every $x \in \Rm \setminus U(0,2)$ one has $E(f)(x) = Q \lseg 0 \rseg$;
\item[(E)] If $0 \in C \subset \ell_2$ is convex and $f(x) \in \calQ_Q(C)$ for every $x \in U$, then $E(f)(x) \in \calQ_Q(C)$ for every $x \in \Rm$.
\end{enumerate}

\end{Theorem}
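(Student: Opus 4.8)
The plan is to write down an explicit extension by radial reflection through the unit sphere, followed by a radial dilation toward $Q\lseg 0\rseg$, to check the algebraic statements directly from the formula, and then to bootstrap from Lipschitz maps to the whole Sobolev class by the usual approximation argument. Concretely, let $\Phi:\Rm\setminus\{0\}\to\Rm\setminus\{0\}$, $\Phi(x)=x/|x|^2$, be the inversion through $\bbS^{m-1}$; it is an involution, it is the identity on $\bbS^{m-1}$, it carries the annulus $\{1<|x|<2\}$ bi-Lipschitzly onto $\{\tfrac12<|y|<1\}$, and (with respect to the standard orthonormal basis of $\Rm$) $D\Phi(x)=|x|^{-2}R_x$ with $R_x$ orthogonal, so $|\det D\Phi(x)|=|x|^{-2m}$. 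For $t\in\R$ and $v=\lseg y_1,\ldots,y_Q\rseg$ write $t\cdot v=\lseg ty_1,\ldots,ty_Q\rseg$; elementary estimates give $\calG_2(t\cdot v,t'\cdot v')\le|t-t'|\lno v\rno+|t'|\calG_2(v,v')$. Fix a Lipschitz $\chi:[1,2]\to[0,1]$ with $\chi(1)=1$, $\chi(2)=0$ and $\chi(t)^p\,t^{2m}\le1$ for all $t$ (for instance $\chi(t)=t^{-2m/p}\min\{1,2-t\}$). For measurable $f:U\to\calQ_Q(\ell_2)$ define
\begin{equation*}
E(f)(x)=\begin{cases} f(x), & |x|\le1,\\ \chi(|x|)\cdot f(\Phi(x)), & 1<|x|<2,\\ Q\lseg 0\rseg, & |x|\ge2.\end{cases}
\end{equation*}

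\textbf{The easy conclusions and (B).} The map $E(f)$ is Borel measurable, so (A) and (D) hold by inspection, and (E) holds because $\chi(|x|)y_i=\chi(|x|)y_i+(1-\chi(|x|))\cdot0\in C$ whenever $y_i\in C$, $0\in C$ and $C$ is convex, while $Q\lseg0\rseg\in\calQ_Q(C)$. For (B), split $\Rm$ into the three regions above. The region $\{|x|\le1\}$ contributes exactly $d_p(f_1,f_2)^p$, the region $\{|x|\ge2\}$ contributes $0$, and on the annulus $\calG_2(\chi(|x|)\cdot f_1(\Phi x),\chi(|x|)\cdot f_2(\Phi x))=\chi(|x|)\calG_2(f_1(\Phi x),f_2(\Phi x))$; substituting $y=\Phi(x)$ (so $\mathrm dx=|y|^{-2m}\mathrm dy$, $|x|=1/|y|$) this annular integral equals $\int_{\{1/2<|y|<1\}}\chi(1/|y|)^p|y|^{-2m}\calG_2(f_1,f_2)^p\,\mathrm dy$, which is $\le\int_U\calG_2(f_1,f_2)^p\,\mathrm d\lambda$ precisely because $\chi(t)^pt^{2m}\le1$ with $t=1/|y|$. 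Hence $d_p(E(f_1),E(f_2))^p\le2\,d_p(f_1,f_2)^p$, which is (B); in particular $E$ is $d_p$-continuous on all measurable maps.

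\textbf{The energy bound (C).} I would first treat Lipschitz $f$, where $E(f)$ is Lipschitz: each piece is Lipschitz ($\Phi$ on the closed annulus, $\chi$ on $[1,2]$, and $\lno f\rno$ bounded on $\overline U$), and the pieces match continuously across $\{|x|=1\}$ (there $\chi=1$, $\Phi=\mathrm{id}$) and across $\{|x|=2\}$ (there $\chi=0$). Thus $E(f)$ is differentiable a.e.\ (Theorem~\ref{258}), and at a.e.\ $x$ in the annulus the product inequality $\rmlip_x(\lambda g)\le(\rmlip_x\lambda)\lno g(x)\rno+|\lambda(x)|\rmlip_x g$ together with $\lno D(f\circ\Phi)(x)\rno=|x|^{-2}\lno Df(\Phi x)\rno$ (using that $R_x$ is orthogonal and the basis orthonormal) and Proposition~\ref{259} give a pointwise bound $\lno DE(f)(x)\rno\le\sqrt Q\big(|\chi'(|x|)|\,\lno f(\Phi x)\rno+\chi(|x|)|x|^{-2}\lno Df(\Phi x)\rno\big)$. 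Raising to the $p$-th power, integrating over the annulus and substituting $y=\Phi(x)$ as above, the second term is absorbed exactly by $\chi(t)^pt^{2m}\le1$ to yield a contribution $\le Q^{p/2}2^{p}\int_U\lno Df\rno^p\,\mathrm d\lambda$, and the first term yields a contribution $\le Q^{p/2}2^{p}\lno f\rno_p^p$; together with $\int_{\{|x|\le1\}}\lno Df\rno^p\,\mathrm d\lambda$ this gives $\int_{\Rm}\lno DE(f)\rno^p\,\mathrm d\lambda\le(1+Q^{p/2}2^p)\big(\int_U\lno Df\rno^p\,\mathrm d\lambda+\lno f\rno_p^p\big)$. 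Since $\calE_p^p(g;V)\le\int_V\lno Dg\rno^p\,\mathrm d\lambda$ for Lipschitz $g$ (constant sequence), this is (C) for Lipschitz $f$. For general $f\in W^1_p(U;\calQ_Q(\ell_2))$, pick by Proposition~\ref{441} Lipschitz $f_j:\Rm\to\calQ_Q(\ell_2)$ with $d_p(f_j,f)\to0$ and $\int_U\lno Df_j\rno^p\,\mathrm d\lambda\to\calE_p^p(f;U)$; then the $E(f_j)$ are Lipschitz, $E(f_j)\to E(f)$ in $d_p$ by (B), and $\sup_j\int_{\Rm}\lno DE(f_j)\rno^p\,\mathrm d\lambda<\infty$ by the Lipschitz estimate, so $E(f)\in W^1_p(\Rm;\calQ_Q(\ell_2))$; taking $\liminf_j$ in the Lipschitz estimate (or invoking Proposition~\ref{442}) and using $\lno f_j\rno_p\to\lno f\rno_p$ yields (C) in general.

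\textbf{Main obstacle.} The delicate point is not the structure of the argument but the exact, dimension-independent constants $2^{1/p}$ and $1+Q^{p/2}2^p$: the cutoff exponent must be tuned so that $\chi(t)^pt^{2m}\le1$ holds, which is what cancels the Jacobian $|x|^{-2m}$ produced by the inversion in both the $d_p$- and the energy-estimates, and one must split $\lno DE(f)\rno$ carefully into its radial part (controlled by $\chi'$ and $\lno f\rno$, giving the $\lno f\rno_p^p$ term) and its angular part (controlled by $\rmLip f$, hence by $\sqrt Q\,\lno Df\rno$ via Theorem~\ref{258}(C) and Proposition~\ref{259}, giving the $Q^{p/2}$ factor) rather than bounding it crudely. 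Keeping this accounting sharp — in particular ensuring no residual dependence on $m$ creeps into the final constants — is the part that requires genuine care; everything else is routine.
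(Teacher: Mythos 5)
Your construction via inversion $\Phi(x)=x/\|x\|^2$ followed by a radial cutoff $\chi$ is genuinely different from the paper's (which first shrinks $f$ linearly to $Q\lseg 0\rseg$ on the half-ball $\|x\|<1/2$, producing $g$, and then reflects by the linear-in-radius map $\vphi(x)=(2/\|x\|-1)x$, i.e.\ $r\mapsto 2-r$), and your handling of (A), (D), (E), of (B) via the change of variables together with the pointwise constraint $\chi(t)^p t^{2m}\leq 1$, and of the reduction from the Sobolev class to the Lipschitz case by $d_p$-approximation, are all correct.

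The gap is in the constant of (C). You need $\chi(1)=1$ (so that the pieces match along $\|x\|=1$) together with $\chi(t)^p t^{2m}\leq 1$ for all $t\in[1,2]$ (which is exactly what gives the constant $2^{1/p}$ in (B)). These two constraints force $\chi(t)\leq t^{-2m/p}$ with equality at $t=1$, hence $\mathrm{ess\,sup}_{(1,1+\delta)}|\chi'|\geq 2m/p-o(1)$ as $\delta\to 0$. Consequently the factor $|\chi'(1/\|y\|)|^p\|y\|^{-2m}$ governing your ``first term'' after the change of variables is of order at least $(2m/p)^p$ on a set of positive measure, not $\leq 2^p$; the contribution of the radial-derivative part of $DE(f)$ to the annular energy is therefore of order $Q^{p/2}(2m/p)^p\lno f\rno_p^p$, and no admissible choice of $\chi$ removes this dependence on $m$. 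So what your argument actually proves is (C) with a constant $C(m,p,Q)$ in place of $1+Q^{p/2}2^p$, not the statement as written. (Such a weaker conclusion does in fact suffice for every later use of Theorem~\ref{ext} in the paper. You may also wish to look critically at the paper's own proof on this very point: it asserts $J\vphi=1$, but the differential of $x\mapsto x/\|x\|$ is $\|x\|^{-1}$ times the orthogonal projection onto $\{x\}^\perp$, not the projection itself, so $J\vphi=((2-r)/r)^{m-1}\leq 1$ on the outer annulus $1\leq r\leq 3/2$, and the displayed inequality $\int h(\vphi(x))\,d\lambda\leq\int h(\vphi(x))\,J\vphi\,d\lambda$ then goes the wrong way.)
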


\begin{proof}
We start the proof by associating with each {\em Lipschitz} map $f : U \to \calQ_Q(\ell_2)$ a {\em Lipschitz} map $E_0(f) : \Rm \to \calQ_Q(\ell_2)$ verifying (A), (B), (D) and (E) above (for Lipschitz maps $f$, $f_1$, $f_2$) and (C) replaced with
\begin{enumerate}
\item[(C')] For every Lipschitz $f  : U \to \calQ_Q(\ell_2)$ one has
\begin{equation*}
\int_{\Rm} \lno D E_0(f)(x) \rno^p d\calL^m(x) \leq C(m,p) \left( \int_U \lno Df(x) \rno^p d\calL^m(x) + \lno f \rno_p^p \right) \,.
\end{equation*}
\end{enumerate}
Given $f$ we write $f(x) = \oplus_{i=1}^Q \lseg f_i(x) \rseg$, $x \in U$, and we define
\begin{equation*}
g(x) = \begin{cases}
\oplus_{i=1}^Q \lseg (2\|x\|-1)f_i(x) \rseg & \text{ if } \|x\| \geq \frac{1}{2} \\
Q \lseg 0 \rseg & \text{ if } \|x\| < \frac{1}{2} \,.
\end{cases}
\end{equation*}
The conscientious reader will check that $g$ is Lipschitz on $U$. In fact, it follows from Proposition \ref{259} and the paragraph preceding it (in particular equation \eqref{eq.204}) that
\begin{equation*}
\lno Dg(x) \rno \leq \sqrt{Q} \bigg( \lno Df(x) \rno + 2 \lno f(x) \rno \bigg)
\end{equation*}
for almost every $x \in U(0,1) \setminus B(0,1/2)$. Therefore,
\begin{equation}
\label{eq.206}
\int_U \lno Dg \rno^p d\calL^m \leq Q^\frac{p}{2} 2^p \left( \int_U \lno Df \rno^p d\calL^m + \int_U \lno f \rno^p d\calL^m \right) \,.
\end{equation}
\par
We now define a Lipschitz mapping $\vphi : U(0,3/2) \setminus U(0,1) \to B(0,1) \setminus B(0,1/2)$ by the formula
\begin{equation*}
\vphi(x) = \left( \frac{2}{\|x\|} - 1 \right) x \,,
\end{equation*}
and $E_0(f)$ by
\begin{equation*}
E_0(f)(x) = \begin{cases}
Q \lseg 0 \rseg & \text{if } x \geq \frac{3}{2} \\
g(\vphi(x)) & \text{if } 1 \leq \|x\| < \frac{3}{2} \\
f(x) & \text{if } \|x\| \leq 1 \,.
\end{cases}
\end{equation*}
We notice that conclusions (A), (D) and (E) are verified by $E_0(f)$. Regarding conclusions (B) and (C') we first observe that the differential of $x/ \|x \|$ at a point $x \neq 0$ is the orthogonal projection onto the plane orthogonal to $x$. Therefore $J\vphi = 1$ and we apply the change of variable formula:
\begin{equation*}
\begin{split}
d_p(E_0(f_1),& E_0(f_2))^p  \leq \int_U \calG(f_1,f_2)^p d\calL^m + \int_{B(0,3/2)\setminus B(0,1)} \calG(g_1 \circ \vphi , g_2 \circ \vphi)^p d\calL^m \\
& \leq \int_U \calG(f_1,f_2)^p d\calL^m + \int_{B(0,3/2)\setminus B(0,1)} \calG(g_1 \circ \vphi , g_2 \circ \vphi)^p J \vphi d\calL^m \\
& \leq 2\int_U \calG(f_1,f_2)^p d\calL^m
\end{split}
\end{equation*}
(because $\calG(g_1,g_2) \leq \calG(f_1,f_2)$), and similarly,
\begin{equation*}
\begin{split}
\int_{\Rm} \lno DE_0(f) \rno^p & d\calL^m  \leq \int_U \lno Df \rno^p d\calL^m + \int_{B(0,3/2)\setminus B(0,1)} \lno D(g \circ \vphi) \rno^p d\calL^m \\
& \leq \int_U \lno Df \rno^p d\calL^m + \int_{B(0,3/2)\setminus B(0,1)} \left( \lno D g \rno^p \circ \vphi \right) d\calL^m \\
\intertext{(because $\rmLip \vphi \leq 1$)}
& \leq \int_U \lno Df \rno^p d\calL^m + \int_{B(0,3/2)\setminus B(0,1)} \left( \lno D g \rno^p \circ \vphi \right) J\vphi d\calL^m \\
& \leq \left( 1 + Q^\frac{p}{2} 2^p \right) \left( \int_U \lno Df \rno ^p d\calL^m + \int_U \lno f \rno^p d\calL^m \right)
\end{split}
\end{equation*}
according to \eqref{eq.206}.
\par
We now define $E(f)$, $f \in W^1_p(U;\calQ_Q(\ell_2))$, as follows. We {\em choose} a sequence $\{ f_j\}$ of Lipschitz mappings $U \to \calQ_Q(\ell_2)$ associated with $f$ as in Proposition \ref{441} and we observe that $\{ E_0(f_j) \}$ is Cauchy in $L_p(\Rm)$ : for $E(f)$ we choose a limit of this sequence (that verifies conclusion (A)). That conclusions (B), (C), (D) and (E) are valid is now a matter of routine verification.
\end{proof}

\subsection{Poincar\'e inequality and approximate differentiability almost everywhere}

We start with a modification of Theorem \ref{448}.

\begin{Theorem}
\label{451}
Let $f \in W^1_p(U;\calQ_Q(\ell_2))$. There then exist a sequence $\{f_n\}$ of Lipschitz mappings $U \to \calQ_Q(\ell_2)$ and a sequence $\{A_n\}$ of Borel subsets of $U$ such that
\begin{enumerate}
\item[(A)] $\lim_n d_p(f_n,f) = 0$;
\item[(B)] For every open set $V \subset U$,
\begin{equation*}
\calE_p^p(f;V) = \lim_n \int_V \lno Df_n \rno^p d\lambda \,.
\end{equation*}
\item[(C)] $\lim_n \calL^m(U \setminus A_n) = 0$ and, for each $n$, $\lno Df_n(x) \rno \leq \lno \delta f \rno(x)$ for $\calL^m$ almost every $x \in A_n$;
\item[(D)] $\lim_n \lno Df_n(x) \rno = \lno \delta f \rno (x)$ for $\calL^m$ almost every $x \in U$.
\end{enumerate}
\end{Theorem}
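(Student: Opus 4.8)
The plan is to run, for each fixed $n$, the Luzin-type approximation of Proposition \ref{434} on the finite dimensional truncation $g_n := \calQ_Q(P_n) \circ f$, which belongs to $W^1_p(U;\calQ_Q(\ell_2^n))$ by Theorem \ref{448}(A), and then to choose the truncation level $t = t(n)$ large enough --- one value of $n$ at a time --- that the resulting Lipschitz maps $f_n := \calQ_Q(\iota_n)\circ h_n$ meet all four requirements at once. The structural input is Theorem \ref{448}: the sequence $\lno D(\calQ_Q(P_n)\circ f)\rno$ increases $\lambda$-a.e.\ to $\lno\delta f\rno$, one has $\calE_p^p(f;V) = \lim_n\int_V\lno D(\calQ_Q(P_n)\circ f)\rno^p d\lambda$ for every open $V\subset U$, and $d_p\big(\calQ_Q(\iota_n)\circ\calQ_Q(P_n)\circ f,\,f\big)\to 0$ (proof of \ref{448}(B)).

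Concretely, I would set $u_n = E(\Upsilon(g_n)) \in W^1_p(\Rm;\R^{N(n,Q)})$, so that $\calG_2(g_n(\cdot),Q\lseg 0 \rseg)^p + (M\vvvert Du_n\vvvert)^p \in L_1(U)$, and for $t\ge 0$ put $A_{n,t} = U \cap \{x : \calG_2(g_n(x),Q\lseg 0 \rseg)^p + (M\vvvert Du_n\vvvert)^p(x) \le t^p\}$, whence by Chebyshev's inequality $t^p\lambda(U\setminus A_{n,t}) \to 0$ as $t\to\infty$ for each fixed $n$. Proposition \ref{434} applied to $g_n$ furnishes a Lipschitz $h_{n,t}:U\to\calQ_Q(\ell_2^n)$ with $h_{n,t} = g_n$ $\lambda$-a.e.\ on $A_{n,t}$, $\lno Dh_{n,t}\rno = \lno Dg_n\rno$ $\lambda$-a.e.\ on $A_{n,t}$, and $\rmLip h_{n,t} \le \kappa_n t$ where $\kappa_n := 4^{m+1}\alpha(n,Q)^{-1}\bc_{\ref{243}}(m,Q)$. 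Then for each $n$ I would pick $t(n)$ large enough that
\[
\lambda(U\setminus A_{n,t(n)}) < 2^{-n},\qquad d_p(h_{n,t(n)},g_n) < \tfrac1n,\qquad Q^{p/2}\kappa_n^{\,p}\,t(n)^p\,\lambda(U\setminus A_{n,t(n)}) < \tfrac1n,
\]
the first and third being possible by the limit just displayed and the second being exactly the estimate in the proof of Proposition \ref{447}. Finally set $f_n = \calQ_Q(\iota_n)\circ h_{n,t(n)}$ (Lipschitz $U\to\calQ_Q(\ell_2)$, with $\lno Df_n\rno = \lno Dh_{n,t(n)}\rno$ $\lambda$-a.e.\ by Proposition \ref{444}) and $A_n = A_{n,t(n)}$.

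Then I would verify (A)--(D). For (A), $d_p(f_n,f) \le d_p(h_{n,t(n)},g_n) + d_p(\calQ_Q(\iota_n)\circ g_n, f) \to 0$ by the choice of $t(n)$ and the Dominated Convergence argument in the proof of \ref{448}(B). For (C), $\lambda(U\setminus A_n) < 2^{-n}\to 0$, and $\lambda$-a.e.\ on $A_n$ one has $\lno Df_n\rno = \lno Dg_n\rno = \lno D(\calQ_Q(P_n)\circ f)\rno \le \lno\delta f\rno$, the last inequality because that sequence increases to $\lno\delta f\rno$ (Theorem \ref{448}(C)). For (B): since each $f_n$ is Lipschitz with $d_p(f_n,f)\to 0$, the relaxation definition of the $p$-energy gives $\calE_p^p(f;V)\le\liminf_n\int_V\lno Df_n\rno^p d\lambda$ for every open $V$; conversely, splitting the integral over $V\cap A_n$ and $V\setminus A_n$ and bounding $\lno Df_n\rno\le\sqrt Q\,\rmLip f_n\le\sqrt Q\,\kappa_n t(n)$ (Theorem \ref{258}(C)) on the latter,
\[
\int_V\lno Df_n\rno^p d\lambda \;\le\; \int_V\lno D(\calQ_Q(P_n)\circ f)\rno^p d\lambda \;+\; Q^{p/2}\kappa_n^{\,p}t(n)^p\lambda(U\setminus A_n) \;<\; \int_V\lno D(\calQ_Q(P_n)\circ f)\rno^p d\lambda + \tfrac1n,
\]
so $\limsup_n\int_V\lno Df_n\rno^p d\lambda \le \calE_p^p(f;V)$ by \ref{448}(B), and equality holds --- with one sequence, uniformly in $V$, since the error term does not depend on $V$. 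For (D): as $\sum_n\lambda(U\setminus A_n)<\infty$, Borel--Cantelli gives $\calL^m$-a.e.\ $x$ belonging to $A_n$ for all large $n$, whence $\lno Df_n(x)\rno = \lno D(\calQ_Q(P_n)\circ f)(x)\rno \to \lno\delta f\rno(x)$.

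I expect the only real work to be the bookkeeping, not any single estimate: one must commit to a single $t(n)$ (for each $n$) that simultaneously makes $\lambda(U\setminus A_n)$ summable --- so the Borel--Cantelli step in (D) runs --- and absorbs the Lipschitz-constant-times-measure error in (B), whose coefficient $\kappa_n$ degenerates as $n\to\infty$ (a priori $\alpha(n,Q)^{-1}$ in Proposition \ref{434}(2) is not bounded in $n$; cf.\ Remark \ref{435}). This causes no trouble precisely because each error is controlled separately, freezing $t(n)$ one $n$ at a time; no $n$-uniformity of the constant in \ref{434}(2) is needed.
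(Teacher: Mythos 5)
Your construction matches the paper's almost verbatim: the same $g_n = \calQ_Q(P_n)\circ f$, the same $u_n$ and $A_{n,t}$, the same appeal to Proposition \ref{434} to produce $h_{n,t}$ and then $f_n = \calQ_Q(\iota_n)\circ h_{n,t(n)}$, and the same estimates for (A), (B) and (C). Your one variation is insisting on $\lambda(U\setminus A_n) < 2^{-n}$ rather than the paper's $1/n$, which makes the bound summable and lets (D) drop out of Borel--Cantelli; this is a welcome tightening, since the paper's remark that (D) is an easy consequence of (B) and (C) really does call for either such a summable choice or a passage to a subsequence.
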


\begin{proof}
With each $n \in \N \setminus \{0\}$ we associate $g_n = \calQ_Q(P_n) \circ f \in W^1_p(U;\calQ_Q(\ell_2^n))$ as well as
\begin{equation*}
u_n = \calG_2(g_n,Q \lseg 0 \rseg)^p + M( \vvvert D\Upsilon(g_n) \vvvert)^p \in L_1(U) \,.
\end{equation*}
Letting $A_n = U \cap \{ x : u_n(x) \leq t_n^p \}$ we can choose $t_n > 0$ large enough for
\begin{equation}
\label{eq.201}
\max \left\{ \calL^m(U \setminus A_n) , \bc_{\ref{434}}(n,m,Q)^p \int_{U \setminus A_n} u_n d\lambda \right\}< \frac{1}{n} \,,
\end{equation}
where we have put $\bc_{\ref{434}}(n,m,Q) = 4^{m+1} \alpha(n,Q)^{-1} \bc_{\ref{243}}(m,Q)$.
We then let $h_n : U \to \calQ_Q(\ell_2^n)$ be a Lipschitz mapping associated with $g_n$ and $t_n$ as in Proposition \ref{434}, and we define $f_n = \calQ_Q(\iota_n) \circ h_n : U \to \calQ_Q(\ell_2)$ which is Lipschitz as well. We observe that
\begin{equation*}
\begin{split}
d_p(f_n,f) & \leq d_p( \calQ_Q(\iota_n) \circ h_n , \calQ_Q(\iota_n) \circ \calQ_Q(P_n) \circ f) + d_p( \calQ_Q(\iota_n) \circ \calQ_Q(P_n) \circ f , f) \\
& \leq d_p(h_n,g_n) + d_p( \calQ_Q(\iota_n) \circ \calQ_Q(P_n) \circ f , f) \,.
\end{split}
\end{equation*}
Notice that
\begin{equation*}
\lim_n d_p(\calQ_Q(\iota_n) \circ \calQ_Q(P_n) \circ f , f) = 0
\end{equation*}
according to the Dominated Convergence Theorem, whereas
\begin{equation*}
\begin{split}
d_p(h_n,g_n) & = \left( \int_U \calG_2(h_n,g_n)^p d\lambda \right)^\frac{1}{p} \\
& =\left( \int_{U \setminus A_n} \calG_2(h_n,g_n)^p d\lambda \right)^\frac{1}{p} \\
& \leq \left( \int_{U \setminus A_n} \calG_2(h_n,Q \lseg 0 \rseg)^p d\lambda \right)^\frac{1}{p}
+ \left( \int_{U \setminus A_n} \calG_2(g_n,Q \lseg 0 \rseg)^p d\lambda \right)^\frac{1}{p} \\
& \leq \left( \int_{U \setminus A_n} \bc_{\ref{243}}(m,Q)^p t_n^p d\lambda \right)^\frac{1}{p}
+ \left( \int_{U \setminus A_n} \calG_2(g_n,Q \lseg 0 \rseg)^p d\lambda \right)^\frac{1}{p}\\
& \leq (1+ \bc_{\ref{243}}(m,Q)) \left( \int_{U \setminus A_n} u_n d\lambda \right)^\frac{1}{p} \\ 
& \to 0 \text{ as } n \to \infty \,,
\end{split}
\end{equation*}
from what conclusion (A) follows. Consequently,
\begin{equation*}
\calE_p^p(f;V) \leq \liminf_n \int_V \lno D f_n \rno^p d\lambda \,.
\end{equation*}
Furthermore, for each $n$ we have
\begin{equation*}
\begin{split}
\int_V \lno Df_n \rno^p d\lambda & =  \int_V \lno D h_n \rno^p d\lambda\\ 
\intertext{(according to Proposition \ref{446})}
& \leq \int_{V \cap A_n} \lno Dg_n \rno^p d\lambda + \int_{V \setminus A_n} Q^\frac{p}{2}\bc_{\ref{434}}(n,m,Q)^p t_n^p d\lambda  \\
\intertext{(according to Proposition \ref{434})}
& \leq \int_V \lno D(\calQ_Q(P_n) \circ f) \rno^p d\lambda + Q^\frac{p}{2} \bc_{\ref{434}}(n,m,Q)^p \int_{U \setminus A_n} u_n d\lambda \,.
\end{split}
\end{equation*}
It now follows from \eqref{eq.201} and Theorem \ref{448}(B) that
\begin{multline*}
\limsup_n \int_V \lno Df_n \rno^p d\lambda \\ \leq \lim_n \int_V \lno D(\calQ_Q(P_n) \circ f) \rno^p d\lambda + \limsup_n Q^\frac{p}{2} \bc_{\ref{434}}(n,m,Q)^p \int_{U \setminus A_n} u_n d\lambda \\ = \calE_p^p(f;V) \,. 
\end{multline*}
This proves conclusion (B).
\par
The first part of conclusion (C) is a consequence of \eqref{eq.201} and the second part follows from the fact that $h_n = g_n$ on $A_n$, therefore $D h_n(x) = D g_n(x)$ at $\calL^m$ almost every $x \in A_n$, and for those $x$ it follows from Proposition \ref{444}, the definition of $\lno \delta f \rno$ and Theorem \ref{448}(C) that
\begin{multline*}
\lno Df_n(x) \rno = \lno D(\calQ_Q(\iota_n) \circ h_n) (x) \rno \leq \lno D h_n(x) \rno \\ = \lno D g_n (x) \rno = \lno D(\calQ_Q(P_n) \circ f)(x) \rno \leq \lno \delta f \rno(x) \,.
\end{multline*}
\par
Conclusion (D) is an easy consequence of (B) and (C).
\end{proof}

We are now ready to prove the analog of the Poincar\'e inequality.

\begin{Theorem}
\label{452}
There exists a constant $\bc_{\theTheorem}(m) \geq 1$ with the following property.
Let $f \in W^1_p(U;\calQ_Q(\ell_2))$, $1 \leq q \leq p$, and let $V \subset U$ be a bounded open {\em convex} subset of $U$. It follows that for $\calL^m$ almost every $x \in V$,
\begin{equation*}
\int_V \calG_2(f(x),f(y))^q d\calL^m(y) \leq (\rmdiam V)^{q+m-1} \int_V \frac{\lno \delta f \rno^q(z)}{\|z-x\|^{m-1}}d\calL^m(z) \,.
\end{equation*}
Furthermore there exists $v \in \calQ_Q(\ell_2)$ such that
\begin{equation*}
\int_V \calG_2(f(x),v)^q d\calL^m(x) \leq \bc_{\theTheorem}(m) (\rmdiam V)^q \left( \frac{(\rmdiam V)^m}{\calL^m(V)} \right)^{1-\frac{1}{m}} \int_V \lno \delta f \rno^q d\calL^m \,.
\end{equation*}
\end{Theorem}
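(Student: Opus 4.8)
Overview of the plan. The plan is to prove the pointwise inequality first for the Lipschitz maps produced by Theorem \ref{451}, by integrating the intrinsic length estimate Theorem \ref{258}(D) over line segments in the convex set $V$, then to let the approximation parameter tend to infinity; the second inequality follows from the first by an averaging argument. Fix a sequence $\{f_n\}$ of Lipschitz maps and Borel sets $\{A_n\}$ as in Theorem \ref{451}, so that $\int_V\calG_2(f_n,f)^q\,d\calL^m\to0$ (here $q\le p$ and $V$ is bounded), $\calL^m(U\setminus A_n)\to0$, $\lno Df_n\rno\le\lno\delta f\rno$ $\calL^m$-a.e.\ on $A_n$, and $\lno Df_n\rno\to\lno\delta f\rno$ $\calL^m$-a.e.; after passing to a subsequence (not relabelled) I also assume $f_n\to f$ pointwise a.e.\ and $\ind_{A_n}\to1$ a.e. Since $\ell_2$ has the Radon--Nikod\'ym property, Theorem \ref{258}(D) applies to each $f_n$ along the unit-speed segments $t\mapsto x+t\omega$, $\omega\in\bbS^{m-1}$; as the union over $n$ of the non-differentiability sets of the $f_n$ is $\calL^m$-negligible, for every $x\in V$ it meets $\calL^1$-almost every ray from $x$ in an $\calH^1$-negligible set, so for $\calH^{m-1}$-a.e.\ $\omega$ one gets $\calG_2(f_n(x),f_n(x+s\omega))\le\int_0^s\lno Df_n(x+t\omega)\rno\,dt$ for every admissible $s$. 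Raising to the power $q$ (H\"older, and $s\le\rmdiam V$), integrating in polar coordinates over $y=x+s\omega\in V$ — convexity of $V$ is used here, $s$ running over $[0,R_x(\omega)]$ with $R_x(\omega)\le\rmdiam V$ — and interchanging the $s$- and $t$-integrations gives, for every $x\in V$,
\begin{equation*}
\int_V\calG_2(f_n(x),f_n(y))^q\,d\calL^m(y)\ \le\ \frac{(\rmdiam V)^{q+m-1}}{m}\int_V\frac{\lno Df_n(z)\rno^q}{\|z-x\|^{m-1}}\,d\calL^m(z).
\end{equation*}

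The limit $n\to\infty$. I let $n\to\infty$ at $\calL^m$-a.e.\ fixed $x$. The left side tends to $\int_V\calG_2(f(x),f(y))^q\,d\calL^m(y)$ for a.e.\ $x$ by Vitali's theorem: for a.e.\ $x$ one has $f_n(x)\to f(x)$, hence $\calG_2(f_n(x),f_n(y))^q\to\calG_2(f(x),f(y))^q$ for a.e.\ $y$, and the family $\{y\mapsto\calG_2(f_n(x),f_n(y))^q\}_n$ is uniformly integrable on $V$ because $\sup_n\int_V\calG_2(f_n,Q\lseg0\rseg)^p\,d\calL^m<\infty$ and $f_n(x)$ is bounded. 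On the right I split $V=(V\cap A_n)\cup(V\setminus A_n)$. Over $V\cap A_n$, $\lno Df_n\rno^q\ind_{A_n}\le\lno\delta f\rno^q$ and $\lno Df_n\rno^q\ind_{A_n}\to\lno\delta f\rno^q$ a.e., so, since $z\mapsto\lno\delta f\rno^q(z)\|z-x\|^{1-m}$ lies in $L^1(V)$ for a.e.\ $x$ (Fubini and local integrability of $\|\cdot\|^{1-m}$, noting $\lno\delta f\rno\in L^p(U)$), dominated convergence makes this piece tend to $\int_V\lno\delta f\rno^q(z)\|z-x\|^{1-m}\,d\calL^m(z)$. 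The piece over $V\setminus A_n$ is the delicate point. One first notes $\int_{V\setminus A_n}\lno Df_n\rno^p\,d\calL^m\to0$: indeed $\int_V\lno Df_n\rno^p\,d\calL^m\to\calE_p^p(f;V)=\int_V\lno\delta f\rno^p\,d\calL^m$ by Theorem \ref{451}(B), while $\int_{V\cap A_n}\lno Df_n\rno^p\,d\calL^m\to\int_V\lno\delta f\rno^p\,d\calL^m$ by dominated convergence from Theorem \ref{451}(C) and (D); hence $\int_{V\setminus A_n}\lno Df_n\rno^q\,d\calL^m\to0$ by H\"older. Therefore the potentials $\Phi_n(x):=\int_{V\setminus A_n}\lno Df_n(z)\rno^q\|z-x\|^{1-m}\,d\calL^m(z)$ satisfy $\int_V\Phi_n\,d\calL^m\le m\balpha(m)(\rmdiam V)\int_{V\setminus A_n}\lno Df_n\rno^q\,d\calL^m\to0$, so after one more subsequence $\Phi_n(x)\to0$ for a.e.\ $x$. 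Combining the three limits yields the first assertion of the theorem (even with the harmless extra factor $1/m$) at a.e.\ $x\in V$. I expect the main obstacle to be exactly this control of the ``bad set'' $V\setminus A_n$, where $\lno Df_n\rno$ need not be comparable to $\lno\delta f\rno$: it cannot be handled pointwise in $x$ and is dealt with through the averaged bound $\int_V\Phi_n\to0$.

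The second inequality. By the first part, the double integral $I:=\int_V\int_V\calG_2(f(x),f(y))^q\,d\calL^m(y)\,d\calL^m(x)$ is finite, so the function $x\mapsto\int_V\calG_2(f(x),f(y))^q\,d\calL^m(y)$ lies below its own $V$-average on a set of positive measure; I fix $x_0$ there and put $v=f(x_0)\in\calQ_Q(\ell_2)$. Two applications of $(a+b)^q\le2^{q-1}(a^q+b^q)$, averaging in $y$, integrating in $x$, and the choice of $x_0$ give $\int_V\calG_2(f(x),v)^q\,d\calL^m(x)\le 2^q\calL^m(V)^{-1}I$. Bounding $I$ by the first part and using $\int_V\|z-x\|^{1-m}\,d\calL^m(x)\le m\balpha(m)^{1-1/m}\calL^m(V)^{1/m}$ (the ball about $z$ of volume $\calL^m(V)$ being extremal for the radially decreasing kernel) produces
\begin{equation*}
\int_V\calG_2(f(x),v)^q\,d\calL^m(x)\le 2^q m\balpha(m)^{1-1/m}(\rmdiam V)^{q+m-1}\calL^m(V)^{\frac{1}{m}-1}\int_V\lno\delta f\rno^q\,d\calL^m,
\end{equation*}
which is the claimed estimate with $\bc_{\ref{452}}(m)=2^q m\balpha(m)^{1-1/m}\le 2^p m\balpha(m)^{1-1/m}$, a constant depending only on $m$ since $p$ is fixed throughout this section.
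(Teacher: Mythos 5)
Your proof follows the same basic route as the paper's: integrate the intrinsic length estimate of Theorem~\ref{258}(D) over line segments in the convex set $V$ to get a pointwise potential bound for Lipschitz maps, then pass to the limit along the approximating sequence of Theorem~\ref{451}, and finally obtain the second inequality by averaging the first. The derivation of the Lipschitz estimate is organized slightly differently (you integrate over rays from $x$ in polar coordinates, whereas the paper integrates over spherical shells and rescales), but these are equivalent. Where you do better than the paper is the passage to the limit. The paper simply writes that the equality $\lim_n\int_V\lno Df_n\rno^q\|z-x\|^{1-m}\,d\calL^m(z)=\int_V\lno\delta f\rno^q\|z-x\|^{1-m}\,d\calL^m(z)$ ``follows from Theorem~\ref{451}(A) and (D) \dots\ and the Dominated Convergence Theorem,'' but the domination $\lno Df_n\rno\le\lno\delta f\rno$ supplied by Theorem~\ref{451}(C) holds only on $A_n$, not on all of $V$, and on $V\setminus A_n$ one only has the Lipschitz bound $\lno Df_n\rno\le\sqrt Q\,\rmLip f_n$, which blows up with $n$. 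Your explicit treatment of the bad set $V\setminus A_n$ (showing $\int_{V\setminus A_n}\lno Df_n\rno^q\,d\calL^m\to0$ via Theorem~\ref{451}(B)--(D) and H\"older, then deducing $\Phi_n(x)\to0$ for a.e.\ $x$ along a further subsequence from $\int_V\Phi_n\,d\calL^m\to0$) is exactly the missing step --- indeed this is what the authors do carefully in the proof of the finite-dimensional analogue, Proposition~\ref{447}, but left implicit here. So on this point your argument is more complete than the paper's.

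On the second inequality you are correct but less sharp than the paper, and slightly at odds with the statement. Having chosen $x_0$ with $\int_V\calG_2(f(x_0),f(y))^q\,d\calL^m(y)\le I/\calL^m(V)$, the paper simply sets $v=f(x_0)$ and observes by symmetry of $\calG_2$ that $\int_V\calG_2(f(x),v)^q\,d\calL^m(x)=\int_V\calG_2(f(x_0),f(y))^q\,d\calL^m(y)\le I/\calL^m(V)$ --- no triangle inequality at all. Your detour via $(a+b)^q\le 2^{q-1}(a^q+b^q)$ is superfluous and injects a factor $2^q$, so that your final constant $2^qm\balpha(m)^{1-1/m}$ depends on $q$ (hence implicitly on $p$), whereas the theorem asserts a constant depending only on $m$, as the direct argument indeed produces. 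Dropping the triangle-inequality step recovers $\bc_{\ref{452}}(m)=m\balpha(m)^{1-1/m}$ (or whatever constant the potential estimate yields), independent of $q$ and $p$.
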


\begin{proof}
We start with the case when $f$ is Lipschitz. Given $x \in V$ it follows from Theorem \ref{258}(D)  that
\begin{equation*}
\calG_2(f(x),f(y)) \leq \int_{S_{x,y}} \lno Df(z) \rno d\calH^1(z) = \|x-y\| \int_0^1 \lno Df(x+t(y-x)) \rno d\calL^1(t) 
\end{equation*}
for $\calL^m$ almost every $y \in V$, where $S_{x,y}$ denotes the line segment joining $x$ and $y$. Now, given $s > 0$, we observe that
\begin{equation*}
\begin{split}
\int_{V \cap \rmBdry B(x,s)} \calG_2( & f(x),f(y))^q d\calH^{m-1}(y) \\
& \leq s^q \int_{V \cap \rmBdry B(x,s)} d\calH^{m-1}(y) \int_0^1 \lno Df(x+t(y-x)) \rno^q d\calL^1(t)\\
& = s^q \int_0^1 d\calL^1(t) \int_{V \cap \rmBdry B(x,s)} \lno Df(x+t(y-x)) \rno^q d\calH^{m-1}(y) \\
& = s^q \int_0^1 t^{1-m} d\calL^1(t) \int_{V \cap \rmBdry B(x,ts)} \lno Df(z) \rno^q d\calH^{m-1}(z) \\
& \leq s^{q+m-1} \int_0^1 d\calL^1(t) \int_{V \cap \rmBdry B(x,ts)} \frac{\lno Df(z) \rno^q}{\|z-x\|^{m-1}} d\calH^{m-1}(z) \\
& = s^{q+m-2} \int_{V \cap B(x,s)} \frac{\lno Df(z) \rno^q}{\|z-x\|^{m-1}} d\calL^m(z) \,.
\end{split}
\end{equation*}
Hence,
\begin{equation}
\label{eq.202}
\begin{split}
\int_V \calG_2(f(x) & ,f(y))^q d\calL^m(y) \\
& = \int_0^{\rmdiam V} d\calL^1(s) \int_{V \cap \rmBdry B(x,s)} \calG_2(  f(x),f(y))^q d\calH^{m-1}(y) \\
& \leq \int_0^{\rmdiam V} s^{q+m-2} d\calL^1(s) \int_{V \cap B(x,s)} \frac{\lno Df(z) \rno^q}{\|z-x\|^{m-1}} d\calL^m(z) \\
& \leq (\rmdiam V)^{q+m-1} \int_V \frac{\lno Df(z) \rno^q}{\|z-x\|^{m-1}} d\calL^m(z)  \,.
\end{split}
\end{equation}
We now merely assume that $f \in W^1_p(U;\calQ_Q(\ell_2))$ and we choose a sequence of Lipschitz mappings $\{f_n\}$ as in Theorem \ref{451}. Thus \eqref{eq.202} applies to each $f_n$. Let $x \in V$ be such that $\lim_n \calG_2(f(x),f_n(x))=0$. In order to establish our first conclusion we can readily assume that
\begin{equation*}
V \to \R : z \mapsto \frac{\lno \delta f \rno^q(x)}{\|z-x\|^{m-1}}
\end{equation*}
is summable. In that case, it follows from Theorem \ref{451}(A) and (D), from \eqref{eq.202} and from the Dominated Convergence Theorem that
\begin{equation*}
\begin{split}
\int_V \calG_2(f(x),f(y))^q d\calL^m(y) & = \lim_n \int_V \calG_2(f_n(x),f_n(y))^q d\calL^m(y) \\
& \leq (\rmdiam V)^{q+m-1} \lim_n \int_V \frac{ \lno Df_n(z) \rno^q}{\|z-x\|^{m-1}} d\calL^m(z) \\
& = (\rmdiam V)^{q+m-1} \int_V \frac{\lno \delta f \rno^q(z)}{\|z-x\|^{m-1}} d\calL^m(z) \,.
\end{split}
\end{equation*}
\par
We now turn to proving the second conclusion. Integrating the inequality above with respect to $x$, and applying standard potential estimates (see e.g. \cite[Lemma 1.31]{MALY.ZIEMER} applied with $p=1$) we obtain 
\begin{multline*}
\int_V d\calL^m(x) \int_V \calG_2(f(x),f(y))^q d\calL^m(y) \\ \leq (\rmdiam V)^{q+m-1} \int_V d\calL^m(x) \int_V \frac{\lno \delta f \rno^q(z)}{\|z-x\|^{m-1}} d\calL^m(z)\\
\leq C(m) (\rmdiam V)^{q+m-1} \calL^m(V)^\frac{1}{m}\int_V \lno \delta f \rno^q d\calL^m \,.
\end{multline*}
Thus there exists $x \in V$ such that
\begin{equation*}
\int_V \calG_2(f(x),f(y))^q d\calL^m(y) \leq C(m) \frac{(\rmdiam V)^{q+m-1}\calL^m(V)^\frac{1}{m}}{\calL^m(V)} \int_V \lno \delta f \rno^q d\calL^m \,.
\end{equation*}
Letting $v=f(x)$ completes the proof.
\end{proof}

\begin{Proposition}
\label{453}
Let $U = U(0,1)$ be the unit ball in $\Rm$, let $f \in W^1_p(U;\calQ_Q(\ell_2))$ and $t \geq 0$. Define
\begin{equation*}
A_t = U \cap \big\{ x : \calG_2(f(x),Q\lseg 0 \rseg)^p + \left( M \lno \delta E(f) \rno \right)^p(x) \leq t^p \big\} \,,
\end{equation*}
where $M$ denotes the maximal function operator and $E$ denotes the extension operator defined in Theorem \ref{ext}.
There then exists a Lipschitzian map $h : U \to \calQ_Q(\ell_2)$ such that
\begin{enumerate}
\item[(1)] $h(x) = f(x)$ for $\calL^m$ almost every $x \in A_t$;
\item[(2)] $\rmLip h \leq 6 m \balpha(m) \bc_{\ref{234}}(m,Q) t$; 
\item[(3)] $\calG_2(h(x),Q\lseg 0 \rseg) \leq \bc_{\ref{243}}(m,Q) t$ for every $x \in U$;
\item[(4)] For $\calL^m$ almost every $x \in A_t$, $f$ is approximately differentiable at $x$ and $\lno Df(x) \rno = \lno Dh(x) \rno$.
\end{enumerate}
\end{Proposition}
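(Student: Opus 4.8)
The plan is to run the proof of Proposition \ref{434} with the extension $\tilde f := E(f) \in W^1_p(\Rm;\calQ_Q(\ell_2))$ (Theorem \ref{ext}) playing the role of $f$, but to replace its use of the $\R^N$-valued representative $E(\Upsilon(f))$ and of the potential estimate \cite[Lemma 1.50]{MALY.ZIEMER} by the \emph{intrinsic} Poincar\'e inequality of Theorem \ref{452}; this is unavoidable if one wants the $n$-independent constant announced in Remark \ref{435}, since the embedding-based argument of Proposition \ref{434} pays a factor $\alpha(n,Q)^{-1}$. Set $g := M\lno \delta \tilde f \rno$. Since $\lno\delta\tilde f\rno \in L_p(\Rm)$ and $p>1$, the Hardy--Littlewood maximal theorem gives $g \in L_p(\Rm)$, so $g$ is Borel and finite $\calL^m$-a.e.\ and $A_t$ is Borel; also $\tilde f = f$ on $U$ by Theorem \ref{ext}(A).

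The heart of the matter is a Morrey-type Lipschitz estimate for $\tilde f$ on (almost all of) $A_t$, and this is the step I expect to be the main obstacle: because $\calQ_Q(\ell_2)$ has no linear structure one cannot average it, so one cannot reproduce the ``$u(x)-\dashint_\Omega u$'' estimate directly, and instead must telescope through the barycenter-free values furnished by the \emph{second} conclusion of Theorem \ref{452}. For each ball $B = B(z,\rho) \subset \Rm$ that conclusion (with $q=1$, applied to $\tilde f$; note $(\rmdiam B)^m/\calL^m(B)$ is a dimensional constant) provides $v_B \in \calQ_Q(\ell_2)$ with $\dashint_B \calG_2(\tilde f, v_B)\, d\calL^m \le C(m)\,\rho\, \dashint_B \lno\delta\tilde f\rno\, d\calL^m$. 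Let $Z$ be the full measure set of Lebesgue points of $\tilde f$, i.e.\ the $x$ with $\lim_{\rho\to 0}\dashint_{B(x,\rho)}\calG_2(\tilde f(y),\tilde f(x))\, d\calL^m(y) = 0$ (these exist $\calL^m$-a.e.\ since $\tilde f$ is measurable into a separable metric space), and set $\tilde A_t := A_t \cap Z$, so $\calL^m(A_t \setminus \tilde A_t) = 0$. For $x \in \tilde A_t$, comparing $v_{B(x,2^{-k}\rho)}$ for successive $k$ (the volumes differ by the factor $2^m$) and using $\dashint_B \lno\delta\tilde f\rno \le M\lno\delta\tilde f\rno(x)$ yields $\calG_2(v_{B(x,\rho)},v_{B(x,\rho/2)}) \le C(m)\,\rho\, M\lno\delta\tilde f\rno(x)$, and since $v_{B(x,2^{-k}\rho)} \to \tilde f(x)$ (as $x \in Z$ and $M\lno\delta\tilde f\rno(x) < \infty$), summing gives $\calG_2(v_{B(x,\rho)},\tilde f(x)) \le C(m)\,\rho\, M\lno\delta\tilde f\rno(x)$. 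Applying this to $x,x' \in \tilde A_t$ with $r := \|x-x'\|$, connecting $v_{B(x,2r)}$ to the chain of balls shrinking to $x'$ (legitimate since $B(x',r) \subset B(x,2r)$), and using $M\lno\delta\tilde f\rno \le g \le t$ on $A_t$, one obtains $\calG_2(\tilde f(x),\tilde f(x')) \le C(m)\, t\, \|x-x'\|$ for all $x,x' \in \tilde A_t$; bookkeeping of the dimensional constants recovers the explicit form stated in (2).

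Since $\tilde f = f$ on $U \supset \tilde A_t$, the restriction $f \restriction \tilde A_t$ is $C(m)t$-Lipschitz for $\calG_2$, hence for $\calG_\infty$, and $\calG_\infty(f,Q\lseg 0\rseg) \le \calG_2(f,Q\lseg 0\rseg) \le t$ on $\tilde A_t \subset A_t$. Extending $f \restriction \tilde A_t$ by uniform continuity to $\rmClos \tilde A_t$ and applying Theorem \ref{243} with $X = \Rm$, $A = \rmClos \tilde A_t$, $Y = \ell_2$ produces a Lipschitz $h_0 : \Rm \to \calQ_Q(\ell_2)$ with $\rmLip h_0 \le \bc_{\ref{243}}(m,Q)\, C(m)\, t$ and $\sup_{\Rm}\calG_\infty(h_0(\cdot),Q\lseg 0\rseg) \le \bc_{\ref{243}}(m,Q)\, t$; put $h := h_0 \restriction U$. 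Then (1) holds since $h = f$ on $\tilde A_t$, which is $\calL^m$-a.e.\ all of $A_t$; (2) and (3) follow after passing from $\calG_\infty$ back to $\calG_2$ via $\calG_2 \le \sqrt Q\, \calG_\infty$, the factor $\sqrt Q$ and the constant $C(m)$ being absorbed into the constants of (2) and (3) respectively. Finally, (4): $h$ is Lipschitz, so by Theorem \ref{258} (applicable because $\ell_2$ has the Radon--Nikod\'ym property) it is approximately differentiable at $\calL^m$-a.e.\ point; at a Lebesgue density point $x$ of the full measure subset $\{f = h\} \cap A_t$ at which $h$ is approximately differentiable, $f$ is approximately tangent to $Ah(x)$, hence approximately differentiable at $x$ with $Af(x) = Ah(x)$, so $\lno Df(x)\rno = \lno Dh(x)\rno$.
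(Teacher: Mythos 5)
Your proof is correct, and it takes a genuinely different intrinsic route from the paper's. You invoke the \emph{second} conclusion of Theorem~\ref{452}, which supplies a surrogate mean $v_B$ on each ball $B$, and then run the classical Morrey telescoping argument through a dyadic chain $B(x,2^{-k}\rho)$; this obliges you to first establish that $\calL^m$-a.e.\ $x$ is a Lebesgue point of the metric-valued map $\tilde f$ and to check that the chain converges to $\tilde f(x)$ there. The paper instead applies the \emph{first} (pointwise) conclusion of Theorem~\ref{452} on a countable family of lens-shaped convex sets $V=U(\tilde x,2r)\cap U(\tilde x',2r)$ (centers in a dense countable set, $r$ rational), so that for $x,x'$ off a single null set a common $V$ containing both can be found; a ``median'' point $y\in V$ is then produced by Chebyshev (the subsets of $V$ where $\calG(\hat f(x),\hat f(\cdot))$, resp.\ $\calG(\hat f(x'),\hat f(\cdot))$, stay below three times their mean each exceed $\tfrac{2}{3}\calL^m(V)$, hence intersect), and $v=\hat f(y)$ serves as the comparison value, after which the Riesz potential of $\lno\delta\hat f\rno$ is dominated by $M\lno\delta\hat f\rno$ via the potential estimate. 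Your route has the familiar Sobolev flavor at the cost of the extra Lebesgue-point/telescoping apparatus; the paper's is shorter and limit-free at the cost of the lens family and the median trick, and both crucially deliver an $n$-independent constant, which is the whole point of re-proving Proposition~\ref{434} intrinsically. Two book-keeping remarks: your constant in (2) will carry the factor $\bc_{\ref{452}}(m)$ coming from the second conclusion of Theorem~\ref{452} (the first conclusion has leading constant $1$), so you will not literally recover $6m\balpha(m)$, only its $m$-only dependence, which is all that is needed; and your concern about the $\sqrt Q$ slippage between $\calG_\infty$ and $\calG_2$ after Theorem~\ref{243} is equally present in Proposition~\ref{434} and is absorbed into the constants there as well.
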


\begin{proof}
The proof is similar to that of Proposition \ref{434}.
We abbreviate $\hat{f} = E(f)$. 
We choose a countable dense set $D \subset \Rm$ and we consider the collection $\calV$ of subsets $V$ of $\Rm$ of the type $V = U(x,r) \cap U(x',r)$ where $x,x' \in D$ and $r \in \Q^+$. Thus $\calV$ is countable and for each $V \in \calV$ there exists $N_V \subset V$ such that $\calL^m(V \setminus N_V) = 0$ and for every $x \in V \setminus N_V$ one has
\begin{equation}
\label{eq.203}
\int_V \calG(\hat{f}(x),\hat{f}(y))d\calL^m(y) \leq (\rmdiam V)^m \int_V \frac{\lno \delta \hat{f} \rno(z)}{\|z-x\|^{m-1}} d\calL^m(z) \,,
\end{equation}
according to Theorem \ref{452} applied with $q=1$. Let $N = \cup_{V \in \calV} N_V$. Given $x,x' \in \Rm \setminus N$ we choose $r \in \Q^+$ such that
\begin{equation*}
0 < r - \|x-x'\| < \frac{\|x-x'\|}{5}
\end{equation*}
and we choose $\tilde{x}, \tilde{x}' \in D$ such that
\begin{equation*}
\max \big\{ \|x-\tilde{x}\| , \|x'- \tilde{x}' \| \big\} < \frac{r}{5} \,.
\end{equation*}
Defining $V = U(\tilde{x},2r) \cap U(\tilde{x}',2r) \in \calV$ we easily infer that $x,x' \in V$. Therefore \eqref{eq.203} applies to both pairs $x, V$ and $x',V$. We define
\begin{equation*}
G = V \cap \left\{ y : \calG(\hat{f}(x),\hat{f}(y)) < \frac{3 (\rmdiam V)^m)}{\calL^m(V)} \int_V \frac{\lno \delta \hat{f} \rno(z)}{\|z-x\|^{m-1}} d\calL^m(z) \right\} \,,
\end{equation*}
as well as
\begin{equation*}
G' = V \cap \left\{ y : \calG(\hat{f}(x'),\hat{f}(y)) < \frac{3 (\rmdiam V)^m)}{\calL^m(V)} \int_V \frac{\lno \delta \hat{f} \rno(z)}{\|z-x'\|^{m-1}} d\calL^m(z) \right\} \,.
\end{equation*}
One readily infer from \eqref{eq.203} that
\begin{equation*}
\max \big\{ \calL^m(V \setminus G) , \calL^m(V \setminus G') \big\}< \frac{\calL^m(V)}{3} \,,
\end{equation*}
and hence $G \cap G' \neq \emptyset$. We choose $y \in G \cap G'$ and we set $v = \hat{f}(y)$. Thus
\begin{equation*}
\calG(\hat{f}(x),v) \leq \frac{3 (2r)^m}{\balpha(m)r^m} \int_V \frac{ \lno \delta \hat{f} \rno(z)}{\|z-x\|^{m-1}} d\calL^m(z)
\end{equation*}
and
\begin{equation*}
\calG(\hat{f}(x'),v) \leq \frac{3 (2r)^m}{\balpha(m)r^m} \int_V \frac{ \lno \delta \hat{f} \rno(z)}{\|z-x'\|^{m-1}} d\calL^m(z) \,.
\end{equation*}
It follows from the potential estimate \cite[Lemma 1.32(i)]{MALY.ZIEMER} that
\begin{multline*}
\int_V \frac{ \lno \delta \hat{f} \rno(z)}{\|z-x\|^{m-1}} d\calL^m(z) \leq \int_{U(x,2r)}\frac{ \lno \delta \hat{f} \rno(z)}{\|z-x\|^{m-1}} d\calL^m(z) \\ \leq m \balpha(m) (2r) M\left( \lno \delta \hat{f} \rno\right)(x) \leq 3 m \balpha(m) \|x-x'\| M\left( \lno \delta \hat{f} \rno\right)(x) \,,
\end{multline*}
and similarly
\begin{equation*}
\int_V \frac{ \lno \delta \hat{f} \rno(z)}{\|z-x'\|^{m-1}} d\calL^m(z) \leq 3 m \balpha(m) \|x-x'\| M\left( \lno \delta \hat{f} \rno\right)(x') \,.
\end{equation*}
If furthermore $x,x' \in A_t$ then $\max \{ M( \lno \hat{f} \rno)(x), M( \lno \hat{f} \rno) (x') \} \leq t$ and it ensues from the above inequalities that
\begin{equation*}
\calG(\hat{f}(x) , \hat{f}(x')) \leq 6 m \balpha(m) \|x-x'\| t \,. 
\end{equation*}
One now concludes like in Proposition \ref{434}.
\end{proof}

The following is the analog of Proposition \ref{447} for an infinite dimensional target.

\begin{Corollary}
\label{454}
Let $U = U(0,1)$ be the unit ball in $\Rm$ and let $f \in W^1_p(U;\calQ_Q(\ell_2))$. It follows that $f$ is approximately differentiable $\calL^m$ almost everywhere and that
\begin{equation*}
\calE_p^p(f;V) = \int_V \lno Df(x) \rno^p d\calL^m(x)
\end{equation*}
for every open set $V \subset U$.
\end{Corollary}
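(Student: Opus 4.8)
The plan is to split the statement into two halves. First, approximate differentiability $\calL^m$ almost everywhere will come straight from the Luzin-type approximation of Proposition~\ref{453}. Second, the energy formula will follow once we know $\lno Df\rno=\lno\delta f\rno$ $\calL^m$ almost everywhere, which we obtain by letting $n\to\infty$ in the finite dimensional truncations $\calQ_Q(P_n)\circ f$ and invoking~\eqref{eq.205}.

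For the first half, note that $E(f)\in W^1_p(\Rm;\calQ_Q(\ell_2))$ has finite $p$-energy by Theorem~\ref{ext}, hence $\lno\delta E(f)\rno\in L_p(\Rm)$ by~\eqref{eq.205}; since $p>1$ the maximal operator is bounded on $L_p$, so $M\lno\delta E(f)\rno<\infty$ $\calL^m$ almost everywhere. Because $\calG_2(f(x),Q\lseg 0\rseg)<\infty$ for every $x$, the sets $A_t$ of Proposition~\ref{453} associated with $f$ and $t\in\N\setminus\{0\}$ increase, as $t\to\infty$, to a subset of $U$ whose complement is $\calL^m$ negligible. By Proposition~\ref{453}(4), $f$ is approximately differentiable at $\calL^m$ almost every point of each $A_t$, hence at $\calL^m$ almost every point of $U$.

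For the second half, fix $x\in U$ at which $f$ and every $\calQ_Q(P_n)\circ f$ ($n\geq1$; these are approximately differentiable $\calL^m$ almost everywhere by Theorem~\ref{448}(A) and Corollary~\ref{436}) are approximately differentiable and at which $\lno\delta f\rno(x)=\lim_n\lno D(\calQ_Q(P_n)\circ f)(x)\rno$ holds (the defining relation~\eqref{eq.204}); this is so for $\calL^m$ almost every $x$. Writing $Df(x)=\oplus_{i=1}^Q\lseg L_i\rseg$ with $L_i\in\rmHom(X,\ell_2)$, Proposition~\ref{443} (whose hypothesis~(3) is met for the norm $\vvvert\cdot\vvvert$ by Remark~\ref{445}) together with uniqueness of the approximate derivative gives $D(\calQ_Q(P_n)\circ f)(x)=\oplus_{i=1}^Q\lseg P_n\circ L_i\rseg$, so that $\lno D(\calQ_Q(P_n)\circ f)(x)\rno^2=\sum_{i=1}^Q\sum_{j=1}^m\|P_n(L_i(u_j))\|^2$. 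Since $P_n$ is the orthogonal projection of $\ell_2$ onto $\rmspan\{e_1,\ldots,e_n\}$ and the union of these subspaces is dense, $P_n(L_i(u_j))\to L_i(u_j)$ in $\ell_2$ for the finitely many pairs $(i,j)$; passing to the limit yields $\lno D(\calQ_Q(P_n)\circ f)(x)\rno^2\to\sum_{i,j}\|L_i(u_j)\|^2=\sum_i\vvvert L_i\vvvert^2=\lno Df(x)\rno^2$. Comparing with~\eqref{eq.204} gives $\lno Df(x)\rno=\lno\delta f\rno(x)$ for $\calL^m$ almost every $x$, and then~\eqref{eq.205} gives $\calE^p_p(f;V)=\int_V\lno\delta f\rno^p\,d\calL^m=\int_V\lno Df\rno^p\,d\calL^m$ for every open $V\subset U$.

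I do not expect a genuine obstacle: the whole argument runs on results already in place. The only point needing attention is the bookkeeping of the exceptional null set in the second half — the countable union of the $\calL^m$-null sets where the various $\calQ_Q(P_n)\circ f$ fail to be approximately differentiable, the set where~\eqref{eq.204} fails, and the set where $f$ is not approximately differentiable — together with the elementary observation that strong convergence $P_n\to\rmid$ on a finite list of vectors of $\ell_2$ is enough to pass to the limit in the norm $\vvvert\cdot\vvvert$.
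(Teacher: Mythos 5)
Your proposal is correct and follows essentially the same two-step route as the paper: approximate differentiability almost everywhere via the Luzin-type approximation of Proposition~\ref{453} (using integrability of $\lno\delta E(f)\rno$ and boundedness of the maximal operator on $L_p$), and then the energy identity via the pointwise limit $\lno D(\calQ_Q(P_n)\circ f)(x)\rno \to \lno Df(x)\rno$ obtained from $P_n\to\rmid$ strongly on the finitely many vectors $L_i(u_j)$, compared against the definition \eqref{eq.204} of $\lno\delta f\rno$ and the relaxation formula \eqref{eq.205}. The only cosmetic difference is that you track the exceptional null sets a bit more explicitly than the paper does.
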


\begin{proof}
Letting $\{t_j\}$ be an increasing unbounded sequence in $\R^+$ we observe that $\calL^m(U \setminus A_{t_j}) \to 0$ as $j \to \infty$ (where $A_{t_j}$ is defined as in the statement of Proposition \ref{453}) because both $\calG(\hat{f}(\cdot),Q\lseg 0 \rseg))$ and $M \left( \lno \delta \hat{f} \rno \right)$ belong to $L_p(\Rm)$. Letting $h_j$ be a Lipschitz mapping $U \to \calQ_Q(\ell_2)$ which coincides with $f$ almost everywhere on $A_{t_j}$, we easily infer that $f$ is approximately differentiable at each Lebesgue point $x \in A_{t_j}$ of $A_{t_j}$ at which $h_j$ is approximately differentiable. Since this is the case of $\calL^m$ almost every $a \in A_{t_j}$ according to Theorem \ref{258}, our first conclusion follows.
\par
In order to prove our second conclusion, consider a point $x \in U$ of approximate differentiability of $f$. Reasoning as in the proof of Proposition \ref{443} we write $Af(x) = \oplus_{i=1}^Q \lseg A_i \rseg$ and we infer that for each integer $n$, $A(\calQ_Q(P_n) \circ f)(a) = \oplus_{i=1}^Q \lseg P_n \circ A_i \rseg$. Since the linear part of $P_n \circ A_i$ is $P_n \circ L_i$, where $L_i$ is the linear part of $A_i$, we see that
\begin{equation*}
\lno D(\calQ_Q(P_n) \circ f)(x) \rno^2 = \sum_{i=1}^Q \vvvert P_n \circ L_i \vvvert^2 = \sum_{i=1}^Q \sum_{j=1}^m \| P_n(L_i(e_j)) \|^2 \,.
\end{equation*}
Thus
\begin{equation*}
\begin{split}
\lim_n \lno D(\calQ_Q(P_n) \circ f)(x) \rno^2 & = \lim_n \sum_{i=1}^Q \sum_{j=1}^m \| P_n(L_i(e_j)) \|^2 \\
& = \sum_{i=1}^Q \sum_{j=1}^m \|L_i(e_j)) \|^2 \\
& = \lno Df(x) \rno^2 \,.
\end{split}
\end{equation*}
Therefore $\lno Df(x) \rno = \lno \delta f \rno(x)$, according to \eqref{eq.204}, and the conclusion follows from \eqref{eq.205}.
\end{proof}

\subsection{Trace}

\begin{Proposition}
\label{471}
Let $U = U(0,1)$ be the unit ball in $\Rm$. For every $\veps > 0$ there exists $\theta > 0$ such that
\begin{equation*}
\int_{\rmBdry U} |u|^p d\calH^{m-1} \leq \theta \int_U |u|^p d\calL^m + \veps \int_U \| \nabla u \|^p d\calL^m
\end{equation*}
whenever $u : \rmClos U \to \R$ is Lipschitz.
\end{Proposition}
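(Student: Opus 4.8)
The plan is to realize $\int_{\rmBdry U}|u|^p$ as the outward flux of a Lipschitz vector field through $\rmBdry U$, integrate by parts, and then absorb the resulting gradient term with a Young inequality carrying a free parameter tuned against $\veps$.

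First, fix a Lipschitz $u : \rmClos U \to \R$. Since $1 < p < \infty$ the map $t \mapsto |t|^p$ is $C^1$ (with derivative $p|t|^{p-1}\rmsign t$), so $x \mapsto |u(x)|^p$ is Lipschitz on the compact set $\rmClos U$, and hence so is $V(x) = |u(x)|^p\,x$. On $\rmBdry U = \bbS^{m-1}$ the outward unit normal at $x$ is $x$ itself, so $\la V(x),x\ra = |u(x)|^p$ there; and for $\calL^m$ almost every $x \in U$ the chain rule for Lipschitz maps gives
\begin{equation*}
\begin{split}
\rmdiv V(x) & = m\,|u(x)|^p + \la \nabla(|u|^p)(x) , x \ra \\
& = m\,|u(x)|^p + p\,|u(x)|^{p-1}\,(\rmsign u(x))\,\la \nabla u(x), x \ra \,.
\end{split}
\end{equation*}
Applying the Gauss--Green formula to $V$ on $U$ I would obtain
\begin{equation*}
\begin{split}
\int_{\rmBdry U}|u|^p\,d\calH^{m-1} & = \int_U \rmdiv V\,d\calL^m \\
& = m\int_U |u|^p\,d\calL^m + p\int_U |u|^{p-1}(\rmsign u)\,\la \nabla u, x\ra\,d\calL^m \,.
\end{split}
\end{equation*}

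Next, since $\|x\| \leq 1$ on $\rmClos U$, the last integrand is at most $p\,|u|^{p-1}\|\nabla u\|$ in absolute value. For a parameter $\tau > 0$, Young's inequality $ab \leq \tau^{p'}a^{p'}/p' + \tau^{-p}b^p/p$ with conjugate exponent $p' = p/(p-1)$, $a = |u|^{p-1}$ (so that $a^{p'} = |u|^p$ because $(p-1)p' = p$) and $b = \|\nabla u\|$ gives $p\,|u|^{p-1}\|\nabla u\| \leq (p\tau^{p'}/p')|u|^p + \tau^{-p}\|\nabla u\|^p$, whence
\begin{equation*}
\int_{\rmBdry U}|u|^p\,d\calH^{m-1} \leq \left( m + \frac{p\,\tau^{p'}}{p'}\right)\int_U |u|^p\,d\calL^m + \tau^{-p}\int_U \|\nabla u\|^p\,d\calL^m \,.
\end{equation*}
Given $\veps > 0$ I would set $\tau = \veps^{-1/p}$, so that $\tau^{-p} = \veps$, and the Proposition follows with $\theta = m + (p/p')\,\veps^{-p'/p}$, a constant depending only on $m$, $p$ and $\veps$.

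The only non-computational point — and the one I expect to require a word of care — is the validity of the Gauss--Green identity for the merely Lipschitz field $V$. I would settle it by mollification: extend $u$ to a Lipschitz function on $\Rm$ by the McShane--Whitney theorem, put $u_k = u * \vphi_{1/k}$ for a smooth approximate identity $\{\vphi_\veps\}$, so that $u_k \to u$ uniformly on $\rmClos U$, $\nabla u_k \to \nabla u$ in $L_p(U)$, and $\sup_k \rmLip u_k < \infty$. The classical divergence theorem applies to the smooth fields $V_k(x) = |u_k(x)|^p x$ and yields the displayed identity (hence the displayed inequality) with $u_k$ in place of $u$; letting $k \to \infty$ one passes to the limit in the boundary integral by uniform convergence on $\rmClos U$, and in the two interior integrals by uniform convergence of $|u_k|^p$ and of $|u_k|^{p-1}\rmsign u_k$ (the latter being a continuous function of $u_k$ since $p > 1$) together with the $L_p$ convergence of $\nabla u_k$ and the uniform Lipschitz bound. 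Everything else is the elementary estimate above, so no real further difficulty is expected.
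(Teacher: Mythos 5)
Your proof is correct, and it takes a genuinely different route from the paper's. You pass through the divergence theorem applied to the Lipschitz field $V(x)=|u(x)|^p\,x$, exploiting the fact that the outward unit normal to the unit sphere at $x$ is $x$ itself, and then absorb the gradient term with a one-parameter Young inequality; the identification $\la V(x),x\ra=|u(x)|^p$ on $\rmBdry U$, the pointwise computation of $\rmdiv V$, the application $ab\le\tau^{p'}a^{p'}/p'+\tau^{-p}b^p/p$ with $a=|u|^{p-1}$, $b=\|\nabla u\|$, and the mollification argument justifying Gauss--Green for a merely Lipschitz field are all in order (and for the record Gauss--Green is already known to hold directly for $W^{1,\infty}\cap C(\rmClos U)$ fields on a smooth domain, so the mollification is a belt-and-braces step rather than a necessity). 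The paper instead integrates along the rays joining an interior point $y$ to a boundary point $x$, inserts a smooth cut-off $\vphi(t)$ in the ray parameter with $\vphi(0)=\vphi(1)=1$ so that the coefficient of the gradient term is $(\int_0^1\vphi^q)^{p/q}$ and can be made arbitrarily small, then averages over $y\in U$ and invokes the Riesz-potential estimate from M{\'a}ly--Ziemer to pass from the weighted ray integral to a volume integral. Your route is the classical Gagliardo-style trace argument and is shorter and more elementary; it buys you an explicit value $\theta=m+(p/p')\,\veps^{-p'/p}$ with clean dependence on $m,p,\veps$. The paper's route is less economical here but deliberately mirrors the potential-theoretic machinery (ray integration, Riesz potentials, maximal function bounds) used throughout Sections 4.4--4.6, so it is likely chosen for stylistic uniformity rather than necessity. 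One small stylistic remark: since $U$ is the ball, $V\in C(\rmClos U)\cap W^{1,\infty}(U)$ and $\rmBdry U$ is $C^\infty$, you could cite the divergence theorem for $W^{1,1}\cap C(\rmClos U)$ fields and skip the mollification, but keeping it does no harm.
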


\begin{proof}
Given $\hat{\veps} > 0$ we choose a smooth function $\vphi : [0,1] \to [0,1]$ such that $\vphi(0) = \vphi(1)=1$ and
\begin{equation*}
\hat{\veps} = \left( \int_0^1 \vphi^q \right)^\frac{p}{q}
\end{equation*}
where $q$ is the exponent conjugate to $p$, and we put
\begin{equation*}
\hat{\theta} = \left( \int_0^1 |\vphi'|^q \right)^\frac{p}{q} \,.
\end{equation*}
For every $x \in \rmBdry U$ and $y \in U$ we observe that
\begin{equation*}
\begin{split}
|u(x)-u(y)| & = \left| \int_0^1 \frac{d}{dt} \bigg( \vphi(t) u(y + t(x-y)) \bigg) d\calL^1(t) \right| \\
& = \left| \int_0^1 \bigg( \vphi'(t) u(y + t(x-y)) + \vphi(t) \la \nabla u(y + t(x-y)) , x-y \ra \bigg) d\calL^1(t) \right| \\
& \leq \left( \int_0^1 | \vphi'|^q \right)^\frac{1}{q} \left( \int_0^1 |u(y+t(x-y))|^p d\calL^1(t) \right)^\frac{1}{p} \\
& \qquad\qquad + \left( \int_0^1 \vphi^q \right)^\frac{1}{q}  \left( \int_0^1 \|\nabla u (y+t(x-y)) \|^p \|x-y\|^p d\calL^1(t) \right)^\frac{1}{p} \,,
\end{split}
\end{equation*}
Therefore,
\begin{multline}
\label{eq.207}
|u(x)-u(y)|^p \leq 2^{p-1} \hat{\theta} \int_0^1 |u(y+t(x-y))|^p d\calL^1(t) \\+ 2^{p-1} \|x-y\|^p\hat{\veps} \int_0^1 \| \nabla u(y+t(x-y)) \|^p d\calL^1(t) \,.
\end{multline}
In order to integrate with respect to $x \in \rmBdry U$, we first note that the jacobian of the map $[0,1] \times \rmBdry U \to U : (t,x) \mapsto y + t(x-y)$ 
at $(t,x)$ equals $\|x-y\|t^{m-1}$. Since $\|x-y\| \leq 2$, the area formula therefore implies that
\begin{equation*}
\begin{split}
2^{2-m} & \int_{\rmBdry U}  d\calH^{m-1}(x)  \int_0^1 |u(y+t(x-y))|^p d\calL^1(t)\\ &  \leq \int_0^1 d\calL^1(t)  \int_{\rmBdry U} \frac{|u(y+t(x-y))|^p}{\|x-y\|^{m-2}} d\calH^{m-1}(x) \\
& = \int_0^1 d\calL^1(t)  \int_{\rmBdry U} \frac{|u(y + t(x-y))|^p}{\|y - (y + t(x-y))\|^{m-1}} \|x-y\|t^{m-1}d\calH^{m-1}(x) \\
& = \int_U \frac{|u(z)|^p}{\|y-z\|^{m-1}} d\calL^m(z) \,.
\end{split}
\end{equation*}
Since the similar inequality holds for the gradient term, we infer from \eqref{eq.207} that
\begin{multline*}
\int_{\rmBdry U} | u(x) -u(y) |^p d\calH^{m-1}(x) \leq 2^{p+m-3} \hat{\theta} \int_U \frac{|u(z)|^p}{\|y-z\|^{m-1}} d\calL^m(z) \\
+ 2^{2p+m-3} \hat{\veps} \int_U \frac{\|\nabla u(z)\|^p}{\|y-z\|^{m-1}} d\calL^m(z) \,.
\end{multline*}
Thus,
\begin{multline*}
\int_{\rmBdry U} |u(x)|^p d\calH^{m-1}(x) \leq 2^{p-1} m \balpha(m) |u(y)|^p \\ + 2^{2p+m-4} \hat{\theta} \int_U \frac{|u(z)|^p}{\|y-z\|^{m-1}} d\calL^m(z)
+ 2^{3p+m-4} \hat{\veps} \int_U \frac{\|\nabla u(z)\|^p}{\|y-z\|^{m-1}} d\calL^m(z)   \,,
\end{multline*}
according to the triangle inequality. We now integrate with respect to $y \in U$ and, referring to the potential estimate \cite[Lemma 1.31]{MALY.ZIEMER}, we obtain
\begin{equation*}
\begin{split}
\int_{\rmBdry U} |u|^p d\calH^{m-1} & \leq 2^{p-1} m \int_U |u(y)|^p d\calL^m(y) \\
& \qquad + 2^{2p+m-4} \balpha(m)^{-1} \hat{\theta} \int_U d\calL^m(y) \int_U \frac{|u(z)|^p}{\|y-z\|^{m-1}} d\calL^m(z) \\
& \qquad + 2^{3p+m-4} \balpha(m)^{-1} \hat{\veps} \int_U d\calL^m(y) \int_U \frac{\|\nabla u(z)\|^p}{\|y-z\|^{m-1}} d\calL^m(z) \\
& \leq m(2^{p-1}+2^{2p+m-4}\hat{\theta})  \int_U |u|^p d\calL^m \\ 
& \qquad + m 2^{3p+m-4} \hat{\veps} \int_U \|\nabla u \|^p d\calL^m \,.
\end{split}
\end{equation*}
\end{proof}

\begin{Remark}
It follows in particular from Proposition \ref{471} that
\begin{equation*}
\int_{\rmBdry U} |u|^p d\calH^{m-1} \leq C \left( \int_U |u|^p d\calL^m + \int_U \|\nabla u\|^p d\calL^m \right)
\end{equation*}
for some $C > 0$. Thus there exists a unique continuous {\em trace operator}
\begin{equation*}
\calT : \bW^1_p(U) \to \bL_p(\rmBdry U;\calH^{m-1})
\end{equation*}
defined by $\calT(u)=u$ whenever $u$ is Lipschitz. Of course, being continuous, $\calT$ is also weakly continuous. The inequality in Proposition \ref{471} shows that $\calT$ is completely continuous, i.e. if $\{u_k\}$ converges weakly in $\bW^1_p(U)$ then $\{\calT(u_k)\}$ converges strongly in $\bL_p(\rmBdry U;\calH^{m-1})$. Using Proposition \ref{471} (more precisely, an $\R^N$ valued version) in conjunction with the embedding Theorem \ref{334} we obtain that for every $\veps > 0$ there exists $\theta_n > 0$ such that
\begin{multline*}
\int_{\rmBdry U} \calG(f_1,f_2)^p d\calH^{m-1} \leq \theta_n \int_U \calG(f_1,f_2)^p d\calL^m \\+ \veps \left( \int_U \lno Df_1 \rno^p d\calL^m + \lno Df_2 \rno^p d\calL^m \right)
\end{multline*}
whenever $f_1,f_2 : U \to \calQ_Q(\ell_2^n)$ are Lipschitz. The dependence of $\theta$ upon $n$ is caused by a constant $\alpha(n,Q)^{-1}$ (the biLipschitz 
constant of the Almgren embedding). This leads to a proper definition of a trace ``operator'' for maps $f \in \bW^1_p(U;\calQ_Q(\ell_2^n))$ but not for maps $f \in \bW^1_p(U;\calQ_Q(\ell_2))$. We use a different approach in our next result, avoiding altogether the embedding of Theorem \ref{334}.
\end{Remark}

\begin{Theorem}
\label{472}
There exists a map
\begin{equation*}
\calT : W^1_p(U;\calQ_Q(\ell_2)) \to L_p(\rmBdry U;\calQ_Q(\ell_2))
\end{equation*}
verifying the following properties.
\begin{enumerate}
\item[(A)] If $f : \rmClos U \to \calQ_Q(\ell_2)$ is Lipschitz then $\calT(f)(x) = f(x)$ for every $x \in \rmBdry U$;
\item[(B)] For every $\veps > 0$ there exists $\theta > 0$ such that
\begin{multline*}
\int_{\rmBdry U} \calG(\calT(f_1),\calT(f_2))^p d\calH^{m-1} \leq \theta \int_U \calG(f_1,f_2)^p d\calL^m \\+ \veps \left( \int_U \lno Df_1 \rno^p d\calL^m + \lno Df_2 \rno^p d\calL^m \right)
\end{multline*}
whenever $f_1,f_2 \in W^1_p(U;\calQ_Q(\ell_2)$;
\item[(C)] There exists $C > 0$ such that for every $f \in W^1_p(U;\calQ_Q(\ell_2))$,
\begin{equation*}
\int_{\rmBdry U} \lno \calT(f) \rno^p d\calH^{m-1} \leq C \left( \int_U \lno f \rno^p d\calL^m + \int_U \lno Df \rno^p d\calL^m \right) \,.
\end{equation*}
\end{enumerate}
\end{Theorem}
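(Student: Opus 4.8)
The plan is to define $\calT$ on Lipschitz maps by restriction to $\rmBdry U$ and to extend it by density, the one substantial point being an \emph{intrinsic} proof of (B) for Lipschitz maps --- one that bypasses the embedding $\bxi$ of Theorem \ref{334}, so that the resulting constant $\theta$ carries no hidden dependence on a dimension. First I would settle the Lipschitz case. For Lipschitz $f_1,f_2:\rmClos U\to\calQ_Q(\ell_2)$ the scalar function $u(x)=\calG_2(f_1(x),f_2(x))$ is Lipschitz on $\rmClos U$, hence differentiable $\calL^m$ almost everywhere by Rademacher's theorem, while $f_1$ and $f_2$ are approximately --- hence, being Lipschitz, genuinely --- differentiable $\calL^m$ almost everywhere by Theorem \ref{258} (recall $\ell_2$ has the Radon--Nikod\'ym property). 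At a common point $x$ of differentiability, two triangle inequalities give $|u(x+h)-u(x)|\le\calG_2(f_1(x+h),f_1(x))+\calG_2(f_2(x+h),f_2(x))$, and for each $i$, writing $Af_i(x)=\oplus_{l=1}^Q\lseg A_l\rseg$ with linear parts $L_l$ and using $Af_i(x)(x)=f_i(x)$ from Theorem \ref{258}(A),
\[
\calG_2\big(f_i(x+h),f_i(x)\big)\le\calG_2\big(f_i(x+h),Af_i(x)(x+h)\big)+\Big(\sum_{l=1}^Q\|L_l(h)\|^2\Big)^{1/2}\le o(\|h\|)+\lno Df_i(x)\rno\,\|h\|,
\]
by Theorem \ref{258}(C) and the elementary bound $\|L(h)\|\le\vvvert L\vvvert\,\|h\|$ for the chosen norm $\vvvert\cdot\vvvert$. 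Consequently $\|\nabla u(x)\|\le\lno Df_1(x)\rno+\lno Df_2(x)\rno$ for $\calL^m$ almost every $x$.

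Applying Proposition \ref{471} to $u$ then gives, for every $\veps>0$, a $\theta>0$ with
\[
\int_{\rmBdry U}\calG_2(f_1,f_2)^p\,d\calH^{m-1}\le\theta\int_U\calG_2(f_1,f_2)^p\,d\calL^m+2^{p-1}\veps\Big(\int_U\lno Df_1\rno^p\,d\calL^m+\int_U\lno Df_2\rno^p\,d\calL^m\Big),
\]
and similarly, taking $f_2\equiv Q\lseg0\rseg$ and $\veps=1$ in Proposition \ref{471}, a constant $C>0$ with $\int_{\rmBdry U}\lno f\rno^p\,d\calH^{m-1}\le C\big(\int_U\lno f\rno^p\,d\calL^m+\int_U\lno Df\rno^p\,d\calL^m\big)$ for every Lipschitz $f$. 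I set $\calT(f):=f\restriction\rmBdry U$ for $f$ Lipschitz on $\rmClos U$; then (A) is immediate, and the two displays are, after an obvious renaming of $\veps$, precisely (B) and (C) restricted to Lipschitz maps.

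The extension is routine. Given $f\in W^1_p(U;\calQ_Q(\ell_2))$, choose by Proposition \ref{441} Lipschitz maps $f_n$ with $d_p(f_n,f)\to0$ and $\int_U\lno Df_n\rno^p\,d\lambda\to\calE_p^p(f;U)$, so in particular $M:=\sup_n\int_U\lno Df_n\rno^p\,d\lambda<\infty$. Feeding the pairs $(f_n,f_{n'})$ into the Lipschitz form of (B): given $\delta>0$, pick $\veps$ with $2^p\veps M<\delta/2$, then the corresponding $\theta$, then $n_0$ with $\theta\,d_p(f_n,f_{n'})^p<\delta/2$ for $n,n'\ge n_0$; so $\{f_n\restriction\rmBdry U\}$ is Cauchy in $L_p(\rmBdry U;\calQ_Q(\ell_2))$, which is complete by Propositions \ref{211} and \ref{411}, and I let $\calT(f)$ be a limit. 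Interleaving two admissible sequences produces another admissible one, which shows the limit is independent of the choice up to $\calH^{m-1}$-a.e.\ equality; for $f$ Lipschitz on $\rmClos U$ the constant sequence is admissible and recovers $\calT(f)=f\restriction\rmBdry U$, giving (A) in general. For (B) and (C) with arbitrary $f_1,f_2\in W^1_p(U;\calQ_Q(\ell_2))$, I take the sequences $\{f_{i,n}\}_n$ just described (so $\int_U\lno Df_{i,n}\rno^p\,d\lambda\to\calE_p^p(f_i;U)=\int_U\lno Df_i\rno^p\,d\lambda$ by Corollary \ref{454}) and pass to the limit in the Lipschitz inequalities of the previous paragraph: the left-hand sides converge since $f_{i,n}\restriction\rmBdry U\to\calT(f_i)$ in $L_p(\rmBdry U;\calQ_Q(\ell_2))$ and $|\calG_2(a,b)-\calG_2(a',b')|\le\calG_2(a,a')+\calG_2(b,b')$, and the right-hand sides converge by the choice of sequences; as $\theta$ and $\veps$ are untouched, this yields (B) and (C).

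The hard part is the pointwise gradient bound $\|\nabla\calG_2(f_1(\cdot),f_2(\cdot))(x)\|\le\lno Df_1(x)\rno+\lno Df_2(x)\rno$ of the first step: it is exactly what lets the \emph{scalar} trace inequality of Proposition \ref{471} govern the \emph{$Q$-valued} trace, and it is precisely here that the intrinsic differentiability theory of Theorem \ref{258} takes over the role played by the embedding $\bxi$ in the finite-dimensional treatment, so that no constant depending on $n$ is introduced. Once that bound is in hand, the rest is a standard density argument; the only thing to watch is that the $\veps$-dependent constant $\theta$ must be preserved when passing to the limit, which is legitimate because the auxiliary term $\veps(\cdots)$ stays uniformly small along the approximating sequences.
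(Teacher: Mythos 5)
Your proof is correct and takes essentially the same route as the paper: set $u=\calG_2(f_1,f_2)$, bound $\|\nabla u\|$ pointwise by $\lno Df_1\rno+\lno Df_2\rno$, apply the scalar trace estimate of Proposition \ref{471}, and extend by density using Propositions \ref{411}, \ref{441} and Corollary \ref{454}. The only cosmetic deviation is that you re-derive the gradient bound directly from Theorem \ref{258} (at every point of differentiability), whereas the paper simply cites Proposition \ref{259}; the content is the same and the density argument you spell out is exactly what the paper's opening sentence appeals to.
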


\begin{proof}
Owing to definition of $W^1_p(U;\calQ_Q(\ell_2))$ (the weak density of Lipschitz maps), and to Propositions \ref{411}, \ref{441} and \ref{454}, it suffices to show that the map $\calT$ defined for Lipschitz $f$ by (A), verifies conclusions (B) and (C) for Lipschitz $f_1,f_2,f$.
\par
Given $f_1,f_2 : \rmClos U \to \calQ_Q(\ell_2)$ we define $u : \rmClos U \to \R$ by the formula $u(x) = \calG(f_1(x),f_2(x))$, $x \in U$. Given $x \in U$ and $h \in \Rm$ such that $x+h \in U$ we infer from the triangle inequality that
\begin{equation*}
\begin{split}
|u(x+h)-u(x)| & \leq | \calG(f_1(x+h),f_2(x+h)) - \calG(f_1(x),f_2(x+h))| \\
&\qquad+ | \calG(f_1(x),f_2(x+h)) - \calG(f_1(x),f_2(x))| \\
& \leq \calG(f_1(x+h),f_1(x)) + \calG(f_2(x+h),f_2(x)) \,.
\end{split}
\end{equation*}
This shows at once that $u$ is Lipschitz. Furthermore Proposition \ref{259} implies that
\begin{equation*}
\|\nabla u(x)\| \leq \lno Df_1(x) \rno + \lno Df_2(x) \rno
\end{equation*}
at each $x \in U$ where $u$, $f_1$ and $f_2$ are differentiable. Conclusion (B) now follows from Proposition \ref{471}, and conclusion (C) is a consequence of (B) with $f_1 = f$, $f_2 = Q \lseg 0 \rseg$ and $\veps = 1$.
\end{proof}

\subsection{Analog of the Rellich compactness Theorem}

\begin{Lemma}
\label{481}
Let $f \in W^1_p(\Rm;\calQ_Q(\ell_2))$ and $h \in \Rm$. It follows that
\begin{equation*}
\int_{\Rm} \calG(f(x+h),f(x))^p d\calL^m(x) \leq \|h\|^p \int_{\Rm} \lno Df \rno^p d\calL^m \,.
\end{equation*}
\end{Lemma}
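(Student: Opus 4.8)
The plan is to prove the inequality first for Lipschitz maps, where it reduces to the fundamental-theorem-of-calculus estimate along line segments provided by Theorem \ref{258}(D), and then to obtain the general case by the usual relaxation argument. The case $h = 0$ is trivial, so assume $h \neq 0$.

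\emph{Lipschitz case.} Let $f : \Rm \to \calQ_Q(\ell_2)$ be Lipschitz. Since $\ell_2$ has the Radon--Nikod\'ym property, Theorem \ref{258} applies: $f$ is approximately differentiable off a $\calL^m$-null set $N$, and it is differentiable in the strong sense of Theorem \ref{258}(C) wherever it is approximately differentiable. First I would use Fubini's theorem, slicing $\Rm$ by lines in the direction $h/\|h\|$, to see that for $\calL^m$-almost every $x$ the segment $[x,x+h]$ meets $N$ in an $\calH^1$-null set; for such $x$, Theorem \ref{258}(D) applied to this segment gives
\begin{equation*}
\calG_2(f(x+h), f(x)) \leq \int_{[x,x+h]} \lno Df(z) \rno \, d\calH^1(z) = \|h\| \int_0^1 \lno Df(x + th) \rno \, d\calL^1(t) \,.
\end{equation*}
Raising to the $p$-th power and using Jensen's inequality on the probability space $([0,1],\calL^1)$ yields $\calG_2(f(x+h),f(x))^p \leq \|h\|^p \int_0^1 \lno Df(x+th) \rno^p \, d\calL^1(t)$. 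Integrating in $x$, then Tonelli's theorem and the translation invariance of $\calL^m$ (so that $\int_{\Rm} \lno Df(x+th) \rno^p \, d\calL^m(x) = \int_{\Rm} \lno Df \rno^p \, d\calL^m$ for every $t$) give the claimed bound for Lipschitz $f$.

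\emph{General case.} For $f \in W^1_p(\Rm;\calQ_Q(\ell_2))$ I would invoke Proposition \ref{441} to choose Lipschitz maps $f_j : \Rm \to \calQ_Q(\ell_2)$ with $d_p(f_j, f) \to 0$ and $\int_{\Rm} \lno Df_j \rno^p \, d\calL^m \to \calE_p^p(f;\Rm)$, and apply the Lipschitz case to each $f_j$. The left-hand sides converge to the desired limit because $g \mapsto d_p(\tau_h g, g)$ is $d_p$-continuous: from $\calG_2(g(x+h),g(x)) \leq \calG_2(g(x+h),g'(x+h)) + \calG_2(g'(x+h),g'(x)) + \calG_2(g'(x),g(x))$, Minkowski's inequality together with translation invariance gives $|d_p(\tau_h g, g) - d_p(\tau_h g', g')| \leq 2 d_p(g,g')$. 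Passing to the limit produces
\begin{equation*}
\int_{\Rm} \calG_2(f(x+h), f(x))^p \, d\calL^m(x) \leq \|h\|^p \, \calE_p^p(f;\Rm) \,,
\end{equation*}
and it remains to identify $\calE_p^p(f;\Rm)$ with $\int_{\Rm} \lno Df \rno^p \, d\calL^m$. This is Corollary \ref{454}; although stated there for the unit ball, the assertion is local in nature (approximate differentiability $\calL^m$-a.e. follows by restricting $f$ to balls, and the pointwise identity $\lno Df(x) \rno = \lno \delta f \rno(x)$ at points of approximate differentiability is proved exactly as in Corollary \ref{454}), so it holds for $U = \Rm$ as well.

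I expect the main obstacle to be the limit passage rather than the Lipschitz case: one must note the $d_p$-continuity of $g \mapsto d_p(\tau_h g, g)$ and, more substantially, the identification $\calE_p^p(f;\Rm) = \int_{\Rm} \lno Df \rno^p \, d\calL^m$, which is precisely where the preceding machinery (Corollary \ref{454}, resting on the Poincar\'e inequality Theorem \ref{452}) is used. A minor technical point: Theorem \ref{258}(D) is phrased for curves on $[0,1]$ of speed at most $1$, so for $\|h\| > 1$ one either reparametrises $[x,x+h]$ by arclength or subdivides it into sub-segments of length $\leq 1$ and sums.
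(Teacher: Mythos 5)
Your proof is correct and follows essentially the same route as the paper: the paper likewise reduces to the Lipschitz case via Propositions \ref{411} and \ref{441}, then applies Theorem \ref{258}(D) and Jensen's inequality and integrates in $x$. Your write-up is simply more explicit about the limit passage (the $d_p$-continuity of $g\mapsto d_p(\tau_h g,g)$, the reparametrisation of $[x,x+h]$ when $\|h\|>1$, and the identification $\calE_p^p(f;\Rm)=\int_{\Rm}\lno Df\rno^p\,d\calL^m$ via the localisation of Corollary \ref{454}), all of which the paper compresses into a one-line citation.
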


\begin{proof}
According to Propositions \ref{411} and \ref{441}, it suffices to prove it when $f$ is Lipschitz as well. In that case it follows from Theorem \ref{258}(D) and Jensen's inequality that
\begin{equation*}
\begin{split}
\calG(f(x+h),f(x))^p & \leq \left( \|h\| \int_0^1 \lno Df(x+th) \rno d\calL^1(t) \right)^p \\
& \leq \|h\|^p \int_0^1 \lno Df(x+th) \rno^p d\calL^1(t) \,.
\end{split}
\end{equation*}
The conclusion follows upon integrating with respect to $x \in \Rm$.
\end{proof}

\begin{Theorem}
\label{482}
Let $U = U(0,1)$ be the unit ball in $\Rm$ and let $\{f_j\}$ be a sequence in $W^1_p(U;\calQ_Q(\ell_2))$ such that
\begin{enumerate}
\item[(1)] There exists a compact set $C \subset \ell_2$ such that $f_j(x) \in \calQ_Q(C)$ for every $x \in U$ and every $j=1,2,\ldots$;
\item[(2)] $\sup_j \int_U \lno Df_j \rno^p d\calL^m < \infty$.
\end{enumerate}
It follows that there exists a subsequence $\{f_{k(j)}\}$ and $f \in W^1_p(U;\calQ_Q(\ell_2))$ such that $\lim_j d_p(f,f_{k(j)}) = 0$.
\end{Theorem}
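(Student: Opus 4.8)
The plan is to reduce to a fixed compact convex target, extend every $f_j$ to all of $\Rm$, apply the Fr\'echet--Kolmogorov compactness Theorem~\ref{421}, and then restrict the resulting limit back to $U$. First I would replace $C$ by the closed convex hull $\widehat{C}$ of $C \cup \{0\}$: since $\ell_2$ is a Banach space, Mazur's Theorem guarantees that $\widehat{C}$ is compact; moreover $0 \in \widehat{C}$ and $f_j(x) \in \calQ_Q(C) \subset \calQ_Q(\widehat{C})$ for all $x \in U$ and all $j$. With $R := \sup_{y \in \widehat{C}} \|y\| < \infty$ we have $\calG(f_j(x),Q\lseg 0 \rseg) \leq \sqrt{Q}\,R$ for every $x \in U$, hence $\sup_j \lno f_j \rno_{L_p}^p \leq \calL^m(U)\,Q^{p/2}R^p < \infty$.

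Next I put $\widehat{f}_j = E(f_j) \in W^1_p(\Rm;\calQ_Q(\ell_2))$, where $E$ is the extension operator of Theorem~\ref{ext}. By parts (A), (D), (E) of that theorem, $\widehat{f}_j = f_j$ on $U$, $\widehat{f}_j = Q\lseg 0 \rseg$ on $\Rm \setminus U(0,2)$, and $\widehat{f}_j$ is $\calQ_Q(\widehat{C})$-valued on all of $\Rm$. Using Theorem~\ref{ext}(C) together with Corollary~\ref{454} (which gives $\calE_p^p(f_j;U) = \int_U \lno Df_j \rno^p\,d\calL^m$), hypothesis (2), and the previous paragraph, we obtain
\[
M := \sup_j \calE_p^p(\widehat{f}_j;\Rm) \leq \left(1 + Q^{p/2}2^p\right)\left( \sup_j \int_U \lno Df_j \rno^p\,d\calL^m + \sup_j \lno f_j \rno_{L_p}^p \right) < \infty ,
\]
and likewise $\lno \widehat{f}_j \rno_{L_p(\Rm)}^p = \int_{U(0,2)} \calG(\widehat{f}_j,Q\lseg 0 \rseg)^p\,d\calL^m \leq \calL^m(U(0,2))\,Q^{p/2}R^p$, uniformly in $j$.

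I then verify the hypotheses of Theorem~\ref{421} with $X = \Rm$, $Y = \widehat{C}$ (a compact metric space), $y_0 = 0$, and $\calF = \{\widehat{f}_j\}_j \subset L_p(\Rm;\calQ_Q(\widehat{C}))$. Condition (C)(i) is the uniform $L_p$-bound just obtained. For (C)(ii), Lemma~\ref{481} gives $d_p(\tau_h \widehat{f}_j,\widehat{f}_j)^p \leq \|h\|^p\,\calE_p^p(\widehat{f}_j;\Rm) \leq M\|h\|^p$, so the ball of radius $\veps M^{-1/p}$ about $0$ serves as the required neighbourhood. For (C)(iii), take $K = \rmClos U(0,2)$: since $\widehat{f}_j = Q\lseg 0 \rseg$ off $K$, one has $(\widehat{f}_j)_K = \widehat{f}_j$ and $d_p((\widehat{f}_j)_K,\widehat{f}_j) = 0$. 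Hence $\calF$ is relatively compact in $L_p(\Rm;\calQ_Q(\widehat{C}))[d_p]$, and (the space being complete by Proposition~\ref{411}) I may extract a subsequence $\{\widehat{f}_{k(j)}\}$ together with $g \in L_p(\Rm;\calQ_Q(\widehat{C}))$ such that $\lim_j d_p(\widehat{f}_{k(j)},g) = 0$.

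Finally I would set $f := g\restriction U$. Because $\widehat{f}_{k(j)} = f_{k(j)}$ on $U$, we get $d_p(f,f_{k(j)})^p = \int_U \calG(g,\widehat{f}_{k(j)})^p\,d\calL^m \leq d_p(g,\widehat{f}_{k(j)})^p \to 0$, which is one of the two required conclusions. To see $f \in W^1_p(U;\calQ_Q(\ell_2))$, for each $j$ I apply Proposition~\ref{441} to $\widehat{f}_{k(j)} \in W^1_p(\Rm;\calQ_Q(\ell_2))$ (the domain now being $\Rm = X$ itself) to get a Lipschitz map $\phi_j : \Rm \to \calQ_Q(\ell_2)$ with $d_p(\phi_j,\widehat{f}_{k(j)}) < 1/j$ and $\int_{\Rm} \lno D\phi_j \rno^p\,d\calL^m < \calE_p^p(\widehat{f}_{k(j)};\Rm) + 1 \leq M+1$; then $\sup_j \int_U \lno D\phi_j \rno^p\,d\calL^m \leq M+1$ and $d_p(f,\phi_j\restriction U) \leq d_p(g,\widehat{f}_{k(j)}) + d_p(\widehat{f}_{k(j)},\phi_j) \to 0$, so the sequence $\{\phi_j\}$ exhibits $f$ as a member of $W^1_p(U;\calQ_Q(\ell_2))$ (condition (1) in the definition being redundant since $U$ is bounded). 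The real obstacle here is conceptual rather than computational: $\ell_2$ is not compact, so Theorem~\ref{421} cannot be used on $\{f_j\}$ directly; hypothesis (1) combined with Mazur's Theorem is exactly what lets us work inside a single fixed compact convex set $\widehat{C}$, with Theorem~\ref{ext}(E) keeping the extensions there, while the uniform energy bound $M$ surviving the extension relies on Theorem~\ref{ext}(C).
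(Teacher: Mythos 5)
Your proof is correct and follows essentially the same route as the paper: extend via Theorem~\ref{ext}, feed the extensions into the Fr\'echet--Kolmogorov Theorem~\ref{421}, and restrict the limit back to $U$ while verifying membership in the Sobolev class through Proposition~\ref{441}. The one place you are actually more careful than the paper is the target set: the paper applies Theorem~\ref{421} to $\{E(f_j)\}$ viewed in $L_p(\Rm;\calQ_Q(C))$ and cites Theorem~\ref{ext}(D) to keep the extensions in $\calQ_Q(C)$, but Theorem~\ref{ext}(E), which is what actually controls the range of the extension, requires $C$ to be convex and to contain $0$ --- hypotheses not assumed in the statement of Theorem~\ref{482}. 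Your explicit replacement of $C$ by the closed convex hull of $C\cup\{0\}$ (compact by Mazur's Theorem), before invoking Theorem~\ref{ext}(E), fills in exactly this step. The rest --- the uniform $L_p$ bound from compactness of the target, the uniform energy bound through Theorem~\ref{ext}(C) and Corollary~\ref{454}, the modulus-of-continuity estimate from Lemma~\ref{481}, the choice $K=\rmClos U(0,2)$, and the final exhibition of $g\restriction U$ as a Sobolev map using Proposition~\ref{441} --- matches the paper's argument.
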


\begin{proof}
We show that the compactness Theorem \ref{421} applies to the sequence $\{E(f_j)\}$ in $L_p(\Rm;\calQ_Q(C))$. Our hypothesis (1) and Theorem \ref{ext}(D) guarantee that the extension $E(f_j)$ take their value in $\calQ_Q(C)$. We now check that the hypotheses of Theorem \ref{421} are verified:
\begin{enumerate}
\item[(i)] follows from the fact that $C$ is bounded, thus
\begin{equation*}
\int_{\Rm} \lno E(f_j) \rno^p d\calL^m \leq 2^m \balpha(m) Q^\frac{p}{2} (\rmdiam C)^p \,
\end{equation*}
for every $j=1,2,\ldots$;
\item[(ii)] follows from Lemma \ref{481} and Theorem \ref{ext}(C):
\begin{multline*}
\sup_j \int_{\Rm} \calG(E(f_j)(x+h),E(f_j)(x))^p d\calL^m(x) \leq \|h\|^p \int_{\Rm} \lno DE(f_j) \rno^p d\calL^m \\
\leq \|h\|^p C(m,p,Q) \sup_j \left( \int_U \lno f_j \rno^p d\calL^m + \int_U \lno  Df_j \rno^p d\calL^m \right)
\end{multline*}
\item[(iii)] follows from the fact that $E(f_j) = E(f_j)_K$ for each $j=1,2,\ldots$, where $K = B(0,2)$, according to Theorem \ref{ext}(D).
\end{enumerate}
Thus there exists $\hat{f} \in L_p(\Rm;\calQ_Q(\ell_2))$ such that $\lim_j d_p(E(f_{k(j)}),\hat{f}) = 0$. It remains to notice that the restriction $\hat{f} \restriction U$ belongs to $W^1_p(U;\calQ_Q(\ell_2))$. This is because for each $j=1,2,\ldots$ we can choose a Lipschitz map $g_j : \Rm \to \calQ_Q(\ell_2)$ such that $d_p(E(f_{k(j)},g_j) < j^{-1}$ and $\int_{\Rm} \lno Dg_j \rno^p d\calL^m \leq j^{-1} + \int_{\Rm} \lno E(f_{k(j)}) \rno^p d\calL^m$. Thus $\lim_j d_p(\hat{f} \restriction U , g_j \restriction U) = 0$ and $\sup_j \int_U \lno D(g_j \restriction U) \rno^p d\calL^m < \infty$.
\end{proof}

\begin{Remark}
It would be interesting to know whether or not all the results proved so far in this paper hold when the range $\ell_2$ is replaced by an infinite dimensional Banach space $Y$ which is separable, a dual space, and admits a monotone Schauder basis.
\end{Remark}

\subsection{Existence Theorem}

\begin{Lemma}
\label{491}
Assume that
\begin{enumerate}
\item[(A)] $X$ is a compact metric space;
\item[(B)] $Y$ is a metric space;
\item[(C)] $g : X \to \calQ_Q(Y)$ is continuous.
\end{enumerate}
It follows there exists a compact set $C \subset Y$ such that $g(x) \in \calQ_Q(C)$ for every $x \in Y$.
\end{Lemma}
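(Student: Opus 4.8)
The plan is to take for $C$ the closure of the union of all the supports,
$C = \overline{\bigcup_{x \in X} \rmsupp g(x)} \subset Y$, and to show that this set is compact; once that is done, the trivial inclusion $\rmsupp g(x) \subset C$ immediately gives $g(x) \in \calQ_Q(C)$ for every $x \in X$, which is the conclusion (the statement should of course read ``for every $x \in X$''). Since $Y$ is merely a metric space, not a complete one, I would not argue via total boundedness together with completeness, but instead establish compactness of $C$ directly through sequences. The reduction I would use is the elementary fact that if $A \subset Y$ has the property that every sequence in $A$ admits a subsequence converging to some point of $Y$, then $\overline A$ is compact: this is just a $1/n$-approximation argument, replacing a sequence in $\overline A$ by nearby points of $A$, extracting a convergent subsequence of the latter, and noting the original subsequence has the same limit, which lies in $\overline A$ since $\overline A$ is closed; a metric space that is sequentially compact is compact.

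So it remains to verify that property for $A := \bigcup_{x \in X} \rmsupp g(x)$. Given a sequence $\{z_n\}$ in $A$, I would first choose $x_n \in X$ with $z_n \in \rmsupp g(x_n)$, then use sequential compactness of the metric space $X$ to pass to a subsequence along which $x_n \to x$, and then invoke continuity of $g$ together with the equivalence of the metrics $\calG$ and $\calG_\infty$ on $\calQ_Q(Y)$ to obtain $\calG_\infty(g(x_n),g(x)) \to 0$. Writing $g(x) = \lseg w_1,\ldots,w_Q \rseg$, for each $n$ I would then pick a numbering $g(x_n) = \lseg y^n_1,\ldots,y^n_Q \rseg$ and a permutation $\sigma_n \in S_Q$ realizing the minimum, so that $\max_{i=1,\ldots,Q} d(y^n_i, w_{\sigma_n(i)}) = \calG_\infty(g(x_n),g(x))$, and an index $j_n \in \{1,\ldots,Q\}$ with $z_n = y^n_{j_n}$.

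The one step that needs a small amount of care — and essentially the only point in the argument — is that $S_Q$ and $\{1,\ldots,Q\}$ are finite, so one may pass to a further subsequence along which $\sigma_n \equiv \sigma$ and $j_n \equiv j$ are both constant; along that subsequence $d(z_n, w_{\sigma(j)}) = d(y^n_j, w_{\sigma(j)}) \leq \max_{i} d(y^n_i, w_{\sigma(i)}) = \calG_\infty(g(x_n),g(x)) \to 0$, hence $z_n \to w_{\sigma(j)} \in Y$. This establishes the sequential property for $A$, so $C = \overline A$ is compact, and since $\rmsupp g(x) \subset C$ for every $x \in X$ we get $g(x) \in \calQ_Q(C)$. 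I do not anticipate a genuine obstacle; the only risk is a notational tangle in carrying the numberings, the permutations $\sigma_n$, and the indices $j_n$ simultaneously through two successive subsequence extractions, which the finiteness of $S_Q$ and of $\{1,\ldots,Q\}$ resolves cleanly.
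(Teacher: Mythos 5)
Your proof is correct, and it takes a genuinely different route from the paper's. Both proofs start from the same set (the union of the supports $\rmsupp g(x)$ over $x \in X$), but the paper argues that this set is closed and, using compactness of $\rmim g$, totally bounded, and then invokes ``closed $+$ totally bounded $\Rightarrow$ compact.'' You instead verify sequential compactness directly, extracting along subsequences $x_n \to x$ (compactness of $X$), $\calG_\infty(g(x_n),g(x)) \to 0$ (continuity of $g$ and equivalence of $\calG$ and $\calG_\infty$), and finally constant $\sigma_n$ and $j_n$ (finiteness of $S_Q$ and $\{1,\dots,Q\}$) to locate a limit $w_{\sigma(j)} \in \rmsupp g(x)$. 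The sequential route is worth noting because it proves the lemma at its stated generality: the paper's closing step tacitly uses that $Y$ is complete (closed and totally bounded does not imply compact in an arbitrary metric space), whereas hypothesis (B) only says ``$Y$ is a metric space.'' For the paper's application $Y=\ell_2$ the distinction is immaterial, but your argument is the one that actually matches the hypotheses as written. You are also right that the conclusion should read ``for every $x \in X$''; that is a typo in the statement. Finally, since your limit $w_{\sigma(j)}$ already lies in $A = \bigcup_{x\in X}\rmsupp g(x)$, the argument in fact shows $A$ itself is sequentially compact (hence closed), so the outer closure $C = \overline{A}$ is harmless but unnecessary — which recovers, as a by-product, the paper's unproved assertion that its $C$ is closed.
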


\begin{proof}
We let $C = Y \cap \{ y : y \in \rmsupp g(x) \text{ for some } x \in X \}$. One easily checks that $C$ is closed, thus it suffices to show it is totally bounded. Since $\rmim g$ itself is compact, given $\veps > 0$ there are $x_1,\ldots,x_\kappa \in X$ such that for each $x \in X$ there exists $k=1,\ldots,\kappa$ with $\calG(f(x),f(x_k)) < \veps$. We write $f(x_k) = \oplus_{i=1}^Q \lseg y_i^k \rseg$. It it now obvious that $C \subset \cup_{k=1}^\kappa \cup_{i=1}^Q B_Y(y_i^k,\veps)$.
\end{proof}

\begin{Theorem}
Let $U = U(0,1)$ be the unit ball in $\Rm$ and let $g : \rmBdry U \to \calQ_Q(\ell_2)$ be Lipschitz. It follows that the minimization problem
\begin{equation*}
\begin{cases}
\text{minimize } \int_U \lno Df \rno^p d\calL^m \\
\text{among } f \in W^1_p(U;\calQ_Q(\ell_2)) \text{ such that } \calT(f) = g
\end{cases}
\end{equation*}
admits a solution.
\end{Theorem}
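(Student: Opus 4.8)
The plan is to run the direct method of the calculus of variations; the one place that needs a genuinely new idea is compactness, since $W^1_p(U;\calQ_Q(\ell_2))$ sits inside $L_p(U;\calQ_Q(\ell_2))$ with no a priori control on the ``size'' of competitors, so the Rellich Theorem \ref{482} cannot be applied to an arbitrary minimizing sequence — it requires the competitors to take their values in a fixed compact subset of $\ell_2$. First I would record that the admissible class $\calA = \{ f \in W^1_p(U;\calQ_Q(\ell_2)) : \calT(f)=g\}$ is nonempty and that $m_0 := \inf\{\int_U\lno Df\rno^p d\calL^m : f\in\calA\} < \infty$: the Lipschitz extension Theorem \ref{243} provides a Lipschitz $\bar g : \rmClos U \to \calQ_Q(\ell_2)$ with $\bar g$ restricting to $g$ on $\rmBdry U$, whence $\calT(\bar g)=g$ by Theorem \ref{472}(A) and $\int_U\lno D\bar g\rno^p d\calL^m \leq (\sqrt Q\,\rmLip\bar g)^p\calL^m(U) < \infty$ by Theorem \ref{258}(C). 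By Lemma \ref{491} (with $X = \rmClos U$) the set $C_0 = \{ y \in \ell_2 : y \in \rmsupp\bar g(x)\text{ for some }x \in \rmClos U\}$ is compact; let $K_0$ be the closed convex hull of $C_0\cup\{0\}$, which by Mazur's theorem is compact, and is convex with $0 \in K_0$. Since $g(x)=\bar g(x)\in\calQ_Q(K_0)$ for $x\in\rmBdry U$, the boundary datum takes its values in $\calQ_Q(K_0)$.

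The truncation step is the heart of the argument. Let $\pi : \ell_2 \to K_0$ be the nearest point projection onto the closed convex set $K_0$; it is $1$-Lipschitz, so $\calQ_Q(\pi):\calQ_Q(\ell_2)\to\calQ_Q(K_0)$, $\calQ_Q(\pi)(\lseg y_1,\ldots,y_Q\rseg)=\lseg\pi(y_1),\ldots,\pi(y_Q)\rseg$, is $1$-Lipschitz for $\calG_2$ by the trivial estimate recorded just before Proposition \ref{443}. I would show that for every $f\in\calA$ the map $\tilde f := \calQ_Q(\pi)\circ f$ again lies in $\calA$, takes values in $\calQ_Q(K_0)$, and satisfies $\int_U\lno D\tilde f\rno^p d\calL^m \leq \int_U\lno Df\rno^p d\calL^m$. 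The trace is preserved: for Lipschitz $f$ this follows from Theorem \ref{472}(A) together with $\calQ_Q(\pi)\circ g = g$ (each point of $\rmsupp g(x)$ lies in $K_0$, hence is fixed by $\pi$), and the general case follows by approximating $f$ in $d_p$ by Lipschitz maps of bounded energy and invoking Theorem \ref{472}(B). For the energy, first take $f$ Lipschitz; then $\tilde f$ is Lipschitz, both are approximately differentiable $\calL^m$-a.e. by Theorem \ref{258}, and at a common point $a$ of differentiability, writing $D\tilde f(a)=\oplus_{i=1}^Q\lseg\tilde L_i\rseg$ and $Df(a)=\oplus_{i=1}^Q\lseg L_i\rseg$, one has for each basis vector $u_j$ of $X$
\begin{equation*}
\sum_{i=1}^Q\|\tilde L_i(u_j)\|^2 = \lim_{t\to0}\frac{\calG_2(\tilde f(a+tu_j),\tilde f(a))^2}{t^2} \leq \lim_{t\to0}\frac{\calG_2(f(a+tu_j),f(a))^2}{t^2} = \sum_{i=1}^Q\|L_i(u_j)\|^2 \,,
\end{equation*}
the two equalities being obtained exactly as in the proof of Proposition \ref{335} (for small $t$ the optimal matching fixes the fibres of $\tilde f(a)$, resp.\ of $f(a)$) and the inequality being the $1$-Lipschitz bound for $\calQ_Q(\pi)$; summing over $j$ gives $\lno D\tilde f(a)\rno\leq\lno Df(a)\rno$, hence the integral inequality. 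For general $f\in\calA$ one passes to a limit along a Lipschitz sequence realizing $\calE_p^p(f;U)$ (Proposition \ref{441}), using that $\calQ_Q(\pi)$ is $1$-Lipschitz, that consequently $\tilde f\in W^1_p(U;\calQ_Q(\ell_2))$ by the very definition of the Sobolev class, and that $\calE_p^p(\tilde f;U)=\int_U\lno D\tilde f\rno^p d\calL^m$ and $\calE_p^p(f;U)=\int_U\lno Df\rno^p d\calL^m$ by Corollary \ref{454}.

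Finally I would conclude. Pick a minimizing sequence $\{f_j\}\subset\calA$ and replace it by $\tilde f_j = \calQ_Q(\pi)\circ f_j$: by the previous paragraph it is still minimizing, lies in $\calA$, has $\tilde f_j(x)\in\calQ_Q(K_0)$ for all $x\in U$ and all $j$, and $\sup_j\int_U\lno D\tilde f_j\rno^p d\calL^m<\infty$. Theorem \ref{482} then yields a subsequence $\{\tilde f_{k(j)}\}$ and $f\in W^1_p(U;\calQ_Q(\ell_2))$ with $d_p(f,\tilde f_{k(j)})\to0$. Since $\calT(\tilde f_{k(j)})=g$ for all $j$ and $\sup_j\calE_p^p(\tilde f_{k(j)};U)<\infty$, Theorem \ref{472}(B) (letting first $j\to\infty$, which kills the $\theta$-term, then the auxiliary $\veps\to0$) gives $\calT(f)=g$, so $f\in\calA$. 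By the lower semicontinuity of the $p$-energy, Proposition \ref{442}, together with Corollary \ref{454},
\begin{equation*}
\int_U\lno Df\rno^p d\calL^m = \calE_p^p(f;U) \leq \liminf_j\calE_p^p(\tilde f_{k(j)};U) = \liminf_j\int_U\lno D\tilde f_{k(j)}\rno^p d\calL^m = m_0 \,,
\end{equation*}
while $\int_U\lno Df\rno^p d\calL^m\geq m_0$ since $f\in\calA$; hence $f$ is a solution. The step I expect to require the most care is the truncation — verifying that composition with the $1$-Lipschitz but nonlinear map $\calQ_Q(\pi)$ does not increase the intrinsic pointwise energy $\lno D\cdot\rno$, and promoting this from Lipschitz to arbitrary Sobolev competitors through the relaxation definition of $\calE_p^p$; everything else is a routine assembly of results already established above.
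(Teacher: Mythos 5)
Your proposal is correct and follows the same route as the paper: truncate a minimizing sequence by post-composing with $\calQ_Q(\pi)$ for $\pi$ the nearest-point projection onto a compact convex set containing $\rmsupp g(x)$ for all $x$, verify that this preserves the trace and does not increase the $p$-energy, then apply the Rellich-type compactness Theorem \ref{482}, the continuity of the trace from Theorem \ref{472}(B), and the lower semicontinuity of $\calE_p^p$ from Proposition \ref{442} together with Corollary \ref{454}. You actually fill in two small details more carefully than the printed proof does. First, you correctly take the \emph{closed} convex hull of $C_0\cup\{0\}$, which is what is needed both for Mazur's theorem and for the nearest-point projection to be defined; the paper writes ``convex hull,'' which in $\ell_2$ need not be closed. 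Second, and more substantively, you justify the pointwise bound $\lno D(\calQ_Q(\pi)\circ f)(a)\rno \leq \lno Df(a)\rno$ at differentiability points by the difference-quotient identity $\sum_i\|L_i(u_j)\|^2 = \lim_{t\to 0}\calG_2(f(a+tu_j),f(a))^2/t^2$ extracted from the proofs of Theorem \ref{258}(C) and Proposition \ref{335}; this is needed because Proposition \ref{443} applies only to \emph{linear} retractions, whereas $\pi$ is nonlinear, so the paper's laconic ``because $\rmLip P\leq 1$'' is not an immediate citation. Your argument supplies the missing step.
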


\begin{proof}
The class of competitors is not empty according to the extension Theorem \ref{243}.
We let $C_0 \subset \ell_2$ be a compact set associated with $g$ in Lemma \ref{491} and we let $C$ be the convex hull of $C_0 \cup \{0\}$ (so that $C$ is compact as well). We denote by $P : \ell_2 \to C$ the nearest point projection. Given a minimizing sequence $\{f_j\}$ we consider the sequence $\{ \calQ_Q(P) \circ f_j \}$ which, we claim, is minimizing as well. That these be Sobolev maps, and form a minimizing sequence, follows from the inequalities
\begin{equation*}
\int_U \calG(\calQ_Q(P) \circ f , \calQ_Q(P) \circ f')^p d\calL^m \leq \int_U \calG(f,f')^p d\calL^m
\end{equation*}
(recall the paragraph preceding Proposition \ref{443}) and
\begin{equation*}
\int_U \lno D( \calQ_Q(P) \circ f ) \rno^p d\calL^m \leq \int_U \lno Df \rno ^p d\calL^m
\end{equation*}
(because $\rmLip P \leq 1$) valid for every Lipschitz $f,f' : U \to \calQ_Q(\ell_2)$, and hence for every $f,f' \in W^1_p(U;\calQ_Q(\ell_2))$ as well. It also follows from these inequalities and Theorem \ref{ext}(B) and (C) that
\begin{equation*}
\calT(\calQ_Q(P) \circ f) = \calT(f)
\end{equation*}
whenever $f \in W^1_p(U;\calQ_Q(\ell_2))$. Thus $\calT(\calQ_Q(P) \circ f_j)=g$, $j=1,2,\ldots$. Since all these $\calQ_Q(P) \circ f_j$ take their values in $\calQ_Q(C)$, it follows from Theorem \ref{482} that there are integers $k(1) < k(2) < \ldots$ and $f \in W^1_p(U;\calQ_Q(\ell_2))$ such that $\lim_j d_p(f,f_{k(j)})=0$. Theorem \ref{472}(B) implies that $\calT(f) =g$. Proposition \ref{441} and Corollary \ref{454} guarantee the required lower semicontinuity.
\end{proof}


\bibliographystyle{amsplain}
\bibliography{../../../Bibliography/thdp}


\end{document}